\documentclass[lms]{amsart}
 \usepackage[]{amsmath, amsthm, amsfonts, amssymb, amscd,mathtools}
\usepackage[mathscr]{eucal}
\usepackage{hyperref}
\usepackage[all]{xy}
\usepackage[usenames,dvipsnames]{color}
\usepackage{todonotes}\usepackage{rotating}
\usepackage[shortlabels]{enumitem}
\usepackage{tikz}

\setlist[enumerate]{label=\it{(\roman*)},
  ref=\it{(\roman*)}}

\usepackage{dsfont}

\usepackage{blindtext}
\usepackage{subfiles}

\usepackage{stackengine}
\stackMath

\makeatletter
\@namedef{subjclassname@2020}{%
  \textup{2020} Mathematics Subject Classification}
\makeatother

\newcommand\fancyconj[2]{\stackon[-1.5pt]{#1}{\rule[2pt]{\widthof{$#1$}}{.4pt}_{{\text{\tiny
          #2}}}}}
\newcommand{\Betti}{\text{\rm B}}
\newcommand{\dR}{\text{\rm dR}}
\newcommand\betticonj[1]{\fancyconj{#1}{\text{\tiny \Betti}}}

\newcommand*\circled[1]{{ \tikz[baseline=(char.base)]{
            \node[shape=circle,draw,inner sep=.7pt] (char)
            {#1};}}}

\newcommand{\A}{\mathds{A}}

\newcommand{\C}{\mathds{C}}
\newcommand{\D}{\mathds{D}}

\newcommand{\G}{\mathds{G}}
\renewcommand{\H}{\mathds{H}}

\renewcommand{\P}{\mathds{P}}
\newcommand{\Q}{\mathds{Q}}
\newcommand{\R}{\mathds{R}}

\newcommand{\Z}{\mathds{Z}}

% Calligraphic letters

\newcommand{\caB}{\mathcal{B}}

\newcommand{\caI}{\mathcal{I}}

\newcommand{\caO}{\mathcal{O}}
\newcommand{\caP}{\mathcal{P}}

\newcommand{\caR}{\mathcal{R}}
\newcommand{\caS}{\mathcal{S}}

\newcommand{\caV}{\mathcal{V}}
\newcommand{\caW}{\mathcal{W}}
\newcommand{\caX}{\mathcal{X}}

\newcommand{\caZ}{\mathcal{Z}}

\newcommand{\fD}{\mathfrak{D}}

\newcommand{\bfone}{{\mathds{1}}}
\newcommand{\fg}{{\mathfrak{g}}}

  \newcommand {\p} {{\prime}}

   \DeclareMathOperator {\Gr} {Gr}

 \DeclareMathOperator {\Div}{div}

 \DeclareMathOperator{\codim}{codim}
 \DeclareMathOperator{\im}{Im}
 \DeclareMathOperator {\cl}{cl}
 \DeclareMathOperator {\CH}{CH} 
  
\DeclareMathOperator{\Spec}{Spec}
\DeclareMathOperator{\Aut}{Aut}
\DeclareMathOperator{\Ext}{Ext}
\DeclareMathOperator{\Hom}{Hom}
\DeclareMathOperator{\Id}{Id}

\DeclareMathOperator{\fDTW}{\fD_{TW}}

\DeclareMathOperator{\DB}{\fD}
\DeclareMathOperator{\vol}{Vol}
\DeclareMathOperator{\GL}{GL}
\DeclareMathOperator{\gl}{\mathfrak{gl}}

\DeclareMathOperator{\lsl}{\mathfrak{sl}}
\DeclareMathOperator{\Ad}{Ad}
\DeclareMathOperator{\ad}{ad}
\DeclareMathOperator{\Ht}{ht}
\DeclareMathOperator{\RE}{Re}
\DeclareMathOperator{\Li}{Li}

\DeclareMathOperator{\simple}{s}

\newcommand{\WF}{\text{\rm WF}}

\newcommand{\Arch}{\text{\rm Arch}}
\newcommand{\geom}{\text{\rm geom}}
\newcommand{\Hodge}{\text{\rm Hodge}}
\newcommand{\TW}{\text{\rm TW}}

\newcommand{\sm}{\text{\rm sm}}

\newcommand{\I}{\text{Im}}
\newcommand{\ol}{\overline}

\newcommand{\MHS}{\mathbf{MHS}}

\newcommand{\old}{\text{\rm old}}
\newcommand{\new}{\text{\rm new}}
\newcommand{\fin}{\text{\rm fin}}

\newcommand{\sing}{\text{\rm sing}}
\newcommand{\Zar}{\text{\rm Zar}}

\newcommand{\dsquare}{\settowidth{\dimen0}{$\square$}\square\hspace*{-\dimen0}\,\square}

\numberwithin{equation}{section}

\theoremstyle{plain}

\newtheorem{prop}{Proposition}[section]

\newtheorem{cor}[prop]{Corollary}
\newtheorem{lem}[prop]{Lemma}
\newtheorem{thm}[prop]{Theorem}
\newtheorem{theoalph}{Theorem}

\theoremstyle{definition}
\newtheorem{df}[prop]{Definition}

\newtheorem{notation}[prop]{Notation}
\newtheorem{assumption}[prop]{Assumption}
\newtheorem{defalph}{Definition}

\theoremstyle{remark}
\newtheorem{rmk}[prop]{Remark}
\newtheorem{ex}[prop]{Example}

\begin{document}

\renewcommand\stackalignment{l}

\title{Height Pairing on higher cycles and mixed Hodge structures}

\author{J.~I. Burgos~Gil}

\address{Instituto de Ciencias Matem\'aticas (CSIC-UAM-UCM-UCM3).
  Calle Nicol\'as Ca\-bre\-ra~15, Campus UAM, Cantoblanco, 28049 Madrid,
  Spain} 
\email{burgos@icmat.es}

\author{S.~Goswami}

\address{Office 230, Fakult\"at f\"ur Mathematik\\
Universit\"at Regensburg\\
93040, Regensburg, Germany}
\email{souvik.goswami@mathematik.uni-regensburg.de}

\author{G.~Pearlstein}

\address{Mathematics Department\\
Universit\`a Di Pisa\\
Largo Bruno Pontecorvo, 5, 56127 Pisa PI, Italy}
\email{greg.pearlstein@unipi.it}

%\address{Office 601G, Department of Mathematics\\
%Texas A\&M University\\
%College Station, TX, USA}
%\email{gpearl@math.tamu.edu}
\date{\today}

\date{\today}
\newif\ifprivate
\privatetrue
\subjclass[2020]{14C25, 14C30, 14F40, 14G40}
\thanks{Burgos Gil was partially supported by the
  MINECO research project PID2019-108936GB-C21 and
  CEX2019-000904-S (ICMAT  Severo Ochoa). Goswami is
  currently supported by the CRC 1085 Higher Invariants (Universit\"at
  Regensburg, funded by DFB). Pearlstein and Goswami were partially supported
  by grant DMS 
  1361120. The three authors were supported by EPRSC grant
  EP/R014604/1  during their stay at the Newton Institute.}

\begin{abstract}
For a smooth, projective complex variety, we introduce several
mixed Hodge structures associated to higher algebraic cycles. Most
notably, we introduce a mixed Hodge structure for a pair of higher
cycles which are in the refined normalized complex and intersect
properly. In a special case, this mixed Hodge structure is an oriented
biextension, and its height agrees with the higher archimedean height
pairing introduced in a previous paper by the first two authors. We
also compute a non-trivial example of this height given by
Bloch-Wigner dilogarithm function. Finally we study the variation of
mixed Hodge structures of Hodge-Tate type, and show that the height 
extends continuously to degenerate situations.
\end{abstract}

\maketitle

\tableofcontents{}

\section*{Introduction}
\label{sec:introduction}
\subsection*{Main objectives}\label{sec:main-onjectives}
Let $X$ be a smooth projective variety of dimension $d$ defined over a number
field $F$. The height pairing between cycles is an arithmetic analogue of
the intersection product and can be seen as a
linking number. It plays a central role  in arithmetic geometry.

Arakelov theory and concretely arithmetic intersection
theory \cite{GilletSoule:ait}
provides a general framework to define and study the height pairing,
exploiting the analogy with the intersection product.
Let $Z $ and $W$ be disjoint, homologically trivial algebraic cycles on $X$ of
codimension $p$ and $q=d+1-p$ respectively.

Assume that there is a regular
model $\caX$  of $X$ over $\caO_{F}$, the ring of integers of
$F$, and that the cycles $Z$ and $W$ can be extended to cycles $\caZ$
and $\caW$ on $\caX$, whose intersection with any vertical cycle is
zero.  Then we can choose liftings $\widehat Z=(\caZ,g_{Z})$ and
$\widehat W=(\caW,g_{W})$ of $Z$ and $W$
in the arithmetic Chow groups $\widehat \CH^{p}(\caX)$ and $\widehat
\CH^{q}(\caX)$ respectively, satisfying the additional condition
\begin{displaymath}
  dd^{c}g_{Z}+\delta _{Z}=dd^{c}g_{W}+\delta _{W}=0
\end{displaymath}
In this setting the height pairing is given by
\begin{displaymath}
  \langle Z, W\rangle_{\Ht}=\widehat{\deg}(\widehat Z\cdot \widehat W)
\end{displaymath}
and is independent of the choice of liftings. 
This height pairing can be written as a sum of
components
\begin{displaymath}
  \langle Z, W\rangle_{\Ht}=\langle Z,W\rangle_{\fin}+\langle Z,W\rangle_{\Arch}\in \R,
\end{displaymath}
where $\langle Z,W\rangle_{\fin}$ is the \emph{finite contribution}
that is defined using intersection theory on the model $\caX$, while $\langle
Z,W\rangle_{\Arch}$ is the archimedean height pairing and is
computed using the Green currents in the complex manifold associated to
$X$.
\begin{displaymath}
  \langle Z,W\rangle_{\Arch}=\int_{X}g_{Z}\wedge \delta
  _{W}=\int_{X}g_{W}\wedge \delta _{Z}.
\end{displaymath}

Note that, even if $\langle Z, W\rangle_{\Ht}$ depends only on
the rational equivalence class of $Z$ and $W$, the finite and
archimedean components depend of the actual cycles $Z$ and $W$. 

In the paper \cite{Hain:Height} R. Hain has given a Hodge theoretical
interpretation of the archimedean height
pairing. Namely, to the pair of cycles $Z$ and $W$ one can associate
a \emph{biextension} $B_{Z,W}$ of mixed Hodge structure. The isomorphism classes of
biextension mixed Hodge structures are classified by a single real
invariant and the archimedean height pairing agrees with this
invariant. In fact, not only the archimedean component can be
interpreted as the class of an extension, but also other local
components of the height pairing can be obtained as extension classes
of motivic origin.  See for instance
\cite{Scholl:MR1265545} and \cite{Scholl:MR1110402}

S. Bloch has introduced the higher Chow groups $\CH^p(X,n)$ in
\cite{Bloch:achK} as a concrete example of motivic cohomology theory.
Subsequently,  in \cite{BurgosFeliu:hacg} E. Feliu and the first author have
introduced the higher arithmetic Chow groups. These groups have been
further studied by the first and second author in
\cite{Burgoswami:hait}. Moreover, they have introduced a height
pairing between higher cycles 
whose real regulators are zero.  Although there are
many differences between the case of algebraic cycles and the case of
higher cycles, the height pairing between higher cycles still
decomposes as a sum
\begin{equation}\label{eq:65}
  \langle Z, W\rangle_{\Ht}=\langle Z,W\rangle_{\geom}+\langle
  Z,W\rangle_{\Arch},
\end{equation}
of an \emph{archimedean contribution}, that will
be called the archimedean higher height pairing, and a
\emph{geometric contribution} that, although is very different in
nature to the finite contribution in the case of ordinary cycles, is
also related to an intersection product.

The archimedean higher height pairing depends only on the complex
manifold associated to $X$ and can be defined for higher cycles on a
smooth projective complex variety. 
The aim of the present paper is to generalize Hain's result and give a
Hodge theoretical 
interpretation of the archimedean higher height
pairing between certain higher cycles. 
More precisely, as we review below \eqref{eq:41}, $\CH^*(X,*)$ can be
computed as the homology of a complex $(Z^*(X,*)_{00},\delta)$.  The main
result of this paper can be compiled in the following theorem:

\begin{theoalph}
Let $X$ be a smooth complex projective variety of dimension $d$ and
$Z\in Z^p(X,1)_{00}$ and $W\in Z^q(X,1)_{00}$ be elements which
satisfy the following conditions: 
\begin{enumerate}
\item \label{item:11} $p+q=d+2$,
\item \label{item:12} $\delta Z=\delta W=0$,
\item \label{item:13} $Z$ and $W$ intersect properly,
\item \label{item:14} the intersection of $Z$ and $W$ also satisfies Assumption \ref{def:7}. 
\end{enumerate}
Then, in analogy with Hain's construction, there is a canonical mixed
Hodge structure $B_{Z,W}$ attached to $Z$ and $W$ from which one can extract
a Hodge theoretical height pairing $\langle
Z,W\rangle_{\Hodge}$. Moreover (see Theorem \ref{compareheight}), if $Z$ and $W$ both have real
regulator zero then
\begin{displaymath}
  \langle Z,W\rangle_{\Hodge} = \langle Z,W\rangle_{\Arch}.
\end{displaymath}
\end{theoalph}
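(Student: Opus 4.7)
My plan is to mimic Hain's biextension construction for ordinary cycles, but to upgrade each input from a codimension $p$ cycle in $X$ to a higher cycle in $Z^p(X,1)_{00}$, i.e.\ a cycle in $X\times \square^1$ whose restriction to the faces is trivial. The degree condition $p+q=d+2$ is exactly what one gets from $p+q=d+1$ after each of $Z,W$ consumes one extra degree from the $\square^1$-direction. First, using the properness hypothesis \emph{(iii)} and Assumption \emph{(iv)}, I would form the support pair $|Z|\cup |W|\subset X\times\square^{\bullet}$ and, after taking appropriate alternating/normalized quotients, build from the associated relative cohomology long exact sequence a four-term complex of mixed Hodge structures whose middle extension is the candidate for $B_{Z,W}$. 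The condition $\delta Z=\delta W=0$ is what ensures the boundary contributions in the cubical complex vanish, so that the construction lands in a well-defined class rather than a coboundary.

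Next, I would verify that $B_{Z,W}$ has weights concentrated in $\{0,-1,-2\}$ with $\Gr^{W}_0\cong\Z(0)$, $\Gr^{W}_{-2}\cong\Z(1)$, and with $\Gr^{W}_{-1}$ a pure Hodge structure of weight $-1$ built from $H^{2p-2}(X)$ (shifted to account for the higher-cycle degree). Here the hypothesis that $Z$ and $W$ are in the refined normalized complex $Z^*(X,\cdot)_{00}$ is essential: it forces the two sub-extensions
\[
0\to \Gr^{W}_{-1}(B_{Z,W})\to W_0/W_{-2}\to \Z(0)\to 0,\qquad 0\to \Z(1)\to W_{-1}\to \Gr^{W}_{-1}(B_{Z,W})\to 0
\]
to be trivial (equivalently, the real regulator classes of $Z$ and $W$ vanish on graded pieces), so that $B_{Z,W}$ is an \emph{oriented} biextension. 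From the general theory of biextensions, such a structure is classified by a single real number, which I would take as the definition of $\langle Z,W\rangle_{\Hodge}$. Independence of the auxiliary choices (liftings, equivalent cubical representatives, movement within the refined complex) would be checked via a standard diagram chase in the derived category of $\R$-mixed Hodge structures.

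For the comparison with $\langle Z,W\rangle_{\Arch}$, I would find explicit representatives of the extension class using $(p,p)$-currents and Green forms attached to $Z$ and $W$ (now currents on $X\times \square^1$ rather than on $X$), following the recipe developed in the Burgos--Feliu--Goswami framework for higher arithmetic Chow groups. The pairing of these currents computes the archimedean higher height. On the Hodge side, the biextension invariant is computed by integrating a representative of the extension against the appropriate Deligne cohomology class. Once both sides are written in the same cohomological language, the equality $\langle Z,W\rangle_{\Hodge}=\langle Z,W\rangle_{\Arch}$ will follow from Stokes' theorem and the defining relation $dd^c g_Z+\delta_Z=0$ on the Green currents, provided the regulator-zero hypothesis is used to discard otherwise-ambiguous boundary terms.

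The main obstacle I expect is the very last step: matching the two formulas. In Hain's classical setting, the identification is short because both pairings reduce to the same Green-current integral $\int_X g_Z\wedge\delta_W$. In the higher case there is an additional $\square^1$-direction to integrate out, cubical normalizations intervene, and the biextension classifying invariant involves a choice of splitting whose naturality in the higher-cycle setup is delicate. Controlling these normalization constants and verifying that the ambiguity is killed precisely by conditions \emph{(i)}--\emph{(iv)} together with real regulator vanishing is where I expect the real work to lie, and is presumably what the referenced \texttt{Theorem \ref{compareheight}} carries out in detail.
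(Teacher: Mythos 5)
Your high-level plan---construct a biextension-type mixed Hodge structure from $Z$ and $W$, extract a single invariant from it, and compare with the Green-current formula via Stokes---is the right shape, but several of your concrete claims go wrong in ways that reflect genuine structural differences between the higher-cycle case and Hain's ordinary-cycle case, and these cannot be patched by bookkeeping.

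First, your weight assignment for $B_{Z,W}$ is incorrect. You assert weights $\{0,-1,-2\}$ with $\Gr^W_{-1}$ pure of weight $-1$ ``built from $H^{2p-2}(X)$.'' But for $Z\in Z^p(X,1)_{00}$ the extension $E_Z$ sits in $0\to H^{2p-2}(X;p)\to E_Z\to\Q(0)\to 0$, and $H^{2p-2}(X;p)$ has weight $-2$, not $-1$. The middle graded piece of $B_{Z,W}$ is
\[
\Gr^W_{-2}B_{Z,W}\;\cong\; H^{2p-2}(X;\Q(p))\;\oplus\;\Q(1)^{\oplus s},
\]
and the weights of $B_{Z,W}$ are $\{0,-2,-4\}$, with $\Gr^W_{-4}\cong\Q(2)$, not $\Q(1)$. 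This has downstream consequences: because $\ell(B_{Z,W})=4$, one cannot invoke Hain's $\C^{\ast}$-bundle classification directly; the paper instead extracts the invariant through the Deligne splitting $\delta_{(F,W)}$ of the generalized biextension, which is a different mechanism and is precisely what allows the later statement about continuity of the limit height.

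Second, you omit the contribution from the intersection $S=\pi_1^{-1}|Z|\cap\pi_2^{-1}|W|$, which is non-empty under $p+q=d+2$ and proper intersection (it is a finite set of points $s$ under Assumption \emph{(iv)}). Far from being a technical nuisance, this is the $\Q(1)^{\oplus s}$ summand in $\Gr^W_{-2}$. Your plan to ``form the support pair $|Z|\cup|W|$'' and take a four-term extension complex misses the need for the full $3\times 3$ commutative diagram with exact rows and columns that encodes both the class of $Z$, the dual class of $W$, and the local contributions at $S$; dropping the $S$-column makes the resulting object fail to be a subquotient of the relevant relative cohomology.

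Third, your description of the role of the refined normalized complex is backwards. You claim the refined condition ``forces the real regulator classes to vanish.'' It does not: the paper constructs $B_{Z,W}$ and the Hodge theoretic height for arbitrary $Z,W$ satisfying the listed hypotheses, with regulator classes possibly non-zero, and the regulator-zero hypothesis is imposed only for the comparison $\langle Z,W\rangle_{\Hodge}=\langle Z,W\rangle_{\Arch}$. What the refined condition actually buys is that $Z|_{B_X\setminus A_X}=0$, which is what allows one to lift the cycle class to $H^{2p}_{|Z|\setminus A_X}(X\times(\P^1)\setminus A_X,B_X\setminus A_X;p)$ and then pull back the long exact sequence to get $E_Z$.

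Finally, your plan for $E^{\vee}_W$ via duality glosses over the fact that $A_X\cup|W|$ and $B_X$ are generally not in local product situation, so the naive Lefschetz duality fails. One must introduce a proper transform $\pi\colon\caX_W\to X\times\P^1$ separating $\widehat W$ from $\widehat B_X$ before dualizing, and the exceptional divisor lands on the ``wrong side'' of the dual pair. This is where much of the technical weight of the construction lies, and is not visible in your sketch. None of these gaps is fatal to the overall strategy, but each must be confronted explicitly for the construction to be well defined and for the Stokes argument in the final comparison to close.
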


Regarding condition \ref{item:11}, much of our analysis carries
through the case where $Z \in Z^p(X,n)_{00}$ and $W \in Z^q(X,m)_{00}$
provided that $2(p+q-d-1) = m+n$.  However, condition \ref{item:13}  allows for
non-trivial intersections of $Z$ and $W$ which contribute to the mixed
Hodge structure $B_{Z,W}$.  In the case $m=n=1$, this intersection is just
a finite set of points and is easy to handle provided we assume some
extra technical conditions that are satisfied generically (see
Assumption \ref{def:7}).

At first glance, the contribution from the intersection of $Z$ and $W$
might appear to be just a technical issue arising during the
construction of $B_{Z,W}$.  However, on reflection, it is exactly this
new contribution which allows $B_{Z,W}$ to have interesting deformations
which satisfy Griffiths horizontality.

The asymptotic behavior of the archimedean component of the height
pairing has been extensively studied by the third author in
\cite{pearlstein:sl2} using the Hodge theoretical
interpretation. Moreover, in collaboration with P. Brosnan, in
\cite{BP:jumps} he has
given an explanation of the height jump phenomenon. The asymptotic
behavior of the height and the height jump phenomenon has also been
studied by the first author in collaboration with R. de Jong and
D. Holmes in \cite{BHdJ}.

A second objective of this paper is to use the Hodge theoretical
interpretation of the archimedean higher height pairing to start the
study of its asymptotic behavior.
In section \ref{sec:an-example-dimension-2} we study an example in
dimension $2$ in which $n=m=1$ and the cohomology of $X$ is of
Hodge--Tate type and we observe that the height can be extended
continuously to the degenerate situations. This is in sharp contrast
with the usual height pairing that has logarithmic singularities when
approaching degenerate situations. We show that this is a general
phenomenon of higher heights for Hodge--Tate variations of mixed Hodge
structures (Theorem \ref{asymptotic-thm}).  

\begin{theoalph}
Let $S$ be a Zariski open subset of
a complex manifold $\bar S$ such that $D=\bar S-S$ is a normal crossing
divisor.  Let $\mathcal V\to S$ be an oriented graded polarized
Hodge--Tate variation with length $\ell(\mathcal V)\ge
4$. Assume $\mathcal V$ is admissible with respect to $\bar S$ and has unipotent
local monodromy about $D$. Let $p\in D$. Then
 the limit mixed 
Hodge structure $\mathcal V_p$ of $\mathcal V$ at $p\in D$ is an oriented
Hodge--Tate structure with the same weight filtration as $\mathcal V$.
Moreover,
\begin{displaymath}
  \lim_{s\to p} \Ht(\mathcal V_s)= \Ht(\mathcal V_p).
\end{displaymath}
\end{theoalph}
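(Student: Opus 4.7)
The plan is to first show that the limit mixed Hodge structure $\mathcal V_p$ is an oriented Hodge-Tate structure whose weight filtration coincides with $W$, and then to verify the continuity of the height as $s\to p$. For the first part, $\mathcal V$ being Hodge-Tate forces each graded piece $\Gr^W_{2k}\mathcal V$ to be a constant variation of type $(k,k)$. The local monodromy logarithms $N_j$ therefore act trivially on each $\Gr^W_{2k}$; equivalently $N_j(W_{2k})\subseteq W_{2k-2}$. This is exactly the statement that $W$ itself already satisfies the defining properties of the relative weight filtration $M(N_j,W)$, so by uniqueness $M(N_j,W)=W$. By admissibility the weight filtration of $\mathcal V_p$ is this relative weight filtration; since its graded pieces coincide with those of $\mathcal V$, we conclude that $\mathcal V_p$ is again Hodge-Tate, and the orientation specializes.

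For the continuity statement, choose local coordinates $(s_1,\dots,s_r)$ on $\bar S$ near $p$ with $D$ locally cut out by $s_1\cdots s_k=0$. The nilpotent orbit theorem for admissible VMHS presents the Hodge filtration as
\begin{displaymath}
F(s)=\exp\Bigl(\tfrac{1}{2\pi i}\sum_{j=1}^{k}(\log s_j)\,N_j\Bigr)\tilde F(s),
\end{displaymath}
with $\tilde F$ holomorphic in a polydisk neighborhood of the origin and $\tilde F(0)=F_\infty$. Fix a splitting of $W$ compatible with the orientation; with respect to it, $\mathcal V_s$ is described by a block upper-triangular period matrix $\Pi(s)$, and $\Ht(\mathcal V_s)$ is a specific real-valued, splitting-independent combination of its entries. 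By the previous paragraph each $N_j$ strictly lowers $W$ by at least $2$, so the exponential factor above contributes only to off-diagonal blocks of weight difference $\ge 2$. The hypothesis $\ell(\mathcal V)\ge 4$ then guarantees that the unipotent gauge freedom of splittings can absorb all the $\log s_j$-contributions to the block that computes $\Ht$; what remains is a real-analytic function of the entries of $\tilde F(s)$, which is holomorphic through $s=p$, so $\Ht(\mathcal V_s)\to \Ht(\mathcal V_p)$.

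The principal obstacle is the cancellation of logarithmic singularities in the last step. For length $3$ (the classical biextension of Hain) the analogous height genuinely has a $\log|s|$-singularity, so the hypothesis $\ell(\mathcal V)\ge 4$ is essential: it provides the additional intermediate weight pieces that allow the would-be divergent contributions to be annihilated by the unipotent ambiguity of splittings. Pinning down the precise block of $\Pi(s)$ that computes $\Ht$ and verifying explicitly that the $N_j$-contributions drop out constitutes the technical core of the proof.
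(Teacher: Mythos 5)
Your Part 1 is correct and matches the paper's argument: since each $\Gr^W_{2k}$ is a constant pure $(k,k)$ variation, each $N_j$ acts trivially on $\Gr^W$, and then $W$ already satisfies the defining properties of the relative weight filtration, so $M(N_j,W)=W$ by uniqueness. (The paper cites Schmid's monodromy theorem and Prop.\ 2.14 of Steenbrink--Zucker to get to the same place.)

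Part 2 is where the argument has a genuine gap. The claimed mechanism — that ``the unipotent gauge freedom of splittings can absorb all the $\log s_j$-contributions to the block that computes $\Ht$'' — is not correct. The height is an intrinsic invariant of the mixed Hodge structure, computed from the Deligne splitting $\delta$, and is independent of any choice of splitting from the start; there is no leftover gauge to spend. Moreover, the logarithmic divergences are genuinely present: in the paper's notation the $(-1,-1)$-component of the splitting is
\begin{displaymath}
  \delta^{-1,-1}(z,s) = N(\I(z)) + \I(\Gamma(s))^{-1,-1} + \delta^{-1,-1},
\end{displaymath}
which does blow up as $\I(z)\to\infty$. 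So nothing is being absorbed; the divergence lives honestly in the splitting. What actually happens is a different, two-part cancellation. First, for the pure nilpotent orbit, $\delta_{(e^{zN}\cdot F_\infty,W)} = \delta_{(F_\infty,W)} + \I(z)N$ (Lemma \ref{rescale-by-N}), and since $N$ is a $(-1,-1)$-morphism this leaves all components $\delta^{-a,-a}$ with $a\ge 2$ unchanged; this both fixes the ambiguity of $F_\infty$ (Corollary \ref{ht-reparam}) and shows the nilpotent-orbit height is exactly constant. Second, the holomorphic perturbation $\Gamma(s)$ enters through Campbell--Baker--Hausdorff cross-terms with $N(z)$; the paper shows, using horizontality (the divisibility $s_j\mid [N_j,\Gamma]$) and a ``tame monomial'' calculus, that every such cross-term contributing to $\delta^{-a,-a}(z,s)$ for $a\ge 2$ is a product of a polynomial in $z_j,\bar z_j$ with an analytic function divisible by the corresponding $s_j$ or $\bar s_j$, hence tends to $0$. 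This tame machinery, not a change of splitting, is what makes the lower-weight components of the Deligne splitting converge. Your proposal identifies the correct locus of difficulty and the role of $\ell\ge4$, but does not supply a mechanism that can deliver the cancellation, and the one it names cannot be made to work. (Minor: the Hain biextension has length $2$, not $3$ — weights $0,-1,-2$.)
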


In this result oriented means that the top and the bottom graded
pieces are constant variations of rank one, and $\Ht(\mathcal V_s)$
denotes the height of the oriented mixed Hodge structure $\mathcal
V_s$ (Definition \ref{signed-height}). The important hypotheses are
first, that the length
$\ell(\mathcal V)\ge
4$ that is, the difference between the minimal and maximal weight is
at least 4 (hence we are dealing with a higher height) and second
that the whole variation is of Hodge--Tate type. In Example
\ref{exam:6iii} we show that this last hypothesis is necessary. 

\subsection*{Background for usual cycles}\label{sec0-2}
Before giving a more precise statement of the main results of the
paper we briefly recall the case of ordinary cycles.\\

Assuming several conjectures,  Beilinson
\cite{Beilinson:hp} has defined a height pairing between the Chow group of
cycles homologous to zero.
\begin{displaymath}
  \langle\ ,\ \rangle_{HT}\colon
  \CH^p(X)^0\otimes \CH^{d-p+1}(X)^0\rightarrow \R, 
\end{displaymath}
 where $\CH^p(X)^0$ indicates the subgroup of  $\CH^p(X)$ consisting of
 cycles homologous to zero. This is the same thing as the kernel of
 the cycle class map to real Deligne cohomology
 \begin{displaymath}
   \CH^p(X)^0=\ker\left(\cl_p\colon \CH^p(X)\rightarrow H^{2p}_{\fD}(X, \R(p))\right).
 \end{displaymath}
Up to certain assumptions on $X$, which are true for certain class of
examples like curves and abelian varieties, Beilinson's height pairing can be constructed using Gillet and
Soul\'e's arithmetic intersection theory (see \cite{Kunnemann:hpvhp}
for more details). More concretely, writing $S=\Spec(\caO_{F})$, we have to make the following
assumptions on $X$. 
\begin{enumerate}
\item [{\bf A1}]\label{item:34} There exists a regular scheme $\caX$,
  flat and projective 
  over $S$, such that $X=\caX\times \Spec(F)$.
\item [{\bf A2}] \label{item:35} Every cycle $x\in \CH^{p}(X)^{0}_{\Q}$ can be lifted to a
  cycle $\ol x\in \CH^{p}(\caX)_{\Q}$ such that $\ol x\cdot Y=0$ for every
  cycle $Y\in Z^{d+1-p}(\caX)_{\fin}$. Here  $Z^{d+1-p}(\caX)_{\fin}$
  is the group of cycles whose support is contained in a finite number
  of fibers of the structural map $\caX\to S$.  
\end{enumerate}
Then, under the assumptions {\bf A1} and {\bf A2} we can construct
Beilinson's height pairing after tensoring with $\Q$ using 
arithmetic intersection on $\caX$. We give a very succinct description of the pairing below.\\

Arithmetic Chow groups \cite{GilletSoule:ait} comes equipped with an
intersection product
\begin{displaymath}
  \widehat{\CH}^p(\caX)_{\Q}\otimes
  \widehat{\CH}^{d-p+1}(\caX)_{\Q}\rightarrow
  \widehat{\CH}^{d+1}(\caX)_{\Q}, 
\end{displaymath}
push-forward maps
\begin{displaymath}
  \widehat{\CH}^{d+1}(\caX)\to \widehat{\CH}^{1}(S)
  \to \widehat{\CH}^{1}(\Spec(\Z))
\end{displaymath}
and an isomorphism
\begin{displaymath}
  \widehat{\CH}^{1}(\Spec(\Z))\simeq \R.
\end{displaymath}
Combining the push-forward and the above isomorphism we obtain an
arithmetic degree map
\begin{displaymath}
  \widehat{\deg}\colon \widehat{\CH}^{d+1}(\mathcal{X})\rightarrow \R.
\end{displaymath}
Composing the intersection product with the arithmetic degree we
obtain a pairing
\begin{equation}
  \label{eq:96}
 (\ ,\ )_{\caX}\colon \widehat{\CH}^p(\caX)_{\Q}\otimes
 \widehat{\CH}^{d-p+1}(\caX)_{\Q}\rightarrow
 \widehat{\CH}^{d+1}(\caX)_{\Q}\xrightarrow{\widehat{\deg}}\R.   
\end{equation}
 
 Now let $\widehat{\CH}^p(\mathcal{X})^0$ be the subgroup
 $\widehat{\CH}^p(\mathcal{X})$,  generated by arithmetic cycles
 $(Z,g_{Z})$ such that $dd^{c}g_Z+\delta _{Z}=0$ and $Z\cdot Y=0$ for every
 $Y\in Z^{d+1-p}(\caX)_{\fin}$. This implies in particular that the
 restriction of $Z$ to the generic fiber $X$ is homologous to zero.

The assumption {\bf A2} implies that the map
$\widehat{\CH}^{p}(\caX)\to \CH^{p}(X)$ induces a surjective map
\begin{displaymath}
  \widehat{\CH}^{p}(\caX)_{\Q}^{0}\twoheadrightarrow \CH^{p}(X)_{\Q}^{0}. 
\end{displaymath}
Finally, for elements $x_1\in \CH^p(X)^0_{\Q}$ and $x_2\in
\CH^{d-p+1}(X)^0_{\Q}$, Beilinson's height pairing is defined as
follows: Lift $x_1$ to $\tilde {x}_1\in
\widehat{\CH}^p(\mathcal{X})^0_{\Q}$ and $x_2$ to $\tilde{x}_2\in
\widehat{\CH}^{d-p+1}(\mathcal{X})^0_{\Q}$ and define
\begin{displaymath}
  \langle x_1,x_2\rangle_{HT}\coloneqq (\tilde{x}_1,\tilde{x}_2)_{\mathcal{X}}.
\end{displaymath}
One can easily show that the right hand side does not depend on the
lifting (see \S5 of \cite{Kunnemann:hpvhp}).

This height pairing is an important tool, and has a number of
conjectural properties which are linked to the Beilinson's conjectures
(see \S 5 of \cite{Beilinson:hp} for further details).

Beilinson's height pairing can be decomposed in a sum of local
contributions. One for each place of $\Q$. The sum of the finite
contributions can be grouped together in an intersection theoretical
contribution, while the archimedean contribution has a Hodge
theoretical interpretation. Let $x_{1}$ and $x_{2}$ as before and
choose representatives $Z\in Z^{p}(X)$ and $W\in Z^{d+1-p}(X)$
of $x_{1}$ and $x_{2}$ respectively that intersect properly. By the
codimensions of $Z$ and $W$ proper intersection means in this
case that they do not meet. Lift $Z$ and $W$ to cycles $\caZ$ and
$\caW$ satisfying the condition in assumption {\bf A2}, and choose
Green currents $g_{Z}$ and $g_{W}$ whose associated forms are zero. Then
\begin{displaymath}
  \langle x_{1},x_{2}\rangle_{HT} = \langle
  Z,W\rangle_{\fin}+
  \langle Z,W\rangle_{\Arch},
\end{displaymath}
where
\begin{align*}
  \langle  Z,W\rangle_{\fin}
  &= \deg (\caZ\cdot \caW),\\
  \langle Z,W\rangle_{\Arch}
  &= \widehat{\deg}(g_{Z}\ast g_{W})=\int_{X}\delta_Z\wedge g_{W}\in \R.
\end{align*}
It is important to remark that, while the height pairing $\langle
x_{1},x_{2}\rangle_{HT}$ depends only on the classes $x_{1}$ and
$x_{2}$, the decomposition in finite and archimedean components depends
on the choice of cycles $Z$ and $W$ representing these classes.\\

We now discuss R. Hain's Hodge theoretic interpretation of $\langle Z,W\rangle_{\Arch}$ (see \cite{Hain:Height} for details). Let $H$ be a torsion free integral pure
Hodge structure of weight $-1$. A \emph{biextension} $B$ associated
to $H$ is a mixed Hodge structure of non-zero weights $-2,-1,0$, with
the graded pieces satisfying 
\begin{displaymath}
\begin{gathered}
\Gr^W_0B= \Z(0),\\
\Gr^W_{-1}B= H,\\
\Gr^W_{-2}B= \Z(1).
\end{gathered}
\end{displaymath}
Let $\caB(H)$ denote the set of isomorphism classes of biextensions as
before and $\caB(H)_{\R}$ the isomorphism classes of real mixed Hodge
structures of the same shape. 
The following results are proved in \cite{Hain:Height} (Corollary
3.1.6, Corollary 3.2.2 and Corollary 3.2.9)
\begin{enumerate}
\item
  $\Ext^1_{\MHS}(\Z(0), H)$ and $\Ext^1_{\MHS}(H, \Z(1))$ are dual tori.
\item The projection
\begin{displaymath}
\caB(H)\rightarrow \Ext^1_{\MHS}(\Z(0), H)\times \Ext^1_{\MHS}(H, \Z(1))
\end{displaymath}
given by $B\mapsto (B/W_{-2}, W_{-1})$ has the structure of a
principal $\C^\ast$ bundle.
\item   $\Ext^1_{\R-\MHS}(\R(0), H_{\R})=\Ext^1_{\R-\MHS}(H_{\R},
  \R(1))=0$.
\item There is a canonical bijection $\caB_{\R}(H) \xrightarrow{\cong}
  \R$.
\end{enumerate}

In particular if $Z\in Z^p_{\hom}(X)$ and $W\in Z^q_{\hom}(X)$ are two
cycles homologous to zero, intersecting properly with $p+q=d+1$, then
the Abel-Jacobi images of $Z$ and $W$ define elements
\begin{displaymath}
\begin{gathered}
e_Z\in \text{Ext}^1_{\MHS}(\Z(0), H),\\
e^{\vee}_{W}\in \text{Ext}^1_{\MHS}(H, \Z(1)),
\end{gathered}
\end{displaymath}
where $H=H^{2p-1}(X,\Z(p))/\text{torsion}$. The extension class $e_Z$
is defined by a short exact sequence
\begin{displaymath}
0\rightarrow H\rightarrow E_Z\rightarrow \Z(0)\rightarrow 0,
\end{displaymath}
$E_Z$ being a sub-Hodge structure of $H^{2p-1}(X\setminus |Z|,
\Z(p))/\text{torsion}$, whereas $e^{\vee}_W$ is given by a short
exact sequence 
\begin{displaymath}
0\rightarrow \Z(1)\rightarrow E^{\vee}_W\rightarrow H\rightarrow 0,
\end{displaymath}
with $E^{\vee}_{W}$ being a quotient of
$H^{2p-1}(X,|W|,\Z(p))/\text{torsion}$. 
Combining both constructions we get a biextension (Proposition 3.3.2 of \cite{Hain:Height})
\begin{displaymath} 
B_{Z,W}\mapsto  (e_Z, e^{\vee}_W),
\end{displaymath}
which is a subquotient of the mixed Hodge structure
\begin{displaymath}
H^{2p-1}(X\setminus |Z|,|W|, \Z(p))/\text{torsion}.
\end{displaymath}
If $\nu \colon
\caB(H)\to \R$ is the composition of the change of coefficients
$\caB(H)\to \caB(H)_{\R}$ with the bijection above, we have
(Proposition 3.3.12 of \cite{Hain:Height})
\begin{displaymath}
\nu(B_{Z,W})=-\langle Z,W\rangle_{\Arch}.
\end{displaymath}
Since proper intersection
means $|Z|\cap|W|=\phi$, there is a duality
\begin{displaymath}
H^{2p-1}(X\setminus |Z|,|W|, \Q(p))\cong
\left(H^{2q-1}(X\setminus |W|,|Z|,
  \Q(q-1))\right)^{\vee},
\end{displaymath}
which implies that the above pairing is symmetric.

In \cite[Theorem 5.19]{Pearlstein:dl2o} the Hodge theoretical interpretation of the
archimedean height pairing is used to obtain results about its
asymptotic behavior.  Let $Z_{s},W_{s}\subset X_{s}$ be a flat family
of cycles homologous to zero over a smooth curve $S$. Let $z$ be a
local holomorphic coordinate on a small disk $\Delta \subset S$ such
that, for $0\not =z\in \Delta $, the variety  $X_{z}$ is smooth and
the cycles $Z_{z}$ and $W_{z}$ intersect properly and such that the
variation of mixed Hodge structures $B_{Z_{z},W_{z}}$ has unipotent
monodromy. Then there is a rational number $\mu $ that can be read
from the monodromy, and such that
\begin{displaymath}
  \langle Z_{z},W_{z}\rangle_{\Arch}=\mu \log|z| + \eta(z),
\end{displaymath}
where $\eta(z)$ is real analytic and remains bounded when $z$ goes to zero.

\subsection*{Higher intersection pairing}
We recall the construction of the higher height pairing of 
\cite{Burgoswami:hait}. Before that we will also have to recall some
terminology.  

Let now $F$ be any field and $X$ a smooth projective variety over $F$. 
There are two equivalent descriptions of Bloch's higher Chow groups,
the simplicial and the cubical versions. The simplicial version is the
one originally introduced by Bloch, but the cubical version is the one
more well suited for the product structure. In this paper we will use
the cubical description. 

In the cubical version, in order to compute the right homology, one is
forced to normalize the complex in order to get rid of degenerate
elements. There are two versions of
the normalization. In fact, there are two quasi-isomorphic complexes 
\begin{equation}\label{eq:41}
Z^p(X,\ast)_{00}\subset Z^p(X,\ast)_0,
\end{equation}
whose homologies compute the cubical version of higher Chow
groups. We will use the complex
$Z^p(X,\ast)_{00}$ because its cycles are easier to link with
relative cohomology.

Let $\square = \P^{1}\setminus \{1\}$ denote a copy of the affine line
where the role of $\infty$ is played by the point $1$ and let
$\square^{n}$ denote the $n$-th cartesian product. Recall that there
are coface maps $\delta _{j}^{i}\colon \square ^{n-1}\to  \square
^{n}$, $i=1,\dots,n$, $j=0,1$, given by
\begin{align*}
  \delta_0^i(t_1,\dots,t_{n-1}) &= (t_1,\dots,t_{i-1},0,t_{i},\dots,t_{n-1}), \\
\delta_1^i(t_1,\dots,t_{n-1}) &= (t_1,\dots,t_{i-1},\infty,t_{i},\dots,t_{n-1}).
\end{align*}
For any scheme $X$, we denote also by $\delta
^{i}_{j}$ the induced maps $X\times \square^{n-1}\to X\times
\square^{n}$. Any intersection of images of the maps $\delta ^{i}_{j}$ is called a
face. 

Let $Z^{p}(X,n)$ denote the group of algebraic cycles on $X\times
\square ^{n}$ that intersect properly all the faces. Then
\begin{displaymath}
  Z^{p}(X,n)_{00}=\bigcap_{i=1}^{n}\ker(\delta _{1}^{i})^{\ast}\cap
  \bigcap_{i=2}^{n}\ker(\delta _{0}^{i})^{\ast} \end{displaymath}
with differential $\delta \colon Z^{p}(X,n)_{00}\to Z^{p}(X,n-1)_{00}$
given by $\delta =-(\delta _{0}^{1})^{\ast}$. An element of
$Z^{p}(X,n)_{00}$ will be called a pre-cycle, while an element of $Z\in
Z^{p}(X,n)_{00}$ with $\delta Z=0$ is called a cycle. The higher Chow groups of $X$ are
the homology of the complex $(Z^{p}(X,\ast)_{00},\delta )$:
\begin{displaymath}
  \CH^{p}(X,n)=H_{n}(Z^{p}(X,\ast)_{00},\delta ),\quad n\ge 0,\  p\ge
  0. 
\end{displaymath}

There is a graded commutative product in $\CH^{\ast}(X,\ast)$ given
by the intersection product. 

Two pre-cycles $Z\in Z^{p}(X,n)_{00}$ and $W\in Z^{q}(X,m)_{00}$ are
said to intersect properly if $\pi ^{-1}_{1}Z$ and $\pi ^{-1}_{2}W$
intersect properly among them and with all the faces of $X\times
\square^{n+m}$. Here $\pi _{1}\colon X\times \square^{n+m}\to
X\times \square^{n}$ and $\pi _{2}\colon X\times \square^{n+m}\to
X\times \square^{m}$ are the two projections. If $Z$ and $W$ intersect
properly, then the intersection product $Z\cdot W$ is a well defined
pre-cycle of $Z^{p+q}(X,n+m)$.

Let $\alpha \in \CH^{p}(X,n)$ and
$\beta \in \CH^{q}(X,m)$. Then there exist representatives $Z\in
Z^{p}(X,n)_{00}$ and $W\in Z^{q}(X,m)_{00}$ of $\alpha $ and $\beta $
respectively that intersect properly. The product $\alpha \cdot
\beta $ is represented by $Z\cdot W$.

Let now $F$ be a number field and $\Sigma $ the set of complex
immersions of $F$. To the smooth projective variety $X$ over $F$ we associate a
complex variety 
\begin{displaymath}
 X_{\Sigma }=\coprod_{\sigma \in \Sigma } X\times_{\sigma }\C. 
\end{displaymath}
This complex manifold has an antilinear involution $F_{\infty}$ and we
denote $X_{\R}=(X_{\Sigma },F_{\infty})$ the corresponding real
variety.

There are regulator maps $\rho \colon \CH^{p}(X,n)\to
H^{2p-n}_{\DB}(X_{\R},\R(p))$, where $H_{\DB}$ denotes
Deligne cohomology.

In the papers \cite{BurgosFeliu:hacg} and \cite{Burgoswami:hait} the
higher arithmetic Chow groups $\widehat{\CH}^{\ast} (X,\ast,\DB_{\TW})$ of $X$
are introduced and studied.  These groups depend on the choice of a
particular complex $\DB_{\TW}$ that computes Deligne cohomology (see
section \ref{sec-deligne-cohom}). These groups satisfy many properties
similar to the ones of classical arithmetic Chow groups. We summarize
the properties needed in the definition of the height pairing.
\begin{enumerate}
\item The elements of $\widehat{\CH}^{p} (X,n,\DB_{\TW})$ are
  represented by pairs $(Z,g_{Z})$ with $Z\in Z^{p}(X,n)_{00}$ with
  $g_{Z}$ a Green form for $Z$ in the appropriate sense (Definition
  \ref{GF}).
\item To each Green form $g_{Z}$, there is an associated canonical differential form
  $\omega (g_{Z})\in \DB_{\TW}^{2p-n}(X,p)$ that represents the class
  of the regulator $\rho (Z)\in H^{2p-n}_{\DB}(X_{\R},\R(p))$.
\item There is a $\ast$-product of Green forms. 
\item The groups $\widehat{\CH}^{\ast}(X,\ast,\DB_{\TW})$ form a graded
  commutative algebra, where the product is induced by the
  intersection product of cycles meeting properly and the star product
  of Green forms. 
\item If $f\colon X\to Y$ is a smooth morphism of relative dimension
  $e$, there are morphisms
  \begin{displaymath}
    f_{\ast}\colon \widehat{\CH}^{p}(X,n,\DB_{\TW})\to
    \widehat{\CH}^{p-e}(Y,n,\DB_{\TW}), \qquad n,p \ge 0. 
  \end{displaymath}
\item \label{item:15} Writing $X_{F}=\Spec(F)$, there is a short exact sequence
  \begin{equation}\label{eq:64}
    0\to \frac{H_{\DB}^{1}(X_{F,\R},\R(p))}{\text{Image}(\rho )} \to
    \widehat{\CH}^{p}(X_F,2p-2,\DB_{\TW}) \to 
    \CH^{p}(X_F,2p-2) \to 0.
  \end{equation}
\end{enumerate}
In the above $X_{F,\R}$ is the real variety associated
to $X_{F}$. So $H_{\DB}^{1}(X_{F,\R},\R(p))$ is a real vector space of
dimension $r_{1}+r_{2}$ if $p$ is odd and $r_{2}$ if $p$ is
even, where $r_{1}$ is the number of real immersions of $F$ and
$2r_{2}$ is the number of non-real complex immersions. Moreover $\rho
$ agrees with Borel's regulator up to a normalization factor
\cite{Burgos:bb}. Hence $\text{Image} (\rho )$ is a lattice in
$H_{\DB}^{1}(X_{F,\R},\R(p))$. Also $\CH^{p}(X_F,2p-2)$ is
torsion. Thus $\widehat{\CH}^{p}(X_F,2p-2,\DB_{\TW})$ is an
extension of a torsion group by a real torus. 

Let now $\alpha \in \CH^{p}(X,n)$ and $\beta \in \CH^{q}(X,m)$ be two
classes satisfying
\begin{equation}
  \label{eq:43}
 2(p+q-d-1)=n+m 
\end{equation}
and $\rho (\alpha )=\rho (\beta
)=0$. We can find representatives $Z\in Z^{p}(X,n)_{00}$ and $W\in
Z^{q}(X,m)_{00}$ intersecting properly. Much like the usual cycle scenario, we can choose Green
forms $g_{Z}$ and $g_{W}$ for $Z$ and $W$ satisfying the attached differential forms $\omega
(g_{Z})=\omega (g_{W})=0$. Using the properties of the arithmetic Chow
groups we obtain an element
\begin{displaymath}
  \pi _{\ast}((Z,g_{Z})\cdot (W,g_{W}))\in  \widehat{\CH}^{p+q-d}(X_F,n+m,\DB_{\TW}).
\end{displaymath}
The condition \eqref{eq:43} assures us that the target group of the
fits in a short exact sequence like
\eqref{eq:64}. After tensoring with $\Q$ to get rid of the torsion
group on the right of the exact sequence, 
the height pairing of $\alpha $ and
$\beta $  is defined as
\begin{displaymath}
  \langle \alpha ,\beta \rangle_{\Ht}\coloneqq  \pi
  _{\ast}((Z,g_{Z})\cdot (W,g_{W})) \in
  \frac{H_{\DB}^{1}(X_{F,\R},\R(p+q-d))}{\text{Image}(\rho)}\otimes \Q. 
\end{displaymath}
The pairs
\begin{displaymath}
  \langle Z,W\rangle_{\geom}\coloneqq (\pi _{\ast}(Z\cdot
  W),0),\quad\text{and}\quad \langle Z,W\rangle_{\Arch}\coloneqq
  (0,\pi _{\ast}(g_{Z}\ast g_{W}))
\end{displaymath}
give well defined elements of
$\widehat{\CH}^{p+q-d}(X_F,n+m,\DB_{\TW})_{\Q}$ obtaining the
decomposition \eqref{eq:65}. 

We note several differences between the higher height pairing and the
usual height pairing.
\begin{enumerate}
\item The higher arithmetic Chow groups are defined for the variety
  $X$ over $F$ and not for a model $\caX$ of $X$. This is due to the
  fact that the good properties of the higher arithmetic Chow groups
  are only established for varieties over a field. At first glance
  this may seem a big loss of information. However, if $F$ is
  a number field and $\caO_{F}$ is its ring of integers, then for odd $i>1$,
  \begin{displaymath}
    K_{i}(F)\otimes \Q\cong K_{i}(\caO_{F})\otimes \Q. 
  \end{displaymath}
  Therefore, for the purpose of defining higher heights, there is no
  great benefit
  in considering an integral model of the variety. More so if we
  consider that in the classical case, in order to have a well defined
  intersection product on the model one has to tensor with $\Q$.
\item Since we are working over a field, we can define the product
  without tensoring with $\Q$. Nevertheless 
  we can tensor with $\Q$ to eliminate the torsion group
  $\CH^{p+q-d}(X_F,n+m)$.
\item Even if there is no model over the ring of integers
  involved, there is still a geometric contribution of the height
  pairing coming from the intersection of the cycles. By contrast, the
  definition of the archimedean higher height pairing is formally
  identical to the classical case.
\item In order for the height pairing to be independent on the choice
  of the Green forms, we need the condition that the real regulator is
  zero. By contrast the Hodge theoretical invariant associated to the
  pair of cycles can be defined even when the regulator of the cycles
  is non zero.
\item The higher height pairing is not a real number but an element of the
  quotient of $H^{1}_{\DB}(X_{F,\R},\R(p+q-d))$ by the image of the
  regulator. The main difference with the classical case is that, for
  $p+q-d>1$ the image of the regulator is a full rank
  lattice. Therefore we can not obtain a well defined real number. 
\end{enumerate}

Although the height pairing is an arithmetic invariant of the rational
equivalence class of the higher cycles and is well defined up to the
image of the regulator, the archimedean higher height pairing can be
defined purely in the complex case.
\begin{defalph}\label{def:4} Let $X$ be a smooth projective variety over
  $\C$ of dimension $d$ and
$Z\in Z^p(X,n)_{00}$ and $W\in Z^q(X,m)_{00}$ be elements which
satisfy the following conditions: 
\begin{enumerate}
\item  $2(p+q-d-1)=n+m$,
\item  $\delta Z=\delta W=0$,
\item  $Z$ and $W$ intersect properly.
\item  $\rho (Z)=\rho (W)=0$.
\end{enumerate}
Let $g_{Z}$ and $g_{W}$ be Green forms for $Z$ and $W$ satisfying
$\omega (g_{Z})=\omega (g_{W})=0$. 
Then the archimedean height pairing of $Z$ and $W$ is defined as
\begin{displaymath}
  \langle Z,W\rangle_{\Arch}\coloneqq\int_{X}g_{Z}\ast g _{W}\in H^{1}_{\DB}\big(\Spec(\C),\R(p+q-d)\big).
\end{displaymath}

\end{defalph}

\subsection*{Mixed Hodge structures associated to higher cycles}
From now on we consider a smooth projective variety $X$ over $\C$, and
we discuss several mixed Hodge structures associated to a pair of
higher cycles. 

Given cycles $Z\in Z^p(X,n)_{00}$, $W\in Z^q(X,m)_{00}$, with
$p,q,n,m$ satisfying
\begin{equation}
  \label{eq:68}
  2(p+q-d-1)=n+m
\end{equation}
and intersecting properly. Let
$\pi_1\colon X\times (\P^1)^n\times (\P^1)^m\rightarrow X\times
(\P^1)^n$ and $\pi_2\colon X\times (\P^1)^n\times (\P^1)^m\rightarrow
X\times (\P^1)^m$ be the two projections and
\begin{displaymath}
S=\pi^{-1}_1(|Z|)\cap \pi^{-1}_2(|W|)
\end{displaymath}
be the intersection of the pull backs of supports of $Z$ and
$W$. Notice that unlike the usual algebraic cycle scenario, proper intersection
of $Z$ and $W$ no longer means that $S$ is empty. 

We first construct a mixed Hodge structure $E_Z$ for $Z$, fitting in
the short exact sequence
\begin{displaymath}
0\longrightarrow H^{2p-n-1}(X;p)\longrightarrow E_Z\longrightarrow
\Q(0)\longrightarrow 0,
\end{displaymath}
and hence defining an element in 
\begin{displaymath}
  \Ext^1_{\MHS}\left(\Q(0),
  H^{2p-n-1}(X;p)\right)=H^{2p-n}_{\DB}(X,\Q(p)).
\end{displaymath}
This element agrees
with the regulator of $Z$. Next for $W$ we consider the dual
$E^{\vee}_W$ extension, fitting in the short exact sequence
\begin{displaymath}
0\longrightarrow \Q(0)\longrightarrow E^{\vee}_W
  \longrightarrow H^{2d-2q+m+1}(X;d-q)\longrightarrow 0.
\end{displaymath}
We stress the fact that, in giving a geometric interpretation of $E_{W}^{\vee}$ we
face the technical problem that the duality in Lemma \ref{lemm:1}
requires the hypothesis of local product situation. We will address this problem latter in the main body when we give more details on the construction
of the mixed Hodge structures.  

Note that condition \eqref{eq:43} implies that
\begin{displaymath}
  H^{2p-n-1}(X;p)=H^{2d-2q+m+1}(X;d-q+(m+n)/2+1).
\end{displaymath}
Hence, after the appropriate twist, the cohomology groups appearing in
both extensions agree and one may hope to glue together $E_{Z}$ and
$E_{W}^{\vee}$ is a biextension. Here the presence of the non
trivial intersection $S$ makes life more interesting. 
In fact,  under several assumptions aimed to keep the contribution
from $S$ under control, we associate to the pair $(Z,W)$  
a mixed Hodge structure $B_{Z,W}$, which fits in Figure
\ref{fig:oriented_diagram}. In the special case of $n=m=1$, and
under Assumption \ref{def:7}, $B_{Z,W}$ is a generalized biextension
(Definition \ref{def:8} and Corollary \ref{cor:4}), with three
non-zero weight graded pieces.

In Section \ref{sec:height-mixed-hodge} we define the height $\Ht(H)\in\R$ of an
oriented mixed Hodge structure $H$ using the Deligne splitting (Definition
\ref{signed-height}), in particular we can define $\Ht(B_{Z,W})$. For $n=m=1$, to compare it with the archimedean height that lives in $H^{1}_{\DB}(\Spec(\C),\R(2))=\C/(2\pi i)^{2}\R$, we
make the following definition.
\begin{defalph}
  Let $\rho _{2}\colon \C/(2\pi i)^{2}\R\to \R$ be the isomorphism
  given by 
  \begin{displaymath}
\rho _{2}(v)=\I(v/(2\pi i)^{2}),
\end{displaymath}
where for a complex number $z$, $\I(z)$ denote its imaginary part. Then the Hodge theoretic height pairing of $Z$ and $W$ is
  \begin{displaymath}
    \langle Z, W\rangle _{\Hodge}=\rho _{2}^{-1}(\Ht(B_{Z,W})). 
  \end{displaymath}
\end{defalph}

We give a little bit more details on the construction of the above
mixed Hodge structures.
On $(\P^1)^n$ we have two divisors
 \begin{align*}
    A &= \{(t_1,\cdots , t_n)\mid \exists i, t_i=1\},\\
    B & =\{(t_1,\cdots , t_n)\mid \exists i, t_i\in \{0, \infty\}\}.
  \end{align*}
Then $A\cup B$ is a simple normal crossing divisor. Moreover, since 
$\square\coloneqq \P^1\setminus \{1:1\}$, we have
\begin{displaymath}
  (\P^1)^n\setminus A=\square^n,\quad (\P^1)^n\setminus B=(\C^{\times})^n
\text{ and } B\cap \square^n=\partial \square^n.
\end{displaymath} 
Further for $A_X\coloneqq X\times A$, $B_X\coloneqq X\times B$, we get
isomorphisms of Hodge structures 
\begin{align}
  H^r(X\times (\P^1)^n\setminus A_X, B_X)
  &\cong H^{r-n}(X),\label{eq:74}\\
H^r(X\times (\P^1)^n\setminus B_X, A_X)&\cong H^{r-n}(X;-n).\label{eq:92}
\end{align}
Since $A_X$ and $B_X$ are in local product situation (Definition
\ref{def:sec-1-product}), the above isomorphisms are compatible
with duality
\begin{multline*}
H^r(X\times (\P^1)^n\setminus A_X, B_X, \Q(p))
\cong \\ \left(H^{2d+2n-r}\left(X\times (\P^1)^n\setminus B_X, A_X,
    \Q(d+n-p)\right)\right)^{\vee}.
\end{multline*}

Since $Z\in Z^{p}(X,n)_{00}$ belongs to the refined normalized
complex, then the restriction $Z |_{B_{X}\setminus A_{X}}$ is
zero. Therefore,
the cycle $Z$ defines an unique class (Proposition \ref{propclass}) 
\begin{displaymath}
[Z]\in H^{2p}_{|Z|\setminus A_X}\left(X\times (\P^1)^n\setminus A_X,
    B_X\setminus A_X;p\right)_{\Q}.
\end{displaymath}
By Lemma \ref{lemcy0} its image in $H^{2p}\left(X\times (\P^1)^n\setminus A_X,
  B_X\setminus A_X;p\right)_{\Q}$ is zero.
Pulling back the long exact sequence \ref{eq:62} of mixed Hodge
structures, by the class $[Z]$ and using the isomorphism \eqref{eq:74}
we obtain the extension $E_{Z}$. We remark that 
the mixed Hodge structure $E_Z$ is a sub Hodge structure of
\begin{displaymath}
H^{2p-1}(X\times (\P^1)^n\setminus A_X\cup |Z|, B_X;p).
\end{displaymath}

We now consider the dual construction for $W$.
As mentioned before, in order to dualize this construction we face the
problem that, in general, $A_X\cup |W|$ and $B_{X}$ are not in local
product situation. Therefore, to dualize we need first to blow up
$|W|\cap B_{X}$ until a local product situation is obtained. Let $\caX_{W}$
be such a blow up with $\widehat A_{X}$, $\widehat B_{X}$ and $\widehat W$ being
the strict transforms of $A_{X}$, $B_{X}$ and $|W|$. Let $D$ be the
exceptional divisor. Naively one would expect the mixed Hodge
structure to be a quotient of
\begin{multline*}
  H^{2d+2m-2q+1}(X\times (\P^1)^m\setminus B_{X}, A_X\cup |W|;d+m-q)
  = \\H^{2d+2m-2q+1}(\caX_{W}\setminus \widehat B_{X}\cup D, \widehat
  A_X\cup \widehat W;d+m-q),
\end{multline*}
but in fact $E_{W}^{\vee}$ is a quotient of
\begin{displaymath}
  H^{2d+2m-2q+1}(\caX_{W}\setminus \widehat B_{X}, \widehat
  A_X\cup \widehat W\cup D;d+m-q).
\end{displaymath}
Note that the exceptional divisor is in a different position. See
Section \ref{dualext} for more details.

Finally, for the construction of $B_{Z,W}$ we refer to Section
\ref{sec:orient-mhs-attach} and \ref{sec:case-n=m=1}. We just remark
that in this construction we have to deal, not only with the duality
problem mentioned above but also with the contribution of the
intersection $S$ of $Z$ and $W$. Although the methods of this paper can be extended
to much more general situations, for the moment we have only made the
complete study in the case $n=m=1$ and Assumption \ref{def:7}. One of
the main reasons is that we want $B_{Z,W}$ to be a generalized
biextension (Definition \ref{def:8}), so there is a clean definition
of the height of $B_{Z,W}$ that we can compare with the height pairing
of $Z$ and $W$. This forces us to keep $S$ under control to avoid many
spurious components in  $B_{Z,W}$. For instance, even if $S$ is a
point, if it is contained in the singular locus of $|Z|$ and $|W|$,
the cohomology with support on $S$ can be very complicated and
mask the classes of $Z$ and $W$.

Nevertheless, using the Deligne splitting one can define a more general height attached
to an oriented mixed Hodge structure (Definition \ref{signed-height}). One would expect
that the main result of this paper can be extended to a more general
situation using this generalization of the height of a mixed Hodge
structure.

\subsection*{Examples}\label{sec:examples}

We compute two examples of the higher height pairing. The first one is
in dimension $0$ with $p=q=n=m=1$. In this case we find that the
higher height pairing is always zero.

The second more interesting example in in dimension $2$, with
$X=\P^{2}$, $p=q=2$ and $n=m=1$. A method of constructing higher
cycles in $\P^{2}$ is to consider three section $s_{0}$, $s_{1}$ and
$s_{2}$ of $\caO(1)$. They determine a triangle in $\P^{2}$ and a
higher cycle as explained in Definition \ref{def:12}. For two such
higher cycles $Z$ and $W$ in general position we compute their higher
height pairing. It turns out to be given by a linear combination of
values of the Bloch Wigner dilogarithm function. A remarkable feature
of this example is that, in the space of parameters of such pair of
divisors, the height function can be extended continuously  to the
degenerate situations. A second observation from the example is that,
when both triangles are defined over $\R$ the higher height pairing
vanishes. Both phenomenons turn out to hold in more general
situations. With respect to the second one, we show in Proposition
\ref{prop:7} that the higher height pairing between cycles defined
over $\R$ should be zero as long as $(n+m)/2$ is odd.

With respect to the continuity of the height function, this is the
starting point of the study of the  asymptotic behavior. As mentioned
previously, we show that the higher height of an admissible variation
$\caV$ of oriented Hodge-Tate mixed Hodge structures extends continuously to
the boundary. It is important to note that this is no longer true if
the variation is not of Hodge-Tate type (Example \ref{exam:6iii}) or
if $\ell(\caV)=2$.

\subsection*{Layout of the paper}\label{subsec-layout}
Our paper is organized as follows. Sections \ref{sec:prelim} and
\ref{sec:mhs-arc} are preliminary in nature where we set up notations
and collect all the necessary results and definitions needed for the
rest of the sections. In Section \ref{sec: mixed hodge-higher cycles}
we study the mixed Hodge structure associated to higher cycles, the
key among them is a mixed Hodge structure associated to a pair of
higher cycles satisfying a numerical condition. In Section \ref{sec5} we
compute the invariants associated to these mixed Hodge structures in a
special case scenario. A key result in this section is the equality of
higher archimedean height pairing and the height of the biextension,
in case the higher cycles have trivial real regulators. Section \ref{sec:example} is devoted towards computing these invariants in specific examples arising from non-degenerate triangles in $\P^2$. We see that the height of the biextension in this case is given by a sum of Bloch-Wigner Dilogarithm functions. Finally in Section \ref{sec:assymptotic mhs} we study the asymptotic behavior of variations of oriented mixed Hodge structures of Hodge--Tate type and as well as arbitrary admissible variations.

\subsection*{Acknowledgements}\label{subsec-Ac} During the elaboration
of this paper we have benefited with stimulating conversation with many
mathematicians. Our thanks to S. Bloch, P. Brosnan, D. Roessler and
specially J. Lewis  who has a different approach to higher heights and
has continuously encouraged us to pursue this research and to Matt
Kerr who shared with us some unpublished notes with his ideas about
higher heights. We also want to thank the anonymous referee for carefully reading the manuscript and making many useful comments,
suggestions and questions.  

The whole project was envisioned and mostly carried out at the Texas A\&M University. We would like to thank the institute for their generous funding and warm hospitality. Part of the work on this project has been done during a stay of the three
authors at the 
Newton Institute during the program KAH. The authors would like to
thank the institute for their support and hospitality.  Part of
this work was also done during a visit of the first and third authors
to the Freiburg Institute for Advanced Studies, and we thank the
institute for their support and hospitality. The second author would
also like to thank the SFB Higher Invariants group at the
Universit\"at Regensburg for funding and kindly hosting him during the current
development of the project.

\section{Preliminaries}
\label{sec:prelim}

In this section we gather all the conventions, notations and known results that will be used
throughout the paper. All through the section $X$ will denote a smooth complex variety of dimension
$d$. To avoid cumbersome notation, we will not distinguish notationally
between a complex algebraic variety and its associated complex
space. That is the symbol $X$ will also denote the 
associated analytic manifold with the classical topology. It will always
be clear from the context whether $X$ denotes the algebraic variety or
the complex manifold.  

\subsection{Mixed Hodge structures.}
\label{sec:conv-mixed-hodge}

A $\Q$-mixed Hodge structure is a triple
\begin{displaymath}
  H=((H_{\Q},W),(H_{\C},W,F),\alpha)
\end{displaymath}
where $(H_{\Q},W)$ is a $\Q$-vector space with an increasing
filtration  $W$, while $(H_{\C},W,F)$ is a complex vector space with
an increasing filtration $W$ and a decreasing filtration $F$, and that $\alpha
\colon (H_{\Q},W)\otimes \C \to (H_{\C},W)$ is a filtered
isomorphism. These data are subjected to several axioms. See for
instance \cite[Definition 3.13]{MHS}. The vector space $H_{\Q}$ is
called the \emph{Betti component} and $H_{\C}$ the \emph{de Rham component}, while
$\alpha $ is the comparison isomorphism. The \emph{rank} of a mixed
Hodge structure $H$ is the complex
dimension of $H_{\C}$ that agrees with the dimension of $H_{\Q}$ over
$\Q$. 

One can also consider real mixed Hodge structures, where instead
of a $\Q$-vector space $H_{\Q}$ one has a real vector space
$H_{\R}$. In fact, 
given a mixed $\Q$-Hodge
structure $H$ we will denote $H_{\R}=H_{\Q}\otimes \R$ obtaining an
$\R$-mixed Hodge structure.
Usually one
identifies $H_{\Q}$ and $H_{\R}$ with its image in $H_{\C}$ through
$\alpha $.

When studying variations of mixed Hodge structures it is convenient to
fix the underlying vector space and move the filtrations $F$ and $W$. Thus
if we fix an ($\R$ or $\Q$) vector space $V$, then a pair of
filtrations $(F,W)$ on $V\otimes \C$ and $V$ respectively is called a
mixed Hodge structure if the triple
\begin{displaymath}
  ((V,W),(V\otimes \C,W,F),\Id_{V\otimes \C})
\end{displaymath}
is a mixed Hodge structure.

For $a\in \Z$, the Tate mixed
Hodge structure $\Q(a)$ is the mixed Hodge structure given by the
following data
\begin{gather*}
  \Q(a)_{\Q}=\Q,\quad W_{-2a-1}\Q(a)_{\Q}=0,\quad
  W_{-2a}\Q(a)_{\Q}=\Q\\
  \Q(a)_{\C}=\C,\quad F^{-a}\Q(a)_{\C}=\C,\quad
  F^{-a+1}\Q(a)_{\C}=0\\
  \alpha (1)=(2\pi i)^{a}\in \C.
\end{gather*}
Note that on $\Q(a)_{\C}=\C$ we have two possible complex conjugations. The
usual conjugation of $\C$ and the one induced by the isomorphism
$\alpha $. The first one will be called the \emph{de Rham conjugation}
and
denoted $z\mapsto \fancyconj{z}{\dR}$ and the second will be called the
\emph{Betti conjugation} and denoted $z\mapsto \betticonj{z}$. These
two conjugations are related by
\begin{displaymath}
  \betticonj{z}=(-1)^{a}\fancyconj{z}{\dR}.
\end{displaymath}
In the sequel we will mainly use the Betti conjugation and write
$\overline z=\betticonj{z}$.
Moreover, the mixed Hodge structure $\Q(a)$ comes equipped with the
choice of two
generators.
\begin{align*}
  \bfone(a)_{\Q} &=1\in \Q=\Q(a)_{\Q}\\
  \bfone(a)_{\C} &=1\in \C=\Q(a)_{\C}.
\end{align*}
These generators are called the Betti and the de Rham generators. They
satisfy
\begin{displaymath}
  \overline {\bfone(a)_{\Q}}=\bfone(a)_{\Q},\qquad
  \overline {\bfone(a)_{\C}}=(-1)^{a}\bfone(a)_{\C},\qquad
  \bfone(a)_{\Q}=(2\pi i)^{a}\bfone(a)_{\Q}.
\end{displaymath}
\begin{rmk}\label{rem:4}
  Note that, although the isomorphisms class of $\Q(a)$ does not
  depend on the choice of a square root of $-1$, $i=\sqrt{-1}$, when
  $a$ is odd, the ratio of the chosen generators
  $\bfone(a)_{\Q}/\bfone(a)_{\C}$ does.
\end{rmk}
\begin{rmk}
  Let $H$ be a $\Q$-mixed Hodge structure of rank one. Then $H$ is
  necessarily pure of even weight, say $2a$. It follows that it is
  isomorphic to 
  $\Q(-a)$. The choice of an isomorphism $H\to \Q(a)$ is equivalent to
  the choice of a generator $e$ of $H_{\Q}$. 
\end{rmk}

If $Z\subset X$ is a closed subvariety and $r\in \Z$, then the cohomology groups
\begin{displaymath}
  H^{r}(X;\Q),\quad H^{r}(X,Z;\Q) \text{ and }
  H^{r}_{Z}(X;\Q)=H^{r}(X,X\setminus Z;\Q)
\end{displaymath}
are all the Betti part of $\Q$-mixed Hodge structures that we denote as 
\begin{displaymath}
  H^{r}(X),\quad H^{r}(X,Z) \text{ and } H^{r}_{Z}(X)=H^{r}(X,X\setminus Z) 
\end{displaymath}
respectively. We will use the shorthand
\begin{displaymath}
  H^{r}(X;p)=H^{r}(X)\otimes \Q(p).
\end{displaymath}
Then $H^{r}(X;p)_{\Q}$, $H^{r}(X;p)_{\R}$ and $H^{r}(X;p)_{\C}$ will
denote the rational and  real Betti component and the complex de Rham
component respectively.

Frequently, in the sequel we will use complexes that compute relative
cohomology of a complex projective variety, but they only have
information about the real structure and the Hodge filtration, and
not about the weight filtration. To work with these at ease we
introduce the following notation.  

\begin{df}\label{def:1} A \emph{weak $\R$-Hodge complex} is a complex
  $(A^{\ast},d)$ of $\C$-vector spaces together with an anti-linear 
  involution  $\omega \mapsto \overline \omega $ commuting with $d$
  and a decreasing filtration $F$ (called the Hodge filtration)
  compatible with $d$. If $A^{\ast}$ 
  is a weak $\R$-Hodge complex, we denote by $A^{\ast}_{\R}$ the
  subcomplex of elements fixed by the involution. 

  Given a weak $\R$-Hodge complex $A^{\ast}$, the
  \emph{Tate
    twisted} weak $\R$-Hodge complex is defined as
  $A^{\ast}(a)=A^{\ast}\otimes \Q(a)_{\C}$. Using the identification
  $\Q(a)_{\C}=\C$, the complex $A^{\ast}(a)$ is given by the following
  data  
  \begin{displaymath}
    A^{\ast}(a)=A^{\ast},\qquad
    \fancyconj{z}{\new}=(-1)^{a}\fancyconj{z}{\old},\qquad
    F^{b}A^{\ast}(a)=F^{a+b}A^{\ast}.
  \end{displaymath}
  The superindexes $\new$ and $\old$ are written here for clarity but
  will not be used in the sequel. Due to the identification 
  $A^{\ast}(a)=A^{\ast}\otimes \Q(a)_{\C}=A^{\ast}\otimes \C=A^{\ast}$
  there is a potential ambiguity in 
  the use of the symbol $\overline \omega $, as it depends on whether
  we consider $\omega $ as an element of $A^{\ast}$ or
  of $A^{\ast}(a)$. In some rare cases, for clarity, an element $\omega \in
  A^{\ast}(a)$ will be written as $\omega\otimes \bfone(a)_{\C}$. 
\end{df}

\begin{rmk}\label{rem:2}
  Any \emph{Dolbeault complex} as in \cite[Definition
2.2]{Burgos:CDB} defines a weak $\R$-Hodge complex.  
\end{rmk}

Recall that the shifted complex 
$A^{\ast}[r]$ is defined by $A^{n}[r]=A^{n+r}$ with
differential $(-1)^{r}d$.

\subsection{Conventions on differential forms and
  currents}\label{subsecconv}
\label{sec:conv-diff-forms}

When dealing with differential forms, currents and cohomology
classes, one can use the topologist's convention, where the emphasis is
put on having real or integral valued classes in singular
cohomology. For instance, in this 
convention the first Chern class of  
a line bundle will have integral coefficients. In algebraic geometry,
the fact that rational de Rham classes are not rational in singular
cohomology, the ubiquitous appearance of the period $2\pi 
i$, and the fact that the choice of a particular square root of $-1$ is non
canonical, makes it useful to use a different convention.

This algebro-geometric convention aims to control the obvious powers
of $2\pi i$ and to be independent of the choice of the imaginary unit
$i=\sqrt{-1}$.

Of course
using one convention or the other is a matter of taste and one can go
easily from one to the other by a normalization factor. In this paper
we will follow the algebro-geometric convention. Therefore, it is
useful to incorporate different powers of $2\pi i$
in the standard operations regarding forms and currents as in  \cite[\S
5.4]{BurgosKramerKuehn:cacg}. We summarize here the conventions used
because they differ from commonly used notations. 

We will denote by $E_{X}^{\ast}$ the
differential graded algebra of complex valued differential forms on
$X$, by $E_{X,\R}^{\ast}$ the subalgebra of real valued forms and by
$E_{X,c}^{\ast}$ and $E_{X,\R,c}^{\ast}$ the subalgebras of
differential forms with compact support. The complexes of currents are
defined as the topological duals of the latter ones. Namely
$E^{\prime-n}_{X}$ and $E^{\prime-n}_{X,\R}$  are the topological dual
of $E_{X,c}^{n}$ and $E_{X,\R,c}^{n}$ respectively, with differential
given by
\begin{displaymath}
  dT(\eta)=(-1)^{n+1}T(d\eta).
\end{displaymath}
Recall that $X$ is smooth of dimension $d$.  
We write
\begin{displaymath}
  D^{\ast}_{X}=E_{X}^{\prime \ast}[-2d](-d).
\end{displaymath}
This implies that 
  \begin{displaymath}
    D^{n}_{X,\R}=\{T\in D^{n}_{X}\mid \forall \eta\in
    E^{2d-n}_{X,\R,c},\ T(\eta)\in (2\pi i)^{-d}\R\}.
  \end{displaymath}
  Hence one can see $D^{n}_{X,\R}(p)$ as the topological dual of
  $E^{2d-n}_{X, \R}(d-p)$.

  We now consider the current $\int_{X}$ given  by
  \begin{displaymath}
    \omega \mapsto \int_{X}\omega.
  \end{displaymath}
Then $\int_{X}\in E_{X,\R}^{\prime-2d}=D_{X,\R}^{0}(d)$. This suggest
to define
\begin{displaymath}
  \delta _{X}\coloneqq {\textstyle \int_{X}}\otimes \bfone(-d)_{\Q}=\frac{1}{(2\pi i)^{d}}{\textstyle \int_{X}}\otimes \bfone(-d)_{\C}\in D^{0}_{X,\R}.
\end{displaymath}

\begin{rmk}\label{rem:15}
  The current $\delta _{X}$ has two advantages over the current
$\int_{X}$. The first one is that $\delta _{X}$ is independent of the choice of
 square root of $-1$  while the
 current $\int _{X}$ is not. Indeed, If
  $z_{1},\dots,z_{d}$ are local complex coordinates with
  $z_{j}=x_{j}+i y_{j}$, then the standard orientation is given by
  the volume form
  \begin{displaymath}
    \vol=dx_{1}\land d y_{1}\land \dots \land dx_{d}\land d y_{d}.
  \end{displaymath}
  If we change the choice of the square root of $-1$ from $i$ to
  $-i$ then $\vol $ is sent to $(-1)^{d}\vol$, which is the same
  change of sign suffered by $(2\pi i)^{d}$. 
  Of course this explains the presence of $i^{d}$
  but not the presence of $(2\pi)^{d}$. The second advantage of
  $\delta_{X}$ is that,   
  if $X$ is defined over $\Q$ and $\omega $ is a differential form 
  representing a rational class in $H_{\Zar}^{2d}(X,\Omega 
  _{X_{\Q}}^{\ast})$, then $\delta _{X}(\omega )\in \Q$.   
\end{rmk}

To be consistent with the previous choice we also need to adjust the
definition
of the current associated to a locally integrable form and to an
algebraic cycle. Given a locally integrable differential form
$\omega $ of degree $n$, there is a current
\begin{displaymath}
  \int_{X}\omega \wedge \cdot \in E_{X}^{\prime n-2d}=D_{X}^{n}(d).
\end{displaymath}
we will denote by $[\omega ]\in D^{n}_{X}$
the current defined by
\begin{equation}\label{eq:7}
  [\omega ]=\int_{X}\omega \land \cdot \otimes \bfone(-d)_{\Q}
    =\frac{1}{(2\pi i)^{d}}\int_{X}\omega \land \cdot \otimes
    \bfone(-d)_{\C} \in  D^{n}_{X}.
\end{equation}
In other words $[\omega ]=\delta _{X}\wedge \omega $.
With this convention, the morphism of complexes 
\begin{math}
  [\cdot]\colon E^{\ast}_{X}\to D^{\ast}_{X} 
\end{math}
respects the structure of weak Hodge complexes on both sides.

If $f\colon X\to Y$ is a proper map of smooth complex varieties, of
dimensions $d,d'$ and relative
dimension $e=d-d'$, then the
push-forward of currents
$f_{\ast}\colon E^{\prime \ast}_{X}\to
E^{\prime \ast}_{Y}$
is defined, for $T\in D^{n}_{X}$ and $\eta\in
E^{2d-n}_{Y,c}$ by
\begin{displaymath}
  f_{\ast}T(\eta)=T(f^{\ast}\eta).
\end{displaymath}
It induces a map 
$f_{\ast}\colon D^{\ast}_{X}\to
D^{\ast}_{Y}[-2e](-e)$.

Finally, let $Z\subset X$ be a
codimension $p$ irreducible subvariety of $X$. Let $\iota \colon \widetilde Z\to X$ be
a resolution of singularities of $Z$. Then the current integration
along $Z$ is defined as 
\begin{displaymath}
  \delta _{Z}=\iota_{\ast}(\delta _{\widetilde Z})
  \in D^{2p}_{X,\R}(p)
\end{displaymath}

\begin{rmk}
  Since $Z$ is irreducible, $H^{2p}_{Z}(X,p)=\Q(0)$ and the class of
  $\delta _{Z}$ is at the same time the Betti and the de Rham
  generator of $\Q(0)$. 
\end{rmk}

Given any cycle
$\zeta\in Z^{p}(X)$ we define $\delta _{\zeta}$ by linearity.
Following Remark \ref{rem:15}, the symbols
$[\omega ]$ and $\delta _{Y}$ do not depend on a
particular choice of $\sqrt{-1}$.

\begin{ex}\label{exm:5} To see how this conventions, together with the
  convention in definition \ref{def:1} work in
  practice, we review the classical example of the
  logarithm. Consider $X=\P^{1}$ with absolute
  coordinate $t$, so $\Div(t)=[0]-[\infty]$, and let
  $U=X\setminus \{0,\infty \}$. Write
  \begin{displaymath}
    \log(t\bar t)\in E^{0}_{U}(1),\quad \frac{dt}{t},\ \frac{d\bar
      t}{\bar t}\in E^{1}_{U}(1).
  \end{displaymath}
  Note that, if we want to stress the fact that these elements belong
  to the twisted complex we will denote them like $\log(t\bar
  t)\otimes \bfone(1)_{\C}$. These elements satisfy
  \begin{displaymath}
    \left[\log t\bar t \,\right]\in D^{0}_{\P^{1}}(1),\quad 
      \left[\frac{dt}{t}\right ],\ \left[\frac{d\bar t}{\bar
          t}\,\right ]
      \in D^{1}_{\P^{1}}(1).
  \end{displaymath}
  Moreover,
  \begin{equation}\label{eq:77}
    \begin{aligned}
      \overline{\log (t\bar t)}&=- \log (t\bar t)\\
      \overline{\frac{dt}{t}} & = -\frac{d\bar t}{\bar t}\\ 
      d \left[\frac{dt}{t}\right ]
      &=\delta _{\Div t}=\delta_{0}-\delta _{\infty},\\
      d \left[\frac{d\bar t}{\bar t}\,\right ]
      &=-\delta _{\Div t}=\delta_{\infty}-\delta _{0}.\\
      \partial\bar \partial \left[\log t\bar t \,\right]
      &=-\delta _{\Div t}=\delta_{\infty}-\delta _{0}
    \end{aligned}
  \end{equation}
  Note how, in the above formulae all the $(2\pi i)$ factors are now
  implicit.

  Recall also the potential ambiguity on the sign of the conjugation
  mentioned at the end of 
  Definition \ref{def:1}. The typical example to keep in mind would be
  the form 
  \begin{displaymath}
    \eta=\frac{1}{2}
    \left(\frac{dt}{t}-\frac{d\overline{t}}{\overline{t}}\right)\in  
    E^{1}_{U}(1), 
  \end{displaymath}
  that represents a generator of $H^{1}(U;1)$. Since $\eta$
  is an element of $E^{1}_{U}(1)$ then $\overline
  \eta=\eta$. Hence $\eta\in E^{1}_{U}(1)_{\R}$. By contrast, if
  $\eta_{0}\in E^{1}_{U}$ is the differential form with the same
  values as $\eta$, but this time belonging to $E^{1}_{U}$, then
  $\overline {\eta_{0}}=-\eta_{0}$. Thus
  $\eta_{0}$ is purely imaginary. 
\end{ex}

\subsection{Local product situation and duality}
\label{sec:local-prod-situ}

Assume in this subsection that $X$ is projective in
order to have Poincar\'e duality. 
Let $A\subset X$ be a Zariski closed subset and $a,r\in \Z$. Then
Lefschetz duality tells us 
that there is an isomorphism of Mixed Hodge structures
\begin{displaymath}
  H^{r}(X\setminus A;a)\cong H^{2d-r}(X,A;d-a)^{\vee}.
\end{displaymath}
If $B$ is a second Zariski closed subset one may ask if there is a
refined duality
\begin{equation}\label{eq:10}
  H^{r}(X\setminus A,B;a)\cong H^{2d-r}(X\setminus B,A;d-a)^{\vee}? 
\end{equation}
In general the answer is no as the following example shows.

\begin{ex}\label{exm:1}
  In this example we put $X=\P^{2}$. Let $\ell_{0}$, $\ell_{1}$ and
  $\ell_{2}$ be three different lines passing through the same point
  $p$ and write $A=\ell_{0}\cup \ell_{1}$ and $B=\ell_{2}$. Then
  \begin{alignat*}{2}
    H^{1}(X\setminus A,B)&= \Q(-1),& \quad H^{3}(X\setminus A,B)&=0,\\
    H^{1}(X\setminus B,A)&= \Q(0),& H^{3}(X\setminus B,A)&=0.
  \end{alignat*}
  Thus, the answer to question \eqref{eq:10} is negative.
\end{ex}

Nevertheless, if we add some hypothesis to the sets $A$ and $B$ we can
have a positive answer.

\begin{df}\label{def:sec-1-product}
  Let $A$ and $B$ be closed subvarieties of $X$. We say that $A$ and
  $B$ are in a \emph{local product situation} if, for any point $x\in X$
  there is a
  neighborhood $U$ of $x$, a decomposition $U=U_{A}\times U_{B}$,
  where $U_{A}$ and $U_{B}$ are open disks of smaller dimension, and
  analytic subvarieties $A'\subset U_{A}$ and $B'\subset U_{B}$ such
  that
  \begin{displaymath}
    A\cap U = A'\times U_{B},\qquad B\cap U = U_{A}\times B'.
  \end{displaymath}
\end{df}

\begin{rmk}
  The sets $A$ and $B$ of Example \ref{exm:1} are not in a local product
  situation. By contrast, if $A$ and $B$ are divisors
  without common components such that $A\cap B$ is a normal
  crossing divisor, then $A$ and $B$ are in local product situation. 
\end{rmk}

The following result is proved in \cite[Lemma 6.1.1]{BKV:Fihnf}.

\begin{lem}\label{lemm:1}
  Let $A$ and $B$ be closed
  subvarieties of $X$ in local product situation. Then, for every
  $a,r\in \Z$, there is an
  isomorphism of mixed Hodge structures
  \begin{displaymath}
    H^{r}(X\setminus A,B;a)\xrightarrow{\cong}
    H^{2d-r}(X\setminus B,A;d-a)^{\vee}.
  \end{displaymath}
\end{lem}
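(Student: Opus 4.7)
My plan is to realise both sides of the claimed isomorphism as hypercohomology on $X$ of suitable complexes of sheaves, and then to identify the two complexes using the local product hypothesis. Write $W=X\setminus(A\cup B)$ and denote the open inclusions $j\colon W\hookrightarrow X\setminus A$, $l\colon X\setminus A\hookrightarrow X$, $k\colon W\hookrightarrow X\setminus B$, $m\colon X\setminus B\hookrightarrow X$. The standard identification of relative cohomology with the hypercohomology of a mapping cone gives
\begin{displaymath}
H^{r}(X\setminus A, B;a)\;\cong\;\mathbb{H}^{r}\bigl(X;\,Rl_{*}j_{!}\Q_{W}(a)\bigr).
\end{displaymath}
Applying the same formula to $H^{2d-r}(X\setminus B, A;d-a)$ and then dualising via Verdier duality on the smooth projective variety $X$ (with dualising complex $\Q_{X}(d)[2d]$), together with the exchanges $\mathbb{D}\circ Rf_{*}=f_{!}\circ\mathbb{D}$ and $\mathbb{D}\circ f_{!}=Rf_{*}\circ\mathbb{D}$ for the open inclusions $m$ and $k$, yields
\begin{displaymath}
H^{2d-r}(X\setminus B, A;d-a)^{\vee}\;\cong\;\mathbb{H}^{r}\bigl(X;\,m_{!}Rk_{*}\Q_{W}(a)\bigr).
\end{displaymath}
Thus everything reduces to constructing a canonical isomorphism $Rl_{*}j_{!}\Q_{W}(a)\cong m_{!}Rk_{*}\Q_{W}(a)$ in $D^{b}(X)$.

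To produce this isomorphism I would first observe that both complexes restrict to $\Q_{W}(a)$ on $W$ and are canonically identified on neighbourhoods of points of $A\setminus B$ and $B\setminus A$: at such a point one of the four maps $j,k,l,m$ is locally the identity, and the identification becomes tautological. All the substance is therefore concentrated at $A\cap B$, and this is where the local product hypothesis enters. Around a point $x\in A\cap B$ choose $U=U_{A}\times U_{B}$ with $A\cap U=A'\times U_{B}$ and $B\cap U=U_{A}\times B'$; set $W_{A}=U_{A}\setminus A'$, $W_{B}=U_{B}\setminus B'$, and let $\alpha\colon W_{A}\hookrightarrow U_{A}$ and $\beta\colon W_{B}\hookrightarrow U_{B}$ be the induced open inclusions. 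By base change and the K\"unneth formula for external tensor products, both complexes restrict over $U$ to
\begin{displaymath}
R\alpha_{*}\Q_{W_{A}}\,\boxtimes\,\beta_{!}\Q_{W_{B}},
\end{displaymath}
because the only nontrivial operation in the $A$-factor is the extension across $A$, and symmetrically in the $B$-factor; these canonical local identifications glue to the desired global isomorphism.

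The entire construction lifts to the derived category of mixed Hodge modules on $X$, where Verdier duality and each of the four operations $j_{!},Rk_{*},Rl_{*},m_{!}$ admit canonical Hodge-theoretic enhancements, so the isomorphism is automatically one of mixed Hodge structures. The main obstacle is precisely the local identification along $A\cap B$: without the local product assumption, the operations of extending across $A$ and of killing along $B$ do not commute. This is exactly what fails in Example~\ref{exm:1}, where three lines through a single point violate the local product condition and the two complexes genuinely differ, producing the discrepancy computed there.
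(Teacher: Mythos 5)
The paper does not actually prove this lemma: it is quoted directly from \cite[Lemma 6.1.1]{BKV:Fihnf}, so there is no internal argument to compare yours against. That said, your proof is correct and is essentially the standard (and, as far as I can tell, the cited) route: both sides become hypercohomology on the compact smooth $X$ of $Rl_{*}j_{!}\Q_{W}(a)$ and $m_{!}Rk_{*}\Q_{W}(a)$, Verdier duality with dualizing complex $\Q_X(d)[2d]$ and the exchange rules for open immersions give the reduction, and the twist/shift bookkeeping ($\mathbb{D}_W\Q_W(d-a)=\Q_W(a)[2d]$, compactness of $X$) checks out. One step you should make explicit rather than phrase as ``the local identifications glue'': there is a single canonical global morphism $m_{!}Rk_{*}\Q_{W}\to Rl_{*}j_{!}\Q_{W}$, obtained from the open base-change isomorphism $m^{*}Rl_{*}j_{!}\Q_{W}\cong Rk_{*}j^{*}j_{!}\Q_{W}=Rk_{*}\Q_{W}$ together with the adjunction $(m_{!},m^{*})$; the local product hypothesis is then used only to verify that this fixed map is an isomorphism on stalks along $A\cap B$, where your K\"unneth computation (projection formula plus base change along the product projections) identifies both sides with $R\alpha_{*}\Q_{W_{A}}\boxtimes\beta_{!}\Q_{W_{B}}$. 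Since all four functors, the adjunctions and the duality lift to Saito's mixed Hodge modules, and an isomorphism there is detected on the underlying constructible complexes, the Hodge-theoretic enhancement is indeed automatic; and your closing remark correctly locates the failure in Example \ref{exm:1} in the non-commutation of extension by zero across one divisor with derived pushforward across the other.
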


In the next section we will explain how to realize this isomorphism
explicitly, after tensoring with $\R$, using differential forms. 

We give now two applications of duality.

\begin{lem}\label{lemm:2} Let $Z\subset X$
  be a closed subvariety and let $\pi \colon \widetilde X\to X$
  be a blow-up with center contained in $Z$ such that $\widetilde X$
  is smooth. Write $\widetilde Z=\pi ^{-1}(Z)$. Then, for all $a,r\in
  \Z$, the maps 
  \begin{align}
    \label{eq:12}
    H^{r}(X\setminus Z;a)&\xrightarrow{\pi ^{\ast}}
    H^{r}(\widetilde X\setminus \widetilde Z;a)\\
    H^{r}(X, Z;a)&\xrightarrow{\pi ^{\ast}}
    H^{r}(\widetilde X,\widetilde Z;a)
    \label{eq:13}
  \end{align}
  are isomorphisms.
\end{lem}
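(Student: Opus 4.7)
The first isomorphism \eqref{eq:12} is essentially tautological. Since the center of the blow-up is contained in $Z$, the exceptional divisor of $\pi$ lies in $\widetilde Z$, and $\pi^{-1}(X\setminus Z)=\widetilde X\setminus \widetilde Z$. Thus $\pi$ restricts to an isomorphism of algebraic varieties $\widetilde X\setminus \widetilde Z\xrightarrow{\sim} X\setminus Z$, and pulling back cohomology yields \eqref{eq:12}, automatically compatible with the MHS since $\pi^*$ is a morphism of MHS.

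For \eqref{eq:13}, my plan is to reduce to a sheaf-theoretic computation and apply proper base change. Let $j\colon U = X\setminus Z\hookrightarrow X$ and $\widetilde j\colon \widetilde U = \widetilde X\setminus \widetilde Z\hookrightarrow \widetilde X$ denote the open inclusions, and $i\colon Z\hookrightarrow X$ the closed inclusion. The standard distinguished triangle $j_!\Q_U\to \Q_X\to i_*\Q_Z\xrightarrow{+1}$ gives the sheaf-theoretic identification
\begin{equation*}
H^r(X, Z;\Q)\cong H^r(X; j_!\Q_U),
\end{equation*}
and analogously on $\widetilde X$. Under these identifications, $\pi^*$ corresponds to the adjunction map $j_!\Q_U\to R\pi_*\widetilde j_!\Q_{\widetilde U}$.

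The crucial step is to show that this adjunction map is a quasi-isomorphism, which I would verify stalkwise via proper base change. For $x\in U$ the fiber $\pi^{-1}(x)$ is a single point, giving $(R\pi_*\widetilde j_!\Q_{\widetilde U})_x = \Q = (j_!\Q_U)_x$ concentrated in degree zero. For $x\in Z$ the fiber $\pi^{-1}(x)$ lies entirely in $\widetilde Z$, where $\widetilde j_!\Q_{\widetilde U}$ vanishes, so both stalks are zero and the higher direct images also vanish. Taking hypercohomology on $X$ via the Leray spectral sequence then yields \eqref{eq:13}, which is an isomorphism of MHS by functoriality of $\pi^*$. The only point requiring some care is the sheaf-theoretic interpretation of pair cohomology, which is standard for closed analytic subspaces; as an alternative, one could argue that $\pi$ induces a homeomorphism of quotient spaces $\widetilde X/\widetilde Z\xrightarrow{\sim} X/Z$ (using properness of $\pi$) and invoke $H^r(X,Z;\Q)\cong \widetilde H^r(X/Z;\Q)$.
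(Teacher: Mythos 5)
Your proof is correct, but it takes a genuinely different route from the paper's. The paper proves \eqref{eq:13} by duality: since $X$ is assumed projective in this subsection, Lefschetz duality gives
$H^{r}(X,Z;a)\cong H^{2d-r}(X\setminus Z;d-a)^{\vee}$ and $H^{r}(\widetilde X,\widetilde Z;a)\cong H^{2d-r}(\widetilde X\setminus\widetilde Z;d-a)^{\vee}$, and the map $\pi^{\ast}$ in \eqref{eq:13} is identified with the dual of $\pi_{\ast}$ on the open complements, which is an isomorphism by the same reasoning that gives \eqref{eq:12}. This is essentially a two-line consequence of the duality formalism already in place. Your argument is instead sheaf-theoretic: identifying $H^{r}(X,Z;\Q)$ with $H^{r}(X;j_{!}\Q_{U})$ and checking via proper base change that the adjunction $j_{!}\Q_{U}\to R\pi_{\ast}\widetilde j_{!}\Q_{\widetilde U}$ is a quasi-isomorphism stalkwise; the MHS statement then follows automatically because $\pi^{\ast}$ is a morphism of mixed Hodge structures and $\MHS$ is an abelian category, so an underlying isomorphism of $\Q$-vector spaces is an isomorphism of MHS. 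Your approach has the merit of not invoking Poincar\'e/Lefschetz duality at all, and therefore does not need the projectivity hypothesis on $X$; the paper's approach, on the other hand, is shorter here because it leverages Lemma \ref{lemm:1} and the functoriality of duality, which is the machinery the rest of the subsection (in particular Lemma \ref{lemm:3}) is built on. Both are valid; the one point you should make fully explicit is the one you flagged yourself, namely the identification $H^{r}(X,Z;\Q)\cong H^{r}(X;j_{!}\Q_{U})$ (or, in the alternative phrasing, $H^{r}(X,Z;\Q)\cong\widetilde H^{r}(X/Z;\Q)$), which holds because $(X,Z)$ is a good pair of complex algebraic/analytic spaces; and in the quotient-space variant, that the continuous bijection $\widetilde X/\widetilde Z\to X/Z$ is a homeomorphism because $\pi$ is proper hence closed.
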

\begin{proof}
  The fact that \eqref{eq:12} is an isomorphism is obvious because
  $X\setminus Z = \widetilde X\setminus \widetilde Z$. By the
  functoriality of duality, the
  morphism \eqref{eq:13} is the composition
  \begin{multline*}
    H^{r}(X, Z;a)\xrightarrow{\cong}
    H^{2d-r}(X\setminus Z;d-a)^{\vee}
    \xrightarrow{(\pi _{\ast})^{\vee}}\\
    H^{2d-r}(\widetilde X\setminus \widetilde Z;d-a))^{\vee}
    \xrightarrow{\cong}
    H^{r}(\widetilde X,\widetilde Z;a).
  \end{multline*}
  Since the map $\pi _{\ast}$ in the middle is also an isomorphism by
  the same reason as before, we conclude that \eqref{eq:13} is an
  isomorphism. 
\end{proof}

The next result tell us the surprising fact that, under some
conditions, we can shift, in the isomorphism of Lemma \ref{lemm:1},
part of the closed subset $A$ to the closed subset $B$.

\begin{lem}\label{lemm:3} Let $A,B$
  be two divisors without common components such that $A\cup B$ is a
  normal crossing divisor. Let $\pi \colon \widetilde X \to X$ be a
  blow up with center contained in $A\cap B$ such that $\widetilde X$
  is smooth and $\pi ^{-1}(A\cup B)$ is a normal crossing
  divisor. Let $\widehat A$ and $\widehat B$ be the strict transforms of
  $A$ and $B$ respectively and $C$ the exceptional divisor of $\pi
  $. Then, for all $a,r\in \Z$ there are isomorphism
  \begin{align}
    \label{eq:14}
    H^{r}(X\setminus A,B;a)
    &\xrightarrow{\pi ^{\ast}}
      H^{r}(\widetilde X\setminus \widehat A\cup C,\widehat B;a),\\
    H^{r}(X\setminus A,B;a)\label{eq:15}
    &\xrightarrow{\cong}
      H^{r}(\widetilde X\setminus \widehat A,\widehat B\cup C;a).    
  \end{align}
\end{lem}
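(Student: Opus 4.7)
The plan is to prove \eqref{eq:14} directly and then deduce \eqref{eq:15} from it via Lefschetz duality (Lemma \ref{lemm:1}).

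For \eqref{eq:14}, the key observation is that since the center of $\pi$ is contained in $A\cap B\subset A$, the exceptional divisor satisfies $C\subset \pi^{-1}(A)$, so
\[
\pi^{-1}(A)=\widehat A\cup C.
\]
Hence $\pi$ restricts to a biholomorphism
\[
\pi\colon \widetilde X\setminus(\widehat A\cup C)\xrightarrow{\sim} X\setminus A.
\]
Under this biholomorphism, $\widehat B\cap(\widetilde X\setminus(\widehat A\cup C))$ is identified with $B\cap(X\setminus A)$, since away from the exceptional divisor the strict transform of $B$ coincides with $B$ itself (after removing the center, which lies in~$A$). Interpreting the closed subset in the relative cohomology notation as its intersection with the open subvariety, we obtain an isomorphism of pairs, which proves \eqref{eq:14}.

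For \eqref{eq:15}, I would apply Lemma \ref{lemm:1} on both sides. On $X$, the pair $(A,B)$ is in local product situation because $A\cup B$ is a normal crossing divisor with $A,B$ sharing no components. On $\widetilde X$, the divisors $\widehat A$ and $\widehat B\cup C$ are in local product situation since $\pi^{-1}(A\cup B)=\widehat A\cup\widehat B\cup C$ is a normal crossing divisor and $\widehat A$ shares no components with $\widehat B\cup C$. Lefschetz duality therefore yields
\[
H^{r}(X\setminus A,B;a)\cong H^{2d-r}(X\setminus B,A;d-a)^{\vee}
\]
and
\[
H^{r}(\widetilde X\setminus \widehat A,\widehat B\cup C;a)\cong H^{2d-r}(\widetilde X\setminus(\widehat B\cup C),\widehat A;d-a)^{\vee}.
\]
Now apply \eqref{eq:14} with the roles of $A$ and $B$ swapped: the hypotheses are symmetric because $C\subset \pi^{-1}(A)\cap \pi^{-1}(B)$, and so $\pi^{-1}(B)=\widehat B\cup C$. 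This gives an isomorphism
\[
H^{2d-r}(X\setminus B,A;d-a)\xrightarrow{\pi^{\ast}} H^{2d-r}(\widetilde X\setminus(\widehat B\cup C),\widehat A;d-a).
\]
Taking the $\vee$ and composing the three isomorphisms yields \eqref{eq:15}.

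The main technical point to verify is the local product situation on the blown-up variety, but this is immediate from the hypothesis that $\pi^{-1}(A\cup B)$ is a simple normal crossing divisor. A secondary point is the compatibility of the duality in Lemma \ref{lemm:1} with pullback along the proper morphism $\pi$, which is the standard functoriality of Poincar\'e--Lefschetz duality. I expect no serious obstacle: the whole argument is a bookkeeping exercise once one notices that the exceptional divisor $C$ belongs both to $\pi^{-1}(A)$ and to $\pi^{-1}(B)$, which is precisely what allows it to be moved from the ``removed'' side to the ``relative'' side via duality.
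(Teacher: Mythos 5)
Your proof is correct and follows essentially the same route as the paper: \eqref{eq:14} comes from the identifications $X\setminus A=\widetilde X\setminus(\widehat A\cup C)$ and $B\setminus A=\widehat B\setminus(\widehat A\cup C)$, and \eqref{eq:15} is obtained by composing the duality of Lemma \ref{lemm:1} on both sides with \eqref{eq:14} applied with $A$ and $B$ interchanged. The only difference is that you spell out the local product verifications that the paper leaves implicit, which is harmless.
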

\begin{proof}
  The fact that $\pi ^{\ast}$ is an isomorphism is a consequence of
  the equalities
  \begin{displaymath}
    X\setminus A = \widetilde X\setminus (\widehat A\cup C),\qquad
    B\setminus A = \widehat B \setminus (\widehat A\cup C)
  \end{displaymath}
  The isomorphism \eqref{eq:15} is the composition of the isomorphisms
  \begin{align*}
    H^{r}(X\setminus A,B;a)&\xrightarrow{\cong}
    H^{2d-r}(X\setminus B,A;d-a)^{\vee}\\
    &\xrightarrow{\cong}
    H^{2d-r}(\widetilde X\setminus \widehat B\cup C,\widehat A;d-a)^{\vee}\\
    &\xrightarrow{\cong}
    H^{r}(\widetilde X\setminus \widehat A,\widehat B\cup C;a),
  \end{align*}
  where the the existence of the first and third isomorphism is a
  consequence of Lemma \ref{lemm:1} and the second isomorphism agrees
  with the
  isomorphism \eqref{eq:14} applied with  $A$ and $B$ interchanged. 
\end{proof}
\subsection{Differential forms with zeros and logarithmic poles}
\label{sec:diff-forms-with}
Let $Y\subset X$ be a closed subvariety, $\widetilde Y$ a
resolution 
of singularities of $Y$ and $\iota \colon \widetilde Y\to X$ the induced
map. We denote 
\begin{displaymath}
  \Sigma _{Y}E^{\ast}_{X}=\{\omega \in E^{\ast}_{X}\mid
  \iota ^{\ast} \omega =0\}.
\end{displaymath}
Then $\Sigma _{Y}E^{\ast}_{X}$ is an example of a Dolbeault
complex. In particular is a weak $\R$-Hodge complex. Therefore we can
apply to it the notation of Definition~\ref{def:1}. We begin with a
basic observation. 

\begin{prop}\label{prop0} Let $Y\subset X$ be a smooth subvariety. Then
The complexes $\Sigma _{Y}E^{\ast}_{X}$ and
$s(E^\ast_X\xrightarrow{\iota^\ast} E^\ast_{\widetilde{Y}})$ are
quasi-isomorphic.
\end{prop}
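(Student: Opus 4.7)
Since $Y$ is smooth, we may take $\widetilde Y = Y$ and $\iota\colon Y\hookrightarrow X$ is simply the inclusion of the smooth submanifold. The plan is to prove the statement by producing a short exact sequence of complexes and then applying a standard mapping-cone argument.

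\textbf{Step 1: Surjectivity of $\iota^{\ast}$.} The key analytic input is that the restriction map $\iota^{\ast}\colon E^{\ast}_{X}\to E^{\ast}_{Y}$ is surjective as a map of complexes of sheaves, hence on global sections since $Y$ and $X$ are paracompact. I would prove this using the tubular neighborhood theorem. Given $\omega\in E^{n}_{Y}$, choose a tubular neighborhood $U\subset X$ of $Y$, identify $U$ with the total space of the normal bundle $N_{Y/X}$ via a diffeomorphism fixing $Y$, and let $\pi\colon U\to Y$ denote the associated projection. Then $\pi^{\ast}\omega$ is a smooth form on $U$ restricting to $\omega$ on $Y$. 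Choose a smooth bump function $\chi\colon X\to[0,1]$ that is identically $1$ in a smaller neighborhood of $Y$ and supported inside $U$. Then $\chi\cdot\pi^{\ast}\omega$, extended by zero outside $U$, is an element of $E^{n}_{X}$ whose restriction to $Y$ equals $\omega$.

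\textbf{Step 2: A short exact sequence.} By the very definition of $\Sigma_{Y}E^{\ast}_{X}$, Step 1 gives a short exact sequence of complexes
\begin{equation*}
  0\to \Sigma_{Y}E^{\ast}_{X}\to E^{\ast}_{X}\xrightarrow{\iota^{\ast}} E^{\ast}_{Y}\to 0.
\end{equation*}

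\textbf{Step 3: Comparison with the mapping cone.} There is a natural map of complexes
\begin{equation*}
  \phi\colon \Sigma_{Y}E^{\ast}_{X}\longrightarrow s\bigl(E^{\ast}_{X}\xrightarrow{\iota^{\ast}} E^{\ast}_{Y}\bigr),\qquad \omega\mapsto(\omega,0),
\end{equation*}
well-defined as a chain map since $\omega\in\ker\iota^{\ast}$ kills the $E^{\ast}_{Y}$-component of the cone differential. To see that $\phi$ is a quasi-isomorphism, compare the long exact sequence in cohomology coming from Step~2 with the long exact sequence attached to the tautological short exact sequence
\begin{equation*}
  0\to E^{\ast-1}_{Y}\to s(\iota^{\ast})\to E^{\ast}_{X}\to 0;
\end{equation*}
in both long exact sequences the connecting homomorphism identifies with $\iota^{\ast}$ on cohomology, and a diagram chase (or the five lemma) gives the result. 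Equivalently, one can exhibit the cokernel of $\phi$ as a cone on the identity of $E^{\ast}_{Y}$, which is manifestly acyclic.

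The only non-formal ingredient is Step~1; the rest is a general fact that for any surjective morphism of complexes, the kernel is quasi-isomorphic to the shifted mapping cone in the Burgos convention of Definition~\ref{def:1}.
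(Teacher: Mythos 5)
Your proposal is correct and follows essentially the same route as the paper: the paper's proof simply notes that for smooth $Y$ the sequence $0\to \Sigma_{Y}E^{\ast}_{X}\to E^{\ast}_{X}\to E^{\ast}_{Y}\to 0$ is exact and that this implies the result. Your Steps 1--3 just make explicit the surjectivity of $\iota^{\ast}$ (tubular neighborhood plus cutoff) and the standard identification of the kernel of a surjection with the shifted cone, both of which the paper leaves implicit.
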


\begin{proof}
For smooth $Y$ the sequence
\begin{displaymath}
0\rightarrow  \Sigma _{Y}E^{\ast}_{X}\rightarrow E^\ast_X\rightarrow
E^\ast_Y\rightarrow 0
\end{displaymath}
is exact, what implies the result.
\end{proof}

Note that we do
not put a weight filtration on $\Sigma _{Y}E^{\ast}_{X}$. Nevertheless
in good conditions  the complex $\Sigma
_{Y}E^{\ast}_{X}$ allows us to compute part of the mixed Hodge
structure of the relative cohomology of the pair $(X,Y)$.

\begin{prop}\label{prop:1} Assume that $X$ is projective. Let $A$ be a
  normal crossing divisor of $X$ and let $W$ be a smooth closed
  subvariety that intersects transversely all intersections among the
  components of $A$. Write $Y=A\cup W$. Assume furthermore that all possible
  intersections among components of $Y$ are smooth and irreducible. 
  Then, there is a mixed Hodge complex $K$ that computes the relative
  cohomology groups $H^{\ast}(X,Y)$, a quasi-isomorphism
  \begin{displaymath}
    \Sigma_{Y}E^{\ast}_{X,\R}\longrightarrow K_{\R}
  \end{displaymath}
  and a compatible filtered quasi-isomorphism 
  \begin{displaymath}
    (\Sigma_{Y}E^{\ast}_{X},F)\longrightarrow (K_{\C},F).
  \end{displaymath}
\end{prop}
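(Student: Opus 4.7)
Plan: I would construct $K$ as the standard mixed Hodge complex on $(X,Y)$ built from the smooth semi-simplicial resolution of $Y$ provided by the hypotheses, define an obvious map $\phi\colon \Sigma_Y E^*_X \to K$ placing a form $\omega$ in the $X$-slot of the cone and zero elsewhere, and verify the quasi-isomorphism by a five-lemma argument comparing with $E^*_X$. The Hodge filtration compatibility will then be essentially automatic, since every ingredient respects holomorphic bidegree.

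Construction of $K$ and of $\phi$: Enumerate the irreducible components of $Y$ as $Y_1,\dots,Y_N$ (the $A_i$ together with $W$). By the hypotheses, each non-empty intersection $Y_I = \bigcap_{i\in I} Y_i$ is smooth projective and irreducible, so these assemble into a smooth semi-simplicial resolution of $Y$. Applying Deligne's construction for smooth simplicial projective varieties, the \v{C}ech-type total complex $K_Y = \bigoplus_{|I|\ge 1} E^*_{Y_I}[-|I|+1]$, equipped with the alternating-sum differential, the Hodge filtration by holomorphic degree, and the weight filtration coming from the simplicial degree, is a mixed Hodge complex computing $H^*(Y)$. Let $K$ be the shifted cone $s(E^*_X \to K_Y)[-1]$ of the restriction map $\iota^*\colon E^*_X \to K_Y$, where $\iota\colon \coprod_i Y_i \hookrightarrow X$ is exactly the resolution appearing in the definition of $\Sigma_Y E^*_X$; this is a mixed Hodge complex for $H^*(X,Y)$. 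Define $\phi(\omega) = (\omega, 0)\in E^*_X\oplus K_Y[-1]$. This commutes with the cone differential because $\iota^*\omega=0$ for $\omega\in\Sigma_Y E^*_X$, and respects $F$ because restriction to complex submanifolds preserves holomorphic bidegree.

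Quasi-isomorphism and main difficulty: Comparing the short exact sequences
\begin{equation*}
0 \to \Sigma_Y E^*_X \to E^*_X \to \mathrm{Im}(\iota^*) \to 0, \qquad 0 \to K_Y[-1] \to K \to E^*_X \to 0
\end{equation*}
via their associated long exact sequences and the five-lemma, the quasi-isomorphism claim (both at the $\R$-level and at the filtered level) reduces to showing that the inclusion $\mathrm{Im}(\iota^*) \hookrightarrow K_Y$ (placed in simplicial degree $0$, extended by zero) is a quasi-isomorphism. Both complexes compute $H^*(Y)$: $K_Y$ does so by the simplicial de Rham theorem, and $\mathrm{Im}(\iota^*)$ does so once one identifies it with the sheaf of compatible smooth forms on the strata of $Y$. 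The hard part will be this last identification, which is a local computation near the singular points of $Y$. The transversality of $W$ with the intersections of the $A_i$ allows one to pick local coordinates in which $A$ becomes a union of coordinate hyperplanes and $W$ is a coordinate subspace transverse to them; in such a model a compatible tuple of smooth forms on the strata extends to a smooth form on the ambient polydisk by a Whitney-type extension argument, and partitions of unity then globalize the statement.
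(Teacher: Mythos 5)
Your construction is essentially the paper's proof: the paper builds the same \v{C}ech-type complex $\bigoplus_{|I|=k}E^{\ast}_{Y_{I}}$ from the smooth irreducible intersections, and its key claim — exactness (also on $F^{p}$-pieces) of the augmented sequence $0\to\Sigma_{Y}E^{\ast}_{X}\to E^{\ast}_{X}\to\bigoplus_{|I|=1}E^{\ast}_{Y_{I}}\to\cdots$ — is exactly the local extension/partition-of-unity statement you isolate as the hard part, your five-lemma reduction being just a repackaging of that exactness. The approach and the crucial lemma coincide, so the proposal is correct and matches the paper.
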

\begin{proof}
  Let $Y=Y_{1}\cup\dots \cup Y_{r}$ be the
  decomposition of $Y$ into irreducible components. For $I\subset
  \{1,\dots,r\}$ we write $Y_{I}=\bigcap _{i\in I}Y_{i}$. Then there
  is an exact sequence
  \begin{equation}
    \label{eq:32}
    0\to \Sigma_{Y}E^{\ast}_{X} \to E^{\ast}_{X}\to 
    \bigoplus _{|I|=1}E^{\ast}_{Y_{I}} \to \bigoplus
    _{|I|=2}E^{\ast}_{Y_{I}} \to \cdots
  \end{equation}
Moreover this sequence remains exact after taking the $F^{p}$
subcomplex at each degree. Since the total complex of the sequence
\begin{displaymath}
  \bigoplus _{|I|=1}E^{\ast}_{Y_{I}} \to \bigoplus
  _{|I|=2}E^{\ast}_{Y_{I}} \to \cdots
  \to \bigoplus _{|I|=k}E^{\ast}_{Y_{I}}\to \cdots
\end{displaymath}
is the de Rham part of a mixed Hodge  complex that computes 
$H^{\ast}(Y)$, the result follows. 
\end{proof}

Let $A\subset X$ be a normal crossing divisor and $E^{\ast}_{X}(\log
A)$ the complex of differential forms on $X$ with logarithmic
singularities along $A$ introduced in \cite{Burgos:CDc}. This is also
a Dolbeault complex, so it has a real structure and a Hodge
filtration. Although in this case it also has a weight filtration. We
will use the shorthand
\begin{displaymath}
  E^{\ast}_{X}(\log A;a)\coloneqq E^{\ast}_{X}(\log A)(a)
\end{displaymath}
The following result is proved in
\cite{Burgos:CDc}.

\begin{prop}\label{prop:2} Assume again that $X$ is projective and that
  $A\subset X$ is a normal crossing
  divisor.  Then, $((E^{\ast}_{X}(\log A)_{\R},W),(E^{\ast}_{X}(\log
  A),W,F))$ is a mixed Hodge complex computing the real mixed Hodge
  structure $H^{\ast}(X\setminus A)$.
\end{prop}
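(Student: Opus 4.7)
The plan is to reduce to Deligne's classical result by comparing the smooth logarithmic complex $E^{\ast}_{X}(\log A)$ with the holomorphic logarithmic de Rham complex $\Omega^{\ast}_{X}(\log A)$. First I would verify that there is a natural inclusion of complexes of sheaves $\Omega^{\ast}_{X}(\log A)\hookrightarrow E^{\ast}_{X}(\log A)$, identifying the holomorphic logarithmic forms with the $\bar{\partial}$-closed smooth logarithmic forms. A logarithmic version of the $\bar{\partial}$-Poincar\'e lemma (which is proved locally by a Dolbeault-type resolution adapted to the normal crossing divisor) then shows that this inclusion is a quasi-isomorphism, and that it is even a bifiltered quasi-isomorphism with respect to the weight filtration $W$ (by order of poles) and the Hodge filtration $F$ (the stupid truncation).

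Next I would compare with the real structure. The complex $j_{\ast}E^{\ast}_{X\setminus A,\R}$ computes $H^{\ast}(X\setminus A,\R)$ by the ordinary de Rham theorem applied to the open manifold $X\setminus A$. A local computation on a polydisk neighborhood of a point of $A$ shows that the natural inclusion
\begin{displaymath}
  E^{\ast}_{X}(\log A)_{\R}\longrightarrow j_{\ast}E^{\ast}_{X\setminus A,\R}
\end{displaymath}
is a quasi-isomorphism; this is essentially the statement that in a neighborhood of a point of $A$ any smooth real form on the complement is cohomologous to one with at worst logarithmic growth. Combined with the preceding step, this identifies $E^{\ast}_{X}(\log A)$, with its filtrations, with the classical triple of Deligne up to filtered quasi-isomorphism.

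Finally I would check that the graded pieces are pure of the right weight. Writing $A=A_{1}\cup\cdots\cup A_{r}$ and $A^{(k)}$ for the disjoint union over $|I|=k$ of the intersections $A_{I}=\bigcap_{i\in I}A_{i}$ (each smooth projective by the normal crossing hypothesis), the Poincar\'e residue induces a morphism of weak $\R$-Hodge complexes
\begin{displaymath}
  \mathrm{Res}\colon \mathrm{Gr}^{W}_{k}E^{\ast}_{X}(\log A)\longrightarrow E^{\ast-k}_{A^{(k)}}(-k)
\end{displaymath}
which on the holomorphic subcomplex recovers Deligne's classical residue isomorphism and is therefore a quasi-isomorphism. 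Since each $A^{(k)}$ is smooth projective, $E^{\ast}_{A^{(k)}}$ is a pure Hodge complex of weight $0$, so $\mathrm{Gr}^{W}_{k}E^{\ast}_{X}(\log A)$ is pure of the appropriate weight after the Tate twist and shift, which is exactly the mixed Hodge complex axiom.

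The main obstacle I would expect is keeping track of all the twists and signs in the residue identification at the level of smooth forms and verifying the compatibility of the $\R$-structure with $\mathrm{Res}$, rather than any conceptual difficulty; once the residue map is set up correctly, the rest of the argument is a direct translation of Deligne's Hodge~II into the Dolbeault framework of weak $\R$-Hodge complexes introduced in Definition~\ref{def:1}.
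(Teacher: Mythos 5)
The paper gives no proof of this proposition: it is quoted directly from the reference \cite{Burgos:CDc} on the $C^{\infty}$-logarithmic Dolbeault complex, and your outline --- the bifiltered comparison with $\Omega^{\ast}_{X}(\log A)$ via a logarithmic $\bar\partial$-Poincar\'e lemma, the real comparison with $j_{\ast}E^{\ast}_{X\setminus A,\R}$, and purity of $\Gr^{W}_{k}$ via the residue onto the strata $A^{(k)}$ --- is precisely the strategy carried out there. Your sketch is correct and essentially the same as the cited source's argument, with the genuine technical work being exactly where you locate it (the $\R$-structure and twists in the residue identification).
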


Proposition \ref{prop:2} can be applied to general subvarieties of $X$
by using resolution of singularities. In order to get a complex that
does not depend on the choice of a particular resolution one can take
a limit with respect to all possible resolutions. In the sequel we
will have a mixed situation where there is already present a normal
crossing divisor $A$ that we want to preserve as much as possible and an
arbitrary subvariety $Z$ that meets $A$ properly. In this case we use
the following notation
\begin{equation}\label{eq:11}
  E^{\ast}_{X}(\log A\cup Z)
  = \lim_{\substack{\longrightarrow\\\widetilde X}}E^{\ast}_{X}(\log A')
\end{equation}
where the limit runs over all proper modifications $\pi \colon
\widetilde X\to X$ such that $A'=\pi ^{-1}(A\cup Z)$ is a normal
crossing divisor and that the restriction $\pi |_{\widetilde
  X\setminus \pi ^{-1}(Z)}\colon \widetilde
  X\setminus \pi ^{-1}(Z)\to
  X\setminus Z$ is an isomorphism. In other words we are
  only allowed to make blow ups supported on $Z$. The complex
  $E^{\ast}_{X}(\log A\cup Z)$ inherits a real structure, a Hodge
  filtration and a weight filtration. Proposition
  \ref{prop:2} easily implies the next result.

  \begin{cor}\label{cor:1}
    Assume that $X$ is projective, that
  $A\subset X$ is a normal crossing 
  divisor and that $Z\subset X$ is a closed subvariety. Then,
  \begin{displaymath}
((E^{\ast}_{X}(\log A\cup Z)_{\R},W),(E^{\ast}_{X}(\log
  A\cup Z),W,F))
\end{displaymath}
is a mixed Hodge complex computing the real mixed Hodge
  structure $H^{\ast}(X\setminus A\cup Z)$.   
\end{cor}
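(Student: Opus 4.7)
The plan is to deduce the statement from Proposition \ref{prop:2} by exhibiting the complex $E^{\ast}_{X}(\log A\cup Z)$ as a filtered colimit of mixed Hodge complexes indexed by an admissible system of resolutions, and then invoking the fact that filtered colimits preserve the relevant structure.

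First I would check that the indexing category is filtered and nonempty. By Hironaka's theorem on embedded resolution of singularities there exists at least one proper birational modification $\pi\colon \widetilde X\to X$, supported on $Z$, such that $\pi^{-1}(A\cup Z)$ is a normal crossing divisor and such that $\pi$ is an isomorphism over $X\setminus Z$; this gives a non-empty index set. Given any two such modifications $\pi_{1}\colon \widetilde X_{1}\to X$ and $\pi_{2}\colon \widetilde X_{2}\to X$, the closure of the graph of the birational map $\widetilde X_{1}\dasharrow \widetilde X_{2}$ (suitably resolved) dominates both, so the category is filtered. Moreover, for any dominating morphism $\pi\colon \widetilde X'\to \widetilde X$ supported on the preimage of $Z$, the pullback $\pi^{\ast}\colon E^{\ast}_{\widetilde X}(\log A')\to E^{\ast}_{\widetilde X'}(\log A'')$ respects the real structure, the Hodge filtration $F$, and the weight filtration $W$, and is therefore a morphism of mixed Hodge complexes by Proposition \ref{prop:2}.

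Next, for each admissible modification $\pi \colon \widetilde X\to X$, Proposition \ref{prop:2} applied to the pair $(\widetilde X, A'=\pi^{-1}(A\cup Z))$ gives a mixed Hodge complex $E^{\ast}_{\widetilde X}(\log A')$ computing $H^{\ast}(\widetilde X\setminus A')$. Since $\pi$ is an isomorphism over $X\setminus Z$ and $A'=\pi^{-1}(A\cup Z)$, the inclusion induces an isomorphism $\widetilde X\setminus A'\cong X\setminus(A\cup Z)$, so each term of the system computes the same cohomology $H^{\ast}(X\setminus (A\cup Z))$ and the transition maps induce the identity on cohomology. Passing to the filtered colimit in \eqref{eq:11} and using that direct limits are exact and commute with cohomology, the resulting complex $E^{\ast}_{X}(\log A\cup Z)$ computes $H^{\ast}(X\setminus(A\cup Z))$.

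The main point that needs care, and the step I would treat as the central technical obstacle, is that the colimit is still a mixed Hodge complex in the sense of Deligne. One must verify: (i) the real structure passes to the limit, which is clear since the involution is compatible with the transition maps; (ii) the filtrations $F$ and $W$ are exhaustive and bounded in each degree, which follows because the transition maps strictly preserve $F$ and $W$, so the filtrations on the colimit are obtained as colimits of the term-wise filtrations and inherit the bounds from any fixed resolution; (iii) the weight spectral sequence degenerates at $E_{2}$ and the Hodge filtration on $E_{1}^{p,q}$ induces a pure Hodge structure of the correct weight. For (iii) one uses that taking the $E_{1}$-term commutes with filtered colimits and that each individual term (coming from a fixed resolution) is a mixed Hodge complex by Proposition \ref{prop:2}; since the transition maps are morphisms of mixed Hodge complexes, the purity and degeneration are preserved. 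Putting these together yields the desired conclusion.
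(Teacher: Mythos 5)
Your argument is correct and follows exactly the route the paper intends: the paper offers no proof beyond the remark that Proposition \ref{prop:2} ``easily implies'' the corollary, the idea being precisely that the limit \eqref{eq:11} is a filtered colimit of the mixed Hodge complexes furnished by Proposition \ref{prop:2}, taken over a nonempty filtered system of admissible modifications, and that the mixed Hodge complex axioms and the computation of $H^{\ast}(X\setminus A\cup Z)$ pass to such a colimit. Your write-up simply makes explicit the points (filteredness via domination of any two resolutions, invariance of the cohomology since each modification is an isomorphism off $Z$, exactness of filtered colimits) that the paper leaves implicit.
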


We can now combine Proposition \ref{prop:1} and Corollary
\ref{cor:1}.

\begin{df}\label{def:2}
Let $A$ be a normal crossing divisor of $X$. Let $Z, W\subset X$ be
closed subvarieties such that no component of $W$ is contained in
$A\cup Z$. Let $\iota\colon  \widetilde W \to X$ be a resolution of
singularities of $W\setminus A\cup Z$. Then we write
\begin{equation*}
  \Sigma _{W}E^{\ast}_{X}(\log A\cup Z)=
  \{\omega \in E^{\ast}_{X}(\log A\cup Z)\mid
  \iota ^{\ast} \omega =0\}
  \subset E^{\ast}_{X}(\log A\cup Z).
\end{equation*}
We again use the shorthand, for $a\in \Z$, 
\begin{displaymath}
  \Sigma _{W}E^{\ast}_{X}(\log A\cup Z;a)\coloneqq \Sigma
  _{W}E^{\ast}_{X}(\log A\cup Z)(a). 
\end{displaymath}
\end{df}
The complex $\Sigma _{W}E^{\ast}_{X}(\log A\cup Z)$ has
a real structure and a Hodge filtration but not a weight
filtration.

\begin{thm}\label{thm:1}
  Assume that $X$ is projective and that $A$, $B$ are
  divisors without common components such that $A\cup B$ is
  a normal crossing divisor. Let $W$ be a smooth subvariety
  intersecting transversely all intersections of components of $A\cup B$
  and such that all intersections between components of $A\cup B\cup
  W$ are smooth and irreducible. Let $Z$ be a closed subvariety.
  Then, there is a mixed Hodge complex $K$ that computes the relative
  cohomology groups $H^{\ast}(X\setminus (B\cup Z), (A\cup W)\setminus
  (B\cup Z))$, a quasi-isomorphism
  \begin{displaymath}
    \Sigma_{A\cup W}E^{\ast}_{X}(\log B\cup Z)_{\R}\longrightarrow K_{\R}
  \end{displaymath}
  and a compatible filtered quasi-isomorphism 
  \begin{displaymath}
    (\Sigma_{A\cup W}E^{\ast}_{X}(\log B\cup Z)_{\C},F)\longrightarrow (K_{\C},F).
  \end{displaymath}
\end{thm}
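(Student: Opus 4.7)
The plan is to combine the ideas of Proposition~\ref{prop:1} and Corollary~\ref{cor:1} by writing down a two-step resolution: first use a logarithmic resolution to handle the open part $X\setminus(B\cup Z)$, then use a Koszul-type resolution to handle the closed subvariety $A\cup W$ along which we take relative cohomology. The mixed Hodge complex $K$ will then be built as the total complex of the resulting double complex.

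First I would decompose $A\cup W=Y_1\cup\cdots\cup Y_r$ into irreducible components and, for nonempty $I\subset\{1,\dots,r\}$, set $Y_I=\bigcap_{i\in I}Y_i$. By the transversality hypotheses, each $Y_I$ is smooth and irreducible; moreover the intersections of $B$ with $Y_I$ form a normal crossing divisor $B_I$ on $Y_I$, and $Z_I\coloneqq Z\cap Y_I$ is a closed subvariety meeting $B_I$ properly (the condition that no component of $W$ lies in $A\cup Z$ ensures $Z_I$ is proper in $Y_I$ on each stratum coming from $W$; for strata purely inside $A$ this is automatic from the fact that $A$ and $Z$ need no containment condition, since $Z$ is just an arbitrary closed subvariety and we only need $Z_I\subsetneq Y_I$). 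The key local observation is that, in a polydisc adapted to the normal crossing divisor $A\cup B\cup W$, restriction of logarithmic forms to a stratum $Y_I$ sends $E^{\ast}_X(\log B\cup Z)$ surjectively onto $E^{\ast}_{Y_I}(\log B_I\cup Z_I)$, and similarly at each level of the Hodge filtration $F$.

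Next I would build the analogue of sequence~\eqref{eq:32}: the sequence
\begin{equation*}
0\to \Sigma_{A\cup W}E^{\ast}_X(\log B\cup Z) \to E^{\ast}_X(\log B\cup Z)
\to \bigoplus_{|I|=1} E^{\ast}_{Y_I}(\log B_I\cup Z_I) \to
\bigoplus_{|I|=2} E^{\ast}_{Y_I}(\log B_I\cup Z_I) \to \cdots
\end{equation*}
is exact, and remains exact after applying $F^{p}$. Exactness is a local statement that follows, using partitions of unity, from the standard Koszul-style argument in adapted polydiscs where $A\cup B\cup W$ is a product of coordinate hyperplanes and $Z$ is pulled back from a factor complementary to $W$. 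By Corollary~\ref{cor:1}, each term $E^{\ast}_{Y_I}(\log B_I\cup Z_I)$, together with its real structure, weight and Hodge filtrations, is the de Rham part of a mixed Hodge complex computing $H^{\ast}(Y_I\setminus(B\cup Z))$. So the truncated complex
\begin{equation*}
E^{\ast}_X(\log B\cup Z) \to \bigoplus_{|I|=1}E^{\ast}_{Y_I}(\log B_I\cup Z_I)\to\bigoplus_{|I|=2}E^{\ast}_{Y_I}(\log B_I\cup Z_I)\to\cdots
\end{equation*}
assembles, via the simple complex construction for a cosimplicial mixed Hodge complex, into a mixed Hodge complex $K$ computing the relative cohomology $H^{\ast}(X\setminus(B\cup Z),(A\cup W)\setminus(B\cup Z))$, precisely as in the proof of~\cite[3.18]{MHS}.

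Finally, the exact sequence above identifies $\Sigma_{A\cup W}E^{\ast}_X(\log B\cup Z)$ with a shift of the simple complex of the rest of the sequence, so the inclusion $\Sigma_{A\cup W}E^{\ast}_X(\log B\cup Z)_{\R}\hookrightarrow K_{\R}$ and its complex analogue $(\Sigma_{A\cup W}E^{\ast}_X(\log B\cup Z),F)\hookrightarrow (K_{\C},F)$ are quasi-isomorphisms (filtered in the second case), which yields the two comparison maps in the statement. The main technical obstacle I anticipate is verifying filtered exactness of the Koszul-type sequence at $F^{p}$: one has to check carefully that restricting logarithmic forms to transverse strata preserves the Hodge filtration strictly, which requires working in adapted local coordinates and exploiting the transversality between $W$ and the normal crossing divisor $A\cup B$ together with the standard fact that $F^{p}E^{\ast}_X(\log\cdot)$ restricts surjectively onto $F^{p}E^{\ast}_{Y_I}(\log\cdot)$ in such coordinates.
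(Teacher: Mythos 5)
Your proposal is correct and follows essentially the same route as the paper: the paper's proof simply says it is the same argument as Proposition \ref{prop:1}, replacing each term $E^{\ast}_{Y_I}$ in the Mayer--Vietoris/Koszul resolution over the components of $A\cup W$ by the logarithmic complex $E^{\ast}_{Y_I}(\log B_I\cup Z_I)$ handled by Corollary \ref{cor:1}. You have merely spelled out the details (filtered exactness in adapted coordinates, assembly of the total mixed Hodge complex) that the paper leaves implicit.
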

\begin{proof}
  The proof is essentially the same as the proof of Proposition
  \ref{prop:1} by using Corollary \ref{cor:1} on each intersection
  among components of $A\cup W$.
\end{proof}

\begin{cor}\label{cor:2} With the hypothesis of Theorem \ref{thm:1}.
  For each $a,r\in \Z$ there is a canonical isomorphism
  \begin{displaymath}
    H^{r}(\Sigma_{A\cup W}E^{\ast}_{X}(\log B\cup Z,a))\xrightarrow{\cong}
    H^{r}(X\setminus(B\cup Z),(A\cup W )\setminus (B\cup Z); a)_{\C} 
  \end{displaymath}
  compatible with the Hodge filtration and the real
  structure. Moreover, the spectral sequence associated to the Hodge
  filtration $F$ degenerates at the term $E_{1}$. Therefore the
  differential $d$ in the complex $\Sigma_{Y}E^{\ast}_{X}$ is strict
  with respect to the filtration $F$.
\end{cor}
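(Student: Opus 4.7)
The plan is to derive everything directly from Theorem \ref{thm:1}, which is the technical heart; the corollary is essentially a translation of that theorem into cohomological language together with the general yoga of mixed Hodge complexes. First I would apply the quasi-isomorphism
\[
\Sigma_{A\cup W}E^{\ast}_{X}(\log B\cup Z)_{\R}\longrightarrow K_{\R}
\]
of Theorem \ref{thm:1} and pass to cohomology. Since $K$ is a mixed Hodge complex computing the relative cohomology of $(X\setminus(B\cup Z),(A\cup W)\setminus(B\cup Z))$, the induced map on the $r$-th cohomology is an isomorphism of real vector spaces onto $H^{r}(X\setminus(B\cup Z),(A\cup W)\setminus(B\cup Z))_{\R}$. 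Tensoring with $\C$ (or, equivalently, using the compatible filtered quasi-isomorphism between the complexified complex and $K_{\C}$ provided by the same theorem) yields the desired canonical isomorphism, and the Tate twist by $a$ is then formal from Definition~\ref{def:1}.

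Next I would check the two compatibilities. The compatibility with the real structure is immediate because the quasi-isomorphism is defined at the level of the real subcomplex; the Betti lattice on each side is identified by construction. The compatibility with the Hodge filtration follows from the fact that the map $(\Sigma_{A\cup W}E^{\ast}_{X}(\log B\cup Z)_{\C},F)\to(K_{\C},F)$ is a \emph{filtered} quasi-isomorphism, so it induces isomorphisms on the associated $F$-graded pieces, hence on the Hodge filtration in cohomology.

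For the $E_{1}$-degeneration and strictness, I would invoke Deligne's theorem that the spectral sequence associated to the Hodge filtration of the de Rham part of any mixed Hodge complex degenerates at $E_{1}$ (see \cite{MHS}). Since $\Sigma_{A\cup W}E^{\ast}_{X}(\log B\cup Z)_{\C}$ is filtered quasi-isomorphic to $K_{\C}$, its Hodge-to-de Rham spectral sequence coincides from $E_{1}$ onwards with the one for $K_{\C}$, which degenerates. This gives $E_{1}$-degeneration on $\Sigma_{A\cup W}E^{\ast}_{X}(\log B\cup Z)$, and strictness of $d$ with respect to $F$ is the standard equivalent reformulation.

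The main obstacle is really packed into Theorem \ref{thm:1} itself rather than this corollary; once that theorem is in place, the only remaining subtlety is to observe that $\Sigma_{A\cup W}E^{\ast}_{X}(\log B\cup Z)$ inherits a real structure and a Hodge filtration but no weight filtration, so strictness statements must be obtained indirectly by transfer through the mixed Hodge complex $K$, which is precisely what the filtered quasi-isomorphism accomplishes.
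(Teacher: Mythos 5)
Your proposal is correct and follows essentially the same route as the paper, whose proof simply cites Theorem \ref{thm:1} for the isomorphism and ``standard properties of mixed Hodge complexes'' for the $E_1$-degeneration and strictness; you have merely spelled out those standard properties (Deligne's degeneration theorem for the de Rham part of a mixed Hodge complex, transferred through the filtered quasi-isomorphism). No gap.
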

\begin{proof}
  The first statement is a direct  consequence of Theorem
  \ref{thm:1}. The second statement is also consequence of
  Theorem \ref{thm:1} and standard properties of mixed Hodge
  complexes.  
\end{proof}

Finally we explain how to use differential forms with zeros and poles
to make effective the duality of Lemma \ref{lemm:1} in the normal
crossing case.

\begin{prop}\label{prop:3}
  Assume that $X$ is projective. Let $A$ and $B$ be two divisors of $X$
  without common components such that $A\cup B$ is a normal crossing
  divisor. For $\eta\in \Sigma _{A}E^{r}_{X}(\log B)$ and $\omega \in
  \Sigma _{B}E^{2d-r}_{X}(\log A)$, the top differential form
  $\eta\wedge \omega $ is locally integrable. Moreover, the pairing 
  \begin{displaymath}
    H^{r}(X\setminus B,A)_{\C}\otimes H^{2d-r}(X\setminus A,B)_{\C}
    \longrightarrow \R(-d)_{\C}
  \end{displaymath}
  given, for $\eta$ and
  $\omega $ closed, by
  \begin{displaymath}
    \langle \eta,\omega \rangle =[\eta\wedge \omega ](1)=
    \frac{1}{(2\pi i)^{d}}\int_{X}\eta\wedge \omega
  \end{displaymath}
  is a perfect pairing inducing an isomorphism as in Lemma
  \ref{lemm:3}. 
\end{prop}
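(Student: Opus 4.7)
The plan is to proceed in three stages: (i) establish local integrability of $\eta\wedge\omega$, (ii) show that the pairing descends to cohomology and is compatible with the Hodge filtration and real structures, and (iii) identify it with the Lefschetz duality pairing of Lemma \ref{lemm:1}, whence perfectness follows.

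For (i), I will argue locally. Since $A\cup B$ is a normal crossing divisor, around any point of $X$ I can choose holomorphic coordinates $z_1,\dots,z_d$ so that $A=\{z_1\cdots z_a=0\}$ and $B=\{z_{a+1}\cdots z_{a+b}=0\}$. Then $\omega\in\Sigma_{B}E^{*}_{X}(\log A)$ is a sum of terms each involving at most factors of $dz_i/z_i$ and $d\bar z_i/\bar z_i$ for $i\le a$, with smooth coefficient vanishing on every component of $B$; dually for $\eta$. In the product $\eta\wedge\omega$, the logarithmic poles of $\omega$ along $A$ are absorbed by the vanishing of $\eta$ on $A$, and vice versa for $B$: the only potentially unbounded contributions are of the type $\bar z_i/z_i$ (bounded) and $(dz_i\wedge d\bar z_i)/z_i$, which is locally integrable in polar coordinates. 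This gives local $L^1$-integrability, and compactness of $X$ then yields a well-defined complex number $\langle\eta,\omega\rangle$.

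For (ii), I need to check that the pairing factors through cohomology. If $\eta=d\alpha$ with $\alpha\in\Sigma_{A}E^{r-1}_{X}(\log B)$ and $d\omega=0$, then formally
\begin{displaymath}
\eta\wedge\omega = d(\alpha\wedge\omega),
\end{displaymath}
so it suffices to show $\int_{X}d(\alpha\wedge\omega)=0$. The product $\alpha\wedge\omega$ is again locally integrable by the same argument as in (i), and its singularities along $A\cup B$ are sufficiently mild (logarithmic in one coordinate, with a vanishing factor in the dual direction) that a standard regularization argument — truncating by a smooth cutoff supported away from a shrinking neighborhood of $A\cup B$ and passing to the limit — yields the Stokes identity. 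The same reasoning applied in the opposite variable shows the pairing is balanced. Compatibility with the Hodge filtration follows because $\Sigma_{A}E^{*}_{X}(\log B)$ and $\Sigma_{B}E^{*}_{X}(\log A)$ are Dolbeault complexes whose Hodge filtrations are both given by truncation in holomorphic degree, and the integration pairing shifts these degrees by $(d,d)$; compatibility with the real structures is immediate from $\overline{\eta\wedge\omega}=\overline\eta\wedge\overline\omega$ together with the Tate twist conventions of Subsection~\ref{sec:conv-diff-forms}.

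For (iii), by Corollary \ref{cor:2} (applied with $Z=\emptyset$ and with the roles of $A$, $B$ and $W$ adjusted) the cohomology of $\Sigma_{A}E^{*}_{X}(\log B)_{\C}$ and of $\Sigma_{B}E^{*}_{X}(\log A)_{\C}$ compute respectively the mixed Hodge structures $H^{*}(X\setminus B,A)_{\C}$ and $H^{*}(X\setminus A,B)_{\C}$. The pairing defined by $\langle\eta,\omega\rangle=[\eta\wedge\omega](1)$ is, up to the normalization $(2\pi i)^{-d}$ built into the current $\delta_{X}$, the cap/cup pairing coming from the exterior product of cohomology classes followed by the fundamental class of $X$. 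Comparing it with the duality pairing of Lemma \ref{lemm:1} on the common dense open $X\setminus(A\cup B)$ — where both pairings reduce to the standard Poincar\'e duality on a smooth quasi-projective variety — shows they coincide; Lemma \ref{lemm:1} then gives perfectness and the duality isomorphism.

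The principal technical difficulty is step (ii): making the Stokes argument rigorous in the presence of simultaneous zeros and logarithmic poles. The key point is that the product $\alpha\wedge\omega$, while singular, belongs to a class of forms on which integration by parts against a smooth cutoff vanishing to high order on $A\cup B$ produces boundary terms that tend to zero as the cutoff approaches the constant $1$. Once this is verified, the rest of the argument reduces to formal comparison with Lemma~\ref{lemm:1} via Corollary~\ref{cor:2}.
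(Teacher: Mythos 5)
The paper states Proposition~\ref{prop:3} without proof (it is presented as a consequence of the duality of Lemma~\ref{lemm:1}, i.e.\ of \cite[Lemma 6.1.1]{BKV:Fihnf}, made explicit by the log-complex machinery of Theorem~\ref{thm:1}), so there is nothing to compare your argument against; I will assess it on its own terms. Your steps (i) and (ii) are sound in outline. In (i) the one point to make explicit is that the genuinely dangerous term is $dz_i\wedge d\bar z_i/(z_i\bar z_i)$, which is \emph{not} locally integrable; it is harmless only because the complementary factor contributed by the other form survives pullback to $\{z_i=0\}$ and hence its coefficient vanishes there, giving an extra factor $O(|z_i|)$ and reducing the singularity to $O(1/|z_i|)$. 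Your list of "potentially unbounded contributions" omits this term, but the mechanism you invoke ("poles absorbed by vanishing") is exactly the one that kills it. Step (ii) correctly identifies the Stokes identity for these mixed zero/log-pole forms as the technical crux; the cutoff argument you describe is the standard one (cf.\ \cite[Theorem 5.44]{BurgosKramerKuehn:cacg}).

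Step (iii), however, contains a genuine gap. You propose to prove perfectness by comparing the integration pairing with the duality of Lemma~\ref{lemm:1} "on the common dense open $X\setminus(A\cup B)$, where both pairings reduce to the standard Poincar\'e duality". But the restriction maps $H^{r}(X\setminus B,A)\to H^{r}(X\setminus(A\cup B))$ are very far from injective, so agreement after restriction proves nothing about the pairings themselves. Already for $X=\P^{1}$, $A=\{0,1\}$, $B=\{\infty\}$ one has $H^{1}(X\setminus B,A)=H^{1}(\A^{1},\{0,1\})\cong\Q(0)$, of weight $0$, while $H^{1}(X\setminus(A\cup B))$ is pure of weight $2$; the restriction is therefore the zero map, and your comparison is vacuous precisely on the classes that carry the duality. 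To close the gap you need a chain-level identification: show that $\eta\otimes\omega\mapsto[\eta\wedge\omega](1)$ realizes the cup product $H^{r}(X\setminus B,A)\otimes H^{2d-r}(X\setminus A,B)\to H^{2d}(X;\,\cdot\,)$ followed by the trace, and that this pairing is compatible (up to sign) with the long exact sequences of the pairs relating these groups to the cohomology of $X\setminus A$, $X\setminus B$ and the strata of $A\cup B$; induction on the number of components then reduces perfectness to classical Poincar\'e duality on $X$ and on the smooth strata. This is in effect a de Rham-level rerun of the proof of \cite[Lemma 6.1.1]{BKV:Fihnf}, and it — not a restriction to the common open — is what identifies your pairing with the isomorphism of Lemma~\ref{lemm:3}.
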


\subsection{Currents on a subvariety}
\label{sec:currents-subvariety}

Let $Z$ be a subvariety of $X$. We denote by $\Sigma
_{Z}E^{\ast}_{X,c}\subset \Sigma
_{Z}E^{\ast}_{X}$  the subspace of differential forms with compact
support on $X$ that vanish on $Z$ and we write
\begin{displaymath}
  E^{\prime -n}_{X,Z}=\{T\in E^{\prime -n}_{X}\mid
  T(\omega )=0,\ \forall \omega \in
  \Sigma_{Z}E^{\ast}_{X,c}\}
\end{displaymath}
The space $E^{\prime -n}_{X,Z}$ has been introduced by Bloom and Herrera
in \cite{BloomHerrera:drcas} and, in the case when $Z$
is smooth, it agrees with $E^{\prime -n}_{Z}$. This space is a
Dolbeault complex and we write 
\begin{displaymath}
  D^{\ast}_{X,Z}=E_{X,Z}^{\prime \ast}[-2d](-d),\qquad D^{\ast}_{X/Z}= D^{\ast}_{X}/D^{\ast}_{X,Z}.
\end{displaymath}
Again, the complex $D^{\ast}_{X,Z}$ has a real structure and a Hodge
filtration but not a weight filtration.

Let $A\subset X$ be a normal crossing divisor and $Z$ a closed
subvariety, write $Y=A\cup Z$. If $\omega \in E^{\ast}_{X}(\log A\cup
Z;a)$ and $\eta\in \Sigma _{Y}E^{\ast}_{X}$ then the differential form
$\omega \land \eta$ is locally integrable in any proper modification
$\widetilde X\to X$ where $\omega $ is defined. This induces a map
\begin{displaymath}
  [\cdot]\colon E^{\ast}_{X}(\log A\cup Z;a) \to D^{\ast}_{X/Y}(a)
\end{displaymath}
given by
\begin{displaymath}
  [\omega ](\eta)=\frac{1}{(2\pi i)^{d}}\int_{X}\omega \land \eta. 
\end{displaymath}

 \begin{prop}\label{prop:5}
   Let $A$, $Z$ and $Y$ be as before. Assume that $Z$ is smooth and
   that meets transversely all the strata of $A$. Then the map 
  \begin{displaymath}
    (E^{\ast}_{X}(\log A\cup Z;a),F)\longrightarrow  (D^{\ast}_{X/Y}(a),F)
  \end{displaymath}
  is a filtered quasi-isomorphism compatible with the real structure.  
\end{prop}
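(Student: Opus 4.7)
The plan is to verify the proposition by reducing to a local computation at each point of $X$ and then identifying the cohomology of both sides with $H^{\ast}(X\setminus Y; a)_{\C}$. Both complexes sheafify to complexes of fine sheaves, so it suffices to check the statement on arbitrarily small open neighborhoods. To see first that the map $[\cdot]$ is well-defined, given $\omega\in E^{\ast}_{X}(\log A\cup Z;a)$, realized with logarithmic singularities on a blow-up $\pi\colon \tilde X \to X$ supported on $Z$, and a compactly supported test form $\eta\in \Sigma_{Y} E^{\ast}_{X,c}$, the pullback $\pi^{\ast}\eta$ vanishes on each component of $\pi^{-1}(Y)$, so $\pi^{\ast}\omega\wedge\pi^{\ast}\eta$ is locally integrable on $\tilde X$. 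Hence the integral converges and $[\omega]\in D^{\ast}_{X/Y}(a)$ vanishes when $\eta$ itself belongs to $\Sigma_{Y}E^{\ast}_{X,c}$.

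For the quasi-isomorphism, I would choose local coordinates $(z_{1},\dots,z_{d})$ around $x\in X$ so that $A = \{z_{1}\cdots z_{k} = 0\}$ and, by transversality with the strata of $A$, so that $Z = \{z_{k+1} = \cdots = z_{m} = 0\}$ whenever $x\in Z$. If $Z$ is a divisor, $Y$ is already a normal crossing divisor in this chart, and Corollary \ref{cor:1} identifies the cohomology of the left-hand side on a small polydisk $U$ with $H^{\ast}(U\setminus Y; a)_{\C}$. Otherwise, blowing up $Z$ reduces to this situation; Lemma \ref{lemm:2} ensures the cohomology of the complement is preserved, and the direct limit definition \eqref{eq:11} guarantees that the resulting cohomology agrees with that of $E^{\ast}_{X}(\log A\cup Z;a)$. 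For the right-hand side, observe that by construction $D^{n}_{X/Y}$ is naturally a space of functionals on $\Sigma_{Y} E^{2d-n}_{X,c}$. Since the latter computes $H^{\ast}_{c}(X, Y)$ (a consequence of Proposition \ref{prop:1} applied to $Y$ as a smooth-plus-normal-crossing union in the local model), Lefschetz duality identifies $H^{\ast}(D^{\ast}_{X/Y}(a))$ with $H^{\ast}(X\setminus Y; a)_{\C}$, and the integration pairing $[\cdot]$ realizes precisely this identification.

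The Hodge filtration on both complexes is the pole-order filtration, which is preserved by integration essentially by construction, and compatibility with the real structure is immediate from the reality of the integration pairing together with the choice of twist factor $(2\pi i)^{-d}$ built into the definition of $D^{\ast}_{X}$. Strictness of $[\cdot]$ with respect to $F$ follows from the fact that $E^{\ast}_{X}(\log A\cup Z;a)$ underlies a mixed Hodge complex by Corollary \ref{cor:1}, so its Hodge-to-de Rham spectral sequence degenerates at $E_{1}$; this degeneration is transported to $D^{\ast}_{X/Y}(a)$ via the now-established quasi-isomorphism, giving the filtered statement. The principal obstacle will be the case where $Z$ has codimension at least two, requiring careful bookkeeping on the blow-up: one needs that $D^{\ast}_{X/Y}$ is insensitive to blow-ups supported on $Z\subset Y$, which holds because such blow-ups leave $X\setminus Y$ unchanged and the pushforward of currents matches the pullback on test forms. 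This compatibility makes both sides compatible with passage to the limit in \eqref{eq:11}, allowing the local argument to globalize.
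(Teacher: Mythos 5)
Your proposal takes a genuinely different route from the paper. The paper's proof is two lines long: it cites the normal crossing case of this filtered quasi-isomorphism from \cite[Theorem 5.44]{BurgosKramerKuehn:cacg}, blows up $Z$ to make $Y$ a normal crossing divisor, and then shows $(D^{\ast}_{\widetilde X/\widetilde Y},F)\to (D^{\ast}_{X/Y},F)$ is a filtered quasi-isomorphism by observing that the square formed by $D^{\ast}_{\widetilde X,\widetilde Y}$, $D^{\ast}_{\widetilde X}$, $D^{\ast}_{X,Y}$, $D^{\ast}_{X}$ has acyclic total complex (by the cohomology-of-a-blow-up formula), compatibly with $F$. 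Your proof instead attempts to re-establish the normal crossing case from scratch by a local duality argument, identifying $D^{\ast}_{X/Y}$ with a topological dual of $\Sigma_Y E^{\ast}_{X,c}$. That would be a valid and perhaps more self-contained strategy, but as written it has gaps.

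Two of these gaps are serious. First, the duality step: you assert that $D^{n}_{X/Y}$ is ``naturally a space of functionals on $\Sigma_{Y}E^{2d-n}_{X,c}$'' and then pass directly to ``$H^{\ast}(D^{\ast}_{X/Y}(a))\cong H^{\ast}(X\setminus Y;a)_{\C}$.'' By definition $D^{n}_{X/Y}$ is the quotient of $D^n_X$ by the annihilator of $\Sigma_Y E^{2d-n}_{X,c}$, so it injects into the algebraic dual, but whether it is the \emph{full} topological dual, and whether passing to cohomology commutes with taking topological duals, are precisely the analytical points that make this kind of statement nontrivial (closed range of the differentials, reflexivity, etc.). This is the content that \cite{Fujiki:dmHs} and \cite{HerreraLieberman:rpvcs} supply and that \cite[Theorem 5.44]{BurgosKramerKuehn:cacg} packages; it cannot be waved through with one sentence of Lefschetz duality. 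Second, the filtered statement: you argue that $E_{1}$-degeneration of the Hodge spectral sequence for $E^{\ast}_X(\log A\cup Z;a)$ is ``transported'' to $D^{\ast}_{X/Y}(a)$ through the quasi-isomorphism, and hence strictness holds. A morphism of filtered complexes that is a quasi-isomorphism of the underlying complexes does \emph{not} transport $E_1$-degeneration, and even if both sides had degenerate spectral sequences this would not make the map a filtered quasi-isomorphism (a filtered morphism can be an isomorphism on cohomology without respecting the induced filtrations). What is actually needed is that $\Gr^{p}_{F}$ of the map is a quasi-isomorphism for each $p$, which must be verified directly; the paper gets this for free from the filtered statement in \cite{BurgosKramerKuehn:cacg} plus a filtered version of the blow-up acyclicity. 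Finally, your reduction of the codimension-$\ge 2$ case to the divisor case via ``pushforward of currents matches pullback on test forms'' is only the bare idea; the paper's version makes this precise through the blow-up long exact sequences, and in particular keeps track of $F$, which your sketch does not.
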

\begin{proof}
  The case when $A\cup Z$ is a normal crossing divisor has been proved in
  \cite[Theorem 5.44]{BurgosKramerKuehn:cacg} using the techniques
  from \cite{Fujiki:dmHs} and \cite{HerreraLieberman:rpvcs}. Let
  $\pi \colon \widetilde X\to X$ be the blow-up of $X$ along $Z$. The
  conditions on $Z$ imply that $\widetilde Y\coloneqq \pi ^{-1}(Y)$ is
  a normal crossing divisor.  
  Consider the commutative diagram with exact rows
  \begin{displaymath}
    \xymatrix{
      0\ar[r]& D^{\ast}_{\widetilde X,\widetilde Y}\ar[r]\ar[d]&
      D^{\ast}_{\widetilde X}\ar[r]\ar[d]&
      D^{\ast}_{\widetilde X/\widetilde Y}\ar[r]\ar[d]&0\\
      0\ar[r]& D^{\ast}_{X,Y}\ar[r]&
      D^{\ast}_{X}\ar[r]&
      D^{\ast}_{X/Y}\ar[r]&0.
    }
  \end{displaymath}
  The formula for the cohomology of a blow-up implies that the total
  complex associated to the diagram of complexes
  \begin{displaymath}
    \xymatrix{
      D^{\ast}_{\widetilde X,\widetilde Y}\ar[r]\ar[d]&
      D^{\ast}_{\widetilde X}\ar[d]\\
      D^{\ast}_{X,Y}\ar[r]&
      D^{\ast}_{X}}
  \end{displaymath}
  is acyclic. Even more, every subcomplex defined by the Hodge
  filtration is acyclic. This implies that the arrow
  \begin{displaymath}
    (D^{\ast}_{\widetilde X/\widetilde Y},F)
    \to
    (D^{\ast}_{X/Y},F)
  \end{displaymath}
  is a filtered quasi-isomorphism. Thus the result follows from the
  normal crossing case. 
\end{proof}
\begin{cor}
  With the hypothesis of Proposition \ref{prop:5}, for every $a,r\in \Z$, there is a canonical
  isomorphism
  \begin{displaymath}
    H^{r}(X\setminus Y;a)_{\C} = H^{r}(D^{\ast}_{X/Y}(a))_{\C} 
  \end{displaymath}
  compatible with the Hodge filtration and the real structure. 
\end{cor}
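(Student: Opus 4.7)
The plan is to derive the corollary as a direct composition of two quasi-isomorphisms already established in the preceding results, using the complex $E^{\ast}_{X}(\log A \cup Z;a)$ as a bridge between the two sides.

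First, I would invoke Corollary \ref{cor:1} applied to the normal crossing divisor $A$ together with the closed subvariety $Z$. Since $Y = A \cup Z$, this says that the data
\[
\bigl((E^{\ast}_{X}(\log A\cup Z)_{\R},W),\ (E^{\ast}_{X}(\log A\cup Z),W,F)\bigr)
\]
forms a mixed Hodge complex computing the real mixed Hodge structure on $H^{\ast}(X\setminus Y)$. Twisting by $\Q(a)$ and taking cohomology in degree $r$ yields a canonical isomorphism
\[
H^{r}\bigl(E^{\ast}_{X}(\log A\cup Z;a)\bigr)_{\C} \;\xrightarrow{\cong}\; H^{r}(X\setminus Y;a)_{\C},
\]
compatible with both the Hodge filtration $F$ and the real structure.

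Next I would apply Proposition \ref{prop:5}, whose hypotheses are exactly those assumed here (namely $Z$ smooth and transverse to all strata of $A$). It supplies a filtered quasi-isomorphism
\[
(E^{\ast}_{X}(\log A\cup Z;a),F)\longrightarrow (D^{\ast}_{X/Y}(a),F)
\]
compatible with the real structure. Passing to cohomology in degree $r$ gives a canonical isomorphism
\[
H^{r}\bigl(E^{\ast}_{X}(\log A\cup Z;a)\bigr)_{\C} \;\xrightarrow{\cong}\; H^{r}\bigl(D^{\ast}_{X/Y}(a)\bigr)_{\C}
\]
which respects $F$ and the real structure. Composing the inverse of this isomorphism with the one from the previous paragraph produces the asserted canonical isomorphism
\[
H^{r}(X\setminus Y;a)_{\C} \;\cong\; H^{r}\bigl(D^{\ast}_{X/Y}(a)\bigr)_{\C},
\]
with the desired compatibilities.

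There is essentially no obstacle: all the substantive work has already been carried out in Corollary \ref{cor:1} (which packages the logarithmic de Rham complex into a mixed Hodge complex via the blow-up limit in \eqref{eq:11}) and in Proposition \ref{prop:5} (which identifies logarithmic forms with currents modulo those supported on $Y$, via the normal-crossing case of \cite[Theorem 5.44]{BurgosKramerKuehn:cacg} and a blow-up argument along $Z$). The only point worth checking carefully is that the map $[\cdot]$ used in Proposition \ref{prop:5} is the same one implicit in the identifications above, so that the composite isomorphism is canonical; this is automatic since both sides are obtained functorially from the natural inclusion of logarithmic forms into currents modulo $Y$.
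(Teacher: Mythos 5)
Your proposal is correct and matches the approach the paper leaves implicit: the paper gives no separate proof for this corollary precisely because it follows by composing Proposition \ref{prop:5} (the filtered quasi-isomorphism from logarithmic forms to currents, compatible with the real structure) with the fact, from Corollary \ref{cor:1}, that the logarithmic complex $E^{\ast}_{X}(\log A\cup Z;a)$ computes $H^{r}(X\setminus Y;a)_{\C}$ together with its Hodge filtration and real structure. The only small unstated step you rely on — that a filtered quasi-isomorphism of biregularly filtered complexes induces a filtered isomorphism on cohomology — is standard and implicit in the paper as well.
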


\subsection{Wave front sets}
\label{sec:wave-front-sets}

A current $T$ can be viewed as a differential form with
\emph{distribution} coefficients or as a \emph{generalized
section} of a vector bundle. As such, it has a wave front set that is
denoted by
$\WF(T)$. The theory of wave front sets of distributions is
developed in \cite{Hormander:MR1065993} chap. VIII. For the theory of
wave front 
sets of generalized sections, the reader can consult
\cite{GuilleminStenberg:MR0516965} chap. VI. Since we will work
with currents and 
hence with generalized sections of vector bundles, we will mainly follow 
\cite{Hormander:MR1065993}.

Denote he conormal bundle of $X$ minus the
zero section as
$T_{0}^{\vee}X=T^{\vee}X\setminus \{0\}$.
The wave front set
of a current $T$ is a closed conical subset of $T_{0}^{\vee}X$.
This set describes the
points and directions of the singularities of $T$ and it allows us
to define certain products and inverse images of currents. For a
concise description of the basic properties of the wave front set we
refer to \cite[\S 4]{BurgosLitcanu:SingularBC}.

Let $\caS\subset T^{\vee}_{0}X$ be a closed conical subset. We denote by
$D^{\ast}_{X;\caS}$ the space of currents on $X$ with wave front set
contained in $\caS$. Then \cite[Theorem 4.5]{BurgosLitcanu:SingularBC}
implies that

\begin{prop}\label{prop:6} Assume that $X$ is projective. Then 
  the morphisms  
  \begin{displaymath}
(E^{\ast}_{X},F)\to (D^{\ast}_{X;\caS },F)\to
  (D^{\ast}_{X},F)
\end{displaymath}
 are filtered quasi-isomorphism. 
\end{prop}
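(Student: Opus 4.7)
The plan is to split the statement into the two constituent filtered quasi-isomorphisms and reduce each one to a standard fact or to the cited microlocal result. First I would observe that the composition $E^{\ast}_{X}\to D^{\ast}_{X}$ is the classical inclusion of smooth forms into currents. On a projective (hence compact) complex manifold, this is a filtered quasi-isomorphism for the Hodge filtration: sheafifying, both $\mathcal{E}^{\ast}_{X}$ and $\mathcal{D}^{\ast}_{X}$ are fine resolutions of $\mathbb{C}_{X}$, and the $F^{p}$-piece on each side is a Dolbeault-type resolution of $\Omega^{\geq p}_{X}$. Taking global sections gives the same hypercohomology with compatible Hodge filtrations. By the two-out-of-three principle for filtered quasi-isomorphisms, it then suffices to show that one of the two maps in the statement is a filtered quasi-isomorphism, and the other will follow.

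For the middle inclusion $D^{\ast}_{X;\caS}\hookrightarrow D^{\ast}_{X}$, I would appeal to a regularization argument. Locally on $X$ any current can be approximated by a smooth form via convolution with a smooth compactly supported kernel, and the difference is exact up to a chain homotopy. The key microlocal input, which is precisely the content of \cite[Theorem 4.5]{BurgosLitcanu:SingularBC}, is that this homotopy can be arranged to preserve the wave front condition $\WF(T)\subset \caS$ while also being type-preserving, hence compatible with $F$. Concretely, one obtains operators $R$ (smoothing) and $K$ (homotopy) on $D^{\ast}_{X}$ with $R-\Id=dK+Kd$, with $R$ landing in $E^{\ast}_{X}\subset D^{\ast}_{X;\caS}$ and with $K$ sending $D^{\ast}_{X;\caS}$ to itself and respecting $F$. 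This sandwiches $D^{\ast}_{X;\caS}$ between two filtered-quasi-isomorphic complexes and forces both arrows in the statement to be filtered quasi-isomorphisms.

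The main obstacle is the compatibility with $F$: microlocal regularization that respects wave front sets is classical (H\"ormander), but ensuring that the smoothing kernel is type-preserving for the bidegree requires choosing it carefully, for instance as a tensor product of one-dimensional kernels in holomorphic coordinates, and then patching via a partition of unity using that $X$ is compact. This is exactly where I would lean on \cite{BurgosLitcanu:SingularBC} rather than redoing the construction. Projectivity enters twice: once to globalize the local regularization via a partition of unity, and once to ensure convergence of the Hodge-to-de Rham spectral sequence on each side, which is what allows the filtered quasi-isomorphism at the level of complexes to descend to an isomorphism of filtered cohomology.
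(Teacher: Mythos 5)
Your argument is correct and matches the paper, which offers no independent proof but simply derives the proposition from \cite[Theorem 4.5]{BurgosLitcanu:SingularBC} — exactly the regularization-with-wave-front-control input you isolate as the key step. The additional scaffolding you provide (the classical filtered quasi-isomorphism $E^{\ast}_{X}\to D^{\ast}_{X}$ plus two-out-of-three) is sound and consistent with how that cited theorem is meant to be applied.
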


We will need an analogue of Theorem \ref{thm:1} for currents with
controlled wave front sets. Although the theory of wave front sets 
depends only of the underlying structure of differentiable manifolds
we will state the needed notations and results in the complex case.

\begin{df}\label{def:11} Let $f\colon Y\to X$ be a morphism of complex
  manifolds, and let $\caS \subset T_{0}^{\vee}X$ and $\caR\subset
  T_{0}^{\vee}$ closed conical subsets. Then we denote
  \begin{align*}
    N^{\vee}_{0}f &= \{(x,\xi ) \in T_{0}^{\vee}X\mid x=f(y),\
                    df(y)^{t}\xi  =0\},\\
    f^{\ast}\caS  &=\{(y,\eta ) \in T_{0}^{\vee}Y\mid \exists (x,\xi)\in
                \caS , \ x=f(y), \ \eta = 
                df(y)^{t}\xi \},\\
    f_{\ast}\caR &= N_{0}^{\vee}f \{(x,\xi) \in T_{0}^{\vee}X\mid \exists (y,\eta )\in
                \caR, \ x=f(y), \ \eta = 
                df(y)^{t}\xi  \}. 
  \end{align*}
  Then
  \begin{displaymath}
    f_{\ast}f^{\ast} \caS  = N_{0}^{\vee}f\cup \{(x,\xi ) \in
    T_{0}^{\vee}X\mid x=f(y),\ \exists (x,\xi ')\in \caS, df(y)^{t}\xi  =df(y)^{t}\xi' \}.
  \end{displaymath}
  Clearly, $f_{\ast}f^{\ast} \caS =f_{\ast}f^{\ast} f_{\ast}f^{\ast} \caS $.
  We call $f_{\ast}f^{\ast} \caS $ the \emph{saturation} of $\caS $ with
  respect to $f$. If $\caS =f_{\ast}f^{\ast} \caS $ we say that $\caS $ is
  \emph{saturated}. 
  If $Y$ is a smooth submanifold of $X$ and $f$ the corresponding
  closed immersion, we write $N^{\vee}_{0}Y=N^{\vee}_{0}f$.
\end{df}

The basic functoriality properties of currents and wave front are the
following (see \cite[chap. VIII, sect. 2]{Hormander:MR1065993}).

\begin{prop}\label{prop:11}
Let $f\colon Y\to X$  be a morphism of complex
  manifolds of relative dimension $e$, and let $\caS \subset T_{0}^{\vee}X$
  and $\caR\subset 
  T_{0}^{\vee}$ closed conical subsets.
  \begin{enumerate}
  \item If $T\in D^{r}_{X;\caS }$ and $N_{f}\cap \caS =\emptyset$, then there
    is a well defined pullback current $f^{\ast}T\in D^{r}_{Y;f^{\ast}
      \caS }$.
  \item If $T\in D^{r}_{Y;\caR}$, then $f_{\ast}T\in D_{X;f_{\ast}\caR}$.
  \end{enumerate}
\end{prop}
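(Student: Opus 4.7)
The plan is to reduce the statement to the corresponding facts for scalar distributions, where everything is standard Hörmander theory, and then upgrade from distributions to currents by working in local trivializations.

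First I would treat (i). Locally on $X$ a current of degree $r$ is a finite sum $T = \sum_I T_I \, dz^I \wedge d\bar z^J$ where the $T_I$ are scalar distributions whose (scalar) wave front sets are contained in the wave front set of $T$. The hypothesis $N^\vee_0 f \cap \caS = \emptyset$ means precisely that, after passing to a local chart of $f$, no element of the wave front set of any $T_I$ lies in the set of covectors annihilating $df$. By \cite[Theorem 8.2.4]{Hormander:MR1065993} the scalar pullback $f^\ast T_I$ is then well defined as a distribution on $Y$ and satisfies $\WF(f^\ast T_I) \subset f^\ast \WF(T_I)$. Pulling back the differential form parts by $df$ in the usual way produces a current $f^\ast T$ on $Y$, and the wave front estimate passes to the current level, giving $f^\ast T \in D^r_{Y;f^\ast\caS}$. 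One then checks that the local definition is independent of the chart, which follows from the functoriality of the scalar pullback.

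For (ii) I would use the same local-to-global strategy but in the dual direction, based on \cite[Theorem 8.2.12]{Hormander:MR1065993}. Since the pushforward of a current is defined by $f_\ast T(\varphi) = T(f^\ast \varphi)$ for test forms $\varphi$ of the appropriate degree, and since $f$ is (tacitly) proper on the support of $T$, this pairing is defined without any wave front hypothesis. The content of the statement is the containment $\WF(f_\ast T) \subset f_\ast \caR$, which at the scalar level is exactly the content of Hörmander's theorem on integration of distributions along the fibers: the only covectors that can appear in $\WF(f_\ast T)$ are images under $(df)^{-t}$ of covectors in $\WF(T)$, together with covectors annihilated by $df$ at points of the image, which is precisely the description of $f_\ast \caR$ built into Definition \ref{def:11}. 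Wedging with form coefficients and summing preserves the wave front inclusion, so the current statement follows.

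The main obstacle, if any, is bookkeeping: translating between the distributional wave front set (covectors in $T^\vee X \setminus 0$) used by Hörmander and the version for generalized sections of vector bundles used here, and making sure the saturation conventions in Definition \ref{def:11} match those that appear naturally in the pullback and pushforward theorems. Once the two sets of conventions are aligned, both (i) and (ii) are immediate consequences of the cited theorems applied coefficient-wise in local trivializations, and the result is genuinely a restatement of \cite[Ch.~VIII, \S 2]{Hormander:MR1065993} in the language of currents on complex manifolds.
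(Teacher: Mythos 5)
The paper gives no proof of this proposition; it simply cites \cite[Ch.~VIII, \S 2]{Hormander:MR1065993}, so there is no ``paper's approach'' to compare against beyond that citation. Your reduction to the scalar case---writing a current locally as a form with distributional coefficients, applying Hörmander's pullback theorem (Theorem~8.2.4) coefficient-wise for (i) and the pushforward wave front bound for (ii), and then checking that the wave front inclusion is stable under taking finite sums and wedging with smooth forms---is precisely how one unpacks that citation, and it is correct. Your closing remark about aligning the two notions of wave front set (Hörmander's scalar one versus the paper's generalized-section one) and the saturation conventions of Definition~\ref{def:11} is the right thing to flag as the only real bookkeeping work; once local trivializations are chosen, the wave front set of a generalized section is by definition the union of the scalar wave front sets of the coefficients, so the translation is mechanical. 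The only minor caveats: for (ii) the properness of $f$ on the support of $T$ is indeed needed for $f_*T$ to be defined at all (the paper's conventions in \S\ref{sec:conv-diff-forms} assume $f$ proper), and the Hörmander theorem number for the pushforward bound is 8.2.12/8.2.13 depending on edition, but neither affects the substance.
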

Let $\iota\colon A\hookrightarrow X$ be a smooth hypersurface and
$\caS \subset T^{\vee}_{0}X$ a closed conical subset.  
We will denote
\begin{displaymath}
  D^{\ast}_{X,A;\caS }=D^{\ast}_{X,A}\cap D^{\ast}_{X;\caS }\qquad
    D^{\ast}_{X/A;\caS }=\left.D^{\ast}_{X;\caS }\right/D^{\ast}_{X,A;\caS }.
\end{displaymath}
\begin{lem}\label{lemm:7} Let $\caR\subset T^{\vee}_{0}A$ be a closed
  conical subset. The morphism $\iota _{\ast}$ induces an isomorphism
  \begin{equation}\label{eq:102}
    \iota_{\ast}\colon D^{\ast}_{A;\caR}[-2](-1) \longrightarrow
    D^{\ast}_{X,A;\iota_{\ast}\caR}.
  \end{equation}
  Therefore, if $\caS \subset T^{\vee}_{0}X$ is saturated, we obtain an
  isomorphism 
  \begin{displaymath}
    \iota_{\ast}\colon D^{\ast}_{A;\iota^{\ast} \caS }[-2](-1) \longrightarrow
    D^{\ast}_{X,A;\caS }.
  \end{displaymath}
\end{lem}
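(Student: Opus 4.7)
The plan is to proceed in two stages: first establish the isomorphism of complexes of currents before imposing any wave front restriction, and then check separately that pushforward respects the wave front conditions on both sides.

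For the first stage, I would invoke the Bloom--Herrera identification $E^{\prime *}_{X,A}\cong E^{\prime *}_{A}$ for smooth $A$, recalled at the start of Section \ref{sec:currents-subvariety}. Taking into account the shifts and Tate twists in the definitions $D^{\ast}_{X}=E^{\prime *}_{X}[-2d](-d)$ and $D^{\ast}_{A}=E^{\prime *}_{A}[-2(d-1)](-(d-1))$, pushforward yields an isomorphism $\iota_{\ast}\colon D^{\ast}_{A}[-2](-1)\to D^{\ast}_{X,A}$ compatible with the real structure and the Hodge filtration (both of which are defined pointwise on currents).

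For the second stage, the forward inclusion $\iota_{\ast}(D^{\ast}_{A;\caR})\subset D^{\ast}_{X,A;\iota_{\ast}\caR}$ is immediate from Proposition \ref{prop:11}(ii). For the reverse, given $S\in D^{\ast}_{X,A;\iota_{\ast}\caR}$, I write $S=\iota_{\ast}T$ uniquely using the first stage and must show $\WF(T)\subset\caR$. In local coordinates with $A=\{y=0\}$, the pushforward is $\iota_{\ast}T=T\otimes \delta(y)$, and its partial Fourier transform is constant in the variable dual to $y$. A standard computation (see \cite[Theorem 8.2.4]{Hormander:MR1065993}) then shows that for covectors $\xi$ at points of $A$ with $\iota^{\ast}\xi\neq 0$, one has $(\iota(y_{0}),\xi)\in\WF(\iota_{\ast}T)$ if and only if $(y_{0},\iota^{\ast}\xi)\in\WF(T)$. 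Hence, given $(y_{0},\eta_{0})\in\WF(T)$ with $\eta_{0}\neq 0$, any lift $\xi_{0}\in T^{\vee}_{\iota(y_{0})}X$ with $\iota^{\ast}\xi_{0}=\eta_{0}$ satisfies $(\iota(y_{0}),\xi_{0})\in\WF(S)\subset\iota_{\ast}\caR$; since $\xi_{0}$ is not conormal to $A$, the definition of $\iota_{\ast}\caR$ in Definition \ref{def:11} forces $\eta_{0}=\iota^{\ast}\xi_{0}\in\caR$.

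The second isomorphism is then a direct consequence: saturation of $\caS$ means precisely $\caS=\iota_{\ast}\iota^{\ast}\caS$, so applying the first part with $\caR=\iota^{\ast}\caS$ gives the stated bijection. The main obstacle is the reverse wave front inclusion in stage two, which relies on the local Fourier-analytic fact that the wave front set of $\iota_{\ast}T$ outside the conormal bundle $N^{\vee}_{0}\iota$ determines, and is determined by, the wave front set of $T$; everything else reduces to the Bloom--Herrera identification together with the standard functoriality recorded in Proposition \ref{prop:11}.
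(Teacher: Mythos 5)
Your proof is correct and follows essentially the same route as the paper: well-definedness and the forward inclusion via Proposition \ref{prop:11}, injectivity via the Bloom--Herrera isomorphism $D^{\ast}_{A}[-2](-1)\cong D^{\ast}_{X,A}$, and surjectivity via the reverse wave front inclusion $\WF(\iota_{\ast}T)\subset\iota_{\ast}\caR\Rightarrow\WF(T)\subset\caR$. The only difference is that you spell out this last implication (which the paper dismisses as following easily from the definitions) through the local computation $\iota_{\ast}T=T\otimes\delta(y)$, and that computation is sound.
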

\begin{proof}
  By Proposition \ref{prop:11}, the map \eqref{eq:102} is well
  defined. Since $A$ is smooth, by \cite{BloomHerrera:drcas} the map 
  \begin{displaymath}
    \iota_{\ast}\colon D^{\ast}_{A}[-2](-1) \longrightarrow D^{\ast}_{X,A}
  \end{displaymath}
  is an isomorphism. This implies directly that the map \eqref{eq:102}
  is injective. It follows easily from the definition of wave front
  set, that if $\WF(\iota _{\ast}T)\subset f_{\ast}\caR$ then
  $\WF(T)\subset \caR$ which implies surjectivity. 
\end{proof}

When taking the current associated to a differential form with
logarithmic singularities, it is easy to control the wave front
set. In fact,  the map $E^{\ast}_{X}(\log A)\to D^{\ast}_{X/A}$
factors as a composition
  \begin{displaymath}
    E^{\ast}_{X}(\log A)\longrightarrow
     D^{\ast}_{X/A;N^{\vee}_{0}A}\longrightarrow
    D^{\ast}_{X/A}
  \end{displaymath}

  Let $\iota' \colon B \to X$ be another smooth hypersurface such that
  $\caS \cap N^{\vee}_{0}B=\emptyset$. By Proposition \ref{prop:11} there
  is a map $(\iota')^{\ast}\colon D^{\ast}_{X;\caS }\to D^{\ast}_{B;(\iota
    ')^{\ast}\caS }$ and we define
  \begin{displaymath}
    \Sigma _{B}D^{\ast}_{X;\caS }=\ker((\iota ')^{\ast}).
  \end{displaymath}

  \begin{df}
    We say that $\caS $ and $B$ are in \emph{good position} if, for every $p\in
    B$ there is an open neighborhood $U \subset X$ of $p$ and a smooth
    retraction $r\colon U\to U\cap B$ such that
    \begin{displaymath}
      r^{\ast} ((\iota')^{\ast}\caS |_{U\cap B} )\subset \caS |_{U}.
    \end{displaymath}
  \end{df}

  \begin{lem}\label{lemm:8}
    If $\caS $ and $B$ are in good position,  then the map
    \begin{displaymath}
      (\iota')^{\ast}\colon D^{\ast}_{X;\caS }\to D^{\ast}_{B;(\iota
    ')^{\ast}\caS }
    \end{displaymath}
    is surjective.
  \end{lem}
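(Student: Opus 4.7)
The plan is to build a local right inverse to $(\iota')^{\ast}$ out of the retractions supplied by the good position hypothesis, and then to glue these local lifts together by a smooth partition of unity.

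Given $T \in D^{\ast}_{B;(\iota')^{\ast}\caS}$, for each $p \in B$ I would choose an open neighborhood $U_p \subset X$ and a smooth retraction $r_p \colon U_p \to U_p \cap B$ as in the definition of good position. Since a retraction onto a submanifold of smaller dimension is a submersion, its differential is surjective everywhere and the conormal $N^{\vee}_0 r_p$ is empty. Hence Proposition \ref{prop:11} gives a well-defined pullback $T_p \coloneqq r_p^{\ast}(T|_{U_p \cap B}) \in D^{\ast}_{U_p}$, whose wave front set satisfies
\[
\WF(T_p) \subset r_p^{\ast}\bigl((\iota')^{\ast}\caS|_{U_p\cap B}\bigr) \subset \caS|_{U_p},
\]
the second inclusion being exactly the good position condition. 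Moreover $r_p \circ \iota'|_{U_p\cap B} = \Id_{U_p\cap B}$, so functoriality of pullback on currents (again Proposition \ref{prop:11}) yields $(\iota')^{\ast}T_p = T|_{U_p\cap B}$.

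To globalize, I would augment $\{U_p\}_{p\in B}$ by $U_0 \coloneqq X \setminus B$ to obtain an open cover of $X$, choose a subordinate smooth partition of unity $\{\psi_\alpha\}$, and define
\[
\tilde T \coloneqq \sum_{\alpha \neq 0} \psi_\alpha\, T_\alpha \in D^{\ast}_X.
\]
Multiplication by a smooth function does not enlarge the wave front set, so each summand lies in $D^{\ast}_{X;\caS}$ (after extension by zero outside $U_\alpha$), and hence so does $\tilde T$. Since $\psi_0$ vanishes on $B$, we have $\sum_{\alpha\neq 0}\psi_\alpha|_B = 1$, and therefore $(\iota')^{\ast}\tilde T = \sum_{\alpha\neq 0}\psi_\alpha|_B\, T|_{U_\alpha\cap B} = T$, which gives the desired surjectivity.

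The only genuinely non-trivial ingredient is the wave front set estimate for the local lift, which is precisely what the good position assumption is designed to guarantee; the partition of unity step is routine. I therefore expect no substantial obstacle beyond carefully tracking the functoriality of wave front sets under pullback by submersions, which is standard (see for instance \cite{Hormander:MR1065993}, chap. VIII).
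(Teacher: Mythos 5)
Your proof is correct and follows essentially the same route as the paper: reduce to a local statement via a partition of unity, then lift $T$ locally by pulling back along the smooth retraction furnished by the good position hypothesis, using that $r\circ\iota'=\Id$ and that good position gives exactly the required wave front set containment $r^{\ast}(\iota')^{\ast}\caS\subset\caS$. You have merely written out the gluing step in more detail than the paper does.
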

  \begin{proof}
    By a partition of unity argument, the statement is local on
    $B$. Let $p\in B$ and $U$ and $r$ the neighborhood and smooth
    retraction that exist because $\caS $ and $B$ are in good
    position. Let $T\in D^{\ast}_{B;(\iota')^{\ast}\caS }$. Then
    \begin{displaymath}
      r^{\ast}T\in D^{\ast}_{U;r^{\ast} (\iota')^{\ast}\caS }\subset
      D^{\ast}_{U;\caS }, \text{
        and } (\iota ')^{\ast}r^{\ast}T=T
    \end{displaymath}
    proving surjectivity.
  \end{proof}

  We now put all the ingredients together.
  Let $X$ be a smooth projective complex
  variety, $\iota\colon A\hookrightarrow X$ and $\iota'\colon
  B\hookrightarrow X$ two smooth disjoint hypersurfaces of
  $X$ and $\caS \subset T_{0}^{\vee}X$ a closed conical subset that is, at
  the same time, saturated with respect to $\iota$ and in good position
  with respect to $B$. 
  We define
  \begin{displaymath}
    \Sigma _{B}D^{\ast}_{X/A;\caS } =
    \{T\in D^{\ast}_{X/A;\caS }\mid T|_{B}=0\}. 
  \end{displaymath}

\begin{thm}\label{thm:2} Let $X$, $A$, $B$ and $\caS $ be as before.
  Then the map
  \begin{equation}\label{eq:31}
    (\Sigma _{B}E^{\ast}_{X}(\log A),F)\longrightarrow
    (\Sigma _{B}D^{\ast}_{X/A;\caS },F)
  \end{equation}
  is a filtered quasi-isomorphism. 
\end{thm}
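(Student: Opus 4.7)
The natural plan is to compare the inclusions $\Sigma_{B} \hookrightarrow \mathrm{id}$ on the two complexes via restriction to $B$, landing in an ambient statement already known. Concretely, I would fit \eqref{eq:31} into a commutative diagram of short exact sequences
\begin{displaymath}
\xymatrix@C=1.3em{
0\ar[r] & \Sigma_{B}E^{\ast}_{X}(\log A)\ar[r]\ar[d] & E^{\ast}_{X}(\log A)\ar[r]^-{(\iota')^{\ast}}\ar[d] & E^{\ast}_{B}\ar[r]\ar[d] & 0\\
0\ar[r] & \Sigma_{B}D^{\ast}_{X/A;\caS}\ar[r] & D^{\ast}_{X/A;\caS}\ar[r]^-{(\iota')^{\ast}} & D^{\ast}_{B;(\iota')^{\ast}\caS}\ar[r] & 0
}
\end{displaymath}
and then reduce to proving that the middle and right vertical maps are filtered quasi-isomorphisms. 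The key point is that $A$ and $B$ are disjoint, so pulling back to $B$ along $\iota'$ kills the logarithmic factor along $A$ and lands in honest smooth forms/currents on $B$.

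The exactness of the top row is the usual fact that restriction of smooth forms to a smooth closed submanifold of a compact manifold is surjective (extend by a tubular neighborhood and cut off; the log singularities along $A$ are irrelevant since $A\cap B=\emptyset$). For the bottom row, surjectivity of $(\iota')^{\ast}$ on $D^{\ast}_{X;\caS}$ is exactly Lemma~\ref{lemm:8}, since $\caS$ is in good position with respect to $B$; passing to the quotient by $D^{\ast}_{X,A;\caS}$ is harmless because $A$ and $B$ are disjoint (one can choose the retractions in the good-position data so their sources avoid $A$). Both sequences are automatically strict for $F$ since $F^{p}$ commutes with kernel and with pullback along the smooth map $\iota'$.

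For the right vertical arrow, the projective smooth variety $B$ together with the closed conical set $(\iota')^{\ast}\caS\subset T^{\vee}_{0}B$ falls directly under Proposition~\ref{prop:6}, giving $(E^{\ast}_{B},F)\to (D^{\ast}_{B;(\iota')^{\ast}\caS},F)$ a filtered quasi-isomorphism. For the middle arrow, by Proposition~\ref{prop:5} applied with $Z=\emptyset$ the composition
\begin{displaymath}
(E^{\ast}_{X}(\log A),F)\longrightarrow (D^{\ast}_{X/A;\caS},F)\longrightarrow (D^{\ast}_{X/A},F)
\end{displaymath}
is a filtered quasi-isomorphism, so it suffices to show the second map is one. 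For this I would use the map of short exact sequences
\begin{displaymath}
\xymatrix@C=1.2em{
0\ar[r] & D^{\ast}_{X,A;\caS}\ar[r]\ar[d] & D^{\ast}_{X;\caS}\ar[r]\ar[d] & D^{\ast}_{X/A;\caS}\ar[r]\ar[d] & 0\\
0\ar[r] & D^{\ast}_{X,A}\ar[r] & D^{\ast}_{X}\ar[r] & D^{\ast}_{X/A}\ar[r] & 0,
}
\end{displaymath}
whose middle vertical is a filtered quasi-isomorphism by Proposition~\ref{prop:6}. Because $\caS$ is saturated with respect to $\iota$, Lemma~\ref{lemm:7} identifies the left vertical, up to the shift $[-2](-1)$, with $(D^{\ast}_{A;\iota^{\ast}\caS},F)\to (D^{\ast}_{A},F)$, which is a filtered quasi-isomorphism by Proposition~\ref{prop:6} applied to $A$. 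The five lemma on the associated long exact sequences of $H^{\ast}F^{p}$ (using strictness) then yields the desired filtered quasi-isomorphism for the quotient.

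The main obstacle I foresee is not any single homological step but the bookkeeping for the bottom row: verifying that $(\iota')^{\ast}\colon D^{\ast}_{X/A;\caS}\to D^{\ast}_{B;(\iota')^{\ast}\caS}$ is surjective at the quotient level, and that everything respects $F$ strictly. One must check that the good-position retractions $r\colon U\to U\cap B$ used in Lemma~\ref{lemm:8} can be chosen with $U$ disjoint from $A$, so that lifting a current on $B$ by $r^{\ast}$ produces a representative in $D^{\ast}_{X/A;\caS}$ and not merely in $D^{\ast}_{X;\caS}$. Since $A\cap B=\emptyset$, this is a compactness argument: shrink each $U$ to avoid $A$, which is possible because $B$ is compact and disjoint from the closed set $A$. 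Once this is in place, the three filtered quasi-isomorphisms above combine through the five lemma applied to the $F^{p}$-pieces to give \eqref{eq:31}.
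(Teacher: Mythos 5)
Your proof is correct and follows essentially the same route as the paper's own argument: the same two short exact sequences of complexes, the same reduction of the middle vertical to showing $D^{\ast}_{X/A;\caS}\to D^{\ast}_{X/A}$ is a filtered quasi-isomorphism, and the same invocation of Lemma~\ref{lemm:7} (saturation) to identify $D^{\ast}_{X,A;\caS}$ with $D^{\ast}_{A;\iota^{\ast}\caS}[-2](-1)$, together with Proposition~\ref{prop:6} on $A$, Proposition~\ref{prop:5} with $Z=\emptyset$, Proposition~\ref{prop:6} on $B$, and Lemma~\ref{lemm:8} for surjectivity. You merely run the argument top-down rather than bottom-up. One small simplification you missed: the worry about choosing the good-position retractions $r$ to have source disjoint from $A$ is unnecessary — Lemma~\ref{lemm:8} gives surjectivity of $(\iota')^{\ast}\colon D^{\ast}_{X;\caS}\to D^{\ast}_{B;(\iota')^{\ast}\caS}$ on the unquotiented complex, and composing with the canonical surjection $D^{\ast}_{X;\caS}\twoheadrightarrow D^{\ast}_{X/A;\caS}$ already yields surjectivity at the quotient level (once one notes that $(\iota')^{\ast}$ factors through the quotient because $A\cap B=\emptyset$ forces currents in $D^{\ast}_{X,A;\caS}$, being supported on $A$, to restrict to zero on $B$).
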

\begin{proof}
  By Lemma \ref{lemm:7}, since $\caS $ is saturated with respect to
  $\iota$, we have an isomorphism 
  \begin{displaymath}
    \iota _{\ast}\colon  D^{\ast}_{A;\iota^{\ast}\caS }[-2](-1)\longrightarrow
    D^{\ast}_{X,A;\caS }.
  \end{displaymath}
  Since $(D^{\ast}_{A;\iota ^{\ast}\caS },F)\to (D^{\ast}_{A},F)$ is a filtered
  quasi-isomorphism and the map $D^{\ast}_{A}[-2](-1)\to D^{\ast}_{X,A}$ is an
  isomorphism, we deduce that $(D^{\ast}_{X,A;\caS },F)\to
  (D^{\ast}_{X,A},F)$ is a filtered quasi-isomorphism.  
  We consider the commutative diagram with exact rows
  \begin{displaymath}
    \xymatrix{
      0\ar[r]& D^{\ast}_{X,A;\caS }\ar[r]\ar[d]&
      D^{\ast}_{X;\caS }\ar[r]\ar[d]&
      D^{\ast}_{X/A;\caS }\ar[r]\ar[d]&0\\
      0\ar[r]& D^{\ast}_{X,A}\ar[r]&
      D^{\ast}_{X}\ar[r]&
      D^{\ast}_{X/A}\ar[r]&0
    }
  \end{displaymath}
As we have discussed, the first vertical arrow is a filtered
quasi-isomorphism. By Proposition
\ref{prop:6} the second vertical arrow is also filtered quasi
isomorphism. We deduce that the third arrow also is one. Using now 
Proposition \ref{prop:5} we obtain that the map
\begin{displaymath}
  (E^{\ast}_{X}(\log A),F)\longrightarrow
  (D^{\ast}_{X/A;\caS },F)
\end{displaymath}
is a filtered quasi-isomorphism.
Consider next the commutative diagram with exact rows.
  \begin{displaymath}
    \xymatrix{      
      0\ar[r]& \Sigma _{B}E^{\ast}_{X}(\log A)\ar[d]\ar[r]&
      E^{\ast}_{X}(\log A)\ar[d]\ar[r]&
      E^{\ast}_{B}\ar[r]\ar[d]&0\\
      0\ar[r]& \Sigma _{B}D^{\ast}_{X,A;\caS }\ar[r]&
      D^{\ast}_{X,A;\caS }
      \ar[r]&
      D^{\ast}_{B;(\iota ')^{\ast}\caS }\ar[r]&0
    }
  \end{displaymath}
  Note that surjectivity of the map $D^{\ast}_{X,A;\caS }
  \to
  D^{\ast}_{B;(\iota ')^{\ast}\caS }$ is Lemma \ref{lemm:8}.  
  We already know that the second and third vertical arrows are
  filtered quasi-isomorphism, hence the first is also one, proving the
  result. 
\end{proof}

\subsection{Higher Chow groups}
\label{sec:higher-chow-groups}
We recall here the definition and main properties of the
\emph{higher Chow groups} defined by Bloch in \cite{Bloch:achK}.
Initially, they were defined using the chain complex associated to
a simplicial abelian group, but the description using the cubical complex is more user friendly to define the product structure. We stick to notations and conventions followed in \S 3 of \cite{Burgoswami:hait}.

Fix a base field $k$ and let $\P^1$ be the projective line over
$k$. Let $ \square = \P^1\setminus \{1\}\,(\cong \A^1).$
The cartesian
product $(\P^1)^{\cdot}$ has a cocubical scheme structure.
For $i=1,\dots,n$, we denote by $t_{i}\in (k\cup\{\infty\})\setminus
\{1\}$  the absolute coordinate of the $i$-th factor. Then the
coface maps
are defined as
\begin{align*}
  \delta_0^i(t_1,\dots,t_n) &= (t_1,\dots,t_{i-1},0,t_{i},\dots,t_n), \\
\delta_1^i(t_1,\dots,t_n) &= (t_1,\dots,t_{i-1},\infty,t_{i},\dots,t_n).
\end{align*}
Then, $\square^{\cdot}$ inherits a cocubical scheme structure from
that of $(\P^1)^{\cdot}$. 
An $r$-dimensional \emph{face} $F$ of $\square^n$ is any subscheme of the form
$\delta^{i_1}_{j_1}\cdots
\delta^{i_{n-r}}_{j_{n-r}}(\square^{r})$. By convention, $\square^n$ is a
face of dimension $n$. The codimension of an $r$-dimensional face of
$\square ^{n}$ is $n-r$.  

Let $X$ be an equidimensional quasi-projective scheme of
dimension $d$ over the field $k$. Let $Z^{p}(X,n)$ be the free
abelian group generated by the codimension $p$ closed irreducible
subvarieties of $X\times \square^{n}$, which intersect properly
$X\times F$ for every face $F$
of $\square^n$. We call the elements of $Z^p(X,n)$
\textit{admissible} cycles. 
The pull-back by the coface and codegeneracy maps of $\square^{\cdot}$
endow $Z^{p}(X,\cdot)$ with a cubical abelian group structure, given by
\begin{displaymath}
\begin{gathered}
  \delta ^{j}_{i}=(\delta ^{i}_{j})^{\ast},\\
  \delta= \sum_{i=1}^n \sum_{j=0,1}(-1)^{i+j}\delta_i^j.
\end{gathered}
\end{displaymath}
Note that the indexes have been raised or lowered to reflect the change from
cocubical to cubical structures.

Let
$(Z^{p}(X,*),\delta)$ be the associated chain complex  
 and consider the \emph{normalized} and \emph{refined normalized} chain complexes associated to $Z^p(X,*)$,
 \begin{eqnarray*}
Z^{p}(X,n)_{0}\coloneqq \bigcap_{i=1}^{n} \ker
\delta^1_i,\\
Z^p(X,n)_{00} \coloneqq \bigcap_{i=1}^{n} \ker \delta^1_i\cap
\bigcap_{i=2}^{n} \ker \delta^0_i.
\end{eqnarray*}
The differential of these normalized complexes are also denoted by $\delta
$. One can show that the inclusion
\begin{displaymath}
Z^p(X,n)_{00}\hookrightarrow Z^{p}(X,n)_{0}
\end{displaymath}
is a quasi-isomorphism of cubical chain complexes. An element in the above two complexes will be called a \textit{pre-cycle}, and
will be called a (higher) \textit{cycle} if it also satisfies $\delta(Z)=0$. 

\begin{df} Let $X$ be a quasi-projective equidimensional scheme  over a field
$k$. The \emph{higher Chow groups} defined by Bloch are
\begin{displaymath}
\CH^p(X,n)\coloneqq H_n(Z^{p}(X,*)_{0})\cong H_n(Z^{p}(X,*)_{00}).
\end{displaymath}
\end{df}
Since we will often come across the notion of proper intersection of
higher cycles in this paper, for the sake of easy reference, we recall
its definition.
\begin{df}\label{def7.5}
Let $X$ be a smooth quasi-projective scheme over $k$, and let
  $p,q,n,m\ge 0$ be non-negative
integers. If  $Z\in Z^{p}(X,n)$, 
$W\in Z^{q}(X,m)$, we say that $Z$ and $W$
\emph{intersect properly} if, for any
face $F$ of $\square ^{n+m}$,
\begin{displaymath}
  \codim_{X\times F}\left( \pi_{1}^{-1}|Z| \, \cap \pi_{2}^{-1}|W|\,
    \cap \, (X\times
  F)\right)\ge p+q,
\end{displaymath}
where
\begin{displaymath}
  \pi_1\colon X\times \square ^{n}\times \square^{m}\to X\times
  \square ^{n},\quad 
  \pi_2\colon X\times \square ^{n}\times \square^{m}\to X\times
  \square ^{m}
\end{displaymath}
are the projections.
\end{df}
Let $W\in Z^{q}(X,m)$ be an admissible cycle. We denote by
  $Z_{W}^{p}(X,n)\subset Z^{p}(X,n)$ the 
  subgroup generated by the codimension $p$ irreducible
  subvarieties $Z\subset X\times \square^{n}$, such that $Z$ and $W$
  intersect properly. Then it can be shown that the inclusions
  \begin{displaymath}
    Z_{W}^{p}(X,\ast)_0\hookrightarrow Z^{p}(X,\ast)_0,\qquad
    Z^{p}_{W}(X,\ast)_{00}\hookrightarrow Z^{p}(X,\ast)_{00}
\end{displaymath}
are quasi-isomorphisms.

\subsection{Survey of Deligne--Beilinson cohomology}
\label{sec-deligne-cohom}

As in \cite[Example 4.17]{Burgoswami:hait}, given a Dolbeault complex
$A$ we can associate to it a
diagram of complexes and morphisms 
  \begin{equation}\label{eq:16}
    \xymatrix{ 
      \DB_{\TW}(A,p) \ar@<5pt>[r]^{I} &
      \DB_{t}(A,p)\ar@<5pt>[r]^{H } \ar@<5pt>[l]^{E}&
      \DB(A,p)\ar@<5pt>[l]^G },      
  \end{equation}
  where the three complexes compute the Deligne cohomology of $A$ and 
  all the arrows are homotopy equivalences. The leftmost 
  complex has the advantage that, when $A$ is a Dolbeault algebra, has
  also a structure of 
  an associative and 
  graded commutative algebra. On the middle complex, we have several
  product structures, but none is at the same time graded commutative
  and associative.
  The rightmost 
  complex is the smallest one and gives a more concise description of
  Deligne cohomology but again has the disadvantage that the product
  is only associative up to homotopy.

  In particular, if $X$ is a smooth projective variety over $\C$, we
  can specialize diagram \eqref{eq:16} to the case $A=E^{\ast}_{X}$ to
  obtain a diagram 
  \begin{equation}\label{eq:30}
    \xymatrix{
      \DB_{\TW}(X,p) \ar@<5pt>[r] &
      \DB_{t}(X,p)\ar@<5pt>[r] \ar@<5pt>[l]& \DB(X,p)\ar@<5pt>[l]
    },     
  \end{equation}
  computing the real Deligne cohomology $H^\ast_{\fD}(X,\R(p))$ of
  $X$.
  We recall a few pieces of this diagram.   Denote by
  $L_{\R}=(L_{\R}^{\ast},d)$ the algebraic de Rham complex of
  $\A^{1}_{\R}$, that is,
  \begin{displaymath}
    L^{0}_{\R}=\R[\varepsilon ],\quad L^{1}_{\R}=\R[\varepsilon ]d\varepsilon, 
  \end{displaymath}
  where $\varepsilon $ is an indeterminate. For a Dolbeault complex
  $A$ we write
     \begin{equation}\label{eq:9}
     \fDTW(A,p)=\left\{\omega \in L^{\ast}_{\R}\otimes 
       A^{\ast }(p)_{\C}\,\middle |\,
       \begin{gathered}
       \omega |_{\varepsilon =0}\in  A^{\ast}(p)_{\R},\\
     \omega |_{\varepsilon =1}\in  F^{0}A^{\ast}(p)_{\C}.
   \end{gathered}
   \right\}
   \end{equation}
 and 
\begin{displaymath}
  \fD^{n}(A,p)=
  \begin{cases}\displaystyle
    A^{n-1}(p-1)_{\R}\cap
    \bigoplus_{\substack{p'+q'=n-1\\p'<p,\ q'<p}}A^{p',q'}_{\C},&\text{
      if }n < 2p,\\[10mm]
    \displaystyle
    A^{n}(p)_{\R}\cap
    \bigoplus_{\substack{p'+q'=n\\p'\ge p,\ q'\ge p}}A^{p',q'}_{\C},&\text{
      if }n \ge 2p.
  \end{cases}
\end{displaymath}
Note that, for $n<2p$ we can also write
\begin{equation}
  \label{eq:33}
  \fD^{n}(A,p)=\frac{A^{n-1}(p)_{\C}}{A^{n-1}(p)_{\R} \cap F^{0}A^{n-1}(p)_{\C}}. 
\end{equation}
We will denote by
\begin{equation}
  \label{eq:35}
  \pi _{p}\colon A^{n-1}(p)_{\C}\longrightarrow \fD^{n}(A,p)
\end{equation}
the projection map. 
Then, for $n<2p$, (see \cite{BurgosWang:hBC} paragraphs (6.1) and
(6.2)) the map $\fDTW^{n}(A,p)\to \fD^{n}(A,p)$ is given by 
\begin{equation}
  \label{eq:34}
  f(\varepsilon )\otimes \omega _{1}+g(\varepsilon )d\varepsilon
  \otimes \omega _{2}\mapsto \int_{0}^{1} g(\varepsilon
  )d\varepsilon\cdot \pi_{p} (\omega _{2}).
\end{equation}

\subsection{Goncharov regulator and higher archimedean
  height pairing} 
\label{subsec-higher-arc-pairing}

Here we give a quick revision of the cubical Goncharov regulator and
of the higher archimedean height pairing
for sake of ready reference. More details about the regulator can be
found in \cite[Section 5]{Burgoswami:hait}, more details about Green
currents and forms, in \cite[Section 6]{Burgoswami:hait} and about the
height pairing in \cite[Subsection
7.5]{Burgoswami:hait}. From now on we denote the
differential in the Thom-Whitney complex by $d_{\fD}$, to distinguish
it from the differential in the de-Rham complex.

In the paper \cite{Burgoswami:hait}, Goncharov regulator
\begin{displaymath}
  \caP\colon \CH^{p}(X,n)\longrightarrow H^{2p-n}_{\fD}(X,\R(p))
\end{displaymath}
is given by a morphism of complexes, also denoted $\caP$ 
\begin{displaymath}
  Z^{p}(X,\ast)_{0}\to
  \DB_{\TW,D}^{2p-\ast}(X,p).
\end{displaymath}
Recall the complex $L$ form Section
\ref{sec:higher-chow-groups} Let
$\lambda \in (L_{\C}\otimes E_{\P^{1}}(\log B))^{1}$ be the element given by
\begin{equation}\label{eq:48}
  \lambda =-\frac{1}{2}\left((\varepsilon +1)\otimes \frac{dt}{t}+
  (\varepsilon -1)\otimes \frac{d\bar t}{\bar t}
  +d\varepsilon\otimes \log t\bar t\right).
\end{equation}
Then $\lambda \in \DB_{\TW}^{1}(E^{\ast}_{\P^{1}}(\log B),1)$.

On $(\P^{1})^{n}\setminus B$, for $n\ge 0$, we consider the Wang forms
\begin{align*}
  W_{0}&=1\\
  W_{n}&=\pi _{1}^{\ast}\lambda \cdots \pi _{n}^{\ast}\lambda, \ n>0, 
\end{align*}
where $\pi _{i}\colon (\P^{1})^{n}\to \P^{1}$ is the projection onto
the $i$-th factor.
Clearly  $W_{n}\in \DB^{n}_{\TW}(\Sigma _{A}E^{\ast}_{(\P^{1})^{n}}(\log
B),n)$. See \cite[\S 5]{Burgoswami:hait} for the main properties of
these forms. By abuse of notation we will also denote by $W_{n}$ the
pull-back of $W_{n}$ to any variety of the form $X\times
(\P^{1})^{n}$. If $Z$ is an irreducible subvariety of $X\times
\square^{n}$ intersecting properly all the faces and $\widetilde Z$
is a resolution of singularities of the closure $\overline Z$, then
the pull-back of $W_{n}$ is locally integrable. 
Therefore, for any cycle $Z\in Z^{p}(X,\ast)_{0}$,
Writing
\begin{equation}
  \label{eq:49}
  \delta _{Z,\TW} \coloneqq 1\otimes \delta _{\overline {Z}}
  \in \mathfrak{D}_{\TW}^{2p}(D^{\ast}_{X\times (\P^{1})^{n}},p),
\end{equation}
we have a well defined current
\begin{displaymath}
\delta _{Z,\TW}\cdot W_{n}\in
\mathfrak{D}_{\TW}^{2p+n}(D^{\ast}_{X\times (\P^{1})^{n}},p+n). 
\end{displaymath}
Then, Goncharov regulator is given by
\begin{equation}\label{eq:50}
  \caP(Z)=(\pi _{X})_{\ast}(\delta _{Z,\TW}\cdot W_{n})
  \in \mathfrak{D}_{\TW}^{2p-n}(D^{\ast}_{X},p),
\end{equation}
where $\pi _{X}\colon X\times (\P^{1})^{n}\to X$ is the projection.
  
Given a cycle $Z\in Z^p(X,n)_0$, we call any current $g_Z\in
\fD^{2p-n-1}_{\TW,D}(X,p)$ a Green current for $Z$ if it satisfies.
\begin{displaymath}
\caP(Z)+d_{\fD}g_Z=[\omega_Z],\text{ for }\omega_Z\in \fD^{2p-n}_{\TW}(X,p).
\end{displaymath}
A class of Green currents is the class of a Green current in
\begin{displaymath}
\widetilde{\fD}^{2p-n-1}_{\TW,D}(X,p)\coloneqq \fD^{2p-n-1}_{\TW,
  D}(X,p)/\im d_{\fD},
\end{displaymath}
and is denoted by $\widetilde{g}_Z$. A pair $(Z,\widetilde{g}_Z)$,
where $\widetilde{g}_Z$ is a Green current for $Z$ is
called an arithmetic cycle, and is the building block to define higher arithmetic
Chow groups.

To define an intersection theory at the level of higher arithmetic
Chow groups, we need the notion of a Green form of logarithmic type
for a cycle $Z$. It acts as a bridge between the current $1\otimes
\delta_Z\in \fD^{2p}_{\TW,D}(X\times (\P^1)^n,p)$ and a smooth form
that lives in $\fD^{2p-n}_{\TW}(X,p)$, and computes the real Deligne
cohomology class $\caP(Z)$. For shorthand, in the next proposition we
denote $A\coloneqq (\P^1)^n\setminus \square^n$.
\begin{df}\label{GF}
Given a cycle $Z \in Z^p(X,n)_0$ and the pullback $|Z|_k$ of $|Z|$ in $X\times \square^k$ (see \S 6.2 of \cite{Burgoswami:hait} for exact definition of $|Z|_k$), a \emph{Green form of logarithmic
  type} for $Z$ is an $n$-tuple
\begin{displaymath}
  \fg_{Z}\coloneqq (g_n, g_{n-1},\cdots , g_0)\in \bigoplus^0_{k=n}\fD
^{2p-n+k-1}_{\TW,\log}(X\times \square ^{k}\setminus |Z|_{k},p)_0,
\end{displaymath}
Such that, if $n>0$,
\begin{enumerate}
\item\label{item:20} the equation $\delta _{Z}+d_{\fD}[g_n]=0$ holds in the complex
\begin{displaymath}
\fD^{2p}_{\TW,D, X\times (\P^1)^n/X\times A}(p)
\end{displaymath}
In other words, $g_{n}$ is a Green form for $Z$ in $X\times
  \square^{n}$.
\item\label{item:21} $(-1)^{n-k+1}\delta g_k+d_{\fD}g_{k-1}=0,\quad k=2,\cdots ,n$.
\item\label{item:22} $(-1)^{n}\delta g_1+d_{\fD}g_0\eqqcolon \omega(\fg_Z)\in
  \fD^{2p-n}_{\TW}(X,p)$. In other words, the form  $(-1)^{n}\delta g_1+d_{\fD}g_0$
  extends to a smooth form on the whole $X$. It can be shown that
  $\omega(\fg_Z)$ is closed and belongs to class $\caP(Z)$ in
  $H^{2p-n}_{\fD}(X,\R(p))$.
\end{enumerate}
If $n=0$ the previous conditions collapse to the condition
\begin{displaymath}
 \delta _{Z}+d_{\fD}[g_n]\in [\fD^{2p}_{\TW}(X,p)]. 
\end{displaymath}
If $Z\in Z^p(X,n)_{00}$ is a cycle in the refined normalized complex,
then a \emph{refined Green form} 
is defined as a Green form satisfying the stronger condition
\begin{equation}\label{eq:66}
  \fg_Z\in \bigoplus^0_{k=n}\fD
^{2p-n+k-1}_{\TW,\log}(X\times \square ^k\setminus |Z|'_{k},p)_{00},
\end{equation}
where $|Z|'_{k}=(\delta _{0}^{1})^{-1}\,\overset{n-k}{\dots}\,(\delta
_{0}^{1})^{-1}|Z|$.  
\end{df}
It can be shown that every class $\widetilde{g}_Z$ of Green currents
contains a Green form of logarithmic type (cf. \cite[propositions
6.12, 6.13]{Burgoswami:hait}).

Let $Z\in Z^p(X,n)_0$ and $W\in Z^q(X,m)_0$ be two cycles intersecting
properly in the sense of definition \ref{def7.5}. Then for choices of
classes of green currents $\widetilde{g}_Z$ and $\widetilde{g}_W$ for
$Z$ and $W$ respectively, we define the start product
\begin{df}\label{defstar}
Choosing any representative $g_{Z}$ of $\widetilde g_{Z}$ and a Green form
  $\fg_{W}=\{g'_m,\cdots, g'_0\}$ for $W$ contained in $\widetilde{g}_W$, we define the
  $\ast$-product of $\widetilde g_{Z}$ and $\widetilde g_{W}$ as
  \begin{displaymath}
    \widetilde g_{Z}\ast \widetilde g_{W}=
    \left (  (-1)^n\left(\sum^m_{j=0}(\pi_{X,\ast}\left(\delta_Z\cdot W_n\cdot g'_j\cdot W_j\right)\right)+g_Z\cdot \omega(\fg_W)\right)^{\sim},  
  \end{displaymath}
  where $\delta_Z\cdot W_n\cdot g'_j\cdot W_j$ is seen as a current in
  $X\times (\P^{1})^{n+m}$ and $\pi_X$ is the projection to $X$.
\end{df}
Of course, the $\ast$-product $\widetilde g_{Z}\ast \widetilde g_{W}$
depends on the choice of the Green currents $\widetilde g_{Z}$ and
$\widetilde g_{W}$ and not only on the cycles $Z$ and
$W$. Nevertheless, if the real regulators of $Z$ and $W$ are zero, we
can obtain an invariant from the $\ast$-product that only depends on
the cycles $Z$ and $W$. This is the higher analogue of the archimedean
component of the height pairing.  

\begin{df}\label{arcdf} Let $Z\in Z^p(X,n)_0$ and $W\in Z^q(X,m)_0$ be
  cycles intersecting properly, having real regulator classes zero,
  and $2(p+q-d-1)=n+m$. Then we can find Green currents for $Z$ and
  $W$ satisfying the conditions
  \begin{equation}\label{eq:88}
  d_{\fD}g_Z+\caP(Z)=d_{\fD}g_W+\caP(W)=0.
  \end{equation}
  and the 
  \emph{higher archimedean height pairing} is defined
  as
  \begin{displaymath}
   \langle Z, W\rangle _{\Arch}\coloneqq \left(p_{X,\ast}(g_Z\ast
   g_W)\right)^\sim \in H^1_{\DB}(\Spec(\C), \R(p+q-d)),  
  \end{displaymath}
  for any choice of Green current $g_{Z}$ for $Z$ and a Green current
  $g_{W}$ for $W$ satisfying \eqref{eq:88}. Here $p_X\colon
  X\rightarrow \Spec(\C)$ is the structural morphism. 
  \end{df}
  It can be shown (\cite[Proposition 7.20]{Burgoswami:hait}) that the
  definition is independent of the choice of Green currents $g_Z$ and
  $g_W$ satisfying condition \eqref{eq:88}.

From the fact that $\omega(\fg_W)$ has been chosen to be zero, we get 
\begin{displaymath}
\langle Z, W\rangle _{\Arch} =
(-1)^n\sum^m_{j=0}p_{\ast}\left(\delta_Z\cdot W_n\cdot g'_j\cdot
  W_j\right)^{\sim}, 
\end{displaymath}
where $p=p_{X}\circ \pi_X$. This pairing is graded commutative and
linear on both components.

\section{Oriented mixed Hodge structures and height}
\label{sec:mhs-arc}

\subsection{The height of a mixed Hodge structure}
\label{sec:height-mixed-hodge}

Let $V$ be a $\Q$-vector space. 
A mixed Hodge structure $(F,W)$ on $V$ induces a unique functorial bigrading
\cite[Theorem 2.13]{CKS:dhs}
\begin{equation}
    V_{\C } = \bigoplus_{a,b}\, I^{a,b}     \label{deligne-bigrading}
\end{equation}  
of the underlying complex vector space $V_{\C }$ such that
\begin{enumerate}
\item $F^a = \oplus_{\alpha\geq a,\beta}\, I^{\alpha,\beta}$;
\item $W_k = \oplus_{\alpha+\beta\leq k}\, I^{\alpha,\beta}$;
\item $\overline{I^{a,b}}
  \equiv I^{b,a} \mod\oplus_{\beta<b,\alpha<a}\, I^{\beta,\alpha}$.
\end{enumerate}
The $I^{a,b}$ is given by
\begin{equation}
  \label{eq:72}
  I^{a,b}=F^{a}\cap W_{a+b}\cap \left(\overline{F^{b}}\cap W_{a+b} +
    \overline{U^{b-1}_{a+b-2}}\right), 
\end{equation}
where
\begin{displaymath}
  U^{r}_{s}=\sum_{j\ge 0} F^{r-j}\cap W_{s-j}.
\end{displaymath}
\begin{df}
The bigrading \eqref{deligne-bigrading} will be called the
\emph{Deligne bigrading} of $(F,W)$. The associated semisimple endomorphism
$Y = Y_{(F,W)}$ of $V_{\C }$ which acts as multiplication by $p+q$ on
$I^{p,q}$ will be called the \emph{Deligne grading} of $(F,W)$.  
\end{df}
 We
will denote by $\Pi _{k}$ the projector over
$\Gr^{W}_{k}V_{\C}=\bigoplus_{a+b=k}I^{a,b}$ and $\Pi _{a,b}$ 
the projector over $I^{a,b}$. So, for instance, $\Pi_{k}$ is the
composition 
\begin{displaymath}
  V_{\C}\longrightarrow \Gr^{W}_{k}V_{\C} \hookrightarrow V_{\C}.
\end{displaymath}
Moreover the semisimple endomorphism $Y$ is given by
\begin{equation}\label{eq:37}
  Y=\sum _{k\in \Z} k \Pi _{k}.
\end{equation}
Let
\begin{equation}
     \gl(V_{\C})^{a,b} = \{\,\alpha\in \gl(V_{\C})
        \mid \alpha(I^{c,d})\subseteq I^{a+c,b+d}\,\}
     \label{gl-hodge-comp}
\end{equation}
be the Hodge decomposition of $\gl(V)$ and define
\begin{equation}
     \Lambda^{-1,-1} = \bigoplus_{a<0,b<0}\, \gl(V_{\C })^{a,b}.
     \label{lambda-def}
\end{equation}
Then, $\overline{\Lambda^{-1,-1}} = \Lambda^{-1,-1}$
\cite[Eq.~2.19]{CKS:dhs}.
For an element $\lambda \in \gl(V_{\C})$ we will denote $\lambda =\sum
\lambda ^{a,b}$ its decomposition into Hodge components. 

There exists a unique
real element $\delta = \delta_{(F,W)}\in\Lambda^{-1,-1}$ such that 
\begin{equation}
       \overline{Y_{(F,W)}} = e^{-2i\delta}\cdot Y_{(F,W)}  \label{delta-def}
\end{equation}
where $g\cdot \alpha \coloneqq \Ad(g)\alpha$ denotes the adjoint action of
$\GL(V_{\C })$
on $\gl(V_{\C })$, \cite[Prop 2.20]{CKS:dhs}. The element
$\delta$ defined by \eqref{delta-def}
will be called the \emph{Deligne splitting} of $(F,W)$.

For an element $g\in \GL(V_{\C})$ we will denote by $g\cdot F$ the
filtration given by $(g\cdot F)^{p}V_{\C}=g(F^{p}V_{\C})$. In general if
$(F,W)$ is a mixed Hodge structure on $V$, the pair of filtrations
$(g\cdot F,W)$ do not form a mixed Hodge structure.

\begin{lem}[{\cite[Lemma 4.11]{Pearlstein:vmhshf}}]
  \label{lambda-equiv} Let $(F,W)$ be a 
  mixed Hodge structure on $V$ and $\Lambda^{-1,-1}$ be the associated
  subalgebra 
  \eqref{lambda-def}.  Then, $\lambda\in\Lambda^{-1,-1}$ implies that
   $(e^{\lambda}\cdot F,W)$ is a mixed Hodge structure on $V$ and that
   \begin{displaymath}
     I^{p,q}_{(e^{\lambda}\cdot F,W)} = e^{\lambda}(I^{p,q}_{(F,W)}).
   \end{displaymath}
\end{lem}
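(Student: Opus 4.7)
The plan is to define the candidate bigrading $\widetilde{I}^{p,q}\coloneqq e^\lambda(I^{p,q}_{(F,W)})$ and to show both that $(e^\lambda\cdot F, W)$ is a mixed Hodge structure and that $\widetilde I^{p,q}$ satisfies the three defining properties of the Deligne bigrading \eqref{deligne-bigrading} relative to $(e^\lambda\cdot F, W)$. Uniqueness of the Deligne bigrading then forces $I^{p,q}_{(e^\lambda\cdot F, W)}=\widetilde I^{p,q}$, giving both assertions simultaneously.

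The key elementary observation is that any $\mu\in\Lambda^{-1,-1}$ strictly decreases both indices of the bigrading: $\mu(I^{\alpha,\beta})\subseteq\bigoplus_{\alpha'<\alpha,\beta'<\beta}I^{\alpha',\beta'}$. In particular $\mu$ is nilpotent, both $\mu$ and $e^\mu$ preserve the weight filtration $W_k=\bigoplus_{\alpha+\beta\leq k}I^{\alpha,\beta}$, and $e^\mu$ acts as the identity on every $\Gr^W_k$. Applied to $\mu=\lambda$, this shows that the Hodge filtration induced by $e^\lambda\cdot F$ on each $\Gr^W_k$ coincides with the one induced by $F$, which is pure of weight $k$; hence $(e^\lambda\cdot F, W)$ is a mixed Hodge structure. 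Properties (1) and (2) for $\widetilde I^{p,q}$ are now formal:
\begin{align*}
(e^\lambda\cdot F)^a &= e^\lambda(F^a)=\bigoplus_{\alpha\geq a,\beta}\widetilde{I}^{\alpha,\beta},\\
W_k &= e^\lambda(W_k)=\bigoplus_{\alpha+\beta\leq k}\widetilde{I}^{\alpha,\beta}.
\end{align*}

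The main obstacle is verifying property (3), the congruence $\overline{\widetilde{I}^{a,b}}\equiv\widetilde{I}^{b,a}\pmod{\bigoplus_{\beta<b,\alpha<a}\widetilde{I}^{\beta,\alpha}}$. Set $S\coloneqq\bigoplus_{\beta<b,\alpha<a}I^{\beta,\alpha}$; the nilpotency observation applied to $\lambda$ shows $e^\lambda(S)=S$, so $S$ also equals $\bigoplus_{\beta<b,\alpha<a}\widetilde I^{\beta,\alpha}$ and the candidate modulus is unambiguous. Since $\overline{\Lambda^{-1,-1}}=\Lambda^{-1,-1}$, we have $\overline\lambda\in\Lambda^{-1,-1}$, and the same observation gives $e^{\overline\lambda}(v)\equiv v\pmod{S}$ for every $v\in I^{b,a}$. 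Combining this with property (3) for the original bigrading, $\overline{I^{a,b}}\equiv I^{b,a}\pmod{S}$, together with $e^{\overline\lambda}(S)=S$, one obtains
\[
\overline{\widetilde I^{a,b}}=e^{\overline\lambda}(\overline{I^{a,b}})\equiv e^{\overline\lambda}(I^{b,a})\equiv I^{b,a}\pmod{S},
\]
whereas $\widetilde I^{b,a}=e^\lambda(I^{b,a})\equiv I^{b,a}\pmod S$. Hence $\overline{\widetilde I^{a,b}}\equiv\widetilde I^{b,a}\pmod S$, as needed.

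I expect the only subtle point to be this conjugation step: all the work lies in combining $\overline\lambda\in\Lambda^{-1,-1}$ with the original congruence in property (3) to match $\overline{\widetilde I^{a,b}}$ against $\widetilde I^{b,a}$ modulo the down-set $S$. Axioms (1) and (2), as well as the MHS property of $(e^\lambda\cdot F, W)$, are purely formal consequences of $\lambda$ having strictly negative Hodge bidegree.
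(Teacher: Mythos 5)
Your proof is correct. Note that the paper itself gives no proof of this lemma --- it is quoted verbatim as Lemma~4.11 of \cite{Pearlstein:vmhshf} --- so there is nothing to compare against; but your argument (transport the bigrading by $e^{\lambda}$, verify the three axioms of the Deligne bigrading using that every element of $\Lambda^{-1,-1}$ strictly lowers both indices and that $\overline{\Lambda^{-1,-1}}=\Lambda^{-1,-1}$, then invoke uniqueness from \cite[Theorem 2.13]{CKS:dhs}) is exactly the standard one and all the steps check out, including the only delicate point, namely that $S=\bigoplus_{\beta<b,\alpha<a}I^{\beta,\alpha}$ is stable under $e^{\lambda}$ and $e^{\bar\lambda}$ so the congruence in axiom (3) can be pushed through the conjugated exponential.
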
    

\par A choice of graded-polarization of $(F,W)$ determines a hermitian inner
product on $V_{\C }$ by declaring the bigrading $\oplus_{a,b}\, I^{a,b}$
to be orthogonal and defining the inner product on $I^{a,b}$ using the
isomorphism $I^{a,b}\cong H^{a,b}\Gr ^W_{a+b}$ and the standard Hodge inner
product on $\Gr ^W_{a+b}$.  In this way, we can attach a collection of heights
to $(F,W)$ via the norms of the Hodge components $\delta^{a,b}$ of $\delta$
\cite[\S 5.1]{pearlstein:sl2}.  To attach a signed height to $(F,W)$,
we need a notion of orientation.

\begin{df}\label{signed-height} Given a mixed Hodge structure
$H=(F,W)$ on $V$, define
\begin{displaymath}
  \max(H) = \max\{k\mid \Gr ^W_k(V)\neq 0\},\quad
     \min(H) = \min\{k\mid \Gr ^W_k(V)\neq 0\}.
   \end{displaymath}
   and define the \emph{length} of $H$ as
   \begin{displaymath}
     \ell(H) = \max(H)-\min(H).
   \end{displaymath}
We say that $H$ is \emph{oriented} if $\Gr ^W_{\max(H)}(V)$ and $\Gr
^W_{\min(H)}(V)$ are 
both of rank 1.  This implies that $\max(H)$ and $\min(H)$ are both
even and, writing $a=\max(H)/2$  and $c=\min(H)/2$, that 
\begin{equation}
  \label{eq:36}
  \Gr ^W_{\max(H)}(V)\cong\Q(-a),\quad
  \Gr ^W_{\min(H)}(V)\cong\Q(-c).
\end{equation}
If $H$ is oriented, an \emph{orientation} of $H$ consists of a
choice of Betti generators $\bfone_{H}$ of $\Gr ^W_{\max(H)}(V)$ and
$\bfone_{H}^{\vee}$ of
$\Gr ^W_{\min(H)}(V)$. Equivalently, an orientation is a choice of the
isomorphisms \eqref{eq:36}. 
Given an orientation of $H$ we define a signed height by the formula:
\begin{equation}
  \delta^{r,r}_H(e) = \Ht(H)e^{\vee},\qquad r = -\ell(H)/2,
  \label{signed-ht-def}
\end{equation}
where $e$ is the  element of $I^{a,a}\subset V_{\C }$ which projects to
$\bfone_{H}\in \Gr ^W_{\max(H)}(V)$ and $e^{\vee}$ is the image of
$\bfone_{H}^{\vee}$ in $W_{\min(H)}V_{\C}$. 
\end{df}

\begin{rmk} The height functions considered above only depend on the
underlying $\R $-mixed Hodge structure.    
\end{rmk}

\begin{df}\label{def:8}
  Let $H$ be an oriented mixed Hodge structure on $V$. We say that $H$
  is a \emph{generalized biextension} if $H$ has at most three non
  trivial weights.   
\end{df}
Therefore, if $H$ is a generalized biextension, there are three
integers $2a>b>2c$, and a pure
Hodge structure $H_{b}$ of weight $b$ such that
\begin{displaymath}
  \Gr_{k}^{W}(V)=
  \begin{cases}
    \Q(-a),&\text{ if }k=2a,\\
    H_{b},&\text{ if }k=b,\\
    \Q(-c),&\text{ if }k=2c,\\
    0,&\text{ otherwise.}
  \end{cases}
\end{displaymath}
Note that $H_{b}$ may be zero.

\begin{lem}\label{main-ht-formula} Let $H=(F,W)$ be a generalized
  biextension and $a,b,c$ as before.   
  Let $e\in I^{a,a}$ be the unique element that maps to the
  generator $\bfone_{H}$ and $e^{\vee}$ the image of
  $\bfone_{H}^{\vee}$ in $I^{c,c}$. Then,
\begin{displaymath}
     \Ht(H)e^{\vee}
     =\frac{1}{2}\im\left(\Pi _{2c}(e-\bar e)\right).   
\end{displaymath}
\end{lem}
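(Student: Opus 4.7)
The plan is to compute directly using the Deligne $\R$-splitting. Write $(F_{0}, W) = (e^{-i\delta} \cdot F, W)$ for the canonical split mixed Hodge structure associated with $H = (F, W)$, and let $J^{p,q}$ denote its Deligne bigrading, so that $I^{p,q} = e^{i\delta}(J^{p,q})$ and $\overline{J^{p,q}} = J^{q,p}$. Since $\delta$ commutes with $e^{i\delta}$, one checks that $\delta \in \Lambda^{-1,-1}_{(F_{0}, W)}$ as well, with Hodge components $\eta^{a,b}$ satisfying $\delta^{a,b}_{(F,W)} = \Ad(e^{i\delta})\eta^{a,b}$; the reality condition $\bar{\delta} = \delta$ becomes $\overline{\eta^{a,b}} = \eta^{b,a}$, so the diagonal component $\eta^{r,r}$ is real.

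Setting $f = e^{-i\delta} e \in J^{a,a}$, the first step is to show $\bar{f} = f$: the image of $f$ in $\Gr^{W}_{2a} V$ equals the rational generator $\bfone_{H}$, so $f - \bar{f}$ projects to zero and lies in $J^{a,a} \cap W_{2a-1} V_{\C} = 0$. Hence $\bar{e} = e^{-i\delta} \bar{f} = e^{-i\delta} f = e^{-2i\delta} e$, and
\begin{displaymath}
e - \bar{e} = (1 - e^{-2i\delta}) e = 2i \delta e + 2 \delta^{2} e - \tfrac{4i}{3} \delta^{3} e - \cdots .
\end{displaymath}
The generalized biextension hypothesis enters here: the only weights are $2a > b > 2c$ and $\delta$ strictly lowers weight, so any path $2a \rightsquigarrow 2c$ under successive applications of $\delta$ has length at most $2$, giving $\delta^{2} f \in J^{c,c}$ and $\delta^{n} f = 0$ for $n \ge 3$ (since $\delta$ annihilates the bottom piece $J^{c,c}$). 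Commutativity with $e^{i\delta}$ yields $\delta^{n} e = e^{i\delta} \delta^{n} f = 0$ for $n \ge 3$, so the expansion truncates to $e - \bar{e} = 2i \delta e + 2 \delta^{2} e$.

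Next I apply $\Pi_{2c}$. The orientation forces $\Gr^{W}_{2c} V \cong \Q(-c)$ to be of pure Hodge type $(c,c)$, so $W_{2c} V_{\C} = I^{c,c} = J^{c,c}$ is one-dimensional. Only the $(r,r)$-component of $\delta$ maps $I^{a,a}$ into $I^{c,c}$, hence $\Pi_{2c}(\delta e) = \delta^{r,r}_{(F,W)}(e)$; using $\delta^{r,r}_{(F,W)} = \Ad(e^{i\delta}) \eta^{r,r}$ and that $e^{\pm i\delta}$ act as the identity on $J^{c,c}$, this simplifies to $\eta^{r,r}(f)$. Similarly $\delta^{2} e = e^{i\delta} \delta^{2} f = \delta^{2} f \in J^{c,c}$, so $\Pi_{2c}(\delta^{2} e) = \delta^{2} f$. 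Both $\eta^{r,r}(f)$ and $\delta^{2} f$ are real, since $\eta^{r,r}$, $\delta^{2}$ and $f$ are all real. Therefore
\begin{displaymath}
\Pi_{2c}(e - \bar{e}) = 2i\, \eta^{r,r}(f) + 2\, \delta^{2} f ,
\end{displaymath}
whose imaginary part equals $2\, \eta^{r,r}(f) = 2\, \delta^{r,r}_{(F,W)}(e) = 2\, \Ht(H)\, e^{\vee}$. Dividing by $2$ yields the lemma.

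The main technical subtlety I expect to manage is the interplay between the two bigradings: $\Pi_{2c}$ is defined via the $I$-decomposition, while the real structure splits the $J$-decomposition. The discrepancy manifests as the extra real term $2 \delta^{2} f$ in the projection, which is harmless because it is killed by taking imaginary parts — a clean reflection of why the lemma takes such a compact form.
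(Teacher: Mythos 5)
Your proof is correct, and it reaches the paper's conclusion by a genuinely different route. The paper never passes to the $\R$-split companion of $H$: it works at the level of the grading operator, expanding $\overline Y=e^{-2i\delta}\cdot Y$ with $\delta=\delta_1+\delta_2+\delta_3$ decomposed by weight, writing $\bar e=e+a_{k_2}+a_{k_3}$, computing $\overline Y(e)$ and $\overline Y(a_{k_2})$ in two ways, and solving the resulting component equations for $\delta_3(e)$. You instead invoke the full strength of \cite[Prop.\ 2.20]{CKS:dhs} (the same result the paper cites for the existence of $\delta$), namely that $(e^{-i\delta}\cdot F,W)$ is split over $\R$ with $I^{p,q}=e^{i\delta}(J^{p,q})$, which lets you prove the vector-level identity $\bar e=e^{-2i\delta}e$ directly and truncate the exponential using the three-weight hypothesis. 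This is cleaner: it makes transparent that the contributions of $\delta_1$ and $\delta_2\circ\delta_1$ to $\Pi_{2c}(e-\bar e)$ are real and hence killed by $\im$, which in the paper's version is buried in the bookkeeping of equations \eqref{eq:44}--\eqref{eq:46}; one can check the two computations agree, since your relation $-a_{k_3}=2i\delta_3(e)+2\delta_2\delta_1(e)$ together with its conjugate recovers the paper's final formula $\delta_3(e)=\tfrac{-1}{2}\tfrac{a_{k_3}-\bar a_{k_3}}{2i}$. The only points worth spelling out slightly more are (i) that $f=e^{-i\delta}e$ projects to $\bfone_H$ because $\delta$ strictly lowers the weight, so $f\equiv e$ modulo $W_{2a-2}$, and (ii) the silent identification $I^{c,c}=J^{c,c}=W_{2c}V_{\C}$ (valid because $e^{i\delta}$ is the identity on the bottom piece), which is what makes $\Pi_{2c}(\delta^2e)=\delta^2f$ immediate; both are one-line observations, so there is no gap.
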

\begin{proof} Write $k_{1}=2a$, $k_{2}=b$ and $k_{3}=2c$ for the
  different weights of $H$ and let $Y = Y_{(F,W)}$ and $\delta = \delta_{(F,W)}$. Since
  $\delta \in \Lambda ^{1,1}$, there is a decomposition
  $\delta = \delta _{1}+\delta _{2}+\delta _{3}$, with
  \begin{displaymath}
    \delta _{1} = \Pi _{k_{2}}\circ \delta \circ \Pi _{k_{1}},\quad
    \delta _{2} = \Pi _{k_{3}}\circ \delta \circ \Pi _{k_{2}},\quad
    \delta _{3} = \Pi _{k_{3}}\circ \delta \circ \Pi _{k_{1}}.
  \end{displaymath}
The decomposition \eqref{eq:37} and the fact that the projectors $\Pi _{k}$ are
orthogonal imply
\begin{displaymath}
  [Y,\delta _{1}]=(k_{2}-k_{1})\delta _{1},\quad
  [Y,\delta _{2}]=(k_{3}-k_{2})\delta _{2},\quad
  [Y,\delta _{3}]=(k_{3}-k_{1})\delta _{3}. 
\end{displaymath}
 In particular,
\begin{align*}
  [\delta ,[\delta,Y]]
  &=[\delta _{1}+\delta _{2}+\delta _{3},(k_{1}-k_{2})\delta
    _{1}+(k_{2}-k_{3})\delta _{2}+(k_{1}-k_{3})\delta _{3}]\\
  & = (k_{1}+k_{3}-2k_{2})\delta _{2}\circ \delta _{1}.    
\end{align*}
Therefore,
\begin{multline}
      \overline Y = e^{-2i\delta}\cdot Y
      = Y -2i[\delta,Y] -2[\delta ,[\delta ,Y]]=\\
      Y -2i((k_{1}-k_{2})\delta
      _{1}+(k_{2}-k_{3})\delta _{2}+(k_{1}-k_{3})\delta _{3})
      -2(k_{1}+k_{3}-2k_{2})\delta _{2}\circ \delta _{1}.  \label{bar-Y-eq}
\end{multline}
Since $e\in I^{a,a}$ is a lift of  $\bfone(-a)_{\Q}\in \Q(-a)_{\Q}$ and 
$\overline{ \bfone(-a)}_{\Q}=\bfone(-a)_{\Q}$, we can write 
\begin{equation}
     \bar e = e + a_{k_{2}} + a_{k_{3}} ,  \label{bar-e0-eq}
\end{equation}
where $Y(a_j) = ja_j$. 
We now compute
\begin{multline}\label{eq:38}
  \overline Y(e) = \overline{Y(\bar e)}
  = \overline{Y(e + a_{k_{2}} + a_{k_{3}})}\\
  = k_{1}\bar e+k_{2}\bar a_{k_{2}} + k_{3}\bar a_{k_{3}}
  = k_{1}e +k_{1}a_{k_{2}}+k_{1}a_{k_{3}}+k_{2}\bar a_{k_{2}} + k_{3}\bar a_{k_{3}}.
\end{multline}
On the other hand, by equation \eqref{bar-Y-eq},
\begin{equation}
  \label{eq:39}
  \overline Y(e)=k_{1}e-2i(k_{1}-k_{2})\delta _{1}(e)
  -2i(k_{1}-k_{3})\delta _{3}(e)-2(k_{1}+k_{3}-2k_{2})\delta
  _{2}(\delta _{1}(e)). 
\end{equation}
By \eqref{bar-e0-eq} we deduce
\begin{equation}\label{eq:42}
  \bar a_{k_{2}}=-a_{k_{2}}-a_{k_{3}}-\bar a_{k_{3}}.
\end{equation}
By equations \eqref{eq:38}, \eqref{eq:39} and \eqref{eq:42} and
using the splitting, we deduce the equations
\begin{align}
  (k_{1}-k_{2})a_{k_{2}}
  &=-2i(k_{1}-k_{2})\delta _{1}(e),\label{eq:44}\\
    (k_{1}-k_{2})a_{k_{3}}+(k_{3}-k_{2})\bar a_{k_{3}}
  &=
    2i(k_{3}-k_{1})\delta _{3}(e)-2(k_{1}+k_{3}-2k_{2})\delta
    _{2}(\delta_{1}(e)). \label{eq:45}
\end{align}
From equation \eqref{eq:44}, taking into
account that $k_{1}-k_{2}\not = 0$, we obtain
  \begin{equation}
    \delta _{1}(e) = \frac{i}{2}a_{k_{2}}.\label{eq:44'}
\end{equation}
Applying $\delta _{2}$ to equation \eqref{eq:44'} we get
\begin{equation}\label{eq:47}
  \delta _{2}(a_{k_{2}})=-2i\delta _{2}(\delta _{1}(e)).
\end{equation}
Computing $\overline Y(a_{k_{2}})$ in two ways as we have done with
$\overline Y(e)$ yields the equation
\begin{equation}
  \label{eq:46}
  -2i\delta _{2}(a_{k_2})=a_{k_3}+\bar a_{k_3}.
\end{equation}
Combining equations \eqref{eq:45}, \eqref{eq:47} and \eqref{eq:46}
gives
\begin{displaymath}
  \delta _{3}(e)=\frac{-1}{2}\frac{a_{k_{3}}-\bar a_{k_{3}}}{2i},
\end{displaymath}
which is equivalent to the lemma.
\end{proof}

\subsection{Some ancillary results}
We next study the effect of a morphism of mixed Hodge structures on
the height we have defined. To this end we first recall the compatibility of the
Deligne splitting with morphism of mixed Hodge structures.

\begin{lem} Let $A$ and $B$ be mixed Hodge structures with Deligne
splittings $\delta_A$ and $\delta_B$ respectively.  Let $f:A\to B$
be a morphism of mixed Hodge structures.  Then,
$f\circ\delta_A = \delta_B\circ f$.
\end{lem}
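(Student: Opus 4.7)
The argument proceeds in two steps. \emph{First}, one checks that $f$ preserves the Deligne bigrading, i.e.\ $f(I^{p,q}_A)\subseteq I^{p,q}_B$ for every $(p,q)$. This is immediate from the explicit formula \eqref{eq:72}: since $f$ is a morphism of mixed Hodge structures it preserves $F$, $W$, and (being defined over $\Q$) commutes with Betti conjugation, hence also preserves $\overline{F^{\bullet}}$ and each $U^r_s$, so it preserves the intersection defining $I^{p,q}$. Combining this with \eqref{eq:37} one obtains $f\circ Y_A=Y_B\circ f$, and conjugating gives $f\circ\overline{Y_A}=\overline{Y_B}\circ f$.

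\emph{Second}, substitute the defining relation \eqref{delta-def} for both $\delta_A$ and $\delta_B$ into this last identity, producing
\[
f\circ e^{-2i\delta_A}Y_A e^{2i\delta_A} \;=\; e^{-2i\delta_B}Y_B e^{2i\delta_B}\circ f.
\]
Expand both sides using $e^{\ad(-2i\delta)}Y=\sum_{n\ge 0}\tfrac{(-2i)^n}{n!}\ad(\delta)^n Y$ and decompose everything with respect to the Hodge bigrading (viewing $\Hom(A_\C,B_\C)$ as bigraded by the bigradings of $A$ and $B$, in which $f$ has type $(0,0)$). Since $[Y,\delta^{r,s}]=(r+s)\delta^{r,s}$ and $\delta\in\Lambda^{-1,-1}$ has strictly negative bidegrees, any type-$(r,s)$ contribution to $\ad(\delta)^n Y$ with $n\ge 2$ is a commutator of factors $\delta^{r_i,s_i}$ with $|r_i|+|s_i|\ge 2$ summing to $|r|+|s|$, so each factor necessarily satisfies $|r_i|+|s_i|<|r|+|s|$. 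Matching the type-$(r,s)$ components on both sides of the displayed equation and dividing by $-2i$, we get
\[
(|r|+|s|)\bigl(f\delta_A^{r,s}-\delta_B^{r,s}f\bigr)\;=\;P^{r,s}\bigl(f\delta_A^{r',s'}-\delta_B^{r',s'}f:|r'|+|s'|<|r|+|s|\bigr),
\]
for a universal polynomial $P^{r,s}$ (coming from the $n\ge 2$ terms) that vanishes whenever all its arguments vanish.

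An induction on $|r|+|s|$, with base case $(-1,-1)$ where $P^{r,s}$ is identically zero, then yields $f\delta_A^{r,s}=\delta_B^{r,s}f$ for every $(r,s)$, using that $|r|+|s|>0$. Summing over Hodge components gives the desired identity $f\circ\delta_A=\delta_B\circ f$. The main obstacle is purely combinatorial, namely the Hodge-type bookkeeping of the iterated commutators $\ad(\delta)^n Y$ and the verification that the polynomial $P^{r,s}$ is indeed of strictly lower order in the sense required by the induction; the rest of the argument is a direct manipulation of the defining equation for $\delta$.
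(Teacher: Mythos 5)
Your proof is correct, but it follows a genuinely different route from the paper. The paper's proof is a two-line reduction: it forms the direct sum mixed Hodge structure $C=A\oplus B$, observes that $g(a,b)=(a,b+f(a))$ is a $(0,0)$-morphism of $C$ and that the block structure of $\gl(A\oplus B)$ forces $\delta_C=\delta_A\oplus\delta_B$, and then invokes \cite[Prop.~2.20]{CKS:dhs}, which says that $\delta_C$ commutes with all $(r,r)$-morphisms of $C$. Writing out $g\circ\delta_C=\delta_C\circ g$ in block form gives the claim. You instead work directly with the two structures and give a self-contained argument: $f$ preserves the Deligne bigrading by \eqref{eq:72}, hence intertwines $Y_A$ with $Y_B$ and their conjugates, and then the uniqueness statement behind \eqref{delta-def} is unwound by an induction on $|r|+|s|$ over Hodge components, using that the $n\ge 2$ terms of $e^{-2i\,\ad\delta}Y$ only involve components of strictly lower order. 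This is essentially a reproof of the cited CKS result in the generality needed (a morphism between two different structures rather than an endomorphism), so it buys self-containedness at the cost of length; the one point worth tightening is the assertion that the higher-order discrepancy is a ``polynomial $P^{r,s}$ vanishing when its arguments vanish'' --- strictly speaking one should telescope each monomial, writing $f\delta_A^{(1)}\cdots\delta_A^{(k)}-\delta_B^{(1)}\cdots\delta_B^{(k)}f$ as a sum of terms each containing a factor $f\delta_A^{(j)}-\delta_B^{(j)}f$ of lower order, which is exactly what the induction hypothesis kills. With that gloss the argument is complete.
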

\begin{proof}  By \cite[Prop 2.20]{CKS:dhs} if $C$ is a mixed Hodge structure
then $\delta_C$ commutes with all $(r,r)$-morphisms of $C$.  Let
$C=A\oplus B$ and observe that $g(a,b) = (a,b+f(a))$ is a morphism of $C$.
Using the block structure of $\gl(C)=\gl(A\oplus B)$ it follows immediately
from \eqref{delta-def} that $\delta_C(a,b) = (\delta_A(a),\delta_B(b))$.
Writing out the $g\circ\delta_C = \delta_C\circ g$ shows that 
$f\circ\delta_A=\delta_B\circ f$.
\end{proof}

\begin{prop}\label{prop:biextension-functoriality}
Let $A$ and $B$ be oriented mixed Hodge structures
such that $\max(A)=\max(B)$ and $\min(A)=\min(B)$.  Let $f\colon A\to B$ be a
morphism of mixed Hodge structures which is injective on $\Gr ^W_{\max(A)}$
and $\Gr ^W_{\min(A)}$.  Then,
\begin{displaymath}
     \text{\rm ht}(A) d_{min}(f) = \text{\rm ht}(B) d_{max}(f)  ,
\end{displaymath}
where $f(\bfone_A) = d_{max}(f) \bfone_B$ and $f(\bfone_A^{\vee}) =
d_{min}(f) \bfone_B^{\vee}$. 
\end{prop}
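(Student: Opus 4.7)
The plan is to combine the naturality of the Deligne splitting under morphisms of mixed Hodge structures (the preceding lemma) with a careful identification of the distinguished lifts $e_A$, $e_A^\vee$, $e_B$, $e_B^\vee$ entering Definition \ref{signed-height}.

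First I would fix notation: write $2a=\max(A)=\max(B)$ and $2c=\min(A)=\min(B)$, so that $r=c-a=-\ell/2$ is common to both structures and each signed height is computed via its $\delta^{r,r}$ component. Let $e_A\in I^{a,a}_A$ be the Deligne lift of $\bfone_A\in\Gr^W_{2a}(A)$, and let $e_A^\vee\in V_{A,\C}$ be the image of $\bfone_A^\vee$ in $W_{2c}V_{A,\C}$; define $e_B$ and $e_B^\vee$ analogously. Because $\min(A)=2c$ forces $W_{2c-1}(A)=0$, we have $W_{2c}(A)=\Gr^W_{2c}(A)=I^{c,c}_A$, and similarly on the $B$-side, so in particular $e_A^\vee\in I^{c,c}_A$ and $e_B^\vee\in I^{c,c}_B$ and both are nonzero.

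Next I would translate the scalars $d_{max}(f)$ and $d_{min}(f)$ into statements about $f$ on Deligne lifts. Since $f$ is a morphism of mixed Hodge structures it respects the Deligne bigrading, $f(I^{p,q}_A)\subset I^{p,q}_B$. Hence $f(e_A)\in I^{a,a}_B$ and its image in $\Gr^W_{2a}(B)$ is $f(\bfone_A)=d_{max}(f)\bfone_B$; by the uniqueness of the Deligne lift,
\begin{displaymath}
f(e_A)=d_{max}(f)\, e_B.
\end{displaymath}
Likewise $f(e_A^\vee)\in I^{c,c}_B=W_{2c}V_{B,\C}$ corresponds under $W_{2c}(B)\cong\Gr^W_{2c}(B)$ to $f(\bfone_A^\vee)=d_{min}(f)\bfone_B^\vee$, whence
\begin{displaymath}
f(e_A^\vee)=d_{min}(f)\, e_B^\vee.
\end{displaymath}

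Finally, I would invoke the naturality lemma $f\circ\delta_A=\delta_B\circ f$. Because $f$ is of bi-type $(0,0)$ for the Deligne bigrading, comparing Hodge components yields $f\circ\delta^{p,q}_A=\delta^{p,q}_B\circ f$ for every $(p,q)$. Evaluating the $(r,r)$-component on $e_A$ and using \eqref{signed-ht-def} gives
\begin{displaymath}
\Ht(A)\,f(e_A^\vee)=f(\delta^{r,r}_A(e_A))=\delta^{r,r}_B(f(e_A))=d_{max}(f)\,\Ht(B)\, e_B^\vee,
\end{displaymath}
and substituting $f(e_A^\vee)=d_{min}(f)\,e_B^\vee$ and cancelling $e_B^\vee$ yields the identity $\Ht(A)\,d_{min}(f)=\Ht(B)\,d_{max}(f)$. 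I do not expect a genuine obstacle: the argument is bookkeeping around the Deligne bigrading once the naturality lemma is available, and the only minor point requiring care is verifying $W_{2c-1}=0$, which is automatic from the definition of $\min$.
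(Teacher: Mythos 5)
Your proposal is correct and follows essentially the same route as the paper's proof: both rely on the naturality of the Deligne splitting ($f\circ\delta_A=\delta_B\circ f$), the fact that $f$ is of type $(0,0)$ so the $(r,r)$-components commute with $f$, and evaluation on the Deligne lift $e_A$ followed by the identifications $f(e_A)=d_{max}(f)e_B$ and $f(e_A^{\vee})=d_{min}(f)e_B^{\vee}$. Your extra bookkeeping (uniqueness of the Deligne lift, $W_{\min}=I^{c,c}$) just makes explicit what the paper leaves implicit.
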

\begin{proof} Let $e_A$ be a lift of $\bfone_A$ and $e_A^{\vee}$ the
  image of $\bfone_A^{\vee}$. Then, $f(e_A)=d_{max}(f)e_B$ where $e_B$
  is a lift of $\bfone_B$.
Likewise, $f(e_A^{\vee}) = d_{min}(f)e_B^{\vee}$ where $e_B^{\vee}$ is
the image of to $\bfone_B^{\vee}$.   Moreover, since $f$ is of type
$(0,0)$, then 
$f\circ\delta_A = \delta_B\circ f$  implies that 
$    f\circ\delta_A^{r,r} = \delta_B^{r,r}\circ f
$    
for any $r$.  Setting $r=(\min(A)-\max(A))/2$ it follows that
\begin{equation*}
\begin{matrix}
    f\circ\delta_A^{r,r}(e_A) & = & \delta_B^{r,r}\circ f(e_A) \\
    \parallel & & \parallel \\
    f(\text{\rm ht}(A)e_A^{\vee}) & = & \delta_B^{r,r}(d_{max}(f)e_B) \\
    \parallel & & \parallel \\
    \text{\rm ht}(A) d_{min}(f) e_B^{\vee} & = &
    \text{\rm ht}(B) d_{max}(f) e_B^{\vee}.
\end{matrix}
\end{equation*}
\end{proof}
\begin{ex}\label{example:usual-height-correspondence}
We put Proposition \ref{prop:biextension-functoriality} in practice
for usual cycles. Let $X$ and $Y$ be smooth projective varieties of
dimensions $d_X$ and $d_Y$ respectively. Let $Z\in Z^p_{\hom}(X)$,
$W\in Z^q_{\hom}(Y)$ and $\Gamma\in Z^{d_X+r}(X\times Y)$ be a
correspondence of \textit{degree} $r$, such that $p+q+r=d_Y+1$. We
assume that the pullbacks of $Z$ and $W$ intersect $\Gamma$ properly,
so that $\Gamma_\ast(Z)$ and $\Gamma^\ast(W)\coloneqq
\Gamma^t_{\ast}(W)$ are both defined at the level of cycles. Let
$B_{Z,\Gamma^\ast(W)}$ and $B_{\Gamma_{\ast}(Z),W}$ be oriented
biextensions as defined by Hain in \cite{Hain:Height}, of graded
weights $0,-1,-2$. One can show that $\Gamma$ defines a morphism of
Hodge structures between these biextensions
\begin{displaymath}
\Gamma_{Z,W}\colon B_{Z, \Gamma^\ast(W)}\rightarrow B_{\Gamma_\ast(Z), W},
\end{displaymath}
with $d_{max}(\Gamma_{Z,W})=d_{min}(\Gamma_{Z,W})=1$. Hence we get
\begin{displaymath}
\Ht(B_{Z, \Gamma^\ast(W)})=\Ht(B_{\Gamma_\ast(Z), W}).
\end{displaymath}
\end{ex}

For later use, we record the following:

\begin{lem}\label{rescale-by-N}  Let $N$ be a $(-1,-1)$-morphism of a mixed
Hodge structure $(F,W)$.  Then, 
$
   \delta_{(e^{tN}\cdot F,W)} = \delta_{(F,W)} + \im(t)N
$.
\end{lem}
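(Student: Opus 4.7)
The plan is to check that $\tilde\delta := \delta_{(F,W)} + \mathrm{Im}(t)\,N$ satisfies the three defining properties of $\delta_{(e^{tN}\cdot F,W)}$, namely that it is real, lies in $\Lambda^{-1,-1}_{(e^{tN}\cdot F,W)}$, and satisfies $\overline{Y'} = e^{-2i\tilde\delta}\cdot Y'$, where $Y' = Y_{(e^{tN}\cdot F,W)}$; by the uniqueness built into \eqref{delta-def} this would force $\tilde\delta = \delta_{(e^{tN}\cdot F,W)}$.

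For the preliminary set-up, note that $\Lambda^{-1,-1}$ is a complex subspace of $\gl(V_{\C})$ containing $N$, so Lemma \ref{lambda-equiv} (applied with $\lambda = tN$) shows that $(e^{tN}\cdot F,W)$ is a mixed Hodge structure for every $t\in\C$, with bigrading $I'^{p,q} = e^{tN}(I^{p,q})$. The relations $[N,Y] = 2N$ and $[N,N]=0$ give $Y' = \mathrm{Ad}(e^{tN})Y = Y+2tN$, and the reality of $N$ yields $\overline{Y'} = \overline{Y} + 2\bar t\,N$. Since commutators of $N$ with any element of $\Lambda^{-1,-1}_{(F,W)}$ fall in strictly deeper Hodge components, $\mathrm{Ad}(e^{tN})$ preserves $\Lambda^{-1,-1}_{(F,W)}$, so $\Lambda^{-1,-1}_{(e^{tN}\cdot F,W)} = \Lambda^{-1,-1}_{(F,W)}$. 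Because $\delta$ and $N$ are both real and both lie in this subspace, $\tilde\delta$ is real and lies in $\Lambda^{-1,-1}_{(e^{tN}\cdot F,W)}$.

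The remaining identity, using $-2i\,\mathrm{Im}(t) = \bar t - t$, becomes
\[
\mathrm{Ad}\bigl(e^{-2i\delta + (\bar t - t)N}\bigr)(Y + 2tN) = \overline{Y} + 2\bar t\,N.
\]
I would handle this by reducing to the $\R$-split case, where the canonical splitting $(\hat F, W) := (e^{-i\delta}\cdot F, W)$ has $\delta_{(\hat F,W)} = 0$ and real Deligne grading. For $\R$-split $(F,W)$, a one-line computation using only $[N,N]=0$ and $[N,Y] = 2N$ gives $\mathrm{Ad}(e^{-2iuN})(Y + 2iuN) = Y - 2iu\,N$, which is exactly the required identity when $t = iu$ is purely imaginary. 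The real part is handled by noting that $e^{rN}$ for $r\in\R$ is a real automorphism of $V$, so it commutes with complex conjugation and preserves the $\R$-split property, leaving $\delta$ unchanged; transporting the $\R$-split statement back through $e^{-i\delta}$ uses the reality of $N$ to show that the correction term $\mathrm{Im}(t)N$ is carried along unchanged.

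The main obstacle is the non-commutativity of $\delta$ and $N$ inside the exponential $e^{-2i(\delta + \mathrm{Im}(t)N)}$, which prevents a direct product decomposition via $e^{-2i\delta}e^{-2i\mathrm{Im}(t)N}$; the reduction to the $\R$-split case circumvents this by making $\delta$ vanish, at which point the nilpotency $[N,N]=0$ allows a clean verification.
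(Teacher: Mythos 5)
There is a genuine gap, and it is somewhat ironic: the obstacle you identify at the end of your proposal does not exist. You write that ``the main obstacle is the non-commutativity of $\delta$ and $N$'' and design the $\R$-split reduction to get around it, but in fact $\delta_{(F,W)}$ and $N$ \emph{do} commute. This is exactly the content of \cite[Prop.~2.20]{CKS:dhs} (quoted elsewhere in this paper): $\delta_{(F,W)}$ commutes with every $(r,r)$-morphism of $(F,W)$, and $N$ is by hypothesis a $(-1,-1)$-morphism. This commutativity is the key lemma of the paper's proof, which then becomes a short direct computation: since $[\delta,N]=0$, one has $e^{(\bar t-t)N-2i\delta}=e^{\bar t N}e^{-2i\delta}e^{-tN}$, and then
\begin{displaymath}
  e^{(\bar t-t)N-2i\delta}\cdot Y' = e^{\bar t N}e^{-2i\delta}e^{-tN}e^{tN}\cdot Y = e^{\bar t N}\cdot\overline Y = \overline{e^{tN}\cdot Y}=\overline{Y'},
\end{displaymath}
so $-2i\delta_{(e^{tN}\cdot F,W)}=(\bar t - t)N - 2i\delta$ by uniqueness, which is the claim.

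Moreover, your proposed circumvention does not actually avoid the commutativity. The preliminary part is fine (the computation $\mathrm{Ad}(e^{(\bar t - t)N})(Y+2tN)=Y+2\bar t N$ in the $\R$-split case is correct, and $e^{rN}$ real preserves $\R$-splitness), but the ``transport back through $e^{-i\delta}$'' step is where the gap hides. Writing $F=e^{i\delta}\cdot\hat F$ with $\hat F$ the canonical $\R$-split companion, one gets $Y'=e^{tN}e^{i\delta}\cdot\hat Y$ and $\overline{Y'}=e^{\bar t N}e^{-i\delta}\cdot\hat Y$, so by simple transitivity the task reduces to showing $e^{\bar t N}e^{-2i\delta}e^{-tN}=e^{-2i(\delta+\mathrm{Im}(t)N)}$ --- and this identity is \emph{precisely} the factorization that requires $[\delta,N]=0$. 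Alternatively phrased, commuting $e^{tN}$ past $e^{i\delta}$ to land on the $\R$-split picture, or checking that $N$ remains a $(-1,-1)$-morphism of $(\hat F,W)$ (so that the $\R$-split computation applies at all), both invoke the same commutativity. So the $\R$-split detour buys you nothing over the direct verification; the correct and shortest route is to first cite \cite[Prop.~2.20]{CKS:dhs} and then compute.
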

\begin{proof}  By \cite[Prop. 2.20]{CKS:dhs}, $N$ and $\delta_{(F,W)}$ commute.
Therefore, using Lemma \eqref{lambda-equiv}:
\begin{align*}
    e^{(\bar t-t)N - 2i\delta_{(F,W)}}\cdot Y_{(e^{tN}\cdot F,W)}
    &= e^{\bar t N}e^{-2i\delta_{(F,W)}}e^{-tN}\cdot Y_{(e^{tN}\cdot F,W)}  \\
    &= e^{\bar t N}e^{-2i\delta_{(F,W)}}e^{-tN}e^{tN}\cdot Y_{(F,W)} \\
    &= e^{\bar t N}e^{-2i\delta_{(F,W)}}\cdot Y_{(F,W)} \\
    &= e^{\bar t N}\cdot \overline{Y_{(F,W)}} \\ 
    &= \overline{e^{t N}\cdot Y_{(F,W)}} \\
    &= \overline{Y_{(e^{tN}\cdot F,W)}}.
\end{align*}
Accordingly, by \eqref{delta-def} 
$-2i\delta_{(e^{tN}\cdot F,W)} = (\bar t - t)N -2i\delta_{(F,W)}$ which implies
the stated formula after dividing by $-2i$.
\end{proof}

\begin{cor}\label{ht-reparam} Let $N$ be a $(-1,-1)$-morphism of a
mixed Hodge structure $(F,W)$ and 
$r=(\min(F,W)-\max(F,W))/2$.  If $r<-1$  then
\begin{displaymath}
     \Ht(e^{tN}\cdot F,W) = \Ht(F,W)
   \end{displaymath}
for all $t\in\C $.
\end{cor}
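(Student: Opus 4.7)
My plan is to combine Lemma \ref{rescale-by-N} with the commutation between $\delta_{(F,W)}$ and $N$, then read off that the $(r,r)$–component of the new Deligne splitting, applied to the new lift of $\bfone_H$, reproduces the old height, simply because the ``correction" terms have Hodge weight too big to contribute when $r<-1$.

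First, apply Lemma \ref{rescale-by-N} to write $\delta_{(e^{tN}\cdot F,W)} = \delta+\im(t) N$, where $\delta=\delta_{(F,W)}$. Since $N$ is a $(-1,-1)$-morphism of $(F,W)$, \cite[Prop.~2.20]{CKS:dhs} gives $[\delta,N]=0$, so $\delta$ and $N$ both commute with $e^{tN}$. By Lemma \ref{lambda-equiv} the new Deligne bigrading is $I^{p,q}_{\text{new}}=e^{tN}(I^{p,q}_{\text{old}})$; in particular the new lift of the orientation is $e_{\text{new}}=e^{tN}(e)$, where $e\in I^{a,a}_{(F,W)}$ is the old lift. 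Moreover, since $W_{\min(H)-1}=0$ forces $N|_{W_{\min(H)}}=0$, we have $I^{c,c}_{\text{new}}=I^{c,c}_{\text{old}}$ and $e^{\vee}$ is unchanged.

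Using these commutations,
\[
\delta_{(e^{tN}\cdot F,W)}(e_{\text{new}})
  =(\delta+\im(t)N)\bigl(e^{tN}(e)\bigr)
  =e^{tN}\bigl(\delta(e)+\im(t)N(e)\bigr).
\]
By definition, $\Ht(e^{tN}\cdot F,W)\, e^{\vee}$ is the $I^{c,c}_{\text{new}}$–component of the left-hand side. Because $e^{tN}$ fixes $I^{c,c}_{\text{old}}=I^{c,c}_{\text{new}}$ pointwise, taking this component amounts to projecting $\delta(e)+\im(t)N(e)$ onto $I^{c,c}_{\text{old}}$. Decomposing $\delta=\sum_{p,q}\delta^{p,q}$ in the old bigrading and noting $\delta^{p,q}(I^{a,a}_{\text{old}})\subset I^{a+p,a+q}_{\text{old}}$, only $(p,q)=(r,r)$ lands in $I^{c,c}_{\text{old}}$, producing $\delta^{r,r}(e)=\Ht(F,W)\,e^{\vee}$. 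On the other hand $N(e)\in I^{a-1,a-1}_{\text{old}}$ and the hypothesis $r<-1$, i.e.\ $\ell(F,W)>2$, gives $a-1>c$, so the $I^{c,c}_{\text{old}}$–component of $N(e)$ vanishes. Combining yields $\Ht(e^{tN}\cdot F,W)=\Ht(F,W)$.

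The one delicate point is that the Hodge type decomposition used to define $\delta^{r,r}$ depends on the Hodge filtration; the argument above sidesteps this by computing the $I^{c,c}$–component of $\delta_{(e^{tN}\cdot F,W)}(e_{\text{new}})$ directly, exploiting the three facts that $N$ commutes with $\delta$, that $N$ acts trivially on the lowest weight piece, and that the hypothesis $r<-1$ puts all terms coming from $N$ into weights strictly above $\min(H)$. Removing the hypothesis $r<-1$ would break the last step and indeed the conclusion fails when $\ell(F,W)=2$, which is consistent with the asymptotic phenomena discussed elsewhere in the paper.
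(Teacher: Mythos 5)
Your proof is correct and follows essentially the same route as the paper: apply Lemma~\ref{rescale-by-N} and observe that, since $N$ is pure of type $(-1,-1)$ and $r<-1$, it cannot contribute to the $(r,r)$-component, so $\delta^{r,r}$ is unchanged. You merely make explicit the change of Deligne bigrading under $e^{tN}$ and the fact that $e^{\vee}$ is fixed, details the paper's one-line proof leaves implicit.
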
     
\begin{proof} By Lemma~\eqref{rescale-by-N},
$\delta_{(e^{tN}\cdot F,W)}^{r,r} = \delta_{(F,W)}^{r,r}$ and hence the two
mixed Hodge structures have the same height.
\end{proof}

\subsection{Dual of a mixed Hodge structure}\label{dualmhs} A real mixed
Hodge structure $A$ induces a mixed Hodge structure $A^{\ast}$ on the 
dual vector space $A_{\R}^*$ by the formula
\begin{equation}
     I^{a,b}_{A^*}
     = \{\, \lambda\in A_{\C }^* \mid \lambda(I^{c,d}) = 0,\hphantom{a}
         (c,d)\neq(-a,-b)\,\}.         \label{dual-mhs-ipq}
\end{equation}
If $\alpha\in \gl(A_{\C })$ then $\alpha^T\in \gl(A_{\C }^*)$ is the
linear map $(\alpha^T(\lambda))(v)=\lambda(\alpha(v))$ for all
$\lambda\in A_{\C }^*$ and $v\in A_{\C }$.  A short calculation
shows that if $\alpha\in \gl(A_{\C })^{a,b}$ then
$\alpha^T\in \gl(A_{\C }^*)^{a,b}$.  Tracing through
the definitions, one sees that the Deligne grading $Y_A$ of $A$ and
$Y_{A^*}$ of $A^*$ are related by the formula
\begin{equation}
        Y_{A^*} = -Y^T_A.     \label{dual-deligne-grading}
\end{equation}
It follows from equations \eqref{dual-deligne-grading} and
\eqref{delta-def} that
\begin{equation}
        \delta_{A^*} = -\delta_A^T.   \label{dual-delta}
\end{equation}
Indeed, since
$\ad(X_1^T)\cdots\ad(X_{r-1}^T)X_r^T
=(-1)^{r-1}\{\ad(X_1)\cdots\ad(X_{r-1})X_r\}^T$ it follows
that
\begin{align*}
    e^{-2i\,\ad(-\delta_A^T)} Y_{A^*}
    &= e^{-2i\,\ad(-\delta_A^T)}(-Y_A^T) \\  
    &= -\sum_{m\geq 0}\, \frac{1}{m!}(2i\,\ad(\delta_A^T))^{m}Y_A^T \\
    &= -\sum_{m\geq 0}\,
        \frac{(-2i)^{m}}{m!}((\ad(\delta_A)^{m})Y_A)^T \\ 
    &= -\left(\exp(-2i\,\ad(\delta_A))Y_A\right)^T 
     = -\overline{Y_A}^T = \overline{-Y_A^T} = \overline{Y_{A^*}}.
\end{align*}
since the operations of transpose and complex conjugation commute. Therefore,
$\delta_{A^*} = -\delta_A^T$ by \eqref{delta-def}.

Now if $H$ is a generalized biextension as defined in Definition
\ref{def:8}, then its dual $H^*$ is also a generalized biextension
with 
\begin{displaymath}
  \Gr_{k}^{W}(V^*)=
  \begin{cases}
    \Q(c),&\text{ if }k=-2c,\\
    H^*_{b},&\text{ if }k=-b,\\
    \Q(a),&\text{ if }k=-2a,\\
    0,&\text{ otherwise.}
  \end{cases}
\end{displaymath}
We have the following relation between the heights of $H$ and $H^*$:
\begin{prop}\label{prop:dual-height}
Let $H$ be a generalized biextension. Then
\begin{displaymath}
\Ht(H^*)=-\Ht(H).
\end{displaymath}
\end{prop}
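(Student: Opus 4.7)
The plan is to apply the height formula of Definition \ref{signed-height} to $H^*$ and transfer the result back to $H$ using the duality identity $\delta_{A^*} = -\delta_A^T$ recorded in \eqref{dual-delta}. I will equip $H^*$ with the canonical dual orientation so that the pairings between the distinguished vectors of $H$ and $H^*$ are normalized to $1$.

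First I would observe that $H^{*}$ is again a generalized biextension, with weights $-2c > -b > -2a$, so $\ell(H^{*}) = \ell(H)$ and the integer $r = -\ell(H)/2 = c-a$ is the same for both mixed Hodge structures. I would then fix $\bfone_{H^{*}} \in \Gr^W_{-2c}(V^{*})$ and $\bfone_{H^{*}}^{\vee} \in \Gr^W_{-2a}(V^{*})$ to be the dual basis vectors of $\bfone_H^{\vee}$ and $\bfone_H$ respectively, using the canonical identifications $\Gr^W_{-k}(V^{*}) = (\Gr^W_k V)^{*}$. Let $f \in I^{-c,-c}_{H^{*}}$ denote the unique lift of $\bfone_{H^{*}}$ and $f^{\vee} \in I^{-a,-a}_{H^{*}}$ the image of $\bfone_{H^{*}}^{\vee}$, in the sense of Definition \ref{signed-height}.

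The core computation is to evaluate the defining relation $\delta^{r,r}_{H^{*}}(f) = \Ht(H^{*})\, f^{\vee}$ on $e \in V_{\C}$. Taking the $(r,r)$-Hodge component of \eqref{dual-delta} gives $(\delta_{H^{*}})^{r,r} = -(\delta^{r,r}_H)^T$, and unpacking the definition of transpose yields
\begin{displaymath}
\Ht(H^{*})\, f^{\vee}(e) \;=\; \delta^{r,r}_{H^{*}}(f)(e) \;=\; -\,f\!\left(\delta^{r,r}_H(e)\right) \;=\; -\Ht(H)\, f(e^{\vee}),
\end{displaymath}
where in the last step I apply the defining relation of $\Ht(H)$ for the original $H$.

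The main remaining task, and the one requiring the most bookkeeping, is to verify $f(e^{\vee}) = f^{\vee}(e) = 1$. For $f(e^{\vee})$: since $2c = \min(H)$, one has $W_{2c} V_{\C} = I^{c,c}_{H}$, so $e^{\vee} = \bfone_H^{\vee}$ literally, and the projection $V_{\C}^{*} \to \Gr^W_{-2c}(V^{*})_{\C}$ is simply restriction to this line; by the choice of dual orientation $f(e^{\vee}) = \bfone_{H^{*}}(\bfone_H^{\vee}) = 1$. Analogously, by \eqref{dual-mhs-ipq} the element $f^{\vee} \in I^{-a,-a}_{H^{*}}$ annihilates every $I^{c',d'}_{H}$ with $(c',d') \neq (a,a)$, so it depends on $e$ only through its projection $\bfone_H \in I^{a,a}_{H}$, giving $f^{\vee}(e) = \bfone_{H^{*}}^{\vee}(\bfone_H) = 1$. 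Combining these identities with the displayed equation yields $\Ht(H^{*}) = -\Ht(H)$; no serious obstacle beyond careful unraveling of the duality conventions is anticipated.
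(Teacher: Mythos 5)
Your proposal is correct and follows essentially the same route as the paper: both arguments rest on the identity $\delta_{H^*}^{r,r}=-(\delta_H^{r,r})^T$ from \eqref{dual-delta} combined with the normalizations $\langle \bfone_{H^*},\bfone_H^{\vee}\rangle=\langle\bfone_{H^*}^{\vee},\bfone_H\rangle=1$ of the dual orientation. The only difference is cosmetic — you verify the pairing identities $f(e^{\vee})=f^{\vee}(e)=1$ in more detail than the paper, which simply asserts them.
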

\begin{proof}
  By the definition of the dual of an oriented biextension, the
  generators of $H$ ad $H^{\ast}$ satisfy
  \begin{displaymath}
    \langle \bfone_{H},\bfone_{H^{\ast}}^{\vee}\rangle=1,\qquad
    \langle \bfone_{H}^{\vee},\bfone_{H^{\ast}}\rangle=1.
  \end{displaymath}
Let $e_H$ be an element of $I^{a,a}_{H}\subset V_{\C}$ which projects
to $\bfone_{H}\in \Gr ^W_{2a}(V)$ and $e^{\vee}_H$ is the image of 
$\bfone_{H}^{\vee}$ in $W_{2c}V_{\C}$. Correspondingly, for $H^*$ we
have elements $e_{H^*}$ and $e_{H^*}^{\vee}$. These elements also
satisfy
\begin{displaymath}
  \langle e_{H^*}, e^{\vee}_H\rangle =1,\qquad
  \langle e^{\vee}_{H^*}, e_H\rangle=1.
\end{displaymath}
Also, since $\delta^{r,r}_{H^*}=-(\delta^{r,r}_H)^T$,
we get 
\begin{align*}
     \Ht(H^*)e^{\vee}_{H^*}=  \delta^{r,r}_{H^*}(e_{H^*})
    &=  -(\delta^{r,r}_H)^T(e_{H^*}).
    \end{align*}
 Hence
 \begin{displaymath}
 \Ht(H^*) = \langle -(\delta^{r,r}_H)^T(e_{H^*}), e_{H}\rangle.
 \end{displaymath}
Finally, using the action of $(\delta^{r,r}_H)^T$, we get
\begin{displaymath}
\Ht(H^*)=-\Ht(H)\langle e_{H^*}, e^{\vee}_{H}\rangle=-\Ht(H).
\end{displaymath}
\end{proof}

\section{Mixed Hodge structures associated to higher cycles.}
\label{sec: mixed hodge-higher cycles}

In this section we define extension classes for higher cycles $Z\in
Z^p(X,n)_{00}$ in the refined normalized complex. For two higher cycles $Z\in Z^p(X,n)_{00}$ and $W\in Z^q(X,m)_{00}$, with $2(p+q-d-1)=n+m$, we construct, under certain assumptions, an oriented mixed Hodge structure diagram (figure \ref{fig:oriented_diagram}) which captures both the extension related to cycle $Z$ and the dual to the extension related to $W$. In an even more special situation for $n=m=1$, this diagram defines an oriented biextension.

\subsection{Two divisors on \texorpdfstring{$(\P^{1})^n$}{P1n}} 
\label{sec:two-divisors-p1n}
\begin{df}
  On $(\P^1)^n$, we define the following divisors:
  \begin{align*}
    A &= \{(t_1,\cdots , t_n)\mid \exists i, t_i=1\},\\
    B & =\{(t_1,\cdots , t_n)\mid \exists i, t_i\in \{0, \infty\}\}.
  \end{align*}
Then $A\cup B$ is a simple normal crossing divisor. Moreover
\begin{displaymath}
  (\P^1)^n\setminus A=\square^n,\quad (\P^1)^n\setminus B=(\C^\ast)^n
\text{ and } B\cap \square^n=\partial \square^n.
\end{displaymath}
For any variety $X$ we also denote
\begin{displaymath}
  A_X\coloneqq X\times A,\qquad B_X:= X\times B.
\end{displaymath}
\end{df}

The following cohomology groups are easy to compute.
\begin{equation}\label{eq:2}
  H^r((\P^1)^n\setminus A, B) =
  \begin{cases}
    0,& \text{ if }r\not = n,\\
    \Q(0),& \text{ if }r= n.
  \end{cases}
\end{equation}
\begin{equation}\label{eq:3}
  H^r((\P^1)^n\setminus B, A) =
  \begin{cases}
    0,& \text{ if }r\not = n,\\
    \Q(-n),& \text{ if }r= n.
  \end{cases}
\end{equation}
In order to fix the isomorphism \eqref{eq:3} we choose the
generator of $H^n((\P^1)^n\setminus B, A;n)_{\Q}$, that is represented
by the differential form 
\begin{equation}
  \label{eq:6}
  (-1)^{n}\frac{dt_{1}}{t_{1}}\wedge\dots \wedge \frac{dt_{n}}{t_{n}}
  \in F^{0}\Sigma _{A}E^{n}_{(\P^{1})^{n}}(\log B;n),
\end{equation}
where $t_{i}$ is the coordinate of the $\P^{1}$ in position $i$.
This choice also fixes the isomorphism \eqref{eq:2}. The reason of the
sign $(-1)^{n}$ is to make it compatible with the normalizations
chosen in \cite{Burgoswami:hait}. See for instance Proposition
\ref{propreg} below.     
The K\"unneth formula and the computations \eqref{eq:2} and
\eqref{eq:3} produce, for $a,r\in \Z$, isomorphisms of mixed
Hodge structures
\begin{align}
  \label{eq:4}
  H^r(X\times (\P^1)^n\setminus A_X, B_X;a)
  &\cong H^{r-n}(X,a),\\
  \label{eq:5}
H^r(X\times (\P^1)^n\setminus B_X, A_X;a)&\cong H^{r-n}(X, a-n).
\end{align}
Since $A_X$ and $B_X$ are in product situation (see \cite[Lemma 6.1.1
and Remark 6.1.2]{BKV:Fihnf}), the above isomorphisms are compatible
with duality
\begin{multline*}
H^r(X\times (\P^1)^n\setminus A_X, B_X, \Q(p))
\cong \\ \left(H^{2d+2n-r}\left(X\times (\P^1)^n\setminus B_X, A_X,
    \Q(d+n-p)\right)\right)^{\vee}.
\end{multline*}
We fix the isomorphism \eqref{eq:4} using the generator \eqref{eq:6}
and Proposition \ref{prop:3}.

\begin{df}\label{def:3}
  For any $a,r,n\in \Z$, we denote by
  \begin{displaymath}
    \Psi \colon H^{r}\colon
    H^r(X\times (\P^1)^n\setminus A_X, B_X;a) \longrightarrow H^{r-n}(X,a)
  \end{displaymath}
  the isomorphism determined by the generator \eqref{eq:6}. This
  isomorphism sends the class of a closed form 
\begin{displaymath}
  \omega \in \Sigma _{B_{X}}E^{r}_{X\times (\P^1)^n}(\log A_{X})
\end{displaymath}
 to the class represented by the current
\begin{displaymath}
  (-1)^{n}(\pi _{X})_{\ast}\left [\omega \wedge
    \frac{dt_{1}}{t_{1}}\wedge\dots \wedge
    \frac{dt_{n}}{t_{n}}\right]=\left[
    \frac{(-1)^{n}}{(2\pi i)^{n}}\int_{(\P^{1})^{n}}
    \omega \wedge
    \frac{dt_{1}}{t_{1}}\wedge\dots \wedge
    \frac{dt_{n}}{t_{n}}
  \right],
\end{displaymath}
where $\pi _{X}\colon X\times (\P^{1})^{n}$ is the first projection. 
\end{df}

\subsection{The extension associated to a higher cycle}
\label{sec:extens-assoc-high}
In this section we show how to associate, to a cycle $Z\in
Z^p(X,n)_{00}$, $n\geq 1$, an
extension
\begin{displaymath}
  e_Z\in \text{Ext}^1_{\Q-\MHS}(\Q(0), H^{2p-n-1}(X;p)).
\end{displaymath}

By definition $Z$ is a codimension $p$ algebraic cycle in $X\times
(\P^1)^n\setminus A_{X}$, which intersects properly all the faces of
$B_X\setminus
(A_X\cap B_X)$. We write
\begin{displaymath}
  B_X=B_{01}\cup\cdots \cup B_{0n}\cup B_{\infty 1}\cup \cdots \cup
  B_{\infty n},
\end{displaymath}
as the decomposition of $B_{X}$ into irreducible components.

Since
$Z\in Z^p(X,n)_{00}$, we have $Z\cdot (B_{ij}\setminus (A_X\cap
B_{ij}))$ well defined. Moreover, $Z$ being a higher cycle, we have
\begin{equation}\label{eq:1}
  Z\cdot (B_{ij}\setminus (A_X\cap B_{ij}))=0,\ \forall
  i=0,\infty,\  \forall j=1,\cdots, n. 
\end{equation}
We denote by $\overline Z$ the closure of $Z$ as an algebraic cycle in
$X\times (\P^{1})^{n}$. There is a cycle class with support
\begin{displaymath}
  \cl(\overline Z)\in H^{2p}_{|\overline Z|}(X\times (\P^1)^n;p)_{\Q}
\end{displaymath}
and, by restriction, a class
\begin{displaymath}
  \cl(Z)\in H^{2p}_{|Z|}(X\times (\P^1)^n\setminus A_{X};p)_{\Q}.
\end{displaymath}
Now we have the following
\begin{prop}\label{propclass}
Under the above setting, there is a unique cycle class
\begin{displaymath}
  [Z]\in H^{2p}_{|Z|\setminus A_X}\left(X\times (\P^1)^n\setminus A_X,
    B_X\setminus A_X;p\right)_{\Q},
\end{displaymath}
that is sent to $\cl(Z)$ under the obvious map 
\begin{displaymath}
  H^{2p}_{|Z|\setminus A_X}\left(X\times (\P^1)^n\setminus A_X,
    B_X\setminus A_X; p\right)\to
  H^{2p}_{|Z|\setminus A_X}\left(X\times (\P^1)^n\setminus A_X; p\right).
\end{displaymath}
\end{prop}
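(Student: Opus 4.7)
\textbf{Proof plan for Proposition \ref{propclass}.} The plan is to extract both existence and uniqueness from the long exact sequence of local cohomology for the pair $(U,V) := (X\times(\P^1)^n\setminus A_X,\, B_X\setminus A_X)$ supported on $|Z|$, using the refined normalization condition \eqref{eq:1} for existence and the proper intersection hypothesis for uniqueness. Since $Z\in Z^p(X,n)_{00}$ is an admissible cycle in $X\times \square^n = U$, its support $|Z|$ is disjoint from $A_X$, so we may safely localize at $|Z|$ in $U$ and at $|Z|\cap V$ in $V$. The relevant piece of the long exact sequence reads
\begin{displaymath}
  H^{2p-1}_{|Z|\cap V}(V;p) \longrightarrow
  H^{2p}_{|Z|}(U,V;p) \longrightarrow
  H^{2p}_{|Z|}(U;p) \longrightarrow
  H^{2p}_{|Z|\cap V}(V;p).
\end{displaymath}
Producing $[Z]$ amounts to lifting $\cl(Z)$ through the middle map; uniqueness amounts to the vanishing of the leftmost group.

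For existence, the key step is to show that the image of $\cl(Z)$ in $H^{2p}_{|Z|\cap V}(V;p)$ is zero. Writing $V=\bigcup_{i,j}(B_{ij}\setminus A_X)$ and invoking the Mayer--Vietoris spectral sequence for local cohomology associated with this decomposition into components (and their mutual intersections), it suffices by the functoriality of cycle classes to check that on each component $B_{ij}\setminus A_X$ the restriction of $\cl(Z)$ is the cycle class of $Z\cdot (B_{ij}\setminus(A_X\cap B_{ij}))$; this vanishes by the refined normalization identity \eqref{eq:1}.

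For uniqueness, the proper intersection hypothesis of $Z$ with all faces of $B_X\setminus(A_X\cap B_X)$ implies that for any intersection $B_I=\bigcap_{\ell}B_{i_\ell j_\ell}$ of components of $B_X$, the locus $|Z|\cap B_I$ has codimension at least $p+|I|$ in $X\times(\P^1)^n$, hence codimension at least $p$ in $B_I$. Consequently, by semi-purity (vanishing of local cohomology below twice the codimension), every group $H^{2p-1}_{|Z|\cap (B_I\setminus A_X)}(B_I\setminus A_X;p)$ is zero. Feeding these vanishings into the Mayer--Vietoris spectral sequence computing $H^{2p-1}_{|Z|\cap V}(V;p)$ shows that this group vanishes, forcing the lift $[Z]$ to be unique.

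The main technical subtlety is keeping the Mayer--Vietoris bookkeeping clean when $B_X$ is a reducible normal crossing divisor, in particular verifying that the proper intersection condition propagates correctly to the higher-codimension strata $B_I$; everything else is a formal consequence of the long exact sequence plus purity.
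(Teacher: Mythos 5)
Your proposal is correct and follows essentially the same route as the paper: the same localization long exact sequence, the same Mayer--Vietoris spectral sequence for the closed covering of $B_X\setminus A_X$ by its components, semi-purity on the smooth strata $B_I$ for uniqueness, and the refined normalization identity \eqref{eq:1} together with injectivity of the edge map into $\bigoplus_{i,j} H^{2p}_{|Z|\cap B_{ij}}(B_{ij}\setminus A_X;p)$ for existence. The only cosmetic difference is that the paper packages the two spectral-sequence consequences into a single auxiliary lemma (Lemma \ref{lemIC}), whereas you state them separately.
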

\begin{proof}
  Consider the long exact sequence of relative cohomology with supports
\begin{multline*}
\cdots \rightarrow H^{2p-1}_{(|Z|\cap B_X)\setminus A_X}(B_X\setminus
A_X;p)\rightarrow H^{2p}_{|Z|\setminus A_X}(X\times
(\P^1)^n\setminus A_X, B_X\setminus A_X;p)\\
\rightarrow H^{2p}_{|Z|\setminus A_X}(X\times (\P^1)^n \setminus
A_X;p)\rightarrow H^{2p}_{(|Z|\cap B_X)\setminus A_X} (B_X\setminus
A_X;p)\rightarrow\cdots
\end{multline*} 
The proof will follow if we show

\begin{enumerate}
\item\label{it1}
$H^{2p-1}_{(|Z|\cap B_X)\setminus A_X}(B_X\setminus A_X;p)=0,$
\vspace{0.1cm}
\item\label{it2}
$\cl(Z)\mapsto 0$, in $H^{2p}_{(|Z|\cap B_X)\setminus
  A_X}(B_X\setminus A_X;p)$.
\end{enumerate}
Notice that for \ref{it1}, we cannot use semi-purity directly since
$B_X\setminus A_X$ is not smooth. Instead we use the following lemma.
\begin{lem}\label{lemIC}
  Let $D$ be a complex space that can be covered by a finite number of
  smooth closed subvarieties. That is $D=\cup^r_{i=1}D_i$, with
  $D_{i}$ Zariski closed and smooth. Put $D_I=\cap_{i\in I}D_i$ for $I\subseteq
\{1,\cdots, r\}$, assume that $D_I$ be smooth for each $I$, and let $Z$ be a Zariski closed subset such that
$Z\cap D_I$ has codimension $p$ for all $I$. Then
\begin{displaymath}
  H^k_{Z}(D;p)=0, \text{ for all }k<2p
\end{displaymath}
and the map
\begin{displaymath}
  H^{2p}_{Z}(D;p)\longrightarrow
  \bigoplus_{i=1}^{r} H^{2p}_{Z\cap D_{i}}(D_{i};p)
\end{displaymath}
is a monomorphism.
\end{lem}
\begin{proof} The Mayer-Vietoris property for closed coverings gives a   
  first quadrant spectral sequence
  \begin{displaymath}
   E_{1}^{a,b}=\bigoplus_{|I|=a+1}H^{b}_{Z\cap
     D_I}(D_{I};p)\Longrightarrow H^{a+b}_{Z}(D;p). 
  \end{displaymath}
Each
$D_I$ is smooth and $\codim(Z\cap D_I)=p$. Hence using semi-purity we
conclude that $H^{b}_{Z\cap D_I}(D_{I};p)=0$ for $b< 2p$. Since
$a\geq 0$, the first statement follows. The second statement is just
the fact that edge morphism of a spectral sequence is a monomorphism. 
\end{proof}

The first statement of Lemma \ref{lemIC}, implies directly condition \ref{it1}.

The property \eqref{eq:1} implies that the the
class $\cl(Z)$ is sent to $0$ in all the groups $H^{2p}_{|Z|\cap
  B_{ij}}(B_{ij}\setminus A_{X};p)$. Therefore condition \ref{it2}
follows from the second statement of Lemma \ref{lemIC}.
\end{proof}
\begin{lem}\label{lemcy0}
For $Z\in Z^p(X,n)_{00}$, the image of the class $[Z]$ in
\begin{displaymath}
  H^{2p}(X\times (\P^1)^n\setminus A_X, B_X;p),
\end{displaymath}
is zero.
\end{lem}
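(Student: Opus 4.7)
My approach would be to unwind the Künneth isomorphism $\Psi$ of Definition~\ref{def:3}, which identifies $H^{2p}(X\times (\P^1)^n\setminus A_X, B_X;p)$ with $H^{2p-n}(X;p)$, and then to show directly that the resulting class in $H^{2p-n}(X;p)$ vanishes. Represented by the integration current $\delta_Z$, the class $[Z]$ is sent under $\Psi$ to the cohomology class of the pushforward current $(-1)^n\,\pi_{X,\ast}\bigl(\delta_Z \wedge \Omega\bigr)$ on $X$, where $\Omega = \tfrac{dt_1}{t_1}\wedge\cdots\wedge\tfrac{dt_n}{t_n}$ and $\pi_X \colon X\times(\P^1)^n\to X$ is the projection.

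To show this class is zero, I would pair it against an arbitrary closed test form $\beta$ on $X$ and reduce matters to proving $\int_{|Z|} \pi_X^\ast \beta \wedge \Omega = 0$. Formally, $\Omega$ is the exterior derivative of the (multivalued) primitive $\gamma = \log t_1 \cdot \tfrac{dt_2}{t_2}\wedge\cdots\wedge\tfrac{dt_n}{t_n}$. Combining Stokes' theorem on the closed cycle $|Z|$ with the Poincaré--Lelong identity $d\bigl[\tfrac{dt_1}{t_1}\bigr] = 2\pi i\,(\delta_{\{t_1=0\}} - \delta_{\{t_1=\infty\}})$ reduces the integral to a sum of boundary contributions supported on the intersections $|Z|\cap (X\times\{t_1 = 0\})$ and $|Z|\cap (X\times\{t_1 = \infty\})$. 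By the refined normalized hypothesis $Z\in Z^p(X,n)_{00}$ together with $\delta Z = 0$ (the same hypothesis already used in Proposition~\ref{propclass}), all the intersection cycles $(\delta^i_j)^\ast Z$ vanish, so each boundary contribution is a pairing against a zero cycle pushed to $X$ and therefore vanishes.

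The main obstacle is the multivalued nature of $\log t_1$. The cleanest way to circumvent this is to replace $\gamma$ by $\tfrac{1}{2}\log|t_1|^2\cdot\tfrac{dt_2}{t_2}\wedge\cdots\wedge\tfrac{dt_n}{t_n}$, which is single-valued on $\C^\ast$; taking its differential introduces an antiholomorphic correction involving $\tfrac{d\bar t_1}{\bar t_1}$, whose contribution is handled symmetrically and vanishes for the same reason. An alternative, purely cohomological route is to combine a semi-purity argument in the spirit of Lemma~\ref{lemIC} with the observation that $|Z|\cap (X\times\partial\square^n)$ has codimension strictly greater than $p$ in $X\times\partial\square^n$, forcing the image of $[Z]$ under the forgetful map to factor through a cohomology group that is zero.
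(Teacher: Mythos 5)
The paper's proof is a two-line weight argument, and your analytic detour both overshoots and, as written, does not close. Via the isomorphism \eqref{eq:4} the target group is $H^{2p-n}(X;p)$, which is \emph{pure of weight $-n<0$}; the image of $[Z]$ is a rational (hence real) class lying in $F^0$, and for a pure Hodge structure of strictly negative weight $F^0H_{\C}\cap H_{\R}=0$. That is the entire proof in the paper.

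The gap in your computation is at the central step. After replacing the multivalued $\log t_1$ by $\tfrac12\log|t_1|^2$, its differential is $\tfrac12\bigl(\tfrac{dt_1}{t_1}+\tfrac{d\bar t_1}{\bar t_1}\bigr)$, so Stokes' theorem together with the vanishing of the face restrictions of $Z$ yields only the relation
\[
\int_{|Z|}\pi_X^{\ast}\beta\wedge\frac{dt_1}{t_1}\wedge\cdots\wedge\frac{dt_n}{t_n}
=-\int_{|Z|}\pi_X^{\ast}\beta\wedge\frac{d\bar t_1}{\bar t_1}\wedge\frac{dt_2}{t_2}\wedge\cdots\wedge\frac{dt_n}{t_n},
\]
and iterating over the remaining variables gives $I=(-1)^n\overline{I}$-type identities, i.e.\ exactly the statement that the class is real and representable in $F^p$ --- not that it is zero. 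The antiholomorphic correction $\tfrac{d\bar t_1}{\bar t_1}\wedge\tfrac{dt_2}{t_2}\wedge\cdots$ is closed and non-exact on $(\C^{\times})^n$, and by the very Stokes relation you are using its pairing with $\delta_Z$ equals \emph{minus} the original integral; so ``handled symmetrically and vanishes for the same reason'' is circular. The only way to finish is to observe that a real class in $F^p\cap\overline{F^p}H^{2p-n}(X;\C)$ vanishes because $n>0$ --- which is precisely the purity argument you were trying to avoid. (Insisting on the multivalued $\log t_1$ is worse: Poincar\'e--Lelong then produces a branch-cut current whose contribution is a real-codimension-one integral over $|Z|$, not a face restriction of $Z$, and it has no reason to vanish.) Your ``alternative cohomological route'' also fails: semi-purity constrains cohomology \emph{with supports}, whereas the class in question is the image of $[Z]$ in the support-free group $H^{2p}(X\times(\P^1)^n\setminus A_X,B_X;p)$, and there is no factorization of that image through any group computed from $|Z|\cap(X\times\partial\square^n)$. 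What survives from your proposal is the correct identification of where the $00$-normalization and $\delta Z=0$ enter (killing face contributions); the actual vanishing, however, is forced by the negative weight of the target, not by Stokes.
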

\begin{proof}
  By the isomorphism \eqref{eq:4} we know that the mixed Hodge
  structure
  \begin{displaymath}
    H^{2p}(X\times(\P^1)^n\setminus A_X, B_X;p)
    \cong H^{2p-n}(X;p)
  \end{displaymath}
  is pure of weight $-n$. 
  Since the image of the class $[Z]$ belongs to
  \begin{displaymath}
    F^0H^{2p-n}(X;p)_{\C}\cap H^{2p-n}(X;p)_{\R}.   
  \end{displaymath}
  Since in a pure Hodge structure of weight $-n<0$ this group is zero, 
  we conclude the result.
\end{proof}
There is a long exact sequence of mixed Hodge structures
\begin{multline}\label{eq:62}
  0\rightarrow
  H^{2p-n-1}(X;p)\rightarrow
  H^{2p-1}(X\times (\P^1)^n\setminus A_X\cup |Z|, B_X;p)
  \rightarrow\\
H^{2p}_{|Z|\setminus A_X}(X\times (\P^1)^n\setminus A_X, B_X;p)
\rightarrow H^{2p}(X\times (\P^1)^n\setminus A_X, B_X; p)
\rightarrow \cdots, 
\end{multline}
where the zero on the left hand side follows from
\begin{displaymath}
  H^{2p-1}_{|Z|}(X\times (\P^1)^n\setminus A_X, B_X;p)=0
  \qquad (\text{semi-purity}). 
\end{displaymath}
By Proposition \ref{propclass} and Lemma \ref{lemcy0}, 
the cycle class $[Z]$ defines a map
\begin{equation}\label{eq:55}
  \phi_Z\colon \Q(0)\longrightarrow H^{2p}_{|Z|\setminus
  A_X}(X\times (\P^1)^n\setminus A_X, B_X;p),
\end{equation}
whose image of $\phi _{Z}$
in $H^{2p}(X\times (\P^1)^n\setminus A_X, B_X;p)$ is
zero. Therefore, pulling back the above long exact sequence through $\phi_Z$, we get
an extension  
\begin{equation}\label{eq:53}
0\longrightarrow H^{2p-n-1}(X;p)\longrightarrow E_Z\longrightarrow
\Q(0)\longrightarrow 0.
\end{equation}
By abuse of notation, we also denote as 
\begin{displaymath}
  E_Z\coloneqq \left[0\rightarrow H^{2p-n-1}(X;p)\rightarrow
    E_Z\rightarrow \Q(0)\rightarrow 0\right]
\end{displaymath}
the class of this extension in $\Ext^1_{\Q-\MHS}\left(\Q(0),
  H^{2p-n-1}(X;p)\right)$. 

\subsection{Differential forms attached to the extension
  \texorpdfstring{$E_{Z}$}{ez}}\label{subsubsecdiff}
The extension $E_{Z}$ induces an extension
\begin{displaymath}
  E_{Z,\R}\in \Ext^1_{\R-\MHS}\left(\R(0),
    H^{2p-n-1}(X;p)\right). 
\end{displaymath}
For shorthand we write $H=H^{2p-n-1}(X;p)$, that is a mixed Hodge
structure pure of weight $-n$. 
Recall that there is an isomorphism
\begin{equation}\label{eq:17}
  \Ext^1_{\R-\MHS}\left(\R(0),
    H\right )\xrightarrow{\simeq}
    \frac{H_{\C}}
    {F^{0}_{\C}+H_{\R}}.
\end{equation}
This isomorphism works as follows. Let $E\in
\Ext^1_{\R-\MHS}$, so $E$ is the class of a short exact
sequence
\begin{displaymath}
  0\to H \to E \to \R(0)\to 0.
\end{displaymath}
Let $\bfone(0)$ be the canonical generator of $\R(0)$. Choose $v \in
F^{0}E$ an element that is sent to $\bfone(0)$. Then $h=(v-\overline
v)/2$ 
is sent to zero in $\R(0)$ and therefore belongs to $H$. The class of
$h$ in the quotient at the right hand side of \eqref{eq:17} does not
depend on the
choice of $v$ and represents the image of $E$ under the isomorphism
\eqref{eq:17}. In this section, given an element $h\in H_{\C}$, we
will denote by 
\begin{equation}
  \label{eq:29}
  \widetilde h \in \frac{H_{\C}}
    {F^{0}_{\C}+H_{\R}}
\end{equation}
its class in the quotient.

We will now construct several differential forms related to the
extension 
$E_{Z,\R}$ and, in particular a representative of its class. To this
end we will use the complexes of differential forms with zeros and
logarithmic poles
\begin{displaymath}
  \Sigma _{B_X}E^\ast_{X\times (\P^1)^n}(\log A_X;
  p), \text{ and } \Sigma _{B_X}E^\ast_{X\times (\P^1)^n}(\log
  A_X\cup |Z|;p).
\end{displaymath}
The relevance of these complexes is clear because, for instance the
class $[Z]$ belongs to
\begin{displaymath} 
  F^{0}H^{2p}_{|Z|}(X\times (\P^1)^n\setminus A_{X},B_{X};p)_{\C}
\end{displaymath}
And the underlying cohomology group can be computed using the simple of
the morphism of complexes
\begin{equation}\label{eq:22}
  \Sigma _{B_X}E^\ast_{X\times (\P^1)^n}(\log A_X;
p)\xrightarrow{\iota} \Sigma _{B_X}E^\ast_{X\times (\P^1)^n}(\log
A_X\cup |Z|;p).
\end{equation}

\begin{prop}\label{prop:4}
  Let $X$ and $Z$ be as in the previous section. Then there are
  differential forms
  \begin{enumerate}
  \item $\eta_{Z}\in F^{0}\Sigma _{B_X}E^{2p-1}_{X\times (\P^1)^n}(\log
  A_X\cup |Z|;p)$ such that $d\eta_{Z}=0$ so the pair $(0,\eta_{Z})$
  is a cycle in the simple $\simple(\iota )$ and the corresponding
  class satisfies
  \begin{equation}
    \label{eq:19}
    \{(0,\eta_{Z})\}=[Z]\in H^{2p}_{|Z|}(X\times (\P^1)^n\setminus A_{X},B_{X};p)_{\C}.
  \end{equation}
  Moreover, on the complex of currents $D^{\ast}_{X\times
    (\P^{1})^{n}/ A_{X}}$ there is an equality of 
  currents
  \begin{equation}
    \label{eq:18}
    d[\eta_{Z}]+\delta _{Z}=0.
  \end{equation}
\item $\theta_{Z} \in F^{-n}\Sigma _{B_X}
  E^{2p-1}_{X\times (\P^1)^n}(\log
  A_X;p)$ with $d\theta_{Z} =0$ and $\overline \theta_{Z} =-\theta_{Z}
  $. Moreover, if we denote by $\widetilde {\{\theta_{Z} \}}$ the image of
  the class $\{\theta_{Z} \}$ under the composition
  \begin{displaymath}
    H^{2p-1}(X\times (\P^{1})^{n}\setminus A_{X},B_{X};p)_{\C}
    \xrightarrow{\simeq}
    H_{\C}
    \to \frac{H_{\C}}{F^{0}H_{\C}+H_{\R}}
    =\Ext^1\left(\R(0),H\right),
  \end{displaymath}
  where we have used again the shorthand $H=H^{2p-n-1}(X;p)$, then
  \begin{equation}
    \label{eq:20}
    \widetilde {\{\theta_{Z} \}}=E_{Z,\R}.
  \end{equation}
\item $g_{Z}\in F^{-1}\cap \overline F^{-1} \Sigma
  _{B_X}E^{2p-2}_{X\times (\P^1)^n}(\log
  A_X\cup |Z|;p)$ satisfying $\overline g_{Z}=-g_{Z}$ and
  \begin{equation}
    \label{eq:21}
    d g_{Z} = \frac{1}{2}(\eta_{Z}-\overline \eta_{Z})-\theta _{Z},
  \end{equation}
  \end{enumerate}
\end{prop}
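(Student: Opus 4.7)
The plan is to construct $\eta_Z$, $\theta_Z$, and $g_Z$ in order, each time turning cohomological information into explicit forms via the strictness of the Hodge filtration provided by Corollary \ref{cor:2}.

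First I would produce $\eta_Z$ by applying Corollary \ref{cor:2} to the simple complex associated to the inclusion
\begin{displaymath}
  \iota\colon \Sigma _{B_X}E^\ast_{X\times (\P^1)^n}(\log A_X;p)\hookrightarrow \Sigma _{B_X}E^\ast_{X\times (\P^1)^n}(\log A_X\cup |Z|;p),
\end{displaymath}
which computes $H^\ast_{|Z|}(X\times (\P^1)^n\setminus A_X, B_X;p)$ compatibly with its Hodge filtration. By Proposition \ref{propclass} the class $[Z]$ lies in $F^0$ of this support cohomology, and by Lemma \ref{lemcy0} it maps to zero in $H^{2p}(X\times (\P^1)^n\setminus A_X,B_X;p)$. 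A short chase in the simple then absorbs the first component of an initial representative into a coboundary, producing a representative of the form $(0,\eta_Z)$ with $\eta_Z$ closed and in $F^0$. The current identity \eqref{eq:18} is then obtained by combining Proposition \ref{prop:5}, which realizes log forms as currents modulo $D^\ast_{X\times(\P^1)^n,A_X}$, with the identification of $[Z]$ with the current $\delta_Z$ in the quotient complex $D^\ast_{X\times (\P^1)^n / A_X}$.

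Next I would construct $\theta_Z$ by analyzing the class
\begin{displaymath}
\widetilde h=\tfrac{1}{2}\bigl(\{\eta_Z\}-\{\overline \eta_Z\}\bigr)\in H^{2p-1}(X\times (\P^1)^n\setminus A_X\cup |Z|,B_X;p)_{\C}.
\end{displaymath}
This class satisfies $\overline{\widetilde h}=-\widetilde h$ and projects to $\tfrac{1}{2}([Z]-\overline{[Z]})=0$ in $H^{2p}_{|Z|}$ because $[Z]$ is rational, hence real. By semi-purity $H^{2p-1}_{|Z|}=0$, so $\widetilde h$ lifts uniquely through the long exact sequence to $H^{2p-1}(X\times(\P^1)^n\setminus A_X, B_X;p)_{\C}$, which by \eqref{eq:4} is canonically $H^{2p-n-1}(X;p)_{\C}$, a pure Hodge structure of weight $-n$. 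In particular this lift lies automatically in $F^{-n}$, and strictness of $F$ (Corollary \ref{cor:2}) yields a closed form $\theta_Z\in F^{-n}\Sigma_{B_X}E^{2p-1}(\log A_X;p)$ representing it. Antisymmetrization using $\overline{\widetilde h}=-\widetilde h$, together with a further application of strictness to keep the correction inside $F^{-n}$, delivers the reality condition $\overline{\theta_Z}=-\theta_Z$; at the level of $X$ this step is essentially the selection of a harmonic representative transported via the K\"unneth isomorphism $\Psi$ of Definition \ref{def:3}.

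Finally, the difference $\tfrac{1}{2}(\eta_Z-\overline \eta_Z)-\theta_Z$ is closed and has vanishing class in $H^{2p-1}(X\times (\P^1)^n\setminus A_X\cup |Z|,B_X;p)_{\C}$, so it is a coboundary $dg_Z$. The main obstacle is to arrange $g_Z\in F^{-1}\cap \overline{F^{-1}}$ (which in degree $2p-2$ forces $g_Z$ to be of pure bi-type $(p-1,p-1)$) while also demanding $\overline{g_Z}=-g_Z$. The strategy is to coordinate the Hodge components of $\theta_Z$ with those of $\eta_Z$ so that the right-hand side of the primitive equation already sits in $F^{-1}\cap \overline{F^{-1}}$, then apply strictness of $F$ and of its conjugate (both furnished by Theorem \ref{thm:1}) to produce a primitive in this bi-filtered stratum, and finish by antisymmetrizing. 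The crux of the proof is this triple compatibility between the primitive equation, the double Hodge filtration condition, and the conjugation condition.
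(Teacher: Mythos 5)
Your construction of $\eta_Z$ follows the paper's proof: represent $[Z]$ by an $F^{0}$-cycle in the simple of \eqref{eq:22}, use Lemma \ref{lemcy0} plus strictness (Corollary \ref{cor:2}) to absorb the first component into a coboundary, and obtain $(0,\eta_Z)$. For \eqref{eq:18}, note that your appeal to Proposition \ref{prop:5} is not directly available, since $|Z|$ may be singular and $A_X\cup|Z|$ need not be a normal crossing divisor; the paper first passes to a resolution $\widetilde X\to X\times(\P^1)^n$ on which the cycle becomes smooth, represents its class by $(\delta_{Z'},0)$ in the current complex, and descends via the projection formula. That is a repairable omission.

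The genuine gap is in steps two and three. You claim the lift of $\tfrac12(\{\eta_Z\}-\{\overline{\eta_Z}\})$ to $H_{\C}$ ``lies automatically in $F^{-n}$''. This is false: $H=H^{2p-n-1}(X;p)$ is pure of weight $-n-1$ (not $-n$), and while $\{\eta_Z\}$ contributes only to $F^{0}H_{\C}\subset F^{-n}H_{\C}$, the term $\{\overline{\eta_Z}\}$ contributes components in $\overline{F^{0}}H_{\C}$, i.e.\ of Hodge type $(a,b)$ with $a\le -n-1$, which lie outside $F^{-n}$. Consequently a closed $F^{-n}$-form $\theta_Z$ representing that full class need not exist, and the subsequent ``coordination of Hodge components'' in step three --- which you correctly identify as the crux --- is left without a mechanism. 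The paper's mechanism is to decompose $\tfrac12\{\beta-\overline{\beta}\}=c-\overline{c}+t$ with $c\in F^{0}H_{\C}$ and $t$ anti-real in the middle bidegree range, let $\theta_Z$ represent only $t$, and then \emph{redefine} $\eta_Z=\beta-2\gamma-2dF^{0}g_{1}$ (with $\gamma$ a closed $F^{0}$-representative of $c$ and $g_{1}$ an anti-real primitive of $\tfrac12((\beta-\overline{\beta})-(\gamma-\overline{\gamma}))-\theta_Z$), so that \eqref{eq:19} is preserved while $\tfrac12(\eta_Z-\overline{\eta_Z})-\theta_Z=dg_{1}^{p-1,p-1}$; one then sets $g_Z=g_{1}^{p-1,p-1}$. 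Without this simultaneous correction of $\eta_Z$, strictness alone will not produce a primitive of pure type $(p-1,p-1)$. A smaller related point: even for $t$, naive antisymmetrization of a representative can leave $F^{-n}$, which is why the paper first truncates $\theta_{1}$ to the bidegree range $(p-n,p+n-1),\dots,(p+n-1,p-n)$ using a primitive of $\theta_{1}+\overline{\theta_{1}}$ before antisymmetrizing.
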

\begin{rmk}\label{rem:1}
  Before starting the proof, we recall how the notation in Definition
  \ref{def:1} works. The conditions
  \begin{displaymath}
    g_{Z}\in F^{-1}\cap \overline F^{-1} \Sigma _{B_X}E^{2p-2}_{X\times (\P^1)^n}(\log
  A_X\cup |Z|;p), \text{ and }\overline g_{Z}=-g_{Z}
  \end{displaymath}
  are equivalent to 
  \begin{displaymath}
    g_{Z}\in \Sigma _{B_X}E^{p-1,p-1}_{X\times (\P^1)^n}(\log
    A_X\cup |Z|), \text{ and }\fancyconj{g_{Z}}{\dR}=(-1)^{p-1}g_{Z}, 
  \end{displaymath}
  where $\fancyconj{g_{Z}}{\dR}$ is the original conjugation of
  differential forms.
\end{rmk}
\begin{proof}[Proof of Proposition \ref{prop:4}]
  We first note that the equality \eqref{eq:20} is a consequence of
  \eqref{eq:19} and \eqref{eq:21}. Recall the explicit construction
  the isomorphism \eqref{eq:17} at the beginning of the
  section. The mixed Hodge structure  $E_{Z}$ is a substructure of
  $H^{2p-1}(X\times (\P^{1})^{n}\setminus A_{X}\cup |Z|,B_{X};p)$.
  Condition \eqref{eq:19} implies that the class $\{\eta_{Z}\}$
  belongs to $F^{0}E_{Z,\C}$ and is a choice of the class $v$. Then
  equation \eqref{eq:21} implies that $\{\theta _{Z}\}$ agrees with
  the class $(v-\overline v)/2$, and we deduce \eqref{eq:20}.  

  The class $[Z]$ belongs to $F^{0}H^{2p}_{|Z|}(X\times
  (\P^1)^n\setminus A_{X},B_{X};p)_{\C}$, and we compute  
  the underlying cohomology group using the simple of
morphism $\iota $ in \eqref{eq:22}.  
Therefore, there should be an element $(\alpha _{1},\beta _{1})\in
F^{0}\simple(\iota)$ that represents $[Z]$.

By Lemma \ref{lemcy0} the form $\alpha _{1}$ has to be exact. Since by
Corollary \ref{cor:2}
the differential $d$ is strict with respect to the Hodge filtration
 we deduce that there is
\begin{displaymath}
  \alpha _{2}\in F^{0} E^{2p-1}_{X\times (\P^1)^n}(\log A_X;p)
\end{displaymath}
with $d \alpha _{2}=\alpha _{1}$. Writing $\beta =\beta _{1}-\alpha
_{2}$ we deduce that $[Z]$ is represented by $(0,\beta)=(\alpha
_{1},\beta _{1})-d(\alpha _{2},0)$ with 
\begin{displaymath}
  \beta\in F^0\Sigma_{B_X}E^{2p-1}_{X\times (\P^1)^n}(\log A_X\cup |Z|;p).
\end{displaymath}
Since the class $\{(0, \overline{\beta})\}$ also agrees with $[Z]$, we
get
\begin{displaymath}
  \{(0, \beta-\overline{\beta})\}=0.
\end{displaymath}
Hence
\begin{multline*}
\{\beta- \overline{\beta}\}\in W_{-1}H^{2p-1}(X\times (\P^1)^n\setminus A_X\cup |Z|, B_X;p)\\
=H^{2p-1}(X\times (\P^1)^n\setminus A_X, B_X;p)=H.
\end{multline*}
Since this last mixed Hodge structure is pure of weight $-n-1$,
we can decompose
\begin{equation}\label{eq:24}
  \{\beta- \overline{\beta}\}/2 =
  c-\overline c +t,
\end{equation}
with
\begin{displaymath}
  c\in F^{0}H_{\C},\qquad \overline c\in \overline F^{0}H_{\C},
  \qquad
  t\in F^{-n}H_{\C}
\end{displaymath}
and $\overline t = -t$. The class $c$ can be represented by a cycle
\begin{displaymath}
  \gamma \in F^{0}\Sigma_{B_X}E^{2p-1}_{X\times (\P^1)^n}(\log A_X;p).
\end{displaymath}
Hence
$\overline \gamma $ represents $\overline c$. Next choose a representative
\begin{displaymath}
  \theta _{1}\in F^{-n}\Sigma_{B_X}E^{2p-1}_{X\times (\P^1)^n}(\log A_X;p)
\end{displaymath}
of $t$. As a form in $\Sigma_{B_X}E^{2p-1}_{X\times (\P^1)^n}(\log
A_X)$, it has components of bidegree $(a,2p-1-a)$ for $a\ge p-n$. We
observe that  
$-\overline {\theta _{1}}$ also represents $t$. Hence,
there is an $u\in \Sigma_{B_X}E^{2p-2}_{X\times (\P^1)^n}(\log A_X;p)$
such that $du=\theta_{1} +\overline \theta _{1}$. Since the bidegrees
of $\theta_{1}$ and $\overline \theta _{1}$ only overlap in the range
\begin{equation}\label{eq:23}
  (p-n,p+n-1),\dots,(p+n-1,p-n)
\end{equation}
we see that some components of $du$ will kill some components of
$\theta _{1}$.
Let $F^{n-1}u$ denote
the sum of the components of $u$ of bidegree $(a,b)$ with $a\ge
p+n-1$. Then $\theta _{2}\coloneqq \theta _{1}-dF^{n-1}u$ only has
components of bidegrees in the range \eqref{eq:23}. This implies that
$\overline \theta _{2}$ belongs to $F^{-n}$

Writing $\theta_{Z}
=(\theta _{2}-\overline \theta _{2})/2$ we obtain a differential form
satisfying
\begin{displaymath}
  \theta_{Z} \in F^{-n}\Sigma _{B_X} E^{2p-1}_{X\times (\P^1)^n}(\log
  A_X;p),\quad d\theta_{Z} =0 \text{ and }\overline \theta_{Z} =-\theta_{Z} 
\end{displaymath}
and still representing $t$.

The decomposition \eqref{eq:24} implies that there is form
\begin{displaymath}
  g_1\in \Sigma_{B_X}E^{2p-2}_{X\times (\P^1)^n}(\log A_X\cup |Z|;p)
\end{displaymath}
such that
\begin{displaymath}
  dg_1=\frac{1}{2}\left((\beta-\overline{\beta})-(\gamma -\overline{\gamma })\right)-\theta_{Z}
\end{displaymath}
and $\overline{g}_1=-g_1$. We decompose $g_{1}$ in bidegrees
\begin{displaymath}
  g_{1}=g_{1}^{p-1,p-1}+F^{0}g_{1}+\overline F^{0}g_{1}.
\end{displaymath}
and define
\begin{displaymath}
  g_{Z}=g_{1}^{p-1,p-1}\quad\text{ and }\quad
  \eta_{Z}=\beta -2\gamma -2dF^{0}g_{1}.
\end{displaymath}
By construction, equation \eqref{eq:21} is satisfied. Therefore
$g_{Z}$ satisfies all the conditions of the theorem. On the other hand
\begin{displaymath}
  (0,\eta_{Z})=(0,\beta )+d(-2\gamma ,2F^{0}g_{1}),
\end{displaymath}
so $\eta_{Z}$ satisfies condition \eqref{eq:19}. As explained in the
beginning, this implies that $\theta _{Z}$ satisfies equation
\eqref{eq:20}. 

It remains to show equation \eqref{eq:18}. The argument is adapted
from \cite[Theorem 4.4]{Burgos:Gftp}. By construction of the class
$[Z]$ we see that forgetting the vanishing at $B_{X}$, the pair
$(0,\eta_{Z})$ represents the class $\cl(Z)\in
H^{2p}_{|Z|}(X\times (\P^1)^n\setminus A_{X};p)_{\C}$. Using
resolution of singularities we can construct a
smooth complex variety $\widetilde X$, a normal crossing divisor $D$
and a codimension $p$ cycle $Z'$ with $|Z'|$ smooth and intersecting
transversely all intersections of components of $D$ and a birational map $\pi
\colon \widetilde X\to X\times (\P^{1})^{n}$, such that $\pi
_{\ast}Z'=Z$, $D$ being the union of the exceptional divisor of $\pi$ and
the preimage of $A_{X}$. The cohomology
group $H^{2p}_{|Z'|}(\widetilde X\setminus D;p)_{\C}$ can be
computed as the simple of the morphism of complexes
\begin{displaymath}
  D^{\ast}_{\widetilde X/D} (p)\xrightarrow{\iota '}
  D^{\ast}_{\widetilde X/(D\cup Z')}(p).
\end{displaymath}
moreover there is a morphism of complexes $\simple(\iota )\to
\simple(\iota' )$ given by the commutative diagram
\begin{displaymath}
  \xymatrix{
    E^\ast_{X\times (\P^1)^n}(\log A_X; p)\ar[r]^-{\iota}
    \ar[d]
    & \Sigma _{B_X}E^\ast_{X\times (\P^1)^n}(\log A_X\cup |Z|;p)
    \ar[d]\\
  D^{\ast}_{\widetilde X/D} (p)\ar[r]^-{\iota '}
  & D^{\ast}_{\widetilde X/(D\cup Z')}(p).    
  }
\end{displaymath}
In the complex $\simple(\iota )$ the class $\cl(Z')$ is represented by
the pair $(\delta _{Z'},0)$. Therefore there are currents $u,v$ such that
\begin{displaymath}
  (\delta _{Z'},0)-(0,[\pi ^{\ast}\eta_{Z}])=d(u,v)=(du,u-dv).
\end{displaymath}
Hence
\begin{displaymath}
  \delta _{Z'} = du,\qquad [\pi ^{\ast}\eta_{Z}]=dv-u,
\end{displaymath}
which implies the result, thanks to the projection formula.
\end{proof}

\subsection{The class of the extension and Goncharov regulator.}
\label{sec:class-extens-gonch}
In this section we will use the form $\theta_Z $ to relate the class
of $E_{Z}$ with the Goncharov regulator $\caP(Z)$ of section
\ref{subsec-higher-arc-pairing}.

\begin{prop}\label{propreg} Let $\caP$ be the cubical Goncharov regulator
  normalized as in \cite[Definition 5.1]{Burgoswami:hait} and $\Psi $
  the isomorphism of Definition \ref{def:3}. 
  Under the isomorphism
  \begin{equation}\label{eq:27}
    H^{2p-n}_{\fD}(X,\R(p))
    \xrightarrow{\cong} 
    \frac{H^{2p-n-1}(X, \C)}{F^pH^{2p-n-1}(X, \C)
      + H^{2p-n-1}(X,\R(p))},
  \end{equation}
 the class $\caP(Z)$ is mapped to $\widetilde{\Psi (\theta _{Z})}$.
\end{prop}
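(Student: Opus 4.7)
The plan is to express both sides as classes in the quotient complex on the right of \eqref{eq:27} and match them using the differential identities of Proposition \ref{prop:4}. Under the isomorphism \eqref{eq:27}, combined with the description \eqref{eq:33} of the complex $\fD$ in degrees $n<2p$, a class in $H^{2p-n}_{\fD}(X,\R(p))$ is represented by a closed element of $E^{2p-n-1}_{X}(p)_{\C}$ modulo $F^{0}E^{2p-n-1}_{X}(p)_{\C}\cap E^{2p-n-1}_{X}(p)_{\R}$, whose cohomology class then lives in the quotient $H^{2p-n-1}(X,\C)/(F^{p}+H^{2p-n-1}(X,\R(p)))$. So the first task is to exhibit a representative for $\caP(Z)$ of this form.

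To do so, I would begin with the Thom--Whitney formula \eqref{eq:50}, $\caP(Z)=(\pi_{X})_{\ast}(\delta_{Z,\TW}\cdot W_{n})$, and push it along the homotopy equivalence $\fD_{\TW}\to\fD$ described by \eqref{eq:34}. A short computation (already recorded in Section~\ref{sec:two-divisors-p1n}) shows $d\lambda=0$ in $\fD_{\TW}^{\ast}(E^{\ast}_{\P^{1}}(\log B),1)$, so $W_{n}$ is closed; expanding $W_{n}=\lambda_{1}\cdots\lambda_{n}$ and isolating the single $d\varepsilon$ factor (higher powers die), the prescription \eqref{eq:34} produces, after integrating over $\varepsilon\in[0,1]$ and projecting via $\pi_{p}$, an explicit form $\varphi_{n}\in E^{n-1}_{(\P^{1})^{n}}(\log A\cup B;p)$ such that $\caP(Z)$ is represented in $\fD^{2p-n}(D^{\ast}_{X},p)$ by $(\pi_{X})_{\ast}(\delta_{Z}\cdot\varphi_{n})$. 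The normalization \eqref{eq:6} of the generator of $H^{n}((\P^{1})^{n}\setminus B,A;n)$ is chosen precisely so that the $\frac{dt_{i}}{t_{i}}$ terms pulled out of $\varphi_{n}$ realize the isomorphism $\Psi$ of Definition \ref{def:3}.

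Now I would use the current identity \eqref{eq:18}, $d[\eta_{Z}]=-\delta_{Z}$, to replace $\delta_{Z}$ in $(\pi_{X})_{\ast}(\delta_{Z}\cdot\varphi_{n})$, and apply Stokes (integration by parts in the currents/logarithmic-forms pairing, using that $\eta_{Z}\in\Sigma_{B_{X}}E^{\ast}(\log A_{X}\cup|Z|;p)$ vanishes on $B_{X}$ and that $\varphi_{n}$ has logarithmic singularities only along $A\cup B$). This transfers the differential onto $\varphi_{n}$, whose de Rham differential recovers exactly $\pm\frac{dt_{1}}{t_{1}}\wedge\cdots\wedge\frac{dt_{n}}{t_{n}}$ up to terms lying in $F^{0}\Sigma_{B_{X}}E^{\ast}(\log A_{X};p)$, i.e., the dual generator used to define $\Psi$. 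Substituting then gives
\begin{displaymath}
  \caP(Z)\equiv (-1)^{n}(\pi_{X})_{\ast}\!\left[\eta_{Z}\wedge\tfrac{dt_{1}}{t_{1}}\wedge\cdots\wedge\tfrac{dt_{n}}{t_{n}}\right]\pmod{F^{p}+\R(p)}.
\end{displaymath}
Finally, I invoke the identity \eqref{eq:21}, $\theta_{Z}=\tfrac{1}{2}(\eta_{Z}-\overline{\eta_{Z}})-dg_{Z}$: the term $\overline{\eta_{Z}}$ pushes to a class conjugate to $\Psi(\eta_{Z})$, so its sum with $\eta_{Z}$ is real and dies modulo $\R(p)$; the term $dg_{Z}$ contributes an exact form and vanishes in cohomology. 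What remains after dividing by $2$ is precisely $\widetilde{\Psi(\theta_{Z})}$.

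The main obstacle is the first step: carefully unraveling \eqref{eq:34} applied to the multi-factor product $W_{n}$ to identify $\varphi_{n}$ and verify that its ``boundary'' contribution matches the distinguished generator \eqref{eq:6} with the right sign $(-1)^{n}$. Once that combinatorial reduction is done, the remainder of the argument is formal manipulation using Proposition \ref{prop:4}. The conjugation bookkeeping (Betti vs.\ de Rham, cf.\ Remark \ref{rem:1}) and the normalization constants from \cite[Definition 5.1]{Burgoswami:hait} also need to be tracked through, but they only affect signs and powers of $2\pi i$ rather than the structure of the proof.
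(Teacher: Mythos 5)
Your proposal is correct and rests on the same essential ingredients as the paper's proof --- the homotopy formula \eqref{eq:34}, the Wang-form combinatorics, Stokes' theorem with the forms of Proposition \ref{prop:4}, and the vanishing of the relevant forms on $B_X$ --- but in a different order. The paper performs the Green-current substitution first, inside the Thom--Whitney complex: equation \eqref{eq:25}, i.e.\ $d[g_{Z,\TW}]=-\delta_{Z,\TW}+[\theta_{Z,\TW}]$ (a packaging of \eqref{eq:18} and \eqref{eq:21}), replaces $\delta_{Z,\TW}$ by $\theta_{Z,\TW}$ directly, and only then applies \eqref{eq:34}, producing the sum \eqref{eq:28} of terms $\theta_Z\wedge P^i_n$, which collapse to $\Psi(\theta_Z)$ via the identity $[P^i_{n,\sigma}]=-[P^n_{n,\sigma}]+\text{boundaries}+\text{currents on }B_X$. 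You instead descend to $\fD$ first, integrate by parts against $\eta_Z$ downstairs to reach $\Psi(\eta_Z)$, and convert to $\Psi(\theta_Z)$ only at the end via \eqref{eq:21}; this is legitimate because the real and exact discrepancies die in the quotient \eqref{eq:27}, and the conversion $\Psi(\overline{\eta_Z})=\overline{\Psi(\eta_Z)}$ holds since $\Psi$ is defined by a rational generator. One small imprecision: the discrepancy between $d\varphi_n$ and $(-1)^n\frac{dt_1}{t_1}\wedge\cdots\wedge\frac{dt_n}{t_n}$ is not purely an $F^0$ term --- already for $n=1$ it is $\frac{1}{2}\bigl(\frac{dt}{t}-\frac{d\bar t}{\bar t}\bigr)$, which is real (in the twisted sense of Definition \ref{def:1}) but not in $F^0$ --- yet since both real and $F^0$ classes vanish in \eqref{eq:27}, this does not affect the argument.
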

\begin{proof}
  In this proof, to compute real Deligne cohomology we use the Thom-Whitney
  Deligne complex $\DB_{\TW}$ of Section \ref{sec:higher-chow-groups}
  (see \cite[Definition 4.14]{Burgoswami:hait}). This
  complex has the advantage to have a well defined graded commutative
  and associative product.
  
  From the forms constructed in Proposition \ref{prop:4} we can define
  the following Thom-Whitney versions, to complement $\delta _{Z,\TW}$
  given by equation \eqref{eq:49}.
  \begin{equation}
    \label{eq:26}
  \begin{aligned}
    g_{Z,\TW}
    & \coloneqq \epsilon\otimes \eta_Z+(1-\epsilon)\otimes
      (\eta_Z+\overline{\eta}_Z)/2+d\epsilon\otimes g_Z\\
    &\phantom{\coloneqq d\epsilon \otimes \theta _{Z}}\in
      \mathfrak{D}_{\TW}^{2p-1}(\Sigma _{B_{X}}E^{\ast}_{X\times
      (\P^1)^n}(\log 
  A_X\cup |Z|),p),\\ 
    \theta _{Z,\TW}
    &  \coloneqq d\epsilon \otimes \theta _{Z}
      \in \mathfrak{D}_{\TW}^{2p}(\Sigma _{B_{X}}E^{\ast}_{X\times (\P^1)^n}(\log
  A_X\cup |Z|),p).
  \end{aligned}    
  \end{equation}

  Equations \eqref{eq:18} and \eqref{eq:21} and the fact that
  $\overline{\delta _{Z}}=\delta _{Z}$ imply that
  \begin{equation}\label{eq:25}
    d[g_{\TW,Z}]=-\delta_{\TW,Z}+ [\theta_{\TW,Z}].
  \end{equation}

  Equations \eqref{eq:50} and \eqref{eq:25}, together with equation
  \cite[(5.7)]{Burgoswami:hait} and the fact that $g_{Z,\TW}$ vanishes
  when restricted to $B_{X}$
  imply the equality of cohomology classes
  \begin{displaymath}
    \caP(Z)=\{(\pi _{X})_{\ast}[\theta_{Z,\TW}\cdot W_{n}]\}.
  \end{displaymath}
  So we are left to compare the classes $\{(\pi _{X})_{\ast}[\theta_{Z,\TW}\wedge
  W_{n}]\}$ with $\{\Psi (\theta _{Z})\}$. To this end we will use the explicit
  description of Wang forms in \cite[Definition
  6.5]{BurgosWang:hBC}. We note that the form denoted by $W_{n}$ here
  is the form $(-1)^{n}W_{n}^{3}$ in \cite{BurgosWang:hBC}. 
  
  Using \eqref{eq:34}, the image of $\caP(Z)$ is represented by the form
  \begin{equation}\label{eq:28}
    \sum_{i=1}^{n}\int_{0}^{1}\frac{(-1)^{n}(\epsilon +1)^{i}(\epsilon
      -1)^{n-i}}{2^{n}i!(n-i)!} d\epsilon \cdot
(\pi_{X}) _{\ast}[\theta _{Z}\wedge P^{i}_{n}],
  \end{equation}
  Where $P^{i}_{n}=\sum_{\sigma \in \mathfrak{S}_{n}}(-1)^{\sigma
  }P^{i}_{n,\sigma }$ and, for a permutation $\sigma \in
  \mathfrak{S}_{n}$. 
\begin{displaymath}
P^i_{n,\sigma }=\frac{dt_{\sigma(1)}}{t_{\sigma(1)}}\wedge\cdots
\wedge \frac{dt_{\sigma(i)}}{t_{\sigma(i)}}\wedge
\frac{d\overline{t}_{\sigma(i+1)}}{\overline{t}_{\sigma(i+1)}}\wedge\cdots
\wedge \frac{d\overline{t}_{\sigma(n)}}{\overline{t}_{\sigma(n)}}. 
\end{displaymath}
We now use that
\begin{gather*}
  [P^{i}_{n,\sigma }] = -[P^{n}_{n,\sigma }] + \text{boundaries}
  +\text{currents in }B_{X},\\
  (-1)^{\sigma }P^{n}_{n,\sigma
  }=\bigwedge_{i=1}^{n}\frac{dt_{i}}{t_{i}},
\end{gather*}
that
\begin{displaymath}
  \sum_{i=0}^{n}\frac{n!(-1)^{n}(\epsilon +1)^{i}(\epsilon
    -1)^{n-i}(-1)^{n-i}}{2^{n}i!(n-i)!}
  =\frac{(-1)^{n}}{2^{n}}(\epsilon +1-(\epsilon -1))^{n}
 =(-1)^{n},
\end{displaymath}  
and that the form $\theta _{Z}$ vanishes on $B_{X}$ to deduce that the
current \eqref{eq:28} is cohomologous to
\begin{displaymath}
  (-1)^{n}(\pi _{X})_{\ast}\left[\theta _{Z}\wedge
  \bigwedge_{i=1}^{n}\frac{dt_{i}}{t_{i}}\right] =
\Psi (\theta _{Z}).
\end{displaymath}
\end{proof}

\begin{cor}\label{thetazero}
Let $Z\in Z^p(X,n)_{00}$, be a cycle such that its real regulator
class is zero. Then we can choose $g_{Z}$, $\eta_{Z}$ and $\theta
_{Z}$ as in Proposition \ref{prop:4} with the additional property
$\theta _{Z}=0$. Therefore
\begin{displaymath}
  dg_Z=\frac{1}{2}(\eta_Z-\overline{\eta}_Z).
\end{displaymath}
\end{cor}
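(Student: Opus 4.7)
The corollary follows directly from Proposition \ref{propreg}, which identifies the real regulator $\caP(Z)\in H^{2p-n}_{\fD}(X,\R(p))$ with the class $\widetilde{\Psi(\theta_Z)}$ in the quotient $H_{\C}/(F^{0}H_{\C}+H_{\R})$, where $H=H^{2p-n-1}(X;p)$. The plan is to use the vanishing of this class to reroute the construction of $\theta_{Z}$ in the proof of Proposition \ref{prop:4}, absorbing it into the $F^{0}$-part so that it may be taken to be zero.

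First I translate the hypothesis. Since $\Psi$ is an isomorphism, $\caP(Z)=0$ forces the cohomology class $t:=\{\theta_{Z}\}\in H_{\C}$ to lie in $F^{0}H_{\C}+H_{\R}$. Writing $t=u+v$ with $u\in F^{0}H_{\C}$ and $v\in H_{\R}$, and combining with $\bar t=-t$ together with $\bar v=v$, I obtain $v=-\tfrac{1}{2}(u+\bar u)$, hence
\begin{displaymath}
  t=\tfrac{1}{2}(u-\bar u),\qquad u\in F^{0}H_{\C}.
\end{displaymath}

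Next I re-run the construction of Proposition \ref{prop:4}. Recall that there, the primitive $\beta\in F^{0}\Sigma_{B_{X}}E^{2p-1}_{X\times(\P^{1})^{n}}(\log A_{X}\cup|Z|;p)$ of the cycle class $[Z]$ yielded the decomposition $\{(\beta-\bar\beta)/2\}=c-\bar c+t$ with $c\in F^{0}H_{\C}$ and $t\in F^{-n}H_{\C}$, $\bar t=-t$, which in turn dictated the choice of the representatives $\gamma$ of $c$ and $\theta_{Z}$ of $t$, and finally of the primitive $g_{1}$. Replacing $c$ by $c':=c+u/2\in F^{0}H_{\C}$ eliminates the $t$-term: the decomposition becomes $\{(\beta-\bar\beta)/2\}=c'-\bar c'$, so the corresponding class to be represented by $\theta_{1}$ is zero. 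One may therefore take $\theta_{1}=0$, whence $\theta_{2}=0$ and $\theta_{Z}=(\theta_{2}-\bar\theta_{2})/2=0$. Carrying out the subsequent steps of \emph{loc.~cit.}---extracting the auxiliary primitive $g_{1}$ with $\bar g_{1}=-g_{1}$, truncating to its $(p-1,p-1)$-component to obtain $g_{Z}$, and setting $\eta_{Z}=\beta-2\gamma'-2dF^{0}g_{1}$ (with $\gamma'$ a closed $F^{0}$-representative of $c'$)---produces forms $\eta_{Z}$ and $g_{Z}$ satisfying every requirement of Proposition \ref{prop:4} together with $\theta_{Z}=0$. Equation \eqref{eq:21} then reduces to the asserted identity
\begin{displaymath}
  dg_{Z}=\tfrac{1}{2}(\eta_{Z}-\bar\eta_{Z}).
\end{displaymath}

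No new technical difficulty arises: the delicate point---the bidegree bookkeeping and truncation $g_{1}\mapsto g_{1}^{p-1,p-1}$---is precisely the one already handled in the proof of Proposition \ref{prop:4}, and requires no modification in the simpler case $\theta_{1}=0$.
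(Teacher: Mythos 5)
Your proof is correct, and the underlying mechanism is the same as the paper's: Proposition \ref{propreg} converts ``vanishing real regulator'' into $\{\theta_Z\}\in F^0H_{\C}+H_{\R}$, and then the anti-self-conjugacy of $\theta_Z$ together with the pure weight $-n-1$ Hodge structure on $H$ is exploited to absorb $\theta_Z$ into the closed $F^0$- and real parts. The organizational difference is this: you re-run the entire construction of Proposition \ref{prop:4}, inserting the hypothesis at the step where the decomposition $\{\beta-\bar\beta\}/2 = c-\bar c+t$ is chosen (so that the class $t$ to be represented by $\theta_1$ is zero, and one may take $\theta_1=0$); the paper instead starts from any triple $(g'_Z,\eta'_Z,\theta'_Z)$ already satisfying Proposition \ref{prop:4}, writes $\theta'_Z = h_1+h_2+d\gamma$ with $h_1$ closed in $F^0$, $h_2$ closed and real, and then performs a single cohomologically trivial adjustment $g_Z = g'_Z+\gamma_1^{p-1,p-1}$, $\eta_Z = \eta'_Z-h_1-2F^0\gamma_1$, $\theta_Z=0$. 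The paper's correction argument is marginally more economical since it does not require re-verifying the intermediate steps of Proposition \ref{prop:4}, but the two routes are genuinely interchangeable and rely on identical bidegree bookkeeping. One small observation worth noting, which is implicit but not stated in your write-up: because the decomposition $c-\bar c+t$ with $c\in F^0$, $t\in F^{-n}$, $\bar t=-t$ is in fact \emph{unique} (since $F^{-n}\cap\bar F^0 = 0$ in weight $-n-1$), the hypothesis forces $t=0$ and hence $u=0$, so your replacement $c\mapsto c+u/2$ is actually the identity---the cohomology class was already trivial. This does not affect the validity of the argument, since all you need is that the zero class admits the zero representative.
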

\begin{proof}
  Let $g'_{Z}$, $\eta '_{Z}$ and $\theta '_{Z}$ a choice of forms as
  in Proposition \ref{prop:4}.  If the real regulator class of $Z$ is
  zero, then Proposition
\ref{propreg} implies that the cohomology class of $\theta_Z'$ belongs to 
\begin{displaymath}
F^0H^{2p-1}(X\times (\P^1)^n\setminus A_{X}, B_{X};p)+
H^{2p-1}(X\times (\P^1)^n\setminus A_{X}, B_{X} ;p)_{\R}. 
\end{displaymath}
Hence there exist differential forms 
\begin{align*}
  h_1&\in F^0\Sigma_{B_X}E^{2p-1}_{X\times (\P^1)^n}(\log A_X; p),\\
  h_2&\in \Sigma_{B_X}E^{2p-1}_{X\times (\P^1)^n,\R}(\log A_X; p),\\
  \gamma &\in \Sigma_{B_X}E^{2p-2}_{X\times (\P^1)^n}(\log A_X; p),
\end{align*}
with $h_{1}$ and $h_{2}$ closed, 
such that
\begin{displaymath}
  \theta' _{Z} = h_{1} + h_{2} + d\gamma.
\end{displaymath}
We write $\gamma _{1}=(\gamma
-\overline \gamma )/2$ and we decompose
\begin{displaymath}
  \gamma_{1} = \gamma_{1} ^{p-1,p-1} +F^{0}\gamma_{1} +\overline F^{0}\gamma_{1}.
\end{displaymath}
Then $\overline F^{0}\gamma_{1} = -
\overline{F^{0}\gamma_{1}}$. Moreover, since $\overline{\theta
  _{Z}'}=-\theta _{Z}'$,
\begin{displaymath}
  d \gamma _{1}^{p-1,p-1} = \theta '_{Z}- \frac{1}{2}\left((h_{1}+2 dF^{0}\gamma
    _{1})-\overline{(h_{1}+2 dF^{0}\gamma _{1})}\right). 
\end{displaymath}
Thus, if we write
\begin{displaymath}
 g _Z=g'_Z+\gamma _{1}^{p-1,p-1},\qquad \eta
_{Z}=\eta _{Z}'-h_{1}-2F^{0}\gamma _{1}\qquad 
\theta _{Z}=0, 
\end{displaymath}
then, is easy to verify that the triple $\eta _{Z},\theta _{Z},\gamma _{Z}$
satisfies the properties of Proposition \ref{prop:4}.
\end{proof}
\begin{rmk}\label{rmkreg0}
When the real regulator class of a higher cycle $Z\in
Z^p(Z,n)_{00}$ is zero, and the forms $\eta_{Z}$ and $g_{Z}$ are as in
Corollary \ref{thetazero}, then $\eta_Z=2\partial g_Z$.
\end{rmk}

\subsection{Comparison with \texorpdfstring{\cite{Burgoswami:hait}}{BGG}} This subsection
acts as a bridge between the Hodge theoretic forms obtained above, and
the higher Green forms and currents used in \cite{Burgoswami:hait}. We
will use it later to connect the higher archimedean height
pairing to the height of a mixed Hodge structure
associated to a pair of higher cycles.
We will follow the notations of \cite{Burgoswami:hait}.

For each $n$, consider the complex given by
\begin{displaymath}
  \tau \DB_{\TW,\A}^{\ast,-s}(X,p)=\tau _{\le 2p}\DB^{\ast}_{\TW}(E^{\ast}_{X\times
    (\P^{1})^{s}}(\log B),p).
\end{displaymath}
It has a cubical structure and we can form the associated refined
normalizes double complex $\tau \DB_{\TW,\A,\log}^{\ast,\ast}(X,p)_{00}$ and
the corresponding total complex
$\tau \DB_{\TW,\A,\log}^{\ast}(X,p)_{00}$. See \cite[5.2]{Burgoswami:hait}
for more details.

There is a quasi-isomorphism
\begin{displaymath}
  \tau_{\le 2p}
\DB_{\TW}^{\ast}(X,p)\hookrightarrow \tau \DB_{\TW,\A,\log}^{\ast}(X,p)_{00}
\end{displaymath}
that is given by the inclusion as the column $n=0$.

Let $Z$, $\theta _{Z,\TW}$ and $g_{Z,\TW}$ be as in the previous
section and write
\begin{displaymath}
  \underline{\theta_{Z}}=(\pi _{X})_{\ast}[\theta_{Z,\TW}\cdot W_{n}]
  \in \DB_{\TW}^{2p-n}(X,p)=\DB_{\TW,\A,\log}^{2p-n,0}(X,p)_{00}.
\end{displaymath}
In the complex $\tau \DB_{\TW,\A,\log}^{\ast}(X,p)_{00}$, the forms
$\theta _{Z,\TW}$ and $\underline{\theta_{Z}}$ are cohomologous as
both represent the class $\{\caP(Z)\}$.
Therefore we obtain an element 
\begin{displaymath}
(\alpha_n,\cdots, \alpha_0)\in \DB_{\TW,\A,\log}^{2p-n-1}(X,p)_{00},
\end{displaymath}
satisfying
\begin{equation}\label{eq:51}
(0,\cdots, \underline{\theta_Z})-(\theta_{Z,\TW},0,\cdots ,
0)=d(\alpha_n,\cdots, \alpha_0).
\end{equation}
We obtain an $n$-tuple of forms
\begin{displaymath}
  \fg_{Z}\coloneqq (g_{Z,\TW}+\alpha _{n},\dots,\alpha _{0}).
\end{displaymath}
\begin{lem}\label{BGgreen}
The $n$-tuple of forms
\begin{displaymath}
  \fg_{Z}\in \bigoplus^{0}_{i=n} \DB_{\TW}^{2p-n+i-1}(
  E^{\ast}_{X\times (\P^{1})^{i}}(\log A\cup |Z|_{i}),p)_{00}
\end{displaymath}
is a refined Green form for $Z$, as in
\cite[Definition 6.5]{Burgoswami:hait}.
\end{lem}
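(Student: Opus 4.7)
The plan is to decode equation \eqref{eq:51} as a system of equations in the double complex underlying $\tau\DB_{\TW,\A,\log}^{\ast}(X,p)_{00}$ and then match each equation to one of the three conditions in Definition~\ref{GF}. Recall that the horizontal differential in this double complex is the cubical boundary, which on the refined normalization reduces to $-(\delta_0^1)^{\ast}$; denote it simply by $\delta$. Indexing the tuple $(\alpha_n,\dots,\alpha_0)$ so that $\alpha_k$ sits in cubical degree $k$ and the total degree is $2p-n-1$, and adopting the standard Koszul sign for the total differential, \eqref{eq:51} breaks up into
\begin{align*}
  d_{\fD}\alpha_n &= -\theta_{Z,\TW}, \\
  d_{\fD}\alpha_{k-1} + (-1)^{n-k+1}\delta\alpha_k &= 0,\qquad k=2,\dots,n,\\
  d_{\fD}\alpha_0 + (-1)^n\delta\alpha_1 &= \underline{\theta_Z}.
\end{align*}

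First I would verify condition \ref{item:20} of Definition~\ref{GF}. By definition $g_n = g_{Z,\TW}+\alpha_n$, so using equation \eqref{eq:25} and the top-degree equation above,
\begin{displaymath}
  d_{\fD}[g_n] = d_{\fD}[g_{Z,\TW}] + d_{\fD}[\alpha_n]
  = -\delta_{Z,\TW} + [\theta_{Z,\TW}] - \theta_{Z,\TW}
  = -\delta_{Z},
\end{displaymath}
the last equality holding in $\fD^{2p}_{\TW,D,X\times(\P^1)^n/X\times A}(p)$ because $\theta_{Z,\TW}$ is smooth so the difference $[\theta_{Z,\TW}]-\theta_{Z,\TW}$ vanishes there. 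Next I would verify conditions \ref{item:21} and \ref{item:22} for $g_k = \alpha_k$, $k<n$. The key observation is that in the refined complex one has $\delta g_n = \delta\alpha_n$: indeed $\delta = -(\delta_0^1)^{\ast}$ is restriction to $\{t_1 = 0\}\subset B_X$, and by construction in Proposition~\ref{prop:4} the forms $\eta_Z, \bar\eta_Z, g_Z$ all vanish on $B_X$, hence $g_{Z,\TW}$ vanishes there as well. Consequently the intermediate equations above yield exactly $(-1)^{n-k+1}\delta g_k + d_{\fD}g_{k-1}=0$, and the bottom equation gives $(-1)^n\delta g_1 + d_{\fD}g_0 = \underline{\theta_Z}$, which visibly lies in $\fD^{2p-n}_{\TW}(X,p)$, so that $\omega(\fg_Z)=\underline{\theta_Z}$ is smooth on $X$; note this is consistent with Proposition~\ref{propreg} which identifies the class of $\underline{\theta_Z}$ with $\caP(Z)$.

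Finally, the refined vanishing in \eqref{eq:66} is automatic: since equation \eqref{eq:51} is written inside the refined normalized total complex $\tau\DB_{\TW,\A,\log}^{\ast}(X,p)_{00}$, each $\alpha_k$ lives in $\DB_{\TW}^{2p-n-1+k}(E^{\ast}_{X\times(\P^1)^k}(\log A\cup |Z|'_k),p)_{00}$ by construction, and $g_{Z,\TW}$ already vanishes on all the relevant faces by Proposition~\ref{prop:4}. The main obstacle I anticipate is purely bookkeeping: making sure the Koszul signs in the decomposition of $d_{\mathrm{tot}}$ on $\tau\DB_{\TW,\A,\log}^{\ast,\ast}(X,p)_{00}$ reproduce precisely the sign pattern $(-1)^{n-k+1}$ required by Definition~\ref{GF}; once the sign conventions in \cite[\S 5.2]{Burgoswami:hait} are matched, the verification is mechanical.
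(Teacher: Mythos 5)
Your proposal is correct and is exactly the paper's argument: the paper's proof consists of the single sentence that equations \eqref{eq:51} and \eqref{eq:25}, written componentwise, yield the conditions of the definition, and your write-up simply carries out that componentwise decomposition (including the key observation that $\delta g_{Z,\TW}=0$ on faces of $B_X$, so $\delta g_n=\delta\alpha_n$, and that the smooth term $\underline{\theta_Z}$ realizes $\omega(\fg_Z)$).
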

\begin{proof}
  Equations \eqref{eq:51} and \eqref{eq:25} when written
  componentwise, imply the conditions of
  \cite[Definition 6.5]{Burgoswami:hait}. 
\end{proof}
\begin{rmk}\label{Improvedgreen}
Notice that in the Green form $\fg_{Z}$ only the component over
$X\times (\P^{1})^{n}$ has singularities along
$|Z|$, while the rest are smooth on $X\times \square^i$, with
logarithmic singularities along $A_X$. 
\end{rmk}

After constructing a higher Green form out of $g_{Z,\TW}$ we also
construct a Green current. 
Let $\underline{g_{Z}}\coloneqq (\pi _{X})_{\ast}[g_{Z}\cdot W_n]\in
\DB^{2p-n-1}_{\TW,D}(X,p)$. Then, in the complex
$\DB^{2p-n-1}_{\TW,D}(X,p)$ the equation 
\begin{displaymath}
d\underline{g_{Z}}=-\caP(Z)+\underline{\theta_{Z}},
\end{displaymath}
is satisfied.  Hence $\underline{g_{Z}}$ is a Green current for the
cycle $Z$ as in \cite[Definition 6.1]{Burgoswami:hait}). 

Let now $W$ be a cycle in $Z^q(X,m)_{00}$, which intersects $Z$
properly and $\underline{g_{W}}$ a Green current for $W$ in the
Thom-Whitney complex. We now can give a second
(and simplified) definition of star product: 

\begin{df}\label{2ndstar} Let $g_{Z,\TW}$, $\underline{g_{Z}}$ and
  $\underline{g_{W}}$ be as before. Then we define the product
  \begin{displaymath}
    \underline{g_{Z}}\ast_{2}\underline{g_{W}}=
    (-1)^n(\pi _{X})_{\ast}\left(\delta_{Z,\TW}\cdot W_m\cdot g_{W,\TW}\cdot
      W_n\right)+\underline{g_{Z}}\cdot \underline{\theta_{W}} 
  \end{displaymath}
\end{df}
We note here that the products are taking place in the ambient space
$X\times (\P^1)^m\times (\P^1)^n$, and the notations should be
interpreted accordingly. For example $g_{\TW,Z}$ really means the
pullback of this form in the ambient space. We avoid the pullback
notations to simplify the exposition. This note will hold true
whenever we take product between elements in a priori different
spaces.

We next show that the star product $\ast_{2}$ is compatible with the
star product $\ast$ in \cite[\S 6.4]{Burgoswami:hait},

\begin{prop}\label{prop2ndGreen}
Let $\fg'_W$ be a Green form for $W$ in the Thom-Whitney complex, such
that $\underline{g_{W}}^\sim=[\fg_W]^\sim$. Then for any Green current
$g_Z$ of $Z$, we have 
\begin{displaymath}
\left(g_Z\ast_2 \underline{g_W}\right)^\sim=(g_Z\ast \fg'_W)^\sim.
\end{displaymath}
\end{prop}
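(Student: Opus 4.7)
The plan is to reduce the proposition to a direct comparison between the two explicit formulas for $\ast$ and $\ast_2$, using the canonical Green form supplied by Lemma \ref{BGgreen}. First I would invoke the well-definedness property of the star product of Definition \ref{defstar}, i.e.\ the fact that the class $(g_Z \ast \fg'_W)^\sim$ depends only on the class $[\fg'_W]^\sim$ of the Green form modulo $d_\fD$-exact forms; this is built into the construction in \cite[\S 6.4]{Burgoswami:hait}. Since $\underline{g_W}^\sim = [\fg'_W]^\sim$ by hypothesis, it suffices to prove the equality for one particular Green form in the class of $\underline{g_W}$. The natural choice is the Green form $\fg_W = (g_{W,\TW}+\alpha_m,\alpha_{m-1},\dots,\alpha_0)$ produced by Lemma \ref{BGgreen} starting from $g_{W,\TW}$.

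For this specific Green form the key structural input is the defining equation \eqref{eq:51} applied to $W$, namely
\begin{displaymath}
  (0,\dots,0,\underline{\theta_W}) - (\theta_{W,\TW},0,\dots,0) = d(\alpha_m,\dots,\alpha_0),
\end{displaymath}
which in particular gives $\omega(\fg_W) = \underline{\theta_W}$ by the very definition of the associated smooth form of a Green form. Substituting $\fg_W$ into Definition \ref{defstar} produces
\begin{displaymath}
(-1)^n\sum_{j=0}^m (\pi_X)_*\!\bigl(\delta_Z\cdot W_n\cdot (\fg_W)_j\cdot W_j\bigr) + g_Z\cdot \underline{\theta_W},
\end{displaymath}
whose last summand already coincides with $\underline{g_Z}\cdot \underline{\theta_W}$ in Definition \ref{2ndstar}. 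For the sum, I would multiply the displayed telescoping identity on the left by $\delta_{Z,\TW}\cdot W_n$ and push forward to $X$; together with the basic properties of Wang forms from \cite[\S 5]{Burgoswami:hait} and the compatibility of $(\pi_X)_*$ with the total differential of the double complex, this shows that the push-forward of $(-1)^n \delta_{Z,\TW}\cdot W_m\cdot g_{W,\TW}\cdot W_n$ differs from $(-1)^n\sum_j (\pi_X)_*\!\bigl(\delta_Z\cdot W_n\cdot (\fg_W)_j\cdot W_j\bigr)$ by a $d_\fD$-boundary in $\DB^{\ast}_{\TW,D}(X,p+q-d)$. Unwinding this identity yields the formula of Definition \ref{2ndstar} up to a boundary, which is exactly what is needed at the level of classes.

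The main obstacle is the precise sign and bigrading bookkeeping inside the double complex $\tau\DB_{\TW,\A,\log}^{\ast,\ast}(X,p)_{00}$. The total differential combines the Thom--Whitney differential $d_\fD$ with the cubical boundary, each carrying its own sign convention, and the star product mixes these with Koszul signs coming from the degrees of $g_Z$, $W_j$ and $W_n$. One has to verify that the push-forward $(\pi_X)_*$ against $\delta_{Z,\TW}\cdot W_n$ commutes with the differentials in exactly the way required so that the telescoping produces the first term of Definition \ref{2ndstar} with the correct overall sign; the refined normalization condition built into $\fg_W$ (cf.\ \eqref{eq:66}) plays a crucial role here by ensuring that the potentially problematic contributions coming from the cubical face maps $\delta_j^0$ and $\delta_j^1$ vanish, so that only the intended boundary piece survives.
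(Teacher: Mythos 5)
Your opening move is the same as the paper's: use the fact that $(g_Z\ast \fg'_W)^\sim$ depends only on the class $[\fg'_W]^\sim$, and then compute with one well-chosen representative. The gap is in the choice of representative. The unmodified Green form $\fg_W=(g_{W,\TW}+\alpha_m,\dots,\alpha_0)$ of Lemma \ref{BGgreen} does \emph{not} lie in the class of $\underline{g_W}$: its associated Green current is
\begin{displaymath}
[\fg_W]=(\pi_X)_*(g_{W,\TW}\cdot W_m)+\sum_{i=0}^m(\pi_X)_*(\alpha_i\cdot W_i)=\underline{g_W}+\alpha,
\qquad \alpha\coloneqq\sum_{i=0}^m(\pi_X)_*(\alpha_i\cdot W_i),
\end{displaymath}
and while $\alpha$ is closed (by \eqref{eq:51}, $d\alpha=-(\pi_X)_*(\theta_{W,\TW}\cdot W_m)+\underline{\theta_W}=0$), it is in general not exact. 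So your chosen form computes the star product for the class $(\underline{g_W}+\alpha)^\sim$, not for $\underline{g_W}^\sim$, and the reduction does not apply to it. The same non-exact $\alpha$ defeats your second claim: the discrepancy between $\sum_j(\pi_X)_*(\delta_{Z,\TW}\cdot W_n\cdot(\fg_W)_j\cdot W_j)$ and the single top term of Definition \ref{2ndstar} is $\sum_j(\pi_X)_*(\delta_{Z,\TW}\cdot W_n\cdot\alpha_j\cdot W_j)=(\pi_X)_*(\delta_{Z,\TW}\cdot W_n\cdot\alpha)$, a closed current pairing the regulator of $Z$ against the class of $\alpha$; it is not a $d_\fD$-boundary. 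Multiplying \eqref{eq:51} by $\delta_{Z,\TW}\cdot W_n$ and pushing forward only relates the $\theta_{W,\TW}$- and $\underline{\theta_W}$-terms modulo boundaries \emph{and} these same $\alpha_j$-contributions; it does not make them vanish.

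The repair, which is what the paper actually does, is to correct the bottom component: take $\fg'_W=(g_{W,\TW}+\alpha_m,\alpha_{m-1},\dots,\alpha_1,\alpha_0-\alpha)$. Since $\alpha$ is a closed form on $X$, this is still a Green form, and now $[\fg'_W]=\underline{g_W}$ on the nose, so the hypothesis is honestly satisfied. Moreover the extra term produced by the modified $0$-th component, $-(\pi_X)_*(\delta_{Z,\TW}\cdot W_n\cdot\alpha\cdot W_0)$, cancels $\sum_j(\pi_X)_*(\delta_{Z,\TW}\cdot W_n\cdot\alpha_j\cdot W_j)$ exactly by the projection formula, giving $g_Z\ast\fg'_W=g_Z\ast_2\underline{g_W}$ identically. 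In particular the sign and bigrading bookkeeping you flag as the main obstacle never arises; the real issue your argument misses is the cohomology class of $\alpha$.
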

\begin{proof}
  Since the product $(g_Z\ast \fg'_W)^\sim$ is independent on the
  choice of $\fg'_W$ we can make a particular choice. 
  We consider the elements $(\alpha _{m},\dots,\alpha _{0})$
  satisfying \eqref{eq:51}. We write
  \begin{displaymath}
    \alpha =\sum _{i=0}^{m} (\pi _{X})_{\ast}(\alpha _{i}\cdot W_{i}).
  \end{displaymath}
  Then $\alpha $ is closed. Indeed by \eqref{eq:51} and
  \cite[(5.7)]{Burgoswami:hait}
  \begin{multline*}
    d\alpha =\sum _{i=0}^{m} (\pi _{X})_{\ast}(d\alpha _{i}\cdot
    W_{i}) + \sum _{i=0}^{m} (-1)^{2p-i-1}(\pi _{X})_{\ast}(\alpha _{i}\cdot
    dW_{i})=\\-(\pi _{X})_{\ast}(\theta _{W,\TW}\cdot
    W_{n})+\underline{\theta _{W}} =0.
  \end{multline*}
  we define
  \begin{displaymath}
    \fg'_{W}=(g_{W,\TW}+\alpha _{m},\alpha _{m-1},\dots,\alpha
    _{1},\alpha _{0}-\alpha ).
  \end{displaymath}
  With this choice
  \begin{displaymath}
    [\fg'_{W}]=(\pi _{X})_{\ast}(g_{W,\TW}\cdot W_{n})+
    \sum _{i=0}^{m} (\pi _{X})_{\ast}(\alpha _{i}\cdot W_{i}) -
    \alpha = \underline{g_{W}}.
  \end{displaymath}
  Moreover
  \begin{multline*}
    (-1)^{n}(g_{Z}\ast \fg'_{W}-g_{Z}\ast_{2} \underline{g_{W}})=\\
    \sum_{i=0}^{m}(\pi _{X})_{\ast}(\delta_{Z,\TW}\cdot W_m\cdot \alpha _{i}\cdot
    W_{i}) 
    -(\pi _{X})_{\ast}(\delta_{Z,\TW}\cdot W_m\cdot \alpha)=0.
  \end{multline*}
  proving the proposition.
\end{proof}
As a consequence we obtain the following formula for the higher
archimedean height pairing of Definition \ref{arcdf}.
\begin{cor}\label{cor:refined-height}
If $Z\in Z^p(X,n)_{00}$ and $W\in Z^q(X,m)_{00}$ be two higher cycles whose real regulator classes are zero with $2(p+q-d-1)=n+m$, then 
\begin{displaymath}
\langle Z,W\rangle_{\Arch} = (-1)^n(p)_{\ast}\left(\delta_{Z,\TW}\cdot
  W_m\cdot g_{W,\TW}\cdot 
      W_n\right)^\sim ,
\end{displaymath}
where $p\colon X\times (\P^1)^n\times (\P^1)^m\rightarrow\Spec(\C)$ is
the structural morphism. 
\end{cor}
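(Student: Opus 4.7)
The plan is to reduce Definition \ref{arcdf} to the explicit current-level expression by replacing the general $\ast$-product of Green currents with the simplified product $\ast_2$ from Definition \ref{2ndstar}, exploiting the vanishing of the regulator representatives supplied by Corollary \ref{thetazero}.

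First, because $\rho(Z)=\rho(W)=0$, I invoke Corollary \ref{thetazero} to fix triples $(g_Z,\eta_Z,\theta_Z)$ and $(g_W,\eta_W,\theta_W)$ as in Proposition \ref{prop:4} with $\theta_Z=\theta_W=0$. Assembling the Thom--Whitney forms $g_{Z,\TW}$ and $g_{W,\TW}$ via \eqref{eq:26}, I set
\begin{displaymath}
  \underline{g_Z}=(\pi_X)_{\ast}[g_{Z,\TW}\cdot W_n],\qquad
  \underline{g_W}=(\pi_X)_{\ast}[g_{W,\TW}\cdot W_m].
\end{displaymath}
Because $\theta_Z=\theta_W=0$, the associated pushed-forward currents $\underline{\theta_Z}$ and $\underline{\theta_W}$ are zero, so the identity $d\underline{g_Z}=-\caP(Z)+\underline{\theta_Z}$ (and its analogue for $W$) reduces to $d\underline{g_Z}=-\caP(Z)$ and $d\underline{g_W}=-\caP(W)$. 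Thus $\underline{g_Z}$ and $\underline{g_W}$ are admissible Green currents satisfying the vanishing condition \eqref{eq:88} required by Definition \ref{arcdf}.

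Next, by Lemma \ref{BGgreen} a refined Green form $\fg_W$ for $W$ exists with $[\fg_W]^\sim=\underline{g_W}^\sim$, and Proposition \ref{prop2ndGreen} then gives
\begin{displaymath}
  (\underline{g_Z}\ast\fg_W)^\sim = (\underline{g_Z}\ast_2\underline{g_W})^\sim,
\end{displaymath}
so that
\begin{displaymath}
  \langle Z,W\rangle_{\Arch}
  =\bigl(p_{X,\ast}(\underline{g_Z}\ast_2\underline{g_W})\bigr)^\sim.
\end{displaymath}
Unfolding Definition \ref{2ndstar},
\begin{displaymath}
  \underline{g_Z}\ast_2\underline{g_W}
  =(-1)^{n}(\pi_X)_{\ast}\bigl(\delta_{Z,\TW}\cdot W_m\cdot g_{W,\TW}\cdot W_n\bigr)
  +\underline{g_Z}\cdot\underline{\theta_W},
\end{displaymath}
and the second summand vanishes because $\underline{\theta_W}=0$. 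Composing $\pi_X$ with the structural morphism $p_X\colon X\to\Spec(\C)$ to get $p=p_X\circ\pi_X$ yields the claimed identity.

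No serious obstacle arises: the corollary is a direct assembly of the chain Corollary~\ref{thetazero} $\Rightarrow$ $\theta$ vanishing $\Rightarrow$ collapse of one term in $\ast_2$, combined with the compatibility Proposition~\ref{prop2ndGreen} which was already established. The only point that needs care is matching the two normalizations: one should verify that the Green currents $\underline{g_Z}$ and $\underline{g_W}$ constructed from the Hodge-theoretic forms of Proposition~\ref{prop:4} indeed lie in the admissibility class of Definition~\ref{arcdf} (this is precisely what the vanishing of $\theta_Z$ and $\theta_W$ achieves), and that $p_{X,\ast}\circ(\pi_X)_{\ast}=p_{\ast}$ so the final push-forward genuinely delivers an element of $H^{1}_{\DB}(\Spec(\C),\R(p+q-d))$.
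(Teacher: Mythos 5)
Your proof is correct and follows essentially the same route as the paper's: invoke Corollary~\ref{thetazero} to choose $\theta_W=0$ (and $\theta_Z=0$), then use Proposition~\ref{prop2ndGreen} to replace the $\ast$-product with the $\ast_2$-product, after which the term $\underline{g_Z}\cdot\underline{\theta_W}$ drops out. Your version is simply more explicit about verifying that $\underline{g_Z}$, $\underline{g_W}$ satisfy the admissibility condition~\eqref{eq:88} and about the composition $p_{X,*}\circ(\pi_X)_*=p_*$, points the paper leaves implicit.
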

\begin{proof}
The key point is that we can use the second definition of Green current using Proposition \ref{prop2ndGreen} for the particular choice of Green form for $W$, since higher archimedean height pairing is independent of the choice of Green form for a higher cycle. Next the real regulator class of $W$ being zero allows us to choose $\theta_W=0$ by Corollary \ref{thetazero}. This concludes the proof.
\end{proof}

\subsection{The dual extension}\label{dualext}
Let now $q\ge 0$ and $m\ge 1$ be integers and let $W\in
Z^{q}(X,m)_{00}$ be a cycle. We apply the construction of sections
\ref{sec:extens-assoc-high} and \ref{subsubsecdiff} to this
setting, obtaining an extension $E_{W}$ and the corresponding
differential forms. 
We can dualize the extension $E_{W}$ to get a dual extension
\begin{displaymath}
  E_{W}^{\vee}=\Hom_{\MHS}(E_{W},\Q(0)).
\end{displaymath}
This extension is given by the short exact sequence
\begin{displaymath}
  0\longrightarrow \Q(0)\longrightarrow E^{\vee}_W
  \longrightarrow H^{2d-2q+m+1}(X;d-p)\longrightarrow 0,
\end{displaymath}
dual to \eqref{eq:53}. By construction $E_{W}$ is a sub-mixed Hodge
structure of
\begin{equation}\label{eq:52}
 H^{2q-1}(X\times (\P^1)^m\setminus A_X\cup |W|, B_X;p) 
\end{equation}
By duality we would like to see $E_{W}^{\vee}$ as a quotient
mixed Hodge structure. 
A naive idea would be to think that $E_{W}^{\vee}$ should be a
quotient of
\begin{displaymath}
  H^{2d-2q+m+1}(X\times (\P^1)^m\setminus B_X, A_X\cup |W|;d+m-q).
\end{displaymath}
But the problem is that the above group does not need to be the dual
to \eqref{eq:52} because $B_{X}$ and $A_X\cup |W|$ may fail to be
in a local product situation. 
To remedy this situation, we consider a composition of blow ups as in
the next lemma.

\begin{lem}\label{lemm:4} There exists a proper transform 
  \begin{displaymath}
  \pi\colon \caX_W\rightarrow X\times (\P^1)^m,
\end{displaymath}
that is a composition of blow-ups 
with smooth centers whose image in $X\times (\P^1)^m$ is contained in
$|W|\cap B_X $, such that, if we denote by
$\widehat{W}$, $\widehat{A}_X$ 
and  $\widehat{B}_X $ the strict transforms of $|W|$, $A_{X}$ and
$B_{X}$ respectively, and by $D$ the exceptional divisor, then
\begin{enumerate}
\item The strict transforms $\widehat{W}$ and $\widehat{B}_X$ do not
  meet.
\item The divisor $\widehat{A}_X\cup D \cup \widehat{B}_X $ is a
  simple normal
  crossing divisor.
\end{enumerate}
The previous conditions imply that the pair of closed subsets $\widehat{A}_X\cup D$ and
$\widehat{B}_X $ are in local product situation and  the same is true
for the pair $\widehat{A}_X\cup D\cup \widehat{W}$ and
$\widehat{B}_X $.
\end{lem}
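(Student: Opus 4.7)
The strategy is to apply Hironaka's principalization theorem to a suitable ideal sheaf and then perform finitely many additional blow-ups to achieve the required separation of the strict transforms.

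First, I would invoke Hironaka's principalization theorem (in the forms available via the work of Villamayor, Bierstone--Milman, or W\l{}odarczyk) applied to the coherent ideal sheaf $\mathcal{I} = \mathcal{I}_{|W|} + \mathcal{I}_{B_X}$ defining the scheme-theoretic intersection $|W|\cap B_X$ in $X\times(\P^1)^m$, with prescribed boundary divisor $A_X \cup B_X$. Because $W$ is an admissible higher cycle it meets the faces of $B_X$ properly, so $|W|\cap B_X$ has codimension at least two in $X\times(\P^1)^m$. Principalization then produces a sequence of blow-ups $\pi \colon \caX_W \to X\times(\P^1)^m$ whose smooth centers $Z_i$ are contained in the iterated cosupports of the weak transforms of $\mathcal{I}$, meet the successive boundaries with normal crossings, and eventually render $\pi^{-1}(\mathcal{I})$ locally principal and monomial in the final SNC divisor. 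In particular, each $Z_i$ projects into $|W|\cap B_X$, and $\widehat{A}_X \cup D \cup \widehat{B}_X$ is an SNC divisor, which establishes condition (ii).

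For (i), I claim that after this principalization, any common point of $\widehat{W}$ and $\widehat{B}_X$ must lie on the exceptional divisor $D$. Indeed, $\pi^{-1}(\mathcal{I})$ is locally monomial in the components of the ambient SNC divisor and hence has cosupport of codimension one, whereas $\widehat{W}$, being a strict transform, is not contained in any exceptional component. Consequently $\widehat{W} \cap \widehat{B}_X$ is a (possibly empty) union of smooth subvarieties sitting transversely in the SNC configuration. To finish, iteratively blow up these smooth transverse intersections: each such center is contained in $\widehat{W} \cap \widehat{B}_X$, hence projects into $|W|\cap B_X$, and the classical fact that blowing up the transverse intersection of two smooth subvarieties disjoins their strict transforms guarantees that the process terminates in finitely many steps.

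The main obstacle is this second step. Hironaka's principalization by itself only forces the common points of the strict transforms of $|W|$ and $B_X$ to lie on exceptional components; it does not automatically disjoin $\widehat{W}$ and $\widehat{B}_X$. The additional iterated blow-ups along transverse intersections are essential, and the key bookkeeping is to verify at each step that the centers project into $|W|\cap B_X$ and that the normal crossings structure of $\widehat{A}_X \cup D \cup \widehat{B}_X$ is preserved; both are standard consequences of Hironaka's theorem combined with the transversality established in the first step.
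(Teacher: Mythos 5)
Your first step is essentially the paper's construction: the paper also works with the ideal $\caI_{|W|}+\caI_{B_X}$, first blowing it up directly and then resolving the resulting (possibly singular) variety by smooth blow-ups centered over $|W|\cap B_X$, which is what a principalization of that ideal with boundary $A_X\cup B_X$ packages in one stroke. So far the two arguments agree, and your step correctly delivers condition (ii) and the constraint on the centers.

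The gap is in your treatment of condition (i), and it stems from a false premise. You assert that principalization ``does not automatically disjoin $\widehat{W}$ and $\widehat{B}_X$,'' but it does: once $\pi^{-1}(\caI_{|W|}+\caI_{B_X})\cdot\caO_{\caX_W}$ is invertible, the universal property of blowing up forces $\pi$ to factor through the blow-up of $\caI_{|W|}+\caI_{B_X}$, and in that blow-up the strict transforms of $|W|$ and $B_X$ are already disjoint (Hartshorne, Chapter II, Exercise 7.12, which is exactly the paper's citation); disjointness is then inherited by any further modification whose centers lie over $|W|\cap B_X$, since each strict transform upstairs maps into the corresponding one downstairs. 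The substitute argument you give instead is not sound as written: you claim $\widehat{W}\cap\widehat{B}_X$ is a union of \emph{smooth} subvarieties meeting the SNC configuration \emph{transversely}, but nothing in the construction resolves the singularities of $|W|$, so $\widehat{W}$ (and hence this intersection) can be singular and the ``classical fact'' about separating transverse smooth subvarieties does not apply; moreover you give no termination argument for the proposed iteration. The repair is simply to delete the second step and replace it with the universal-property/Exercise 7.12 observation above, after which your argument coincides with the paper's.
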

\begin{proof}
  Let $\caI_{W}$ be the ideal sheaf of $|W|$ and $\caI_{B}$ the ideal
  sheaf of $B_{X}$ by blowing up $\caI_{W}+\caI_{B}$ we obtain a
  proper transform $X_{1}\to X\times (\P^1)^m$ such that the strict
  transform of $|W|$ and $B_{X}$ do not meet (\cite[Chapter II,
  Exercise 7.12]{Hartshorne:ag}). This proper transform is 
  an isomorphism outside $|W|\cap B_X $ but $X_{1}$ is possibly
  singular. By the use of strong resolution of
  singularities in the elimination of indeterminacies, there is
  proper transform 
  \begin{displaymath}
    \pi\colon \caX_W\rightarrow X\times (\P^1)^m,
  \end{displaymath}
  that is a composition of blow-ups 
  with smooth centers whose image in $X\times (\P^1)^m$ is contained in
  $|W|\cap B_X $, with a map $\caX_W\to X_{1}$ making the diagram
  \begin{displaymath}
    \xymatrix{
      \caX_W\ar[r]\ar[dr]& X_{1}\ar[d]\\
      & X\times (\P^1)^m
    }
  \end{displaymath}
  commutative and satisfying the conditions of the lemma.
\end{proof}

Let $\pi\colon \caX_W\rightarrow X\times (\P^1)^m$ be a map provided
by Lemma \ref{lemm:4}, 
\begin{notation}\label{def:5}
In the sequel we will use the following shorthands.
\begin{alignat*}{2}
  \square^{m} &= (\P^{1})^{m}\setminus A, &\quad
  \dsquare^{m} &= ((\P^{1})^{m}\setminus A,B),\\
  \square_{X}^{m} &= X\times (\P^{1})^{m}\setminus A_{X}, &\quad
  \dsquare_{X}^{m} &= (X\times (\P^{1})^{m}\setminus A_{X},B_{X})\\
  \widetilde {\square_{X}^{m}}&=\caX_{W}\setminus
  \widehat{A}_{X}&\qquad
  \widetilde {\dsquare_{X}^{m}}&=(\caX_{W}\setminus
  \widehat{A}_{X},\widehat{B}_{X}).
\end{alignat*}
and the dual ones
\begin{alignat*}{2}
  G^{m} &= (\P^{1})^{m}\setminus B, &\quad
  \G^{m} &= ((\P^{1})^{m}\setminus B,A),\\
  G_{X}^{m} &= X\times (\P^{1})^{m}\setminus B_{X}, &\quad
  \G_{X}^{m} &= (X\times (\P^{1})^{m}\setminus B_{X},A_{X})\\
  \widetilde {G_{X}^{m}}&=\caX_{W}\setminus
  \widehat{B}_{X},&\qquad
  \widetilde {\G_{X}^{m}}&=(\caX_{W}\setminus
  \widehat{B}_{X},\widehat{A}_{X}).
\end{alignat*}
Moreover, in the relative schemes like $\dsquare_{X}^{m}$, the
notation $(\dsquare_{X}^{m}\setminus S,T)$ will mean
\begin{displaymath}
  (X\times (\P^{1})^{m}\setminus A_{X}\cup S,B_{X}\cup T).
\end{displaymath}
\end{notation}
We have the following
\begin{lem}\label{5lemblowup1}
The cohomology of $\caX_W$ satisfies
\begin{enumerate}
\item\label{it3} the morphism
  \begin{displaymath}
    H^r(X\times (\P^1)^m\setminus A_X\cup |W|, B_X )\xrightarrow{\pi ^{\ast}}
    H^{r}(\caX_W\setminus \widehat{A}_X\cup D\cup\widehat{W}, \widehat{B}_X),
  \end{displaymath}  
 is an isomorphism for all $r\geq 0$;
\item\label{it4} the morphism
  \begin{displaymath}
    H^r(X\times (\P^1)^m\setminus A_X, B_X)\longrightarrow
    H^r(\caX_W\setminus \widehat{A}_X\cup D, \widehat{B}_X) 
  \end{displaymath}
  is an isomorphism for $r\leq 2q$, and injective for $r=2q+1$.
\end{enumerate}
\end{lem}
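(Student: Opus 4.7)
The plan is to prove (i) by identifying $\pi$ as an isomorphism of pairs once $|W|$ and its preimages are excised, and then to deduce (ii) by comparing the long exact sequences of cohomology with support in $|W|$ and $\widehat W$ respectively, using semi-purity.

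For (i), since $\pi$ is a composition of blow-ups with smooth centers contained in $|W|\cap B_X$, it is an isomorphism outside these centers. As the exceptional divisor satisfies $D\subseteq\pi^{-1}(|W|\cap B_X)\subseteq\pi^{-1}(|W|)$, we have
\begin{displaymath}
\pi^{-1}(A_X\cup|W|)=\widehat A_X\cup D\cup\widehat W,
\end{displaymath}
so $\pi$ restricts to an analytic isomorphism
\begin{displaymath}
\caX_W\setminus(\widehat A_X\cup D\cup\widehat W)\xrightarrow{\ \sim\ }X\times(\P^1)^m\setminus(A_X\cup|W|).
\end{displaymath}
The assumption $\widehat W\cap\widehat B_X=\emptyset$ from Lemma~\ref{lemm:4} then implies that the restriction to $\widehat B_X\setminus(\widehat A_X\cup D)$ lands bijectively onto $B_X\setminus(A_X\cup|W|)$. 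Hence $\pi$ is an isomorphism of pairs, inducing the claimed isomorphism on relative cohomology for every $r\ge 0$.

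For (ii), I compare the long exact sequences of cohomology with support in $|W|$ and $\widehat W$ respectively:
\begin{align*}
\cdots&\to H^r_{|W|}(X\times(\P^1)^m\setminus A_X,B_X)\to H^r(X\times(\P^1)^m\setminus A_X,B_X)\\
&\to H^r(X\times(\P^1)^m\setminus A_X\cup|W|,B_X)\to H^{r+1}_{|W|}(X\times(\P^1)^m\setminus A_X,B_X)\to\cdots,
\end{align*}
and similarly on the blown-up side with $\widehat W$ in place of $|W|$. Part (i) identifies the third terms of the two sequences. It therefore suffices to establish vanishing of the support groups in the relevant range. By excision, $H^r_{|W|}(X\times(\P^1)^m\setminus A_X,B_X)$ reduces to cohomology with support in $|W|\setminus(|W|\cap B_X)$ on the smooth open stratum $X\times(\P^1)^m\setminus(A_X\cup B_X)$, where $|W|$ has codimension $q$, and semi-purity gives the required vanishing. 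The blown-up side is handled analogously, with the disjointness $\widehat W\cap\widehat B_X=\emptyset$ replacing the excision step. A five-lemma argument applied to the two long exact sequences then yields the stated isomorphism and injection.

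The main obstacle is tracking the semi-purity bound carefully through the pair structure: one must combine semi-purity on the smooth open ambient (obtained after excising $B_X$) with the hypothesis that $W\in Z^q(X,m)_{00}$ intersects all faces of $B_X$ properly, invoking Lemma~\ref{lemIC} when needed to manage the normal crossing structure of $B_X$ and to reconcile the vanishing ranges on the two sides so that the five-lemma gives precisely the stated bounds.
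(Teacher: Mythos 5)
Part \ref{it3} of your proposal is fine and is exactly the paper's argument: $\pi$ restricts to an isomorphism of pairs over the complement of $A_X\cup|W|$, using $\widehat W\cap\widehat B_X=\emptyset$, so the relative cohomology groups agree for all $r$.

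For part \ref{it4} you take a genuinely different route from the paper, and it has a gap precisely in the degrees the lemma is about. The paper never touches cohomology with supports in $|W|$: it identifies $\caX_W\setminus(\widehat A_X\cup D)$ with $X\times(\P^1)^m\setminus(A_X\cup C)$, where $C\subseteq|W|\cap B_X$ is the center of the blow-ups, and compares the long exact sequences of the pairs $(\,\cdot\,,B)$; since $C$ has codimension $\geq q+1$, the restriction maps on the \emph{absolute} groups $H^{\ast}(\square^m_X)$ and $H^{\ast}(B_X\setminus A_X)$ are isomorphisms, resp.\ injections, in the stated range, and the five lemma concludes. Your route, via the localization sequences with supports in $|W|$ and $\widehat W$, requires the comparison maps
\begin{displaymath}
  H^{j}_{|W|\setminus A_X}\bigl(X\times(\P^1)^m\setminus A_X,\,B_X\setminus A_X\bigr)
  \longrightarrow
  H^{j}_{\widehat W}\bigl(\caX_W\setminus(\widehat A_X\cup D)\bigr)
\end{displaymath}
to be isomorphisms, resp.\ monomorphisms, for $j=2q,\,2q+1,\,2q+2$ in order to run the five lemma at $r=2q$ and $r=2q+1$. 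Semi-purity only gives vanishing for $j<2q$, so it says nothing about these degrees: in degree $2q$ these groups carry the cycle classes, and in degrees $2q+1,2q+2$ they see the cohomology of the possibly singular $|W|$; since $\widehat W\to|W|$ is a nontrivial proper modification over $|W|\cap B_X$, the required iso/mono statements are not automatic and are not supplied by your argument. A secondary problem is your excision step: because $|W|$ genuinely meets $B_X$, you cannot reduce the relative support group to support cohomology on $X\times(\P^1)^m\setminus(A_X\cup B_X)$; the correct comparison is the exact sequence involving $H^{r}_{|W|\cap B_X}(B_X\setminus A_X)$ handled via Lemma \ref{lemIC}, and even that only yields vanishing for $r<2q$.
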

\begin{proof}
Since the map $\pi $ gives isomorphisms
\begin{align*}
  \caX_W\setminus \widehat{A}_X\cup D\cup\widehat{W}&\cong
  X\times (\P^1)^m\setminus A_X\cup |W| \\
  \widehat{B}_{X}\setminus \widehat{A}_X\cup D\cup\widehat{W}&\cong
  B_X\setminus A_X\cup |W|,
\end{align*}
we get \ref{it3} immediately.

For \ref{it4}, let $C$ be the center of the blow ups. by the same
reason as before, $\pi ^{\ast}$ gives isomorphisms
\begin{displaymath}
    H^r(X\times (\P^1)^m\setminus A_X\cup C, B_X)\xrightarrow{\cong}
  H^r(\caX_W\setminus \widehat{A}_X\cup D, \widehat{B}_X).
\end{displaymath}
Moreover, using Notation \ref{def:5} we have a diagram of mixed Hodge structures with exact rows
and commutative squares 
{\tiny
\begin{displaymath}
  \xymatrix@C=1em{
    H^{r-1}(\square^{m}_X)\ar[d]^-{\circled{1}}\ar[r]
    & H^{r-1}(B_X\setminus A_X)\ar[d]^-{\circled{2}}\ar[r]&
   H^r(\dsquare^{m}_X)\ar[d]^-{\circled{3}}\ar[r]
   & H^r(\square^{m}_{X})\ar[d]^-{\circled{4}}\ar[r]
   & H^r(B_X\setminus A_X)\ar[d]^-{\circled{5}}\\ 
    H^{r-1}(\widetilde {\square_{X}^{m}}\setminus D)\ar[r] &
   H^{r-1}(\widehat{B}_X\setminus \widehat{A}_X\cup D)\ar[r] &
 H^{r}(\widetilde {\dsquare_{X}^{m}}\setminus D) \ar[r] &
   H^{r}(\widetilde {\square_{X}^{m}}\setminus D)\ar[r] &
   H^{r}(\widehat{B}_X\setminus \widehat{A}_X\cup D)} 
\end{displaymath}
}
Since $C$ has codimension $\geq q+1$ in $X\times (\P^1)^m$ and $C\cap
B_W$ has codimension $\geq q+1$ in $B$, the arrows
$\circled{1}$, $\circled{2}$, $\circled{4}$ and $\circled{5}$ 
are isomorphisms. Hence the arrow $\circled{3}$ is also an isomorphism for $r\leq 2q$. For $r=2q+1$, the
arrows $\circled{1}$ and $\circled{2}$ are isomorphisms, while the arrow $\circled{4}$ is
injective. Hence the arrow $\circled{3}$ is also injective.
\end{proof}

\begin{cor}\label{cor:3}
  The morphism
    \begin{displaymath}
      H^r(\dsquare^{m}_{X})\longrightarrow
    H^r(\widetilde{\dsquare^{m}_{X}} \setminus D) 
  \end{displaymath}
  is an isomorphism for $r\le 2q$ and injective for $r=2q+1$. Dually,
  the map
  \begin{displaymath}
    H^{s}(\widetilde{\G^{m}_{X}},D)\longrightarrow
    H^{s}(\G^{m}_{X})
  \end{displaymath}
  is an isomorphism for $s\ge 2d+2m-2q$ and surjective for
  $s=2d+2m-2q-1$.
\end{cor}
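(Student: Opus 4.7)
The first assertion is essentially a restatement of Lemma \ref{5lemblowup1}\ref{it4} in the shorthand of Notation \ref{def:5}. Indeed, unfolding the notation,
\begin{displaymath}
H^r(\dsquare^{m}_{X})=H^r(X\times(\P^{1})^{m}\setminus A_{X},B_{X}),\qquad
H^r(\widetilde{\dsquare^{m}_{X}}\setminus D)=H^r(\caX_{W}\setminus\widehat{A}_{X}\cup D,\widehat{B}_{X}),
\end{displaymath}
and the map in the corollary is exactly $\pi^{\ast}$. So the first assertion is immediate.

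For the second assertion, I would deduce it from the first by invoking the duality of Lemma \ref{lemm:1}. The plan is to check that the two pairs of divisors involved are in local product situation, so that the duality isomorphism applies on both sides:
\begin{itemize}
\item[] For the pair $(A_{X},B_{X})$ in $X\times(\P^{1})^{m}$: since $A$ and $B$ are divisors in $(\P^{1})^{m}$ with no common components whose union is a simple normal crossing divisor, the pair $(A,B)$ is in local product situation, and this property is preserved after taking the product with $X$.
\item[] For the pair $(\widehat{A}_{X}\cup D,\widehat{B}_{X})$ in $\caX_{W}$: by Lemma \ref{lemm:4}, $\widehat{W}$ and $\widehat{B}_{X}$ do not meet and $\widehat{A}_{X}\cup D\cup\widehat{B}_{X}$ is a simple normal crossing divisor, so $\widehat{A}_{X}\cup D$ and $\widehat{B}_{X}$ are in local product situation.
\end{itemize}
Therefore Lemma \ref{lemm:1} yields natural isomorphisms (with appropriate Tate twists that I suppress)
\begin{displaymath}
H^{r}(\dsquare^{m}_{X})\cong H^{2d+2m-r}(\G^{m}_{X})^{\vee},\qquad
H^{r}(\widetilde{\dsquare^{m}_{X}}\setminus D)\cong H^{2d+2m-r}(\widetilde{\G^{m}_{X}},D)^{\vee},
\end{displaymath}
and the map of the first assertion becomes, after dualising, the map
\begin{displaymath}
H^{s}(\widetilde{\G^{m}_{X}},D)\longrightarrow H^{s}(\G^{m}_{X}),\qquad s=2d+2m-r,
\end{displaymath}
induced by the pushforward $\pi_{\ast}$.

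Under the substitution $s=2d+2m-r$, the range $r\le 2q$ corresponds to $s\ge 2d+2m-2q$, and the borderline case $r=2q+1$ corresponds to $s=2d+2m-2q-1$. Since dualising exchanges injective with surjective and preserves isomorphisms, the first assertion translates precisely into the second: the map is an isomorphism for $s\ge 2d+2m-2q$ and surjective for $s=2d+2m-2q-1$. The only real work is the verification of the local product hypothesis, and once that is in place the rest is a bookkeeping exercise. I do not anticipate a serious obstacle.
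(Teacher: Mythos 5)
Your proposal is correct and follows exactly the route the paper intends: the first assertion is Lemma \ref{5lemblowup1}~\ref{it4} rewritten in Notation \ref{def:5}, and the second follows by dualising via Lemma \ref{lemm:1}, with the local product hypotheses supplied by the remark after \eqref{eq:5} for $(A_X,B_X)$ and by Lemma \ref{lemm:4} for $(\widehat{A}_X\cup D,\widehat{B}_X)$. The index bookkeeping $s=2(d+m)-r$ and the exchange of injectivity with surjectivity under dualisation are exactly as you state.
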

We now consider the commutative diagram with exact rows
  \begin{displaymath}
    \xymatrix@C=.6em{H^{2q-1}(\dsquare^{m}_{X})\ar[d]^{\cong}\ar@{^{(}->}[r]
      & H^{2q-1}(\dsquare^{m}_{X} \setminus |W|)\ar[d]^{\cong}\ar[r]
      & H^{2q}_{|W|}(\dsquare^{m}_{X})\ar[d]^{\cong}\ar[r]
      & H^{2q}(\dsquare^{m}_{X})\ar[d]^{\cong}\\ 
      H^{2q-1}(\widetilde{\dsquare^{m}_{X}}\setminus D)\ar@{^{(}->}[r]
      & H^{2q-1}(\widetilde{\dsquare^{m}_{X}}\setminus D\cup
      \widehat{W})\ar[r]
      & H^{2q}_{\widehat{W}}(\widetilde{\dsquare^{m}_{X}}\setminus
      D) \ar[r]
      & H^{2q}(\widetilde{\dsquare^{m}_{X}}\setminus D)
 }         
\end{displaymath}
where the vertical arrows are isomorphisms thanks to lemma \ref{5lemblowup1},

In the bottom row of the above diagram, all the relevant relative
schemes are in a product situation. Hence the dual to this bottom row,
after twisting by $\Q(-d-m)$ to make the twist disappear, writing
$d_{W}=\dim(W)=d+m-q$, and taking into account that
$H^{2d_{W}}(\widehat{W})=H^{2d_{W}}(\widehat{W},
\widehat{W}\cap(D\cup \widehat{A}_X))$,
reads
\begin{displaymath}
  H^{2d_{W}}(\widetilde {\G^{m}_{X}},D)
  \to
  H^{2d_{W}}(\widehat{W})
  \to
  H^{2d_{W}+1}(\widetilde {\G^{m}_{X}},D\cup \widehat{W})
  \twoheadrightarrow
  H^{2d_{W}+1}(\widetilde {\G^{m}_{X}},D).
\end{displaymath}
After unfolding Notation \ref{def:5} we obtain
\begin{multline}\label{eq:56}
H^{2d_{W}}(\caX_W\setminus \widehat{B}_X, \widehat{A}_X\cup
D)\rightarrow H^{2d_{W}}(\widehat{W})\rightarrow\\ 
H^{2d_{W}+1}(\caX_W\setminus \widehat{B}_X, \widehat{A}_X\cup D\cup
\widehat{W})\rightarrow H^{2d_{W}+1}(\caX_W\setminus \widehat{B}_X,
\widehat{A}_X\cup D)\rightarrow 0.
\end{multline}
Just as a sanity check, note that in this exact sequence the first
arrow is well defined
because $\widehat{W}\cap \widehat{B}_X=\emptyset$ and there is a
zero at the end because $\dim\widehat{W}=d_{W}$.
We now use that
\begin{displaymath}
  H^{2d_{W}}(\widehat{W}, \widehat{W}\cap(\widehat{A}_X\cup D))\cong H^{2d_{W}}(\widehat{W}),
\end{displaymath}
since $\dim (\widehat{W}\cap(\widehat{A}_X\cup D))< d_{W}$.

The class of $W$ produces a morphism of mixed Hodge structure
\begin{equation}\label{eq:63}
  \phi^\vee_W\colon H^{2d_{W}}(\widehat{W}; d_{W})\longrightarrow \Q(0),
\end{equation}
that is the dual of the map \eqref{eq:55}. The fact that the image of
the class $[W]$ in $H^{2q}(X\times (\P^1)^m\setminus A_X, B_X;p)$ is
zero implies that 
\begin{displaymath}
  \phi^\vee_W\left(H^{2d_{W}}(\caX_W\setminus \widehat{B}_X, \widehat{A}_X\cup D;d_{W})\right)=0.
\end{displaymath}
Hence, taking the push-forward through $\phi^\vee_W$ of the exact
sequence \eqref{eq:56}, we obtain a short exact sequence 
\begin{displaymath}
0\rightarrow \Q(0)\rightarrow E^\vee_W\rightarrow H^{2d_{W}+1}(\caX_W\setminus \widehat{B}_X,
\widehat{A}_X\cup D;d_{W}) \rightarrow 0.
\end{displaymath}
By Lemma \ref{5lemblowup1}~\ref{it4}, the fact that $\widehat{B}_X$
and $\widehat{A}_X\cup D$ are in local product situation and the
isomorphism \eqref{eq:5} we have
\begin{align*}
  H^{2d_{W}+1}(\caX_W\setminus \widehat{B}_X,
  \widehat{A}_X\cup D;d_{W})
  &= H^{2q-1}(\caX_W\setminus\widehat{A}_X\cup
    D,\widehat{B}_X;p)^{\vee}\\
  &= H^{2q-1}(X\times(\P^{1})^{m}\setminus A_X,B_X;p)^{\vee}\\
  &= H^{2d_{W}+1}(X\times(\P^{1})^{m}\setminus B_X,A_X;d_{W})\\
  &= H^{2d-(2q-m-1)}(X;d-q).
\end{align*}
Therefore the above short exact sequence can be written as
\begin{equation}\label{eq:58}
  0\rightarrow \Q(0)\rightarrow E^\vee_W\rightarrow
  H^{2d-(2q-m-1)}(X;d-q) \rightarrow 0.
\end{equation}
By construction this exact sequence is 
the dual sequence to \ref{eq:53}. As with $E_Z$, we keep using the notation $E^\vee_W$ to actually denote its class in
\begin{displaymath}
  \Ext^1_{\MHS}\left(H^{2d+m-2q+1}(X;d-q), \Q(0)\right).
\end{displaymath}

\subsection{Oriented MHS attached to a pair of higher cycles}
\label{sec:orient-mhs-attach}
Let $n,m\ge 1$, and $p,q\ge 0$ be integers with
\begin{equation}
  \label{eq:57}
  2(p+q-d-1)=n+m.
\end{equation}
Let $Z\in Z^p(X,n)_{00}$,
and $W\in Z^q(X,m)_{00},$ be two cycles in the refined normalized
complex  intersecting properly.
We want to attach an oriented rational mixed Hodge structure to this
pair. This mixed Hodge structure is similar to the one constructed by
Hain in \cite{Hain:Height}, with one significant difference: In the
case for usual cycles homologous to zero, proper intersection and the
numerical relation $p+q=d+1$ mean that the supports of the cycles are
disjoint, which is no longer the case here. So one should expect the
new mixed Hodge structure to reflect this phenomenon. Moreover, the use of proper modification 
in order to use duality
will add another technical difficulty.

Let
\begin{align*}
  \pi _{1}\colon X \times (\P^{1})^{n}\times
  (\P^{1})^{m}&\longrightarrow 
  X \times (\P^{1})^{n}\\
  \pi _{2}\colon X \times (\P^{1})^{n}\times (\P^{1})^{m}
  &\longrightarrow X \times (\P^{1})^{m} 
\end{align*}
be the two projections. Then the fact that $Z$ and $W$ meet properly
means precisely that $p_{1}^{-1}(|Z|)\cap p_{2}^{-1}(|W|)\cap X\times
\square^{n+m}$ has codimension $p+q$ and intersects properly all the
faces of $\square^{n+m}$. Hence there is a well defined intersection
pre-cycle
\begin{displaymath}
  Z\cdot W\in Z^{p+q}(X,n+m)_{0}
\end{displaymath}
Since $Z$ and $W$ are cycles in the refined normalized complex, the
same is true for $Z\cdot W$.

Let $\pi \colon \caX_{W}\to X\times (\P^{1})^{m}$ be a proper
modification as in Lemma \ref{lemm:4} applied to $W$. Let $C\subset
|W|$ be the support of the center of $\pi $. Then $\pi $ is an
isomorphism outside $C$. On $\caX_{W}$, $\widehat{W}$,
$\widehat{A}_{X}$ and $\widehat{B}_{X}$ are the strict transforms of
$|W|$, $A_{X}$ and $B_{X}$ and $D$ is the exceptional divisor. 
 
We will assume the following technical conditions.
\begin{assumption}\label{def:ass}
  The intersection $\pi _{1}^{-1}(|Z|)\cap \pi _{2}^{-1}(C)=\emptyset$.
\end{assumption}

\begin{rmk}
  The assumption \ref{def:ass} is more and more restrictive the bigger
  $n$ and $m$ are. In the case $n=m=1$, this condition is satisfied
  generically but it is not the case for higher values of $n$ and
  $m$. 
\end{rmk}

The sought mixed Hodge structure will appear in a diagram that
contains at the same time the exact sequence \eqref{eq:53} for the
cycle $Z$ and the dual exact sequence \eqref{eq:58} for the cycle
$W$. For the main diagram to fit in one page we need to complement Notation
\ref{def:5}.
\begin{notation}\label{not:1}
  We have already introduced the projections $\pi _{1}$ and $\pi _{2}$
  and consider also the projection
  \begin{displaymath}
    \pi _{3}\colon \caX_{W}\times (\P^{1})^{n}\longrightarrow
    \caX_{W}.
  \end{displaymath}
  Moreover we also consider the proper transform
  \begin{displaymath}
    \pi '\colon \caX_{W}\times (\P^{1})^{n}
    \longrightarrow
    X\times (\P^{1})^{n}\times (\P^{1})^{m}.
  \end{displaymath}
  Note that this map involves a change in the order of the variables. 
  We write,
  \begin{alignat*}{2}
    A_{1}&=\pi _{1}^{-1}A_{X},&\qquad A_{2}&=\pi_{2} ^{-1}A_{X},\\
    B_{1}&=\pi _{1}^{-1}B_{X},&\qquad B_{2}&=\pi_{2} ^{-1}B_{X},\\
    \overline {A}_{2}&=\pi _{3}^{-1}\widehat{A}_{X},&
    \qquad \overline {B}_{2}&=\pi_{3}^{-1}\widehat{B}_{X},\\
    \overline{A}_{1}&=(\pi')^{-1}A_{1},&
    \qquad \overline {B}_{1}&=(\pi')^{-1}B_{1},\\
    \overline D &=\pi _{3}^{-1}D,&\qquad C_{2}&=\pi _{2}^{-1}(C),\\
    \overline{Z}&=|Z|\times (\P^{1})^{m},&\qquad
    \overline{W}&=\pi _{3}^{-1}\widehat{W}.
  \end{alignat*}
  Note that the spaces marked with an overline are subsets of
  $\caX_{W}\times (\P^{1})^{n}$ while the others are 
  subsets of $X\times (\P^{1})^{n}\times (\P^{1})^{m}$. 
  We will also consider the relative schemes
  \begin{align*}
    \dsquare_{X}^{n,m}
    &=\dsquare_{X}^{n}\times_{X}\G^{m}_{X}=
    (X\times (\P^{1})^{n}\times (\P^{1})^{m}\setminus A_{1}\cup
      B_{2},B_{1}\cup A_{2}),\\
    \widetilde{\dsquare_{X}^{n,m}}
    &=\dsquare_{X}^{n}\times_{X}\widetilde {\G^{m}_{X}}=
      (\caX_{W}\times (\P^{1})^{n}\setminus \overline{A}_{1}\cup
      \overline{B}_{2},\overline{B}_{1}\cup\overline{A}_{2}),\\
    \underline{Z}
    &=(\overline{Z}
      \setminus \overline{A}_{1}\cup
      \overline{B}_{2},\overline{B}_{1}\cup\overline {A}_{2})\subset
      \widetilde{\dsquare_{X}^{n,m}},\\
    \underline{W}&=(\overline{W}\setminus
    \overline{A}_{1},
    \overline{B}_{1}\cup\overline {A}_{2})\subset
    \widetilde{\dsquare_{X}^{n,m}}.
  \end{align*}
  The relative schemes will always be denoted, either with double
  line typography or with an underline.
  Finally we write $\underline{S}=\underline{Z}\cap
  \underline{W}$. Note that, by Assumption \ref{def:ass} the relative
  schemes $\underline Z$ and $\underline S$ can be seen as subschemes
  of either $\dsquare_{X}^{n,m}$ or
  $\widetilde{\dsquare_{X}^{n,m}}$. Note also that in the definition
  of $\underline{W}$, the divisor
  $\overline B_{2}$ does not appears because $\widehat{W}$ and
  $\widehat B_{X}$ are disjoint.
\end{notation}
We consider the commutative diagram
with exact rows and columns of figure \ref{fig:maindiag}. In that
diagram, we have omitted $\overline{D}$ in the last column because, by
Assumption \ref{def:ass}, it
is disjoint
with $\overline Z$.
\begin{figure}[ht]
  \centering
\begin{displaymath}
  \xymatrix@C=.8em{
    &\circled{1} & \circled{2} & \circled{3}& \\
    \circled{4}\ar[r]&
    H^{r}(\widetilde{\dsquare_{X}^{n,m}},\overline{D}) \ar[u] \ar[r]&
    H^{r}(\widetilde{\dsquare_{X}^{n,m}}\setminus \overline{Z},\overline{D}) \ar[u] \ar[r]&
    H^{r+1}_{\underline Z}(\widetilde{\dsquare_{X}^{n,m}}) \ar[u] \ar[r]&
    \circled{5}\\
    \circled{6}\ar[r]&
    H^{r}(\widetilde{\dsquare_{X}^{n,m}},\overline{W}\cup\overline{D}) \ar[u] \ar[r]&
    H^{r}(\widetilde{\dsquare_{X}^{n,m}}\setminus \overline{Z},
    \overline{W}\cup\overline{D}) \ar[u] \ar[r]&
    H^{r+1}_{\underline Z}(\widetilde{\dsquare_{X}^{n,m}},\overline{W}) \ar[u] \ar[r]&
    \circled{7}\\
    \circled{8}\ar[r]&
    H^{r-1}(\underline{W},\overline{W}\cap \overline{D}) \ar[u] \ar[r]&
    H^{r-1}(\underline{W} \setminus S,\overline{W}\cap \overline{D})
    \ar[u] \ar[r]&
    H^{r}_{\underline{S}}(\underline{W}) \ar[u] \ar[r]&
    \circled{9}\\
    &\circled{10}\ar[u] &\circled{11}\ar[u] &\circled{12}\ar[u]&  
  }
\end{displaymath}  
  \caption{The main diagram}
  \label{fig:maindiag}
\end{figure}

We now analyze the different terms in that diagram for $r=2p+m-1$. We
start with the
top left corner
\begin{align*}
  H^{2p+m-1}(\widetilde{\dsquare_{X}^{n,m}},\overline{D})
  &=H^{2p+m-1}((\widetilde {\G_{X}^{m}},D)\times \dsquare^{n})\\
  &=H^{2p+m-n-1}(\widetilde{\G_{X}^{m}},D)\\
  &=H^{2p+m-n-1}(\G_{X}^{m})\\
  &=H^{2p-n-1}(X;-m).
\end{align*}
The first equality is true at the level of relative schemes. The second
equality follows from \eqref{eq:4} and K\"unneth formula. Since, by
\eqref{eq:57}, $2p+m-n-1=2d+2m-2q+1$, hence the third equality follows form
Corollary \ref{cor:3}. The last one follows from \eqref{eq:5}. This
computation means in particular that the composition
\begin{equation}\label{eq:59}
  H^{2p+m-1}(\widetilde{\dsquare_{X}^{n,m}},\overline{D})
  \xleftarrow{\cong }H^{2p+m-1}(\dsquare_{X}^{n,m},C_{2})
  \longrightarrow
  H^{2p+m-1}(\dsquare_{X}^{n,m})
\end{equation}
is an isomorphism. The fact that the composition \eqref{eq:59} is an
isomorphism, together with the fact that $\overline D$ and $Z$ are
disjoint by Assumption \ref{def:ass} imply that the compositions   
\begin{multline}\label{eq:60}
  H^{2p+m-1}(\widetilde{\dsquare_{X}^{n,m}}\setminus \underline Z,\overline{D})
  \xleftarrow{\cong }H^{2p+m-1}(\dsquare_{X}^{n,m}\setminus \underline{Z},C_{2})
  \longrightarrow\\
  H^{2p+m-1}(\dsquare_{X}^{n,m}\setminus \underline{Z})
  \xrightarrow{\cong}
  H^{2p-1}(\dsquare_{X}^{n}\setminus Z;-m)
\end{multline}
and
\begin{equation}
  \label{eq:61}
  H^{2p}_{Z}(\dsquare _{X}^{n};-m)\longrightarrow 
  H^{2p+m}_{\underline Z}(\dsquare_{X}^{n,m})
  \longrightarrow 
  H^{2p+m}_{\underline Z}(\widetilde{\dsquare_{X}^{n,m}})
\end{equation}
are isomorphisms. So we can identify the top row of the diagram with
the exact sequence \eqref{eq:62}. In fact this argument also
implies that $\circled{4}$ is zero and that the image of the class of
$Z$ in $\circled{5}$ is also zero.

Since $2d_{W}=2d+2m-2q=2p-2+m-n$, using the isomorphisms 
\begin{multline*}
  H^{2d_{W}+1}((\widetilde{\G_{X}^{m}},\widehat{W}\cup D)),
  \xrightarrow{\cong}\\
  H^{2p+m-1}((\widetilde{\G_{X}^{m}},\widehat{W}\cup D)\times \dsquare^{n})
  =
  H^{2p+m-1}(\widetilde{\dsquare_{X}^{n,m}},\underline{W}\cup\overline{D})
\end{multline*}
and
\begin{multline*}
  H^{2d_{W}}(\widehat{W},\widehat{W}\cap D)=H^{2d_{W}}(\widehat{W})
  \xrightarrow{\cong}\\
  H^{2p+m-2}((\widehat{W},\widehat{W}\cap D)\times \dsquare^{n})=
  H^{2p+m-2}(\underline{W},\underline{W}\cap \overline{D})
\end{multline*}
we can identify the first column of the diagram with the exact
sequence  \eqref{eq:56}. By dimension reasons, these
identifications also imply that $\circled{1}$ is zero and that the
image of $\circled{10}$ twisted by $\Q(d_{W})$ under the map $\phi_{W}^{\vee}$ in
\eqref{eq:63} is zero.  
 
Note that the group $\circled{9}$ agrees with $\circled{1}$ and the
group $\circled{12}$ agrees with $\circled{4}$ so they both vanish.

Next we face the technical problem that, in general, the groups
$H^{\ast}_{\underline{S}}(\underline{W})$ are difficult to
control. Even if $S$ is one point, if $W$ is singular, it can be very
complicated. So in order to proceed we need to add another technical
assumption. Afterwards we will give an example of geometrical
conditions that assure the fulfillment of the technical assumption.

\begin{assumption}\label{def:6} Assume that the main diagram satisfies the
  following conditions:
  \begin{enumerate}
  \item\label{item:4} the image of the class of $Z$ in
    $H^{2p+m}_{\underline{S}}(\underline{W})$ is zero;
  \item\label{item:5} the map $\phi_{W}^{\vee}$ sends the image of
    $H^{2p+m-2}_{\underline{S}}(\underline{W};d_{W})$ to zero;
  \item\label{item:6} the mixed Hodge structure
    $H^{2p+m-1}_{\underline{S}}(\underline{W})$ has weights contained
    in the interval $[2p+m-n-1,2p+2m-1]$.
  \end{enumerate}
\end{assumption}

\begin{prop}\label{prop:8}
  Let $S_{0}$ be the union of components of $S$
  that are not contained in $\overline A_{1}\cup A_{2}$. If the
  conditions
  \begin{enumerate}
  \item\label{item:1} the subset $S_{0}$ is contained in
    $\widehat W_{\sm}$, the open subset of smooth points;
  \item\label{item:2} the pair of subsets $S_{0}$ and $\widehat{W}\cap(\overline
    B_{1}\cup \overline D\cup \overline A_{2})$ are in local product
    situation inside $\widehat{W}$;
  \item\label{item:3} we are in the symmetric situation $n=m$;
  \end{enumerate}
  are satisfied, then the conditions of Assumption \ref{def:6} are
  also satisfied. 
\end{prop}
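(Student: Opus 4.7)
The plan is to identify the cohomology with support $H^{\ast}_{\underline{S}}(\underline{W})$ via purity and then to verify each of the three conditions of Assumption \ref{def:6} by a combination of weight arguments and a dimension-based vanishing in the spirit of Lemma \ref{lemcy0}.

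First I would note that components of $S = \underline{Z} \cap \underline{W}$ lying in $\overline{A}_{1}$ disappear when $\overline{A}_{1}$ is removed, and those lying in $A_{2}$ become trivial in the relative structure, so that only $S_{0}$ contributes to $\underline{S}$. Under conditions (1) and (2), $S_{0}$ is smooth of the expected codimension $p$ in $\overline{W} = \widehat{W} \times (\P^{1})^{n}$ (using the numerical condition and $n=m$ to see $\dim S_{0} = m-1$) and meets the boundary of $\underline{W}$ in local product situation. Purity (Lemma \ref{lemm:1} combined with the techniques of Section \ref{sec:local-prod-situ}) then yields an isomorphism
\begin{displaymath}
  H^{r}_{\underline{S}}(\underline{W}) \;\cong\; H^{r-2p}(\underline{S})(-p),
\end{displaymath}
where $\underline{S}$ is the smooth relative pair induced on $S_{0}$ by the boundary of $\underline{W}$.

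Conditions \ref{item:6} and \ref{item:5} would then be weight checks using this isomorphism. For \ref{item:6} with $r = 2p+m-1$, the right-hand side is $H^{m-1}(\underline{S})(-p)$, and standard bounds for the cohomology of smooth pairs (combined with the Tate twist) place all weights inside the interval $[2p+m-n-1, 2p+2m-1]$, which is comfortable when $n = m$. For \ref{item:5}, the same isomorphism gives $H^{m-2}(\underline{S})(d_{W}-p)$, and a short calculation using $2p - 2d_{W} = 2+n-m$ shows that under the symmetry $n=m$ this Hodge structure has strictly positive weights; since $\Q(0)$ is pure of weight $0$ and morphisms of mixed Hodge structures are strict, the composition with $\phi^{\vee}_{W}$ is forced to vanish. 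Alternatively, one may exchange the roles of $Z$ and $W$, which is legitimate under (3), and reduce condition \ref{item:5} to a variant of \ref{item:4}.

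The remaining condition \ref{item:4} is the heart of the matter. I would argue that the image of $[Z] \in H^{2p+m}_{\underline{Z}}(\widetilde{\dsquare^{n,m}_{X}})$ in $H^{2p+m}_{\underline{S}}(\underline{W})$, under the combination of the Gysin/connecting maps of the third column of the main diagram and the purity identification above, is precisely the cycle class of the proper intersection $Z \cdot W$. Since $\delta Z = \delta W = 0$ and both cycles lie in the refined normalized complex, $Z \cdot W$ is a higher cycle satisfying the hypotheses of Proposition \ref{propclass}, so that by Lemma \ref{lemcy0} its class factors through
\begin{displaymath}
  H^{2(p+q)}\bigl(X \times (\P^{1})^{n+m}\setminus A_{X}, B_{X}; p+q\bigr) \;\cong\; H^{2(p+q)-(n+m)}(X; p+q) \;=\; H^{2d+2}(X; p+q) \;=\; 0,
\end{displaymath}
using the numerical condition $2(p+q-d-1) = n+m$, whence $2(p+q)-(n+m) = 2d+2 > 2d$. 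The main obstacle will be the careful diagram chase required to identify the image with the cycle class of $Z \cdot W$, which calls for compatibility between the proper transform $\pi'$, the projections $\pi_{1}, \pi_{2}, \pi_{3}$, and the cycle-class/Gysin formalism in the relative setting.
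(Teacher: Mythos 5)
Your identification of $H^{\ast}_{\underline S}(\underline W)$ and your weight arguments for conditions \ref{item:6} and \ref{item:5} of Assumption \ref{def:6} match the paper's proof in substance: the paper resolves singularities of $\widehat W$ and invokes the duality of Lemma \ref{lemm:1} (rather than a purity isomorphism, which would require $S_0$ itself to be smooth, whereas condition \ref{item:1} only places $S_0$ inside $\widehat W_{\sm}$), then bounds the weights of the cohomology of the $((n+m)/2-1)$-dimensional set $S_0$ by $[0,n+m-2]$, so that $H^r_{\underline S}(\underline W)$ has weights in $[2p,2p+n+m-2]$; conditions \ref{item:6} and \ref{item:5} then follow essentially as you say once $n=m$.

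The gap is in your treatment of condition \ref{item:4}, which you rightly call the heart of the matter. The paper settles it by the same weight bound: the class of $Z$ in $H^{2p+m}_{\underline Z}(\widetilde{\dsquare_{X}^{n,m}})$ is pure of weight $2p+2m$ (it is the weight-$2p$ class of $Z$ tensored with the weight-$2m$ generator coming from $H^m(\G^m)\cong\Q(-m)$), whereas the target $H^{2p+m}_{\underline S}(\underline W)$ has weights at most $2p+n+m-2=2p+2m-2$; strictness of morphisms of mixed Hodge structures forces the image to vanish. Your alternative route does not work as written, for two reasons. First, the image of $[Z]$ under the connecting map lands, under your own purity identification, in $H^{m}(\underline S)(-p)$ rather than in $H^{0}(\underline S)(-p)$, which is where a cycle class of the intersection would live; so the identification with the cycle class of $Z\cdot W$ fails already on degree grounds. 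Second, even granting such an identification, Lemma \ref{lemcy0} only asserts that the image of a cycle class in the cohomology \emph{without} support vanishes; it says nothing about the class \emph{with} support, and the map from cohomology with support to ambient cohomology is far from injective (indeed the whole construction of $E_Z$ rests on $[Z]$ being a nonzero class with support whose ambient image vanishes). So the vanishing required by condition \ref{item:4} cannot be extracted from Lemma \ref{lemcy0}; it is the weight argument that makes the hypothesis $n=m$ do its work here.
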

\begin{proof}
  By resolving singularities of $\widehat W$ and using Lemma
  \ref{lemm:1}, the conditions \ref{item:1} and \ref{item:2} of the
  proposition imply that 
  \begin{displaymath}
    H^{r}_{\underline S}(\underline{W})=
    H^{2d+2n+2m-2q-r}(S_{0}\setminus \overline B_{1}\cup \overline D\cup
    \overline A_{2},\overline A_{1};d+n+m-q)^{\vee}.
  \end{displaymath}
  Since $\dim S_{0}=(n+m)/2-1$, by 
  \cite[Chapter IV, Proposition (3.5)]{GNPP} the cohomology of $S$ has
  weights in the interval $[0,n+m-2]$. Therefore 
  the weights of $H^{r}_{\underline S}(\underline{W})$
  are contained in the interval $[2p,2p+n+m-2]$. If we add the
  condition $n=m$, then this interval is contained in the interval of
  Assumption \ref{def:6}~\ref{item:6}.

  The class of $Z$ in  $H^{2p+m}_{\underline
    Z}(\widetilde{\dsquare_{X}^{n,m}})$ has weight $2p+2m$ (recall the
  isomorphism \eqref{eq:5}) since $H^{2p+m}_{\underline S}(\underline
  W)$ has weight at most $2p+2m-2$ (here as well we are using $n=m$)
  condition \ref{def:6}~\ref{item:4} follows.

  Using again $n=m$, the group
  $H^{2p+m-2}_{\underline{S}}(\underline{W};d_{W})$ has weights in
  the interval $[2,2m]$. Since the image of the map 
  $\phi _{W}^{\vee}$ has weight zero, we deduce condition
  \ref{def:6}~\ref{item:5}. 
\end{proof}

\begin{df}
  Let $n=m\ge 1$ and $p,q\ge 0$ satisfying $p+q=d+n+1$ and let $Z\in
  Z^{p}(X,n)_{00}$ and $W\in Z^{q}(X,n)_{00}$ be cycles satisfying
  assumptions \ref{def:ass} and \ref{def:6}. Then the \emph{oriented
    mixed Hodge structure diagram} associated to $Z,W$ is the diagram
  obtained from the main diagram in Figure \ref{fig:maindiag} by first
  twisting by $\Q(p+n)$, then taking the pullback by $\phi _{Z}$ and
  then the push-forward by $\phi _{W}^{\vee}$ twisted by
  $\Q(n+1)$. This diagram is depicted in figure
  \ref{fig:oriented_diagram}. 
  \begin{figure}[ht]
    \centering
    \begin{displaymath}
      \xymatrix{& 0 & 0 & 0\\
        0\ar[r] & H^{2p-n-1}(X; p)\ar[u]\ar[r]
        & E_Z\ar[u]\ar[r]
        & \Q(0)\ar[u]\ar[r] & 0\\ 
        0\ar[r] & E_W^\vee(n+1)\ar[r] \ar[u]
        & B_{Z,W} \ar[r] \ar[u] & C_{Z,W}\ar[u]\ar[r] & 0\\
        0\ar[r] & \Q(n+1)\ar[r]\ar[u] & D_{Z,W}\ar[r]\ar[u] &
        H^{2p+n-1}_{\underline{S}}(\underline{W};p+n)\ar[u]\ar[r] & 0\\
        & 0\ar[u] & 0\ar[u] & 0\ar[u]}      
    \end{displaymath}    
    \caption{Oriented mixed Hodge structure diagram}
    \label{fig:oriented_diagram}
  \end{figure}
\end{df}

\begin{rmk}
  In general, if we switch $Z$ and $W$, we do not obtain the dual of
  the diagram in figure \ref{fig:oriented_diagram}. The first problem
  is obvious: Assumption \ref{def:ass} is not symmetric. But even if
  assumptions \ref{def:ass}, \ref{def:6} and the symmetric assumptions 
  are satisfied, the two obtained diagrams may not be dual of each
  other if $Z$ and $W$ are not in local product situation. Later we
  will investigate in more detail the duality of this diagram in a
  particular case.
\end{rmk}

\subsection{The case \texorpdfstring{$n=m=1$}{nm1}}
\label{sec:case-n=m=1}

Due to the technical difficulties arising from the intersection $\pi
_{1}^{-1}(|Z|)\cap \pi _{2}^{-1}(|W|)$ we will concentrate on the case
$n=m=1$. Then equation  \eqref{eq:57} reads
\begin{equation}
  \label{eq:67}
  p+q = d+2.
\end{equation}
Proper intersection means that the intersection $\pi
^{-1}_{1}(|Z|)\cap \pi _{2}^{-1}(|W|)\cap X\times \square^{2}$ is a
finite set of points. 

To ease the analysis, we make the following stronger assumption.  

\begin{assumption}\label{def:7} We assume that $n=m=1$ and that the
  whole intersection $S=\pi _{1}^{-1}(|Z|)\cap
  \pi _{2}^{-1}(|W|)\subset X\times(\P^{1})^{2}$ is a finite set of
  points. Moreover,
  \begin{enumerate}
  \item\label{item:7} the subsets $S$ and $A_{1}\cup A_{2}\cup B_{1}\cup B_{2}$ are
    disjoint; 
  \item\label{item:8} the subset $S$ is contained in $\pi ^{-1}(|Z|_{\sm})\cap \pi
    _{2}^{-1}(|W|_{\sm})$ and 
    the intersection $\pi ^{-1}(|Z|)\cap \pi _{2}^{-1}(|W|)$ is
    transverse at every point of $S$.
  \end{enumerate}
  In particular $\pi ^{-1}(|Z|)$ and $\pi _{2}^{-1}(|W|)$ are in local
  product situation.  
\end{assumption}

Assumption \ref{def:7} implies that we can define the diagram in
figure \ref{fig:oriented_diagram} and also the same diagram with $Z$
and $W$ swapped. 

\begin{prop}\label{prop:9}
  Assumption \ref{def:7} implies assumptions \ref{def:ass} and
  \ref{def:6} for the pair $Z$, $W$ and for the reversed pair $W$,
  $Z$.   
\end{prop}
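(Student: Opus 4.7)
The plan is to deduce each clause of Assumptions \ref{def:ass} and \ref{def:6} directly from the geometric conditions of Assumption \ref{def:7}, reducing the second assumption to Proposition \ref{prop:8}.

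First I would verify Assumption \ref{def:ass} for the pair $(Z,W)$. By Lemma \ref{lemm:4}, the center $C$ of the proper modification $\pi\colon \caX_W\to X\times \P^1$ is contained in $|W|\cap B_X$. Therefore
\begin{displaymath}
\pi_1^{-1}(|Z|)\cap \pi_2^{-1}(C)\subset \pi_1^{-1}(|Z|)\cap \pi_2^{-1}(|W|)\cap \pi_2^{-1}(B_X)=S\cap B_2=\emptyset,
\end{displaymath}
where the last equality follows from Assumption \ref{def:7}~\ref{item:7}. The identical argument, exchanging the roles of the two projections, applies to the center of the proper modification $\caX_Z\to X\times \P^1$ used for the reversed pair, so Assumption \ref{def:ass} also holds for $(W,Z)$.

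Next I would check that the three hypotheses of Proposition \ref{prop:8} are fulfilled, thereby obtaining Assumption \ref{def:6}. Hypothesis \ref{item:3} ($n=m$) is immediate since $n=m=1$. Let $S_0$ be as in Proposition \ref{prop:8}; by Assumption \ref{def:7}~\ref{item:7}, $S$ is disjoint from $\overline A_1\cup A_2$ so $S_0=S$. Since $S$ is disjoint from $B_X$ (again by Assumption \ref{def:7}~\ref{item:7}), the modification $\pi$ is an isomorphism in a neighbourhood of the preimage of $S$, hence $\widehat W$ agrees with $|W|$ there. Assumption \ref{def:7}~\ref{item:8} therefore gives $S_0\subset \widehat W_{\sm}$, which is hypothesis \ref{item:1}. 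For hypothesis \ref{item:2}, the same disjointness shows that $S_0$ does not meet $\widehat W\cap (\overline B_1\cup \overline D\cup \overline A_2)$, which is trivially a local product situation. Proposition \ref{prop:8} then yields Assumption \ref{def:6} for the pair $(Z,W)$.

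For the reversed pair $(W,Z)$, the verification is entirely symmetric: Assumption \ref{def:7} treats $Z$ and $W$ symmetrically except through the choice of projection, and the three hypotheses of Proposition \ref{prop:8} depend only on the zero-dimensional intersection $S$ and its position with respect to the boundary divisors, which is unchanged under the swap. I do not anticipate any real obstacle here, since the genuine work was already carried out in Proposition \ref{prop:8}; the present proposition is essentially the bookkeeping that translates the hands-on geometric condition of Assumption \ref{def:7} into the abstract cohomological conditions of Assumptions \ref{def:ass} and \ref{def:6}.
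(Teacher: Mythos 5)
Your proof is correct. The treatment of Assumption \ref{def:ass} and of the reversed pair coincides with the paper's: the centre of the modification lies in $|W|\cap B_X$, so condition \ref{item:7} of Assumption \ref{def:7} forces $\pi_1^{-1}(|Z|)\cap\pi_2^{-1}(C)=\emptyset$, and the swap of $Z$ and $W$ is handled by the symmetry of Assumption \ref{def:7}. Where you genuinely differ is in the verification of Assumption \ref{def:6}: you route it through Proposition \ref{prop:8} by checking its three geometric hypotheses, whereas the paper bypasses that proposition and computes the local cohomology directly. Since $S$ is a finite set of points contained in the smooth locus of $\underline{W}$, which has dimension $d+2-q=p$, the groups $H^{2p+m}_{\underline{S}}(\underline{W})$ and $H^{2p+m-2}_{\underline{S}}(\underline{W})$ (degrees $2p+1$ and $2p-1$) vanish outright, and $H^{2p+m-1}_{\underline{S}}(\underline{W})=H^{2p}_{\underline{S}}(\underline{W})$ is pure of weight $2p$, so conditions \ref{item:4}--\ref{item:6} of Assumption \ref{def:6} hold for essentially trivial reasons. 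Your route buys uniformity --- it exhibits the $n=m=1$ case as an instance of the general criterion already established --- at the cost of passing through the duality step inside Proposition \ref{prop:8}; there you should justify the assertion that ``disjointness is trivially a local product situation'' a little more carefully, since Definition \ref{def:sec-1-product} is phrased in terms of product neighbourhoods (the claim is true and is what Lemma \ref{lemm:1} needs, but it deserves a sentence). The paper's direct computation avoids duality altogether and is marginally stronger, since two of the three relevant groups are literally zero rather than merely of the right weight.
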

\begin{proof}
  By condition \ref{def:7}~\ref{item:7} $\pi_{1} ^{-1}(|Z|)$ and $\pi
  _{2}^{-1}(|W|\cap B_{X})$ are disjoint. Therefore Assumption
  \ref{def:ass} is satisfied. Since $S$ is a finite set of points
  contained in the smooth part of $\underline W$, the dimension of
  $\underline W$ is $d+2-q=p$, and $2p+m-1=2p$, we deduce that
  \begin{displaymath}
    H^{2p+m}_{\underline S}(\underline W)=H^{2p+m-2}_{\underline S}(\underline W)=0,
  \end{displaymath}
  and that $H^{2p+m-1}_{\underline S}(\underline W)$ is pure of weight
  $2p$. Hence Assumption \ref{def:6} is also satisfied.

  Since Assumption \ref{def:7} is symmetric with respect to the swap
  of $Z$ and $W$, we deduce assumptions \ref{def:ass} and \ref{def:6}
  for the pair reversed. 
\end{proof}

Next we modify the main diagram in figure \ref{fig:maindiag} to
achieve two goals. First we want it to be symmetric under the swap of
$Z$ and $W$, and second, we want the strict transforms of $Z$ and $W$
to be smooth in order to easily use differential forms on them.

Using the same method as in Lemma \ref{lemm:4}. we can find a 
proper transform $\pi _{Z}\colon \caX_{Z}'\to X\times \P^{1}$, with
centers contained in $|Z|_{\sing} \cup (|Z|\cap B_{X})$, with
exceptional divisor $D_{Z}$ such that
\begin{enumerate}
\item the strict transform $\widehat Z$ of $|Z|$ is smooth and does
  not meet $\widehat {B}_{X}$;
\item the divisor $\widehat {A}_{X}\cup D_{Z}\cup \widehat{B}_{X}$ is
  a simple normal crossing divisor.
\end{enumerate}
Similarly we construct the proper transform $\pi _{W}\colon
\caX_{W}'\to X\times \P^{1}$ and define 
\begin{displaymath}
\caX_{Z,W}\coloneqq \caX'_Z\times_{X}\caX'_{W},
\end{displaymath}
which is smooth under Assumption \ref{def:7}. We denote the union of
the centers
of blow ups for $\caX'_W$ and $\caX'_Z$ to be $C_W$ and $C_Z$
respectively. Let  
\begin{displaymath}
\pi'\colon \caX_{Z,W}\rightarrow X\times \P^1\times \P^1
\end{displaymath}
be the proper morphism induced by the maps $\pi _{Z}$ and $\pi _{W}$, and let
\begin{displaymath}
\pi_1'\colon \caX_{Z,W}\rightarrow \caX'_Z,\qquad
\pi_2'\colon \caX_{Z,W}\rightarrow \caX'_W, 
\end{displaymath}
be the projections. We summarize the different maps in the following diagram.
\begin{equation}\label{eq:75}
  \xymatrix{& \caX_{Z,W}\ar[dl]_{\pi _{1}'}\ar[dd]^{\pi '}\ar[dr]^{\pi _{2}'} &\\
    \caX'_{Z}\ar[dd]_{\pi _{Z}}&& \caX'_{W}\ar[dd]^{\pi _{W}}\\
    &X\times \P^{1} \times \P^{1} \ar[dl]_{\pi _{1}} \ar[dr]^{\pi _{2}}&\\
    X\times \P^{1} \ar[dr]&& X\times \P^{1}\ar[dl]\\
    &X&
  }
\end{equation}
We adapt Notation \ref{not:1} to this case,
and introduce 
\begin{notation}\label{not:2}
  \begin{alignat*}{2}
    A_{1}&=\pi _{1}^{-1}A_{X},&\qquad A_{2}&=\pi_{2} ^{-1}A_{X},\\
    B_{1}&=\pi _{1}^{-1}B_{X},&\qquad B_{2}&=\pi_{2} ^{-1}B_{X},\\
    \overline {A}_{1}&=(\pi _{1}')^{-1}\widehat{A}_{X},&
    \qquad \overline {B}_{1}&=(\pi_{1}')^{-1}\widehat{B}_{X},\\
    \overline {A}_{2}&=(\pi' _{2})^{-1}\widehat{A}_{X},&
    \qquad \overline {B}_{2}&=(\pi_{2}')^{-1}\widehat{B}_{X},\\
    \overline D_Z &=(\pi_{1}')^{-1}D_Z, &\qquad \overline D_W &=(\pi _{2}')^{-1}D_W,\\
    C_{1}&=\pi _{1}^{-1}(C_Z),&\qquad C_{2}&=\pi_{2}^{-1}(C_W),\\
    \overline{Z}&=(\pi _{1}')^{-1}\widehat{Z},&\qquad
    \overline{W}&=(\pi _{2}')^{-1}\widehat{W}.
  \end{alignat*}
  Here we have denoted by $\widehat A_{X}$ and $\widehat B_{X}$ the
  strict transforms of $A_{X}$ and $B_{X}$ in both blow-ups,
  $\caX_{Z}$ and $\caX_{W}$. 
  Note that the spaces marked with an overline are subsets of
  $\caX_{Z,W}$ while the others are 
  subsets of $X\times \P^{1}\times \P^{1}$. 
 As before, we will consider the relative schemes
  \begin{align*}
    \dsquare_{X}
    &=\dsquare_{X}\times_{X}\G_{X}=
    (X\times \P^{1}\times \P^{1}\setminus A_{1}\cup
      B_{2},B_{1}\cup A_{2}),\\
    \underline{\caX_{Z,W}}
    &=(\caX_{Z,W}\setminus \overline{A}_{1}\cup
      \overline{B}_{2}\cup\overline{D}_{Z},\overline{B}_{1}\cup
      \overline{A}_{2}\cup  \overline{D}_{W}),\\
    \underline{Z}
    &=(\overline{Z}
      \setminus \overline{A}_{1}\cup
      \overline{B}_{2}\cup \overline{D}_{Z},\overline {A}_{2})\subset
      \underline{\caX_{Z,W}},\\
    \underline{W}&=(\overline{W}\setminus
    \overline{A}_{1},\overline{B}_{1}\cup
      \overline{A}_{2}\cup  \overline{D}_{W})
                   \subset
    \underline{\caX_{Z,W}}.
  \end{align*}
  Finally we write $S=Z\cap W$. Note that, by Assumption \ref{def:7},
  the subset $S$ can be seen as a the relative scheme $\underline S
  \coloneqq (S\setminus  \emptyset,\emptyset)$ that is a relative subscheme of  
  either $\dsquare_{X}$ or
  $\underline{\caX_{Z,W}}$. As before,
  $\overline B_{2}$ does not appear in the definition of
  $\underline{W}$  because $\widehat{W}$ and 
  $\widehat B_{X}$ are disjoint. Similarly, $\overline B_{1}$ does not
  appear in the definition of $\underline Z$. 
\end{notation}

In figure \ref{fig:maindiag_b}, there is a more symmetric version of
the main diagram in figure
\ref{fig:maindiag}. 
\begin{figure}[ht]
  \centering
\begin{displaymath}
  \xymatrix@C=.8em{
    &0  & 0 & 0 & \\
    0\ar[r]&
    H^{2p}(\underline{\caX_{Z,W}}) \ar[u] \ar[r]&
    H^{2p}(\underline{\caX_{Z,W}}\setminus \overline{Z}) \ar[u] \ar[r]&
    H^{2p+1}_{\underline Z}(\underline{\caX_{Z,W}}) \ar[u] \ar[r]&
    \circled{5}\\
    0\ar[r]&
    H^{2p}(\underline{\caX_{Z,W}},\overline{W}) \ar[u] \ar[r]&
    H^{2p}(\underline{\caX_{Z,W}}\setminus \overline{Z},
    \overline{W}) \ar[u] \ar[r]&
    H^{2p+1}_{\underline Z}(\underline{\caX_{Z,W}},\overline{W}) \ar[u] \ar[r]&
    \circled{7}\\
    0\ar[r]&
    H^{2p-1}(\underline{W}) \ar[u] \ar[r]&
    H^{2p-1}(\underline{W} \setminus \overline{S}) \ar[u] \ar[r]&
    H^{2p}_{\underline{S}}(\underline{W}) \ar[u] \ar[r]&
    0\\
    &\circled{10}\ar[u] &\circled{11}\ar[u] &0\ar[u]&  
  }
\end{displaymath}  
  \caption{A symmetric version of the main diagram for $n=m=1$.}
  \label{fig:maindiag_b}
\end{figure}
The analysis of the main diagram carries through, with small
modifications to the diagram in figure \ref{fig:maindiag_b}. For
instance, using Lemma \ref{lemm:2}, the fact that $\codim C_{1}\ge
p+1$ and $\dim C_{2}\le p-1$, yield
\begin{align*}
  H^{2p}(\underline{\caX_{Z,W}})
  &= H^{2p}(\dsquare_{X}\setminus C_{1},C_{2})\\
  &= H^{2p}(\dsquare_{X})\\
  &= H^{2p-2}(X;-1)\\
\end{align*}

As in the proof of Proposition  \ref{prop:9}, the group $H^{2p}_{\underline
  S}(\underline W)$ is pure of weight  $2p$. In fact more is true. If
$s=\#S$ is the number of points in the intersection, then 
there is a canonical isomorphism
\begin{displaymath}
  H^{2p}_{\underline
  S}(\underline W)\cong \Q(-p)^{\oplus s}.
\end{displaymath}
Thus, after pulling back through the class of $Z$, taking the
pushforward with respect to the class of $W$ and twisting by
$\Q(p+1)$, we obtain, form figure \ref{fig:maindiag_b}, the
particular case of figure \ref{fig:oriented_diagram} depicted in
figure \ref{fig:biextensionnm1}.
\begin{figure}[ht]
  \centering
\begin{displaymath}
  \xymatrix{& 0 & 0 & 0\\
    0\ar[r] & H^{2p-2}(X;p)\ar[u]\ar[r]
    & E_Z\ar[u]\ar[r]
    & \Q(0)\ar[u]\ar[r] & 0\\ 
    0\ar[r] & E_W^\vee(2)\ar[r] \ar[u]
    & B_{Z,W} \ar[r] \ar[u] & C_{Z,W}\ar[u]\ar[r] & 0\\
    0\ar[r] & \Q(2)\ar[r]\ar[u] & D_{Z,W}\ar[r]\ar[u] &
    \Q(1)^{\oplus s}\ar[u]\ar[r] & 0\\
    & 0\ar[u] & 0\ar[u] & 0\ar[u]}       
   \end{displaymath}     
  \caption{The biextension diagram for $n=m=1$.}
\label{fig:biextensionnm1}  
\end{figure}

\begin{prop}\label{prop:10}
  With Assumption \ref{def:7}, the dual of the
  diagram of figure \ref{fig:biextensionnm1}, twisted by $\Q(2)$,
  agrees with the similar
  diagram with the role of $Z$ and $W$ reversed. In particular 
  \begin{displaymath}
    B_{W,Z}=B_{Z,W}^{\vee}(2),\qquad C_{W,Z}=D_{Z,W}^{\vee}(2),
    \qquad D_{W,Z}=C_{Z,W}^{\vee}(2).
  \end{displaymath}
\end{prop}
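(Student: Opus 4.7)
The plan is to apply Lefschetz duality (Lemma \ref{lemm:1}) term by term to the symmetric main diagram in Figure \ref{fig:maindiag_b} and then verify that the pullback by $\phi_Z$ and pushforward by $\phi_W^{\vee}$ used to extract Figure \ref{fig:biextensionnm1} from it commute with duality up to the twist $\Q(2)$. First I would check that under Assumption \ref{def:7} every pair of closed subsets appearing in Figure \ref{fig:maindiag_b} is in local product situation. Indeed, $\overline A_1, \overline B_1, \overline A_2, \overline B_2, \overline D_Z, \overline D_W$ form a simple normal crossing divisor on the smooth variety $\caX_{Z,W}$; moreover $\overline Z$ and $\overline W$ are smooth, meet this divisor transversely, and meet each other transversely along a finite set $S$ disjoint from the divisor. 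Thus Lemma \ref{lemm:1} is available throughout.

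Second, observe that the ambient relative scheme $\underline{\caX_{Z,W}} = (\caX_{Z,W} \setminus \overline A_1\cup \overline B_2\cup \overline D_Z,\ \overline B_1\cup \overline A_2\cup \overline D_W)$ is mapped to $\underline{\caX_{W,Z}}$ under the operation $(X \setminus A, B) \mapsto (X \setminus B, A)$, since interchanging the roles of the two $\P^1$-factors swaps the labels $1 \leftrightarrow 2$ and also swaps $(Z, D_Z) \leftrightarrow (W, D_W)$. Writing $d' = d+2 = \dim \caX_{Z,W}$ and using $p+q=d+2$, Lemma \ref{lemm:1} then produces natural isomorphisms such as
\begin{displaymath}
H^{2p}(\underline{\caX_{Z,W}} \setminus \overline Z,\, \overline W;\, a)
\;\cong\;
H^{2q}(\underline{\caX_{W,Z}} \setminus \overline W,\, \overline Z;\, d'-a)^{\vee},
\end{displaymath}
and analogous isomorphisms for every other entry of Figure \ref{fig:maindiag_b} (with or without supports along $\overline Z$, $\overline W$, or $\underline S$). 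Since all horizontal and vertical maps in the diagram are induced by inclusions of pairs, they carry over to the corresponding dual maps in the $(W,Z)$-diagram by naturality of Poincar\'e duality. The cycle classes $[Z] \in H^{2p+1}_{\underline Z}(\underline{\caX_{Z,W}})$ and $\phi_W^{\vee}$ defined in \eqref{eq:63} are interchanged, under this duality, with the corresponding objects $\phi_Z^{\vee}$ and $[W]$ appearing in the construction of the $(W,Z)$-diagram; this is exactly what Sections \ref{sec:extens-assoc-high} and \ref{dualext} build in, and under Assumption \ref{def:7} no blow-up is actually needed (so the two constructions agree on the nose).

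Third, I would keep track of the twists. The biextension diagram in Figure \ref{fig:biextensionnm1} for $(Z,W)$ is obtained from Figure \ref{fig:maindiag_b} by twisting by $\Q(p+1)$ and then pulling back by $\phi_Z$ and pushing forward by $\phi_W^{\vee}$; the analogous construction for $(W,Z)$ uses the twist $\Q(q+1)$. Because $p+q=d+2=d'$, the Tate twists combine to give: dualizing Figure \ref{fig:biextensionnm1} introduces a twist by $\Q(-(p+1))$, which differs from $\Q(q+1)$ exactly by $\Q(-2)$. Multiplying back by $\Q(2)$ restores the correct twist and turns the dual of each entry of the $(Z,W)$-diagram into the corresponding entry of the $(W,Z)$-diagram, yielding $B_{W,Z}=B_{Z,W}^{\vee}(2)$, $C_{W,Z}=D_{Z,W}^{\vee}(2)$, and $D_{W,Z}=C_{Z,W}^{\vee}(2)$. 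The main obstacle is the bookkeeping at the corners where the support $\underline S$ enters: one must verify that the isomorphism $H^{2p}_{\underline S}(\underline W) \cong \Q(-p)^{\oplus s}$ dualizes (after the twist) to the corresponding isomorphism $H^{2q}_{\underline S}(\underline Z) \cong \Q(-q)^{\oplus s}$ in a way that is compatible with both $\phi_Z$ and $\phi_W^{\vee}$. This ultimately reduces to the fact that at each point of $S$ the intersection is transverse, so the local duality identifies the class of $Z$ in $H^{2p}_{\underline S}(\underline W)$ with the class of $W$ in $H^{2q}_{\underline S}(\underline Z)$ under the canonical self-duality of the finite sum $\Q(-p)^{\oplus s}(p+1) \cong \Q(1)^{\oplus s}$.
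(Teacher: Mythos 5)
Your proposal is correct and follows essentially the same route as the paper: termwise Lefschetz duality (Lemma \ref{lemm:1}) applied to the symmetric diagram of Figure \ref{fig:maindiag_b}, using that Assumption \ref{def:7} puts all the relevant closed subsets in local product situation, together with the twist bookkeeping $(p+1)+(q+1)-2=d+2=\dim\caX_{Z,W}$. The paper's proof is terser (it checks only the central term explicitly and asserts the rest follows), whereas you spell out the compatibility of $\phi_Z$, $\phi_W^{\vee}$ and the $\underline{S}$-corner; the only slight imprecision is your aside that ``no blow-up is actually needed'' --- the symmetric space $\caX_{Z,W}=\caX'_Z\times_X\caX'_W$ is itself built from blow-ups, and the real point is that the same space serves both the $(Z,W)$ and $(W,Z)$ constructions.
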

\begin{proof}
  Since, by condition condition \eqref{eq:67}, we have
  $(p+1) + (q+1) -2 = d+2 = \dim(\caX_{Z,W})$, and by Assumption
  \ref{def:7} all the  
  subspaces appearing in the diagram in figure
  \ref{fig:maindiag_b} are in local product situation, if we take that 
  diagram, twist it by $\Q(p+1)$, then
  take the dual and finally twist by $\Q(2)$, we obtain the analogous
  diagram, with $Z$ and $W$ swapped and
  twisted by $\Q(q+1)$. For instance, the central term of the first
  diagram twisted by $\Q(p+1)$ is 
 \begin{equation}\label{eq:71}
   H^{2p}(\underline{\caX_{Z,W}}\setminus \overline Z, \overline W;
   p+1), 
 \end{equation}
 and $B_{Z,W}$ as a sub quotient this mixed
 Hodge structure. The dual of this cohomology group, twisted by 
 $\Q(2)$ is 
 \begin{displaymath}
   H^{2q}(\underline{\caX_{Z,W}}\setminus \overline W,\overline Z; q+1).  
 \end{displaymath}
 From this the sought duality follows easily.
\end{proof}

From the diagram of figure \ref{fig:biextensionnm1}, and the fact that
all the maps there are 
morphisms of mixed Hodge structures, we deduce the next result.

\begin{cor}\label{cor:4}
  If Assumption \ref{def:7} is satisfied, then the mixed Hodge
  structure $B_{Z,W}$ has weights $-4$, $-2$ and $0$ and the graded
  pieces are  
\begin{displaymath}
\begin{aligned}
\Gr^W_0B_{Z,W}&= \Q(0),\\
\Gr^W_{-2}B_{Z,W}&= H^{2p-2}(X, \Q(p))\oplus \Q(1)^{\oplus_s},\\
\Gr^W_{-4}B_{Z,W}&= \Q(2).
\end{aligned}
\end{displaymath}
Therefore it is a generalized biextension. Moreover, if $H^{2p-2}(X,\Q(p))$
is of Hodge--Tate type, the same is true for $B_{Z,W}$. 
\end{cor}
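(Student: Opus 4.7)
The plan is to read off everything directly from the biextension diagram in figure \ref{fig:biextensionnm1}, using that every map there is a morphism of mixed Hodge structures so that the weight filtration is strictly preserved.

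First I would read off the weights of the corner pieces. Since $H^{2p-2}(X)$ is pure of weight $2p-2$, after twisting by $\Q(p)$ the mixed Hodge structure $H^{2p-2}(X;p)$ is pure of weight $-2$, and similarly $\Q(0)$, $\Q(1)^{\oplus s}$, $\Q(2)$ are pure of weights $0$, $-2$, $-4$ respectively. From the top row of figure \ref{fig:biextensionnm1} one sees that $E_Z$ has weights in $\{-2,0\}$ with $\Gr^W_{-2}E_Z = H^{2p-2}(X;p)$ and $\Gr^W_0 E_Z = \Q(0)$. From the left column (which is just the dual of the extension defining $E_W$, twisted by $\Q(2)$) one sees that $E_W^{\vee}(2)$ has weights in $\{-4,-2\}$ with $\Gr^W_{-4}E_W^{\vee}(2) = \Q(2)$ and $\Gr^W_{-2}E_W^{\vee}(2) = H^{2p-2}(X;p)$. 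From the right column $C_{Z,W}$ has weights in $\{-2,0\}$ with $\Gr^W_{-2}C_{Z,W} = \Q(1)^{\oplus s}$ and $\Gr^W_0 C_{Z,W} = \Q(0)$, while from the bottom row $D_{Z,W}$ has weights in $\{-4,-2\}$ with $\Gr^W_{-4}D_{Z,W} = \Q(2)$ and $\Gr^W_{-2}D_{Z,W} = \Q(1)^{\oplus s}$.

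Next I would use the middle row $0 \to E_W^{\vee}(2)\to B_{Z,W}\to C_{Z,W}\to 0$ and the exactness of $\Gr^W_k$ on the category of mixed Hodge structures. This immediately yields that the non-zero weights of $B_{Z,W}$ are contained in $\{-4,-2,0\}$ and gives short exact sequences of pure Hodge structures
\begin{align*}
0 &\to \Gr^W_{-4}E_W^{\vee}(2) \to \Gr^W_{-4}B_{Z,W} \to \Gr^W_{-4}C_{Z,W}\to 0,\\
0 &\to \Gr^W_{-2}E_W^{\vee}(2) \to \Gr^W_{-2}B_{Z,W} \to \Gr^W_{-2}C_{Z,W}\to 0,\\
0 &\to \Gr^W_{0}E_W^{\vee}(2) \to \Gr^W_{0}B_{Z,W} \to \Gr^W_{0}C_{Z,W}\to 0.
\end{align*}
Substituting the values computed above, $\Gr^W_{-4}B_{Z,W}=\Q(2)$ and $\Gr^W_0B_{Z,W}=\Q(0)$. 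For $\Gr^W_{-2}$ we obtain an extension of $\Q(1)^{\oplus s}$ by $H^{2p-2}(X;p)$ in the category of pure Hodge structures of weight $-2$; since $\Ext^1$ vanishes in the category of pure Hodge structures of fixed weight (by semisimplicity of polarized pure Hodge structures), this extension splits canonically, giving $\Gr^W_{-2}B_{Z,W}\cong H^{2p-2}(X;p)\oplus \Q(1)^{\oplus s}$. Alternatively, the same conclusion follows from the middle column $0\to D_{Z,W}\to B_{Z,W}\to E_Z\to 0$.

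At this point $B_{Z,W}$ has three non-zero weights $\{-4,-2,0\}$ with top and bottom graded pieces $\Q(0)$ and $\Q(2)$ of rank one, so it is oriented with $\max = 0$, $\min = -4$, and it is a generalized biextension in the sense of Definition \ref{def:8}. The canonical orientation $\mathbf{1}_{B_{Z,W}} \in \Gr^W_0 B_{Z,W} = \Q(0)$ and $\mathbf{1}_{B_{Z,W}}^{\vee} \in \Gr^W_{-4}B_{Z,W}=\Q(2)$ are the Betti generators of $\Q(0)$ and $\Q(2)$ respectively. Finally, if $H^{2p-2}(X;p)$ is of Hodge--Tate type, then all three graded pieces of $B_{Z,W}$ are direct sums of Tate twists $\Q(j)$, so $B_{Z,W}$ is itself of Hodge--Tate type. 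The only step requiring care is the splitting of $\Gr^W_{-2}B_{Z,W}$, but this is automatic by the semisimplicity of polarized pure Hodge structures.
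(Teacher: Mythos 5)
Your proof is correct and follows the paper's (unstated but evident) argument: read the weight graded pieces off the biextension diagram of Figure \ref{fig:biextensionnm1}, using that all arrows are morphisms of mixed Hodge structures (hence strict with respect to $W$), so that $\Gr^W_k$ is exact on the middle row and middle column. The only detail you supply that the paper leaves implicit is the splitting of $\Gr^W_{-2}B_{Z,W}$; your appeal to semisimplicity of polarizable pure Hodge structures of a fixed weight is a valid route, and it agrees with the canonical splitting one can read directly from the diagram by comparing the middle row and middle column (as the paper spells out later in \eqref{eq:69}--\eqref{eq:70}).
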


\begin{rmk}
  In the case $n=m=1$,
  the duality in Proposition \ref{prop:10} is not only a duality of mixed Hodge
  structures, as we will see in the proof of the next
  proposition, this duality preserves the orientation. This is in
  contrast with the case $n=m=0$ as shown in \cite[Proposition
  3.3.4]{Hain:Height}.
\end{rmk}

\begin{prop}
   With Assumption \ref{def:7}, we have
   \begin{displaymath}
     \Ht(B_{Z,W})=-\Ht(B_{W,Z}).
   \end{displaymath}
 \end{prop}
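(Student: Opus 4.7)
The plan is to combine Proposition \ref{prop:10} with Proposition \ref{prop:dual-height}. Proposition \ref{prop:10} gives an isomorphism of mixed Hodge structures $\Phi\colon B_{W,Z}\xrightarrow{\cong} B_{Z,W}^{\vee}(2)$, and Proposition \ref{prop:dual-height} tells us that $\Ht(H^{\ast})=-\Ht(H)$ for any generalized biextension. So after checking two compatibilities, the identity
\[
\Ht(B_{W,Z})=\Ht\bigl(B_{Z,W}^{\vee}(2)\bigr)=\Ht\bigl(B_{Z,W}^{\vee}\bigr)=-\Ht(B_{Z,W})
\]
will follow immediately.

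First I would check that the height of a generalized biextension is invariant under Tate twists. If $H$ is a generalized biextension with weights $2a,b,2c$ and orientation $(\bfone_H,\bfone_H^{\vee})$, then $H(n)$ is a generalized biextension with weights $2(a-n),b-2n,2(c-n)$ and the canonical generators of $\Q(n)$ provide an orientation on $H(n)$. The Deligne bigrading shifts by $(-n,-n)$, so the length $\ell(H)$ and the integer $r=-\ell/2$ in Definition \ref{signed-height} are unchanged; moreover the Deligne splitting $\delta_{(F,W)}\in\Lambda^{-1,-1}$ is insensitive to a uniform shift of both filtrations. Writing out the defining relation $\delta^{r,r}(e)=\Ht(H)e^{\vee}$ for $H(n)$ with the lifted generators gives exactly the same scalar, so $\Ht(H(n))=\Ht(H)$.

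Second I would verify that the isomorphism $\Phi$ of Proposition \ref{prop:10} preserves the canonical orientations. Recall that the orientation of $B_{Z,W}$ at the top piece $\Gr^W_0=\Q(0)$ is determined by the morphism $\phi_Z\colon\Q(0)\to H^{2p}_{\underline Z}(\underline{\caX_{Z,W}};p+1)$ coming from the cycle class of $Z$, and the orientation at $\Gr^W_{-4}=\Q(2)$ is determined by $\phi_W^{\vee}$, dually induced by $W$. The isomorphism $\Phi$ is obtained by applying Poincaré duality on $\caX_{Z,W}$ (of pure dimension $d+2=p+q$) to the central cohomology \eqref{eq:71} and twisting by $\Q(2)$. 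Under this pairing the cycle class of $Z$ paired with the fundamental class of $\caX_{Z,W}$ returns the normalization used to build the bottom generator of $B_{W,Z}$ via $\phi_Z^{\vee}$, and symmetrically for $W$; concretely one checks that on the top and bottom graded pieces $\Phi$ induces the identity on $\Q(0)$ and on $\Q(2)$. This is the content of the remark following Proposition \ref{prop:10} and is what distinguishes this case from the classical ($n=m=0$) situation in \cite[Proposition 3.3.4]{Hain:Height}.

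The main obstacle is precisely this second step: ensuring that no spurious sign enters when one traces the orientation through the duality. The calculation reduces to verifying the normalization of the Poincaré duality pairing between the classes $[Z]\in H^{2p}_{\underline Z}(\underline{\caX_{Z,W}};p+1)$ and $[W]\in H^{2q}_{\underline W}(\underline{\caX_{Z,W}};q+1)$, which in turn boils down to the fact that the cup-product of their canonical cohomological representatives with the fundamental class of $\caX_{Z,W}$ is $+1$ (under our fixed conventions for cycle classes). Once this is in place, the two compatibilities described above combine with Proposition \ref{prop:dual-height} to give the stated anti-symmetry.
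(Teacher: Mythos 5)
Your proposal follows essentially the same route as the paper: the paper's proof likewise reduces the statement, via Proposition \ref{prop:dual-height}, to checking that the duality $B_{W,Z}\cong B_{Z,W}^{\vee}(2)$ of Proposition \ref{prop:10} preserves orientations (with the Tate-twist invariance of the height left implicit). One caveat: the remark after Proposition \ref{prop:10} cannot be cited for the orientation compatibility, since it defers its own justification to this very proof, so the sign check must actually be carried out as you sketch; the paper does it by pairing the explicit form representatives $\nu_{Z},\mu_{Z},\nu_{W},\mu_{W}$ on $\caX_{Z,W}$, where the decisive point is that for $n=m=1$ these forms have even degree, so the graded-commutativity sign is $(-1)^{4pq}=+1$ (in contrast with the odd-degree case $n=m=0$ of Hain).
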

 \begin{proof}
   By Proposition \ref{prop:dual-height} we only need to show that the
   duality between $B_{Z,W}$ and $B_{W,Z}$ preserves the
   orientation. The mixed Hodge structure $B_{Z,W}$ is a subquotient
   of $H^{2p}(\underline{\caX_{Z,W}}\setminus \overline{Z},
    \overline{W}; p+1)$, Hence its elements can be represented by
    differential forms in
    \begin{displaymath}
      E_1=\Sigma _{\overline{B}_{1}\cup
      \overline{A}_{2}\cup  \overline{D}_{W}\cup \overline W
    }E^{2p}_{\caX_{Z,W}}(\log \overline{A}_{1}\cup
      \overline{B}_{2}\cup\overline{D}_{Z}\cup \overline Z; p+1),
    \end{displaymath}
    while the elements in $B_{W,Z}$ can be represented by forms in
    \begin{displaymath}
      E_2=\Sigma _{\overline{A}_{1}\cup
      \overline{B}_{2}\cup\overline{D}_{Z}\cup \overline
      Z} E^{2q}_{\caX_{Z,W}}(\log \overline{B}_{1}\cup 
      \overline{A}_{2}\cup  \overline{D}_{W}\cup \overline W; q+1).
    \end{displaymath}
    The duality is given by the map
    \begin{displaymath}
      \langle \alpha ,\beta \rangle =\int _{\caX_{Z,W}}\alpha \wedge
      \beta. 
    \end{displaymath}
    The class of $Z$ is represented by a differential form $\nu
    _{Z}\in E_1$ and its dual class can be represented by a
    differential form $\mu _{Z}\in E_{2}$. Similarly we have
    differential forms $\nu _{W}$ and $\mu _{W}$. These forms satisfy
    \begin{displaymath}
      \int _{\caX_{Z,W}} \nu _{Z}\wedge \mu _{Z}= \int _{\caX_{Z,W}}
      \nu _{W}\wedge \mu _{W}.
    \end{displaymath}
    The orientation of $B_{Z,W}$ is given by the classes $(\nu
    _{Z},\mu _{W})$ and the orientation of $B_{W,Z}$ by the classes 
    $(\nu_{W},\mu _{Z})$.
    Since
    \begin{displaymath}
      \langle \nu _{Z},\mu _{Z}\rangle =\int _{\caX_{Z,W}} \nu _{Z}\wedge \mu _{Z}=1
    \end{displaymath} 
    and 
    \begin{equation}\label{eq:93}
      \langle \mu _{W},\nu _{W}\rangle =\int _{\caX_{Z,W}} \mu
      _{W}\wedge \nu _{W}=(-1)^{4pq}=1 
    \end{equation}
    we obtain that the duality preserves orientations and hence the
    result.  Note that in equation \eqref{eq:93} we are using that
    $n=m=1$, that implies that the forms $\mu _{W}$ and $\nu _{W}$ have even
    degree. In the case $n=m=0$ the differential forms have odd
    degree, hence the similar duality would not be compatible with the
    orientations. 
 \end{proof}

\section{Invariants attached to the mixed Hodge structure
  \texorpdfstring{$B_{Z,W}$}{BZW}}\label{sec5}

In this section we suppose that Assumption \ref{def:7} is satisfied and
compute the Deligne splitting $\delta $ of $B_{Z,W}$ (see
\eqref{delta-def}). This map characterizes $B_{Z,W}$ as a real mixed
Hodge structure.

\subsection{A decomposition of the Deligne splitting of
  \texorpdfstring{$B_{Z,W}$}{BZW}}
\label{sec:decomp-texorpdfstr}

Since we will be considering different mixed Hodge structures we will
use the following variant of the notation in Section
\ref{sec:mhs-arc} to keep track of them.

\begin{notation}
For a MHS $H$, we will denote the Deligne bigrading as
$H_{\C}=\bigoplus_{r,s} I_{H}^{r,s}$, and will denote the various
projections to the individual $I_{H}^{r,s}$ by
$\Pi_{I_{H}^{r,s}}$. Similarly the projection to the piece $\bigoplus_{p+q=k}I^{p,q}_{H}$ of pure
weight $k$ will be denoted $\Pi _{H,k}$. Also, the Deligne splitting
of $H$ will be denoted $\delta _{H}$.
\end{notation}

After Corollary \ref{cor:4}, the Deligne bigrading of $B\coloneqq B_{Z,W}$ (See
\eqref{deligne-bigrading}) has the shape
\begin{displaymath}
\begin{gathered}
B_{\C}=I_{B}^{0,0}\oplus\left(\bigoplus_{a+b=-2}
  I_{B}^{a,b}\right)\oplus I_{B}^{-2,-2} 
\end{gathered}
\end{displaymath}
Similarly, the bigradings of $C\coloneqq C_{Z,W}$, $D\coloneqq
D_{Z,W}$, $E_{Z}$ and $E^{\vee}_{W}$ are given by
\begin{displaymath}
\begin{alignedat}{2}
C_{\C}&=I^{0,0}_C\oplus I^{-1,-1}_C,\quad &
D_{\C}&=I^{-1,-1}_D\oplus I^{-2,-2}_D,\\
E_{Z,\C}&=I^{0,0}_{E_{Z}}\oplus \bigoplus_{a+b=-2} I^{a,b}_{E_{Z}}, \quad
& E_{W,\C}^{\vee}(2)&= \bigoplus_{a+b=-2} I^{a,b}_{E_{W}^{\vee}(2)}\oplus
I^{-2,-2}_{E_{W}^{\vee}(2)}.
\end{alignedat}
\end{displaymath}
Since $H^{2p-2}(X, \Q(p))$ and $\Q(1)^{s}$ are pure Hodge structures
of weight $-2$, their Deligne bigradings are given by
\begin{displaymath}
  H^{2p-2}(X,\Q(p))_{\C}=\bigoplus_{a+b=-2} I^{a,b}_{1},\qquad
  \Q(1)_{\C}^{s}=I^{-1,-1}_{2}.
\end{displaymath}
The functoriality of the Deligne bigrading and the diagram of figure \ref{fig:biextensionnm1}, give us canonical
identifications
\begin{equation}
  \label{eq:69}
  \begin{aligned}
    I^{0,0}_{B}&=I^{0,0}_{C}=I^{0,0}_{E_{Z}},\\
    I^{a,b}_{E_{Z}}&=I^{a,b}_{E_{W}^{\vee}(2)}=I^{a,b}_{1},\quad
    \text{ for }a+b=-2,\\
    I^{-1,-1}_{C}&=I^{-1,-1}_{D}=I^{-1,-1}_{2},\\
    I^{-1,-1}_{B}&=I^{-1,-1}_{C}\oplus I^{-1,-1}_{E_{Z}},\\
    I^{a,b}_{B}&=I^{a,b}_{E_{Z}},\quad \text{ for }a+b=-2,\ a\not =
    -1,\\
    I^{-2,-2}_{B}&=I^{-2,-2}_{D}=I^{-2,-2}_{E^{\vee}_{W}(2)}.
  \end{aligned}
\end{equation}
In terms of the graded pieces of the weight filtration we obtain identifications
\begin{equation}
  \label{eq:70}
  \begin{aligned}
    \Gr_{0}^{W}B&=\Gr_{0}^{W} C=\Gr_{0}^{W} E_{Z}=\Q(0),\\
    \Gr_{-2}^{W}B&=\Gr_{-2}^{W}C\oplus
    \Gr_{-2}^{W}E_{Z}\\
    &=\Gr_{-2}^{W}D\oplus \Gr_{-2}^{W}E^{\vee}_{W}(2)=
    H^{2p-2}(X,\Q(p))\oplus\Q(1)_{\C}^{s},\\
    \Gr_{-4}^{W}B&=\Gr_{-4}^{W}D\oplus \Gr_{-4}^{W}E^{\vee}_{W}(2)=\Q(2).
  \end{aligned}
\end{equation}
As in the proof of Lemma \ref{main-ht-formula} there is a
decomposition
\begin{displaymath}
  \delta _{B}= \delta _{1}+\delta _{2}+\delta _{3},
\end{displaymath}
with
\begin{displaymath}
  \delta _{1}\colon \Gr_{0}^{W}B\to \Gr_{-2}^{W}B,\quad 
  \delta _{2}\colon \Gr_{-2}^{W}B\to \Gr_{-4}^{W}B,\quad 
  \delta _{3}\colon \Gr_{0}^{W}B\to \Gr_{-4}^{W}B. 
\end{displaymath}
Using the identifications \eqref{eq:70}, we can write
\begin{displaymath}
  \delta _{1}=\delta _{E_{Z}}+\delta _{C},\qquad
  \delta _{2}=\delta _{E_{W}^{\vee}(2)}+\delta _{D}.
\end{displaymath}
Moreover, $\delta _{3}=\delta _{B}^{-2,-2}$ as in Definition
\ref{signed-height}. Therefore, if $e$ and $e^{\vee}$ are the
generators of $I_{B}^{0,0}$ and $I^{-2,-2}_{B}$ given by the
orientation of $B_{Z,W}$, then the height of $B$ is determined by the
equation 
\begin{displaymath}
  \delta _{3}(e)=\Ht(B)e^{\vee}.
\end{displaymath}
In conclusion, the Deligne splitting $\delta _{B}$ is characterized by
the invariants $\delta _{E_{Z}}$, $\delta _{C_{Z,W}}$, $\delta
_{E_{W}^{\vee}(2)}$, $\delta _{D_{Z,W}}$ and $\Ht(B)$. By duality, the
invariant $\delta_{E_{W}^{\vee}(2)}$ is determined by $\delta
_{E_{W}}$ and $\delta _{D_{Z,W}}$ by $\delta _{C_{W,Z}}$. So we will
concentrate in the computation of the invariants $\delta _{E_{Z}}$,
$\delta _{C}$ and $\Ht(B)$. 
By Lemma \ref{main-ht-formula} and equation \eqref{eq:44'}, we get
\begin{align}
\delta_{E_Z}(e)&=\frac{i}{2}\Pi_{E_Z,-2}\left(\overline{e}-e\right)
=\frac{i}{2}\Pi_{E_Z,-2}\left(\overline{e}\right),\label{eq:110}\\
\delta_{C}(e)&=\frac{i}{2}\Pi_{I^{-1,-1}_C}\left(\overline{e}-e\right)
=\frac{i}{2}\Pi_{I^{-1,-1}_C}\left(\overline{e}\right),\label{eq:80}\\
  \Ht(B)e^{\vee}&=-\frac{1}{2}\im
  \left(\Pi_{I^{-2,-2}_B}\left(\overline{e}\right)\right). \label{eq:81}
\end{align}
In this section we will concentrate in the computation of 
$\delta _{E_{Z}}(e)$, $\delta _{C_{W,Z}}(e)$ and $\Ht(B)$. Moreover, we will
show that, when the regulators of $Z$ and $W$ are zero, the height $\Ht(B)$ is
given by the higher archimedean height pairing.

\subsection{Computation of \texorpdfstring{$\delta_{E_Z}(e)$}{dEZe}.}
\label{sec:comp-delt}
We first compute
$\delta_{E_Z}(e)$ using the mixed Hodge structure arising from
\ref{eq:53}. To this end, we will find an element $e\in I^{0,0}_{E_{Z}}$
that is mapped to the standard generator of $\Q(0)$.
Most of the job has been done in Section
\ref{subsubsecdiff}. 
Let $\eta_{Z}$, $g_{Z}$ and $\theta _{Z}$ be the differential forms
provided by Proposition \ref{prop:4}. In particular,
\begin{displaymath}
\eta_Z\in F^0\Sigma_{B_X}E^{2p-1}_{X\times \P^1}(\log A_X\cap |Z|; p),
\end{displaymath}
with $d\eta_Z=0$. We claim that the class of $\eta_{Z}$,
\begin{displaymath}
  \{\eta_{Z}\}\in H^{2p-1}(X\times \P^{1}\setminus
  A_{X}\cup|Z|,B_{X};p) 
\end{displaymath}
gives us the sought element $e$.

By Proposition \ref{prop:4},  
the pair $(0,\eta_Z)$ is a cycle in the simple complex
associated to the morphism \eqref{eq:22}, representing
the cohomology class of $Z$. By the construction of $E_{Z}$, this
implies that $\{\eta_{Z}\}$ belongs to $E_{Z}$ and that it is
mapped to the standard generator of $\Q(0)$. We still need to show that
this class belongs to $I^{0,0}_{E_{Z}}$. By \eqref{eq:72} and the
shape of $E_{Z}$,
 \begin{displaymath}
  I^{0,0}_{E,Z}=F^{0}\cap \left( \overline {F^{0}} +
    \overline{F^{-1}}\cap W_{-2}\right).
\end{displaymath}
 By the construction of
$\eta_{Z}$, the class $\{\eta_{Z}\}$ belongs to $F^{0}$.
From the equation
\begin{equation}\label{eq:73}
  dg_Z=\frac{1}{2}(\eta_Z-\overline{\eta}_Z)-\theta_Z,
\end{equation}
with $\theta_Z \in
F^{-1}\Sigma_{B_X}E^{2p-1}_{X\times\P^1}(\log
A_X; p)$, and the fact that $H^{2p-1}(X\times\P^1\setminus
A_{X},B_{X};p)=H^{2p-2}(X;p)$ is pure of weight $-2$, 
we conclude that the cohomology class $\{\eta_Z\}$
belongs to $\overline {F^{0}}+\overline{F^{-1}}\cap W_{-2}$.
Hence $e\coloneqq\{\eta_Z\}\in I^{0,0}_{E_Z}$ is the
generator we are looking for.

Using again equation \eqref{eq:73} and the fact that the class
$\{\theta _{Z}\}$ of $\theta _{Z}$ belongs to $W_{-2}$, we
deduce that
\begin{equation}\label{eq:90}
  \delta _{E_{Z}}(e)=\frac{i}{2}\Pi _{E_Z,-2}(\overline e-e) =
  -i\widetilde{\theta _{Z}}=-i \widetilde{\Psi(\theta _{Z})},
\end{equation}
where, in the last equation, we are using the map $\Psi $ from
Definition \ref{def:3} to identify $H^{2p-1}(X\times\P^1\setminus
A_{X},B_{X};p)$ with $H^{2p-2}(X;p)$.  
Recall that, by Proposition \ref{propreg}, the class  
$\widetilde{\Psi(\theta_Z)}$ represents the Goncharov regulator of
$Z$. So, essentially, the invariant $\delta _{E_{Z}}(e)$ is the
regulator of $Z$. Note that the factor $i$ comes from the fact that in
the chosen normalization, the regulator is purely imaginary, while
the map $\delta $ is chosen to be real.

\begin{rmk}\label{Ezn}
Although we have written the above computation for $n=1$ to keep parity with
the rest of section \ref{sec:decomp-texorpdfstr}, since Section
\ref{subsubsecdiff} is valid for general $n\geq 1$, the same is true
for the above computation.
\end{rmk}

We now make the computation in the mixed Hodge structure $B_{Z,W}$ as
the techniques involved will be used latter in the computations of
the other invariants. As before, the key point is 
to find the generator $e$ of $I^{0,0}_{B}$. We see $B_{Z,W}$ as a
subquotient of
\begin{displaymath}
H^{2p}(\underline{\caX_{Z,W}}\setminus \overline Z, \overline W; p+1).
\end{displaymath}
Hence we will work on the smooth projective variety
$\caX_{Z,W}$ introduced in Section \ref{sec:case-n=m=1}. 

\begin{notation}\label{def:9} We choose $(t_{1},t_{2})$ affine
  coordinates of $\square^{2}$. We denote
  \begin{displaymath}
    \frac{dt_{1}}{t_{1}}, \frac{dt_{2}}{t_{2}}
    \in F^{0}\Sigma _{A}E^{1}_{(\P^{1})^{2}}(\log B;1).
  \end{displaymath}
  Recall, as Example \ref{exm:5}, that this implies
  \begin{equation}\label{eq:78}
    \overline{\left(\frac{d t_{1}}{t_{1}}\right)} =
   -\frac{d \bar t_{1}}{\bar t_{1}}
  \end{equation}
  Moreover, 
  when working with differential forms on the smooth projective variety
$\caX_{Z,W}$, that come from other spaces in diagram \eqref{eq:75}, we
will not write down explicitly the pullback map. For instance we will
denote by $\eta_{Z}$ the differential form $(\pi _{Z}\circ \pi
'_{1})^{\ast}\eta_{Z}$. Similarly $dt_{1}/t_{1}$ and $dt_{2}/t_{2}$
will also denote differential forms on $\caX_{Z,W}$. 
\end{notation}

We have the following characterization of $I^{0,0}_{B}$.
\begin{lem}\label{lemm:5}
An element $\xi\in B_{Z,W}$ belongs to $I_{B}^{0,0}$ if and only if
\begin{enumerate}
\item the condition 
$\xi \in F^0B_{Z,W}$ holds;
\item the image of $\xi$ in $E_Z$ belongs to $I^{0,0}_{E_Z}$.
\end{enumerate}
\end{lem}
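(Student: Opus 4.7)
The direction $(\Rightarrow)$ is routine functoriality. If $\xi\in I^{0,0}_{B}$, then $\xi\in F^{0}B_{\C}$ by definition, and since the projection $B_{Z,W}\to E_{Z}$ in the middle column of Figure~\ref{fig:biextensionnm1} is a morphism of mixed Hodge structures, it respects the Deligne bigrading, so the image of $\xi$ lies in $I^{0,0}_{E_{Z}}$.

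For $(\Leftarrow)$, I will use the explicit formula \eqref{eq:72} applied to $B\coloneqq B_{Z,W}$. Since $B$ has weights $0,-2,-4$ and $\Gr^{W}_{-4}B=\Q(2)$ is Hodge--Tate of pure type $(-2,-2)$, we compute
\begin{displaymath}
  I^{0,0}_{B}=F^{0}B_{\C}\cap\bigl(\overline{F^{0}}B_{\C}+\overline{F^{-1}}\cap W_{-2}B_{\C}+W_{-4}B_{\C}\bigr).
\end{displaymath}
Given $\xi$ satisfying the two conditions, write its image $\pi(\xi)\in I^{0,0}_{E_{Z}}$ as $\pi(\xi)=a+b$ with $a\in\overline{F^{0}}E_{Z,\C}$ and $b\in\overline{F^{-1}}\cap W_{-2}E_{Z,\C}$. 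Using that the morphism of mixed Hodge structures $B\twoheadrightarrow E_{Z}$ is strict with respect to $W$ and $F$ (hence also with respect to $\overline{F}$, since the map is defined over $\Q$ and thus commutes with Betti conjugation), we can lift $a$ to some $\tilde a\in\overline{F^{0}}B_{\C}$ and $b$ to some $\tilde b\in\overline{F^{-1}}\cap W_{-2}B_{\C}$.

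Then $\xi-\tilde a-\tilde b$ lies in the kernel of $B\to E_{Z}$, which by the middle column of Figure~\ref{fig:biextensionnm1} equals $D_{Z,W}$. Since $D_{Z,W}$ is Hodge--Tate with nonzero graded pieces only in weights $-2$ and $-4$, we may decompose $\xi-\tilde a-\tilde b=c+d$ with $c\in I^{-1,-1}_{D}\subset\overline{F^{-1}}\cap W_{-2}$ and $d\in I^{-2,-2}_{D}\subset W_{-4}$. Therefore
\begin{displaymath}
  \xi=\tilde a+(\tilde b+c)+d\in\overline{F^{0}}+\overline{F^{-1}}\cap W_{-2}+W_{-4},
\end{displaymath}
and combined with the hypothesis $\xi\in F^{0}$, we conclude $\xi\in I^{0,0}_{B}$.

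The only non-cosmetic step is the simultaneous lifting of $a$ and $b$ respecting the conjugate Hodge filtration and the weight filtration; this is where strictness is essential, and the argument relies crucially on the Hodge--Tate nature of $\Gr^{W}_{-4}B=\Q(2)$ so that the weight $-4$ part contributes only to $W_{-4}\subseteq\overline{F^{-2}}\cap W_{-4}$ automatically.
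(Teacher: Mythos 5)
Your proof is correct and follows essentially the same route as the paper's: both arguments reduce the backward implication to the kernel $D_{Z,W}$ of the projection $B_{Z,W}\to E_Z$ and exploit that $D_{Z,W}$, being an extension of $\Q(1)^{\oplus s}$ by $\Q(2)$, is contained in $\overline{F^{-1}}\cap W_{-2}+W_{-4}$ (the paper phrases the lifting by choosing a representative in $I^{0,0}_{B}$ of the image of $\xi$ in $I^{0,0}_{E_Z}$ and then conjugating, rather than by bistrictness applied to the two components $a$ and $b$, but the underlying mechanism is identical). One minor imprecision: $I^{-1,-1}_{D}$ is contained in $\overline{F^{-1}}\cap W_{-2}+W_{-4}$ but not in $\overline{F^{-1}}\cap W_{-2}$ itself, since $\overline{I^{-1,-1}_{D}}\equiv I^{-1,-1}_{D}$ only modulo $I^{-2,-2}_{D}$; this is harmless here because the stray piece lands in $W_{-4}$, which already appears as a summand in your target expression for $I^{0,0}_{B}$.
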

\begin{proof}
The implication `only if' is clear from the fact that $\xi\in
I^{0,0}_{B}$ implies that $\xi\in F^{0}B_{Z,W}$, and that
$\rho\colon B_{Z,W}\rightarrow E_Z$ is a morphism of mixed Hodge
structures. To show the if part, we note first that $\ker(\rho)=
D_{Z,W}$. Since $D_{Z,W}$ is an extension of $\Q(1)^{s}$ by $\Q(2)$, 
we get
\begin{multline*}
  D_{Z,W}\subset F^{-2}\cap W_{-4}B_{Z,W}+F^{-1}\cap W_{-2} B_{Z,W}\\
  \subset F^{-2}\cap W_{-3}B_{Z,W}+F^{-1}\cap W_{-2} B_{Z,W}.
\end{multline*}
By assumption, $\xi\in F^0B_{Z,W}$, and we need to check that
\begin{displaymath}
  \overline{\xi}\in F^0B_{Z,W}+F^{-1}\cap W_{-2} B_{Z,W}+F^{-2}\cap W_{-3}B_{Z,W}.
\end{displaymath}
Now since also by assumption, $\rho(\xi)\in I^{0,0}_{E_Z}$,  we obtain
a $\xi'\in I_{B}^{0,0}$, such that 
$\rho(\xi')=\rho(\xi)$. Hence $\rho(\xi-\xi')=0$. Since $\rho$ is a
real map, we get $\rho
(\overline{\xi}-\overline{\xi'})=0$. Thus
$\overline{\xi}-\overline{\xi'}\in \ker(\rho)=D_{Z,W}$ and 
\begin{displaymath}
\overline{\xi}\in \overline{\xi'}+D_{Z,W}\subset F^0B_{Z,W}+F^{-1}\cap
W_{-2}B_{Z,W}+F^{-2}\cap W_{-3}B_{Z,W}, 
\end{displaymath}
as required. Hence $\xi\in I^{0,0}_{B}$, and the lemma follows.
\end{proof}
Now we have the following
\begin{prop}\label{propinv1}
Let $\eta_Z$ be as above, and write, using Notation \ref{def:9}, 
\begin{displaymath}
  \nu_Z\coloneqq -\eta_Z\wedge \frac{dt_2}{t_2}\in
  E^{2p}_{\caX_{Z,W}}(\log
    \overline {A_{2}}\cup \overline {B_{1}}\cup 
    \overline{D_{Z}}\cup \overline{Z};p+1).
\end{displaymath}
Then the cohomology class 
$\{\nu_Z\}$ is the generator $e$ of $I^{0,0}_B$ that is sent
to the canonical generator of $\Q(0)$. 
\end{prop}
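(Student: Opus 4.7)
The plan is to apply Lemma~\ref{lemm:5}: to prove $\{\nu_Z\}\in I^{0,0}_B$ it suffices to check that $\{\nu_Z\}\in F^{0}B_{Z,W}$ and that the image of $\{\nu_Z\}$ in $E_Z$ lies in $I^{0,0}_{E_Z}$. On top of this I must verify that $\{\nu_Z\}$ projects to the canonical generator of $\Q(0)$ in the top-right corner of Figure~\ref{fig:biextensionnm1}. So the three tasks are: produce a well-defined class; place it in $F^0$; compute its images in $E_Z$ and in $\Q(0)$.

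First I would verify that $\nu_Z$ is a closed form representing a class in the ambient cohomology group $H^{2p}(\underline{\caX_{Z,W}}\setminus\overline Z,\overline W;p+1)$ of which $B_{Z,W}$ is a sub-quotient. Closedness is immediate from $d\eta_Z=0$ and $d(dt_2/t_2)=0$. For the containment in the correct complex of forms with prescribed log poles and vanishing conditions, I would pull $\eta_Z$ back through $\pi_Z\circ\pi'_1$ and $dt_2/t_2$ through the second projection, using that $\eta_Z\in \Sigma_{B_X}E^{2p-1}_{X\times\P^{1}}(\log A_X\cup|Z|;p)$ and that $dt_2/t_2$ contributes the remaining logarithmic singularities on the second $\P^1$-factor (and, after blow-up, on the corresponding exceptional divisor). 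The independence of representative in the refined complex then permits us to view $\{\nu_Z\}$ in the correct sub-quotient after twisting by $\Q(p+1)$ and applying $\phi_Z^{\ast}$ and $(\phi_W^{\vee})_\ast$.

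Second, I would check the Hodge-filtration condition. Since $\eta_Z\in F^{0}\Sigma_{B_X}E^{2p-1}(\log A_X\cup|Z|;p)$, in the untwisted complex $\eta_Z$ is a sum of components of Hodge bidegree $(r,s)$ with $r\ge p$, and $dt_2/t_2$ is of pure type $(1,0)$. Hence $\nu_Z$ is a sum of components of type $(r+1,s)$ with $r+1\ge p+1$, i.e.\ $\nu_Z\in F^{p+1}$ untwisted, equivalently $F^{0}$ in the $\Q(p+1)$-twisted complex. Passing to cohomology gives $\{\nu_Z\}\in F^{0}B_{Z,W}$.

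Third, and most delicately, I would compute the images of $\{\nu_Z\}$ in $E_Z$ and in $\Q(0)$. The map $B_{Z,W}\to E_Z$ in Figure~\ref{fig:biextensionnm1} arises from forgetting the relative condition along $\overline{W}$, followed by the K\"unneth identification of the $\G^1_X$-factor with a cohomological shift, which is exactly the map $\Psi$ of Definition~\ref{def:3} applied to the second $\P^1$-coordinate. Under this identification the form $\eta_Z\wedge dt_2/t_2$ is sent to $-\eta_Z$, so the sign chosen in $\nu_Z=-\eta_Z\wedge dt_2/t_2$ (together with the $(-1)^n$ factor in Definition~\ref{def:3} with $n=1$) produces the class $\{\eta_Z\}$, which was shown in Section~\ref{sec:comp-delt} to be the generator of $I^{0,0}_{E_Z}$ mapping to the canonical generator of $\Q(0)$. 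Lemma~\ref{lemm:5} together with commutativity of the top-right square of Figure~\ref{fig:biextensionnm1} now yields both $\{\nu_Z\}\in I^{0,0}_B$ and that $\{\nu_Z\}$ maps to the canonical generator of $\Q(0)$.

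The main obstacle I expect is not conceptual but bookkeeping: tracking the pullbacks through $\pi_Z,\pi_W,\pi'_1,\pi'_2$ in diagram~\eqref{eq:75}, the various Tate twists, and the normalization $(-1)^n/(2\pi i)^n$ of Definition~\ref{def:3}, so that the image in $E_Z$ is precisely $\{\eta_Z\}$ and not some $\pm(2\pi i)^{k}$-multiple of it. This is exactly why the sign $-1$ appears in the definition of $\nu_Z$.
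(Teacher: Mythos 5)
Your overall strategy is the right one and matches the paper's: show that $\nu_Z$ is a closed form in the correct sub-complex, that it lies in $F^{0}$, that its image in $E_Z$ equals $\{\eta_Z\}\in I^{0,0}_{E_Z}$, and then conclude via Lemma~\ref{lemm:5}. The $F^{0}$ computation and the identification of the image in $E_Z$ via the isomorphism of type~\eqref{eq:3}/Definition~\ref{def:3} are carried out correctly (including the sign bookkeeping).

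However, there is a genuine gap in the step where you place $\nu_Z$ in the ``correct complex of forms with prescribed log poles and vanishing conditions.'' You only account for the vanishing and log-pole conditions that come formally from the factors $\eta_Z$ (vanishing on the $B$-divisor coming from the first $\P^1$-factor, log poles along $A$ and $|Z|$) and $dt_2/t_2$ (vanishing on the $A$-divisor in the second factor, log poles along $B$). This does \emph{not} give the vanishing of $\nu_Z$ along $\overline W$, which is required for $\nu_Z$ to define a class in the relative cohomology group $H^{2p}(\underline{\caX_{Z,W}}\setminus\overline Z,\overline W;p+1)$ of which $B_{Z,W}$ is a subquotient. Neither $\eta_Z$ nor $dt_2/t_2$ ``knows'' anything about $W$, so this vanishing is not automatic from the product structure, and without it $\{\nu_Z\}$ does not even land in the ambient mixed Hodge structure. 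Your appeal to ``the independence of representative in the refined complex'' and to the operations $\phi_Z^{\ast}$ and $(\phi_W^{\vee})_{\ast}$ does not resolve this: those maps act on classes already living in the relative cohomology.

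The fix is actually close at hand and you already have the key ingredient. You observe (correctly) that $\nu_Z\in F^{p+1}$ as an untwisted form, i.e.\ $\nu_Z$ has at least $p+1$ holomorphic differentials. But $\dim_{\C}\overline{W}=d+2-q=p$, so restricting a form with at least $p+1$ holomorphic differentials to a $p$-dimensional variety kills it: $\nu_Z|_{\overline W}=0$. This is precisely the argument the paper uses; the $F^{0}$ fact you derived plays double duty (it both gives $\{\nu_Z\}\in F^{0}B_{Z,W}$ and forces the vanishing on $\overline W$). You should make this connection explicit rather than leaving the vanishing on $\overline W$ as an unexamined side condition.
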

\begin{proof}
  We first have to show that $\nu _{Z}$ belongs to
  \begin{displaymath}
    F^{0}\Sigma _{\overline {A_{1}}\cup \overline {B_{2}}\cup
      \overline{D_{W}}\cup \overline{W}}E^{2p}_{\caX_{Z,W}}(\log
    \overline {A_{2}}\cup \overline {B_{1}}\cup 
    \overline{D_{Z}}\cup \overline{Z};p+1).
  \end{displaymath}
  For this, the only point that has to be checked is that $\nu
  _{Z}|_{\overline {W}}$
  vanishes. As differential form $\nu _{Z}$ belongs to $F^{p+1}$, but
  \begin{displaymath}
    \dim (\overline W) = d+2-q = p.
  \end{displaymath}
  Therefore, $\nu _{Z}|_{\overline {W}}=0$. Since $\eta_{Z}$ is
  closed, the same is true for $\nu _{Z}$. By the explicit
  description of the isomorphism $\eqref{eq:3}$, we see that the class
  $\{\nu _{Z}\}$ is sent to $\{\eta_{Z}\}$. In
  particular to the canonical generator of $\Q(0)$.
  It remains to be
  shown that it belongs to $I^{0,0}_{B}$. The map $B_{Z,W} \to E_{Z}$
  sends that class $\{\nu _{Z}\}$ to the class $\{\eta_{Z}\}$ that
  belongs to $I^{0,0}_{E_{Z}}$. By Lemma \ref{lemm:5},
 $\{\nu _{Z}\}$  belongs to $I^{0,0}_{B}$ completing the proof.
\end{proof}
To compute $\delta _{E_{Z}}(e)$ using $\nu_{Z}$, it is easier to first
project to the
cohomology group 
\begin{displaymath}
  H^{2p}(\underline{\caX_{Z,W}}\setminus
  \overline{Z};p+1), 
\end{displaymath}
that is, we remove the condition of vanishing along $W$. In the complex
  \begin{displaymath}
    \Sigma _{\overline {A_{1}}\cup \overline {B_{2}}}E^{\ast}_{\caX_{Z,W}}(\log
    \overline {A_{2}}\cup \overline {B_{1}}\cup 
    \overline{D_{Z}}\cup \overline{Z};p+1),
  \end{displaymath}
    equations \eqref{eq:73} and \eqref{eq:78}, and the fact that
    $\eta_{Z}$ has odd degree, imply that 
  \begin{equation}\label{eq:76}
    \frac{1}{2}(\nu_Z-\overline{\nu}_Z)-\left(-\theta_Z\wedge
      \frac{dt_2}{t_2}\right) 
    =d\left(-g_Z\wedge \frac{dt_2}{t_2}
      +\frac{1}{2}(\log(t_2\overline{t}_2))\overline{\eta}_Z\right).
  \end{equation}
  From this equation, we conclude again that the invariant $\delta _{E_{Z}}(e)$
  is given by equation \eqref{eq:90}.

 \subsection{Computation of \texorpdfstring{$\delta_{C}(e)$}{dcs}}
 \label{sec-deltaint} 
 Since the form $\nu _{Z}$ represents the generator $e\in I_{B}^{0,0}$
 its image in $C_{Z,W}$ represents the generator $e\in I^{0,0}_{C}$.
 To compute this image, we project to the cohomology group $H^{2p+1}_{\underline
   Z}(\underline{\caX_{Z,W}},\overline{W})$. The class of $\nu _{Z}$
 is sent to the class of $(0,\nu _{Z})$. We know that the class of
 \begin{equation}\label{eq:82}
   \lambda _{Z}\coloneqq \frac{i}{2}(0,\overline \nu _{Z}-\nu _{Z})
 \end{equation}
 is sent to zero in the cohomology group $H^{2p+1}_{\underline
   Z}(\underline{\caX_{Z,W}})$. Therefore, according to equation
 \eqref{eq:80}, in order to compute $\delta _{C}(e)$, we need to find
 a preimage of  the class of $\lambda _{Z}$ in the group
 $H^{2p}_{\underline{S}}(\underline{W})$. Using Proposition
 \ref{prop0}, the fact that $\overline W$ is smooth and the standard
 description of the connection morphism associated to a short exact
 sequence, the method to find this preimage is the
 following. First we find a primitive of $\lambda _{Z}$ in the complex that
 computes the cohomology $H^{\ast}_{\underline
   Z}(\underline{\caX_{Z,W}})$, then we restrict this primitive to
 the relative scheme $\underline W$ and the class of this restriction
 will agree with
 $\delta _{C}(e)$.
 By equation \eqref{eq:76}, we have
 \begin{displaymath}
   \lambda _{Z}= d\left(i\theta _{Z}\wedge
   \frac{dt_{2}}{t_{2}},-ig_{Z}\wedge\frac{dt_{2}}{t_{2}}
   +\frac{i}{2}\log(t_{2}\overline t_{2})\eta_{Z}\right)
 \end{displaymath}
 Therefore, by the previous discussion, the class $\delta _{C}(e)$ is represented by
 \begin{equation}\label{eq:83}
   \left.\left(i\theta _{Z}\wedge
   \frac{dt_{2}}{t_{2}},-ig_{Z}\wedge\frac{dt_{2}}{t_{2}}
   +\frac{i}{2}\log(t_{2}\overline
   t_{2})\eta_{Z}\right)\right |_{\underline{W}}. 
 \end{equation}
 In order to compute explicitly the cohomology class represented by
 this form, we use that $S$ is disjoint with $\overline A_{1}\cup
 \overline B_{1}\cup \overline A_{2}\cup \overline D_{W}$, therefore
 \begin{equation}\label{eq:79}
   H^{\ast}_{\underline S}(\underline W)=H^{\ast}_{S}(\overline W).
 \end{equation}
 We write $S=\{(x_{j},t_{1,j},t_{2,j})\}_{j=1,\dots,s}$ and
 denote by $e_{j}$ the Betti generator of the term $\Q(1)_{\Q}$ corresponding to
 the point $(x_{j},t_{1,j},t_{2,j})$, for $j=1,\dots,s$. 
 We also
 denote by $\mu _{Z,j}$ the multiplicity of the cycle $Z$ in the
 component of $Z$ containing $(x_{j},t_{1,j})$. Using equation
 \eqref{eq:18}, we have the residue  
 computation 
 \begin{multline*}
   d\left[ \left. \left(-ig_{Z}\wedge\frac{dt_{2}}{t_{2}}
   +\frac{i}{2}\log(t_{2}\overline
   t_{2})\eta_{Z}\right)\right|_{\underline W} \right]=\\
\left[\left. i\theta _{Z}\wedge
   \frac{dt_{2}}{t_{2}}\right|_{\underline W}\right] -
 \frac{i}{2}\sum_{j=1}^{s} \log(t_{2,j}\overline t_{2,j})\mu
 _{Z,j}\delta _{(x_{j},t_{1,j},t_{2,j})}.
 \end{multline*}
 Since $\overline W$ is smooth, we can compute the cohomology
 \eqref{eq:79} using currents. 
 From the residue computation it follows that 
\begin{equation}\label{eq:91}
  \delta_{C}(e)=
\frac{i}{2}\sum_{j=1}^{s} \log(t_{2,j}\overline t_{2,j})\mu
 _{Z,j}\delta _{(x_{j},t_{1,j},t_{2,j})}=
  \frac{1}{4\pi}\sum^r_{j=1}\mu _{Z,j}\log(t_{2,j}\overline{t}_{2,j})e_{j}.
\end{equation}
In the second equality we are using the implicit de Rham generator 
carried by $\log(t_{2,j}\overline t_{2,j})$:
\begin{displaymath}
  \log(t_{2,j}\overline t_{2,j})=\log(t_{2,j}\overline t_{2,j})\otimes
  \bfone(1)_{\C} =\frac{1}{2\pi i} \log(t_{2,j}\overline t_{2,j})\otimes
  \bfone(1)_{\Q}. 
\end{displaymath}
As expected, the invariant $\delta_{C}(e)$ is real. 

\subsection{Computation of \texorpdfstring{$\text{ht}(B)$}{htBZW}}
\label{subsecgamma}
Since we will need to consider also the dual construction, we denote
by $e_{Z,W}$ the generator of $I^{0,0}_{B_{Z,W}}$ previously denoted
by $e$ and by $e_{Z,W}^{\vee}$ the generator of
$I^{-2,-2}_{B_{Z,W}}$. By Proposition \ref{propinv1}, we know that
$e_{Z,W}$ is represented by $\nu _{Z}$. By equation \eqref{eq:81} we
have that
\begin{equation}\label{eq:84}
    \Ht(B)e_{Z,W}^{\vee}=-\frac{1}{2}\im
  \left(\Pi_{I^{-2,-2}_B}\left(\overline{e}_{Z,W}\right)\right).
\end{equation}

We consider the dual mixed Hodge structure $B_{W,Z}(-2)=B_{Z,W}^{\vee}$
with decomposition, 
\begin{displaymath}
  B_{W,Z}(-2)_{\C} =J^{2,2}\bigoplus
  \left(\bigoplus_{l+s=2}J^{l,s}\right)\bigoplus J^{0,0}.
\end{displaymath}
Let $e_{W,Z}(-2)$ be the generator of $J^{2,2}$ that is mapped to the
generator $\bfone(-2)_{\Q}$ of $\Q(-2)_{\Q}$. It is constructed as in Section
\ref{sec:comp-delt} with $Z$ and $W$ swapped. It satisfies
the conditions 
\begin{align}
  \langle e_{W,Z}(-2), e_{Z,W}^{\vee}\rangle &=1,\label{eq:85}\\
  e_{W,Z}(-2) & \in \left( \bigoplus_{a+b = -2}  I^{a,b}_{B} \oplus
                I^{0,0}_{B}\right)^{\perp}.\label{eq:86}
\end{align}
Equations \eqref{eq:84}, \eqref{eq:85} and \eqref{eq:86} imply that
\begin{displaymath}
  \Ht(B)=-\frac{1}{2}\im \langle e_{W,Z}(-2), \overline
  e_{Z,W}\rangle. 
\end{displaymath}
The class $e_{Z,W}$ is represented by the form
\begin{displaymath}
\nu _{Z} \in F^{0}\Sigma _{\overline {A_{1}}\cup \overline {B_{2}}\cup
      \overline{D_{W}}\cup \overline{W}}E^{2p}_{\caX_{Z,W}}(\log
    \overline {A_{2}}\cup \overline {B_{1}}\cup 
    \overline{D_{Z}}\cup \overline{Z};p+1).
\end{displaymath}
while the class $e_{W,Z}$ is represented by
\begin{displaymath}
  \nu _{W}=-\eta_{W}\wedge\frac{dt_{1}}{t_{1}}\in
  F^{0}\Sigma _{\overline {A_{2}}\cup \overline {B_{1}}\cup
      \overline{D_{Z}}\cup \overline{Z}}E^{2q}_{\caX_{Z,W}}(\log
    \overline {A_{1}}\cup \overline {B_{2}}\cup 
    \overline{D_{W}}\cup \overline{W};q+1).
  \end{displaymath}
  Note that the subset where $\nu _{Z}$ may have logarithmic
  singularities agrees with the subset where $\nu _{W}$ vanishes and
  reciprocally. Therefore the differential form $\nu _{Z}\wedge \nu
  _{W}$ is locally integrable in $\caX_{Z,W}$, and the duality
  pairing is given by
  \begin{displaymath}
    \langle e_{W,Z}(-2), \overline e_{Z,W}\rangle =
    \frac{1}{(2\pi i)^{2}} (p_{\caX_{Z,W}}) _{\ast}[\nu_{W}\wedge
    \overline \nu_{Z}]=
    \frac{1}{(2\pi
      i)^{d+4}}\int_{\caX_{Z,W}}\nu_{W}\wedge \overline \nu_{Z},
  \end{displaymath}
  where $p_{\caX_{Z,W}}\colon \caX_{Z,W}\to \Spec(\C)$ is the
  structural map.
  In consequence, the height of
  $B_{Z,W}$ is given by
  \begin{multline}
    \label{eq:87}
    \Ht(B)=-\frac{1}{2}\im\frac{1}{(2\pi
      i)^{p+q+2}} \int_{\caX_{W,Z}}\nu_{W}\wedge \overline \nu_{Z}
    \\=\frac{1}{2}\im\frac{1}{(2\pi
      i)^{p+q+2}} \int_{\caX_{W,Z}}\eta_{W}\wedge
    \frac{dt_{1}}{t_{1}}\wedge \overline \eta_{Z}\wedge
    \frac{d\overline t_{2}}{\overline t_{2}}.
  \end{multline}
  Recall for the last equality that
  \begin{displaymath}
    \overline \nu _{Z} =\overline{-\eta _{Z}\wedge
      \frac{dt_{2}}{t_{2}}}=\eta_{Z}\wedge \frac{d\bar t_{2}}{\bar t_{2}} 
  \end{displaymath}

Using the fact that $g_Z|_{t_1=0}=g_Z|_{t_1=\infty}=0$ and that
$\eta_W|_{t_2=0}=\eta_W|_{t_2=\infty}=0$, the residue theorem, and the
relations 
\begin{displaymath}
d[g_Z]=\left[\frac{1}{2}(\eta_Z-\overline{\eta}_Z)-\theta_Z\right],
\quad d[\eta_{W}]= -\delta _{W},
\end{displaymath}
we have
\begin{multline*}
d\left[\eta_W\wedge \frac{dt_1}{t_1}\wedge g_Z\wedge
  \frac{d\overline{t}_2}{\overline{t}_2}\right]
= -\delta _W\wedge \frac{dt_1}{t_1}\wedge g_Z\wedge
  \frac{d\overline{t}_2}{\overline{t}_2}
+\frac{1}{2}\left[\eta_W\wedge \frac{dt_1}{t_1}\wedge \eta_Z\wedge
  \frac{d\overline{t}_2}{\overline{t}_2}\right]\\
-\frac{1}{2}\left[\eta_W\wedge \frac{dt_1}{t_1}\wedge \overline
  \eta_Z\wedge 
  \frac{d\overline{t}_2}{\overline{t}_2}\right]
-\left[\eta_W\wedge \frac{dt_1}{t_1}\wedge \theta _Z\wedge
  \frac{d\overline{t}_2}{\overline{t}_2}\right]. 
\end{multline*}
For type reasons the second term on the right hand side is zero
(as differential form, $\eta_{Z}$ is in $F^p$, $\eta_W$ is in $F^q$,
so the term is in $F^{p+q+1}$, but $p+q+1=d+3 > d+2$). Hence, by Stokes'
theorem, 
  \begin{multline}\label{formula-ht-B}
    \Ht(B)=\im\frac{-1}{(2\pi
      i)^{p+2}} \int_{\overline{W}}\frac{dt_1}{t_1}\wedge g_Z\wedge
    \frac{d\overline{t}_2}{\overline{t}_2}\\
    +\im \frac{-1}{(2\pi i)^{p+q+2}} \int_{\caX_{W,Z}}
    \eta_W\wedge \frac{dt_1}{t_1}\wedge \theta _Z\wedge
  \frac{d\overline{t}_2}{\overline{t}_2}.
  \end{multline}
The first term on the right hand side of the above equation resembles
the higher height pairing, and in fact, it agrees with the higher
height pairing, in case the real
regulators of the cycles are zero.

\begin{rmk}\label{rem:3}
  Although, to define the extension $B_{Z,W}$, we needed to go to the
  blow up $\caX_{Z,W}$ in order to be in local product situation and
  use duality, In the actual computation of $\Ht(B)$ we can
  remain in $X\times \P^{1}\times \P^{1}$. 
\end{rmk}

\subsection{Connection to the higher height pairing when the regulators are zero}\label{ss5ht}
In this subsection we want to compare $\Ht(B)$ to the higher
archimedean height pairing $\langle Z,W\rangle_{\Arch}$, when the real
regulator classes of $Z$ and 
$W$ are both zero and Assumption \ref{def:7} is satisfied. This can be
seen as a generalization of Hain's result \cite{Hain:Height} relating
the archimedean height pairing  for the usual cycles
homologous to zero with biextensions of mixed Hodge
structures.

Before doing the comparison, we need to put both invariants in the
same place. Recall that
\begin{displaymath}
  \langle Z,W\rangle_{\Arch}\in H^{1}_{\DB}(\Spec(\C);\R(2))=\Q(2)_{\C}/\Q(2)_{\R},
\end{displaymath}
while
\begin{displaymath}
  \Ht(B)\in \R.
\end{displaymath}
We denote by $\rho _{2}\colon \Q(2)_{\C}/\Q(2)_{\R}\to \R$ the
isomorphism given by
\begin{equation}
  \label{eq:89}
  \rho _{2}(v )= \im \left(\frac{v}{(2\pi i)^{2}}\right).
\end{equation}

\begin{thm}\label{compareheight}
  If the real regulators of $Z$ and $W$ are zero, then
  \begin{displaymath}
   \rho _{2}(\langle Z,W\rangle_{\Arch})=\Ht(B). 
  \end{displaymath}
\end{thm}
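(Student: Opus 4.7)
The plan is to unpack both sides of the equation into concrete integrals on $X \times \P^1 \times \P^1$ and match them up.

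First, I would use the vanishing of the regulators: by Corollary \ref{thetazero} applied to each of $Z$ and $W$, one may choose Green forms for which $\theta_Z = 0$ and $\theta_W = 0$, satisfying moreover $\eta_Z = 2\partial g_Z$ and $\eta_W = 2\partial g_W$ by Remark \ref{rmkreg0}. Inserting $\theta_Z = 0$ into formula \eqref{formula-ht-B} annihilates the second summand and leaves
\begin{equation*}
\Ht(B_{Z,W}) = \im \frac{-1}{(2\pi i)^{p+2}} \int_{\overline{W}} \frac{dt_1}{t_1} \wedge g_Z \wedge \frac{d\overline{t}_2}{\overline{t}_2}.
\end{equation*}
By Remark \ref{rem:3} this can be rewritten as the integral of $\delta_W \wedge \frac{dt_1}{t_1} \wedge g_Z \wedge \frac{d\overline{t}_2}{\overline{t}_2}$ over $X \times \P^1 \times \P^1$.

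Next I would write $\langle Z, W\rangle_{\Arch}$ via Corollary \ref{cor:refined-height} with $n = m = 1$,
\begin{equation*}
\langle Z, W\rangle_{\Arch} = -\, p_\ast\bigl(\delta_{Z,\TW} \cdot W_1 \cdot g_{W,\TW} \cdot W_1\bigr)^\sim,
\end{equation*}
where the two Wang forms \eqref{eq:48} are pulled back from the two different $\P^1$ factors. Because $\theta_W = 0$, the Thom--Whitney form \eqref{eq:26} reduces to $g_{W,\TW} = \epsilon \otimes \eta_W + (1-\epsilon)\otimes \tfrac{1}{2}(\eta_W + \overline{\eta}_W) + d\epsilon \otimes g_W$. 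To land in $\fD^1(\Spec \C,2)$ one then applies the projection formula \eqref{eq:34}: only the coefficient of $d\epsilon$ in the product contributes, and it is integrated against $\epsilon$ over $[0,1]$. Expanding the product, the factor $d\epsilon$ can come from exactly one of the three pieces (the first Wang form, $g_{W,\TW}$, or the second Wang form), which breaks the result into three groups of terms.

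The core of the proof is then a Stokes argument that collapses these three groups onto the single integral appearing in the expression for $\Ht(B_{Z,W})$ above. The polynomial coefficients $(\epsilon\pm 1)^a$ integrate to elementary rationals exactly as in the proof of Proposition \ref{propreg}. The piece in which $d\epsilon$ comes from $g_{W,\TW}$ produces an integrand $\delta_Z \cdot g_W$ wedged with an explicit polynomial in the logarithmic $1$-forms $\frac{dt_i}{t_i}, \frac{d\overline{t}_i}{\overline{t}_i}$; on this piece one applies Stokes' theorem together with $\eta_W = 2\partial g_W$ and $d\eta_W = -\delta_W$ (see Proposition \ref{prop:4}) to convert $\delta_Z \cdot g_W$ into $\delta_W \cdot g_Z$, reproducing the shape of the $\Ht(B_{Z,W})$ integrand. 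The two pieces in which $d\epsilon$ comes from a Wang form pick up a $\log(t_i\overline{t}_i)$ factor whose total contribution, after integration, is manifestly real and hence belongs to $\Q(2)_{\R}$, dying under $\rho_2(v) = \im(v/(2\pi i)^2)$.

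The main obstacle will be the bookkeeping in the previous paragraph: tracking the signs coming from interchanging differential forms of various degrees, the Tate twists (the combined factor $(2\pi i)^{-(p+q+2)}$ after $p_\ast$ together with the $(2\pi i)^{-2}$ inside $\rho_2$ must match the $(2\pi i)^{-(p+2)}$ in $\Ht(B_{Z,W})$, using $p + q = d + 2$), and the elementary polynomial integrals in $\epsilon$ simultaneously. The fact that the higher archimedean height pairing is symmetric and independent of the choice of Green forms provides a useful sanity check on the final numerical constants.
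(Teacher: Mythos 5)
Your overall strategy is close to the paper's: apply Corollary \ref{thetazero} to force $\theta_Z=\theta_W=0$, use the formula \eqref{formula-ht-B} for $\Ht(B)$, express $\langle Z,W\rangle_{\Arch}$ via Corollary \ref{cor:refined-height}, expand the Thom--Whitney product, and sort the summands according to which of the three factors contributes the $d\epsilon$. These are the right ingredients and the right decomposition.

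However, the proposal contains a genuine error at the key step. You claim that the two pieces in which $d\epsilon$ comes from a Wang form ``pick up a $\log(t_i\overline{t}_i)$ factor whose total contribution, after integration, is manifestly real and hence belongs to $\Q(2)_\R$, dying under $\rho_2$.'' This is false. The paper does \emph{not} show that these pieces vanish; instead it introduces explicit potentials
\begin{displaymath}
\Lambda_1 = \delta_W\wedge d(\log(t_2\overline t_2))\wedge g_Z\log(t_1\overline t_1),\qquad
\Lambda_2 = \delta_W\wedge\log(t_2\overline t_2)g_Z\wedge d(\log(t_1\overline t_1)),
\end{displaymath}
satisfying $d\Lambda_1 = \delta_W\wedge(\Omega_1-\Omega_2)$ and $d\Lambda_2 = \delta_W\wedge(\Omega_3-\Omega_1)$. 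Consequently all three pieces are \emph{cohomologous to each other} in the Deligne complex, not two of them zero. This is essential: the resulting factor of $3$ combines with the elementary integral $\int_0^1 f(\epsilon)\,d\epsilon = \int_0^1 \frac{1}{4}(\epsilon^2-1)\,d\epsilon = -\frac{1}{6}$ to give the final coefficient $-\frac{1}{2}$. If the Wang pieces really vanished, the computation would be off by a factor of $3$.

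A secondary, minor deviation: you expand $\delta_{Z,\TW}\cdot g_{W,\TW}$ and then try to convert $\delta_Z\wedge g_W$ into $\delta_W\wedge g_Z$ by Stokes. The paper avoids this by invoking the graded-commutativity of the $\ast$-product when $n=m=1$, so $\langle Z,W\rangle_{\Arch}=-\langle W,Z\rangle_{\Arch}$, and then works directly with $\delta_{W,\TW}\cdot g_{Z,\TW}$; this puts the integrand into the form that matches \eqref{formula-ht-B} without a second integration by parts. Your version can be made to work but needs careful tracking of boundary terms that the paper's symmetry shortcut makes unnecessary.
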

\begin{proof}
Since the real regulators of $Z$ and $W$ are zero, by Corollary
\ref{thetazero}, we can choose $g_{Z}$ and $\eta _{Z}$ with $\theta
_{Z}=0$ and the same for $W$. With this choice, after changing
the order of the terms,  equation \eqref{eq:87}
can be written as 
\begin{displaymath}
  \Ht(B)=\im\left(\frac{1}{(2\pi i)^{2}}(p)_{\ast}\left(\delta_W\wedge
    \frac{d\overline{t}_2}{\overline{t}_2}\wedge g_Z\wedge
    \frac{dt_1}{t_1}\right)\right).
\end{displaymath}
Since $n=m=1$ and the $\ast$-product is graded commutative, we have
that
\begin{displaymath}
  \langle Z,W\rangle_{\Arch}=-\langle W,Z\rangle_{\Arch}.
\end{displaymath}
By Corollary \ref{cor:refined-height}, for $n=m=1$ we have 
\begin{displaymath}
\langle Z,W\rangle_{\Arch}=-\langle W,Z\rangle_{\Arch}=
(p)_{\ast}\left(\delta_{W,\TW}\cdot
  W_1(t_{2})\cdot g_{Z,\TW}\cdot W_1(t_{1})\right)^\sim,
\end{displaymath}
as an element in $H^1_{\fD}(\Spec(\C),\R(2))$. Here
\begin{displaymath}
\begin{gathered}
W_1(t_{2})\coloneqq-\frac{1}{2}\left((\epsilon+1)\otimes \frac{dt_2}{t_2}+(\epsilon-1)\otimes \frac{d\overline{t}_2}{\overline{t}_2}+d\epsilon\otimes \log(t_2\overline{t}_2)\right),\\
W_1(t_{1})\coloneqq-\frac{1}{2}\left((\epsilon+1)\otimes \frac{dt_1}{t_1}+(\epsilon-1)\otimes \frac{d\overline{t}_1}{\overline{t}_1}+d\epsilon\otimes \log(t_1\overline{t}_1)\right),
\end{gathered}
\end{displaymath}
while 
\begin{displaymath}
  g_{Z,\TW}=\frac{\epsilon+1}{2}\otimes
  \eta_Z-\frac{\epsilon-1}{2}\otimes
  \overline{\eta}_Z+d\epsilon\otimes g_Z. 
\end{displaymath}

In order to prove the proposition, we need to unwrap the product in the $\TW$-complex and use Stokes' theorem. 
Since the pullback of $W$ in $X\times (\P^1)^2$ has dimension $p$, we get
\begin{displaymath}
\langle Z,W\rangle_{\Arch}=(p)_{\ast}\left(f(\epsilon)d\epsilon
  \otimes \delta_W\wedge
  \left(\Omega_1+\Omega_2+\Omega_3\right)\right), 
\end{displaymath}
where $f(\epsilon)=\frac{1}{4}(\epsilon^2-1)$ and
\begin{align*}
 \Omega_1&=-\frac{d\overline{t}_2}{\overline{t}_2}\wedge g_Z\wedge
          \frac{dt_1}{t_1} -\frac{dt_2}{t_2}\wedge g_Z\wedge \frac{d\overline{t}_1}{\overline{t}_1},\\
\Omega_2&=\frac{d\overline{t}_2}{\overline{t}_2}\wedge\frac{\eta_Z}{2}\log(t_1\overline{t}_1)
          -\frac{dt_2}{t_2}\wedge\frac{\overline{\eta}_Z}{2}\log(t_1\overline{t}_1),\\
\Omega_3&=\log(t_2\overline{t}_2)\frac{\eta_Z}{2}
          \wedge\frac{d\overline{t}_1}{\overline{t}_1}
          -\log(t_2\overline{t}_2)\frac{\overline{\eta}_Z}{2}\wedge
          \frac{dt_1}{t_1}. 
\end{align*}
In the computation above one has to take into account that $d\epsilon
$ anticommutes with forms of odd degree.
Now let
\begin{align*}
\Lambda_1&\coloneqq \delta _{W}\wedge d(\log(t_{2}\overline{t}_2))\wedge g_Z\log(t_1\overline{t}_1),\\
\Lambda_2&\coloneqq \delta _{W}\wedge \log(t_2\overline{t}_2)g_Z\wedge d(\log(t_{1}\overline{t}_1)).
\end{align*}
Then one can easily see that
\begin{displaymath}
  d\Lambda_1=\delta _{W}\wedge(\Omega_1-\Omega_2),\qquad
  d\Lambda_2=\delta _{W}\wedge(\Omega_3-\Omega_1). 
\end{displaymath}
Since our higher height pairing is an element of the Deligne
cohomology group, we conclude 
\begin{align*}
  \langle Z,W\rangle_{\Arch}
  &=(p)_{\ast}\left(f(\epsilon)d\epsilon
  \otimes \delta_W\wedge
    \left(\Omega_1+\Omega_1+\Omega_1\right)\right)\\ 
  &=3f(\epsilon)d\epsilon
    \otimes (p)_{\ast}\left(\delta_W\wedge\Omega_1\right)
\end{align*}
After integrating $f(\epsilon)$ from $0$ to $1$, we arrive at
\begin{displaymath}
\langle Z,W\rangle_{\Arch} = -\frac{1}{2}(p)_{\ast}\left(\delta_W\wedge\Omega_1\right)
\end{displaymath}
Finally, using (remember Notation \ref{def:1})
\begin{displaymath}
\overline{\delta_W\wedge\frac{d\overline{t}_2}{\overline{t}_2}\wedge
  g_Z\wedge \frac{dt_1}{t_1}}=-\delta_W\wedge\frac{dt_2}{t_2}\wedge
g_Z\wedge \frac{d\overline{t}_1}{\overline{t}_1}, 
\end{displaymath}
 we conclude
\begin{displaymath}
  \frac{1}{2}(p)_{\ast}\left(\delta_W\wedge\Omega_1\right)=
  -i\im(p)_{\ast}\left(\delta_W\wedge 
  \frac{d\overline{t}_2}{\overline{t}_2}\wedge g_Z\wedge
  \frac{dt_1}{t_1}\right).
\end{displaymath}
Hence we get
\begin{align*}
\rho _{2}(\langle Z,W\rangle_{\Arch}) &= \im\left(\frac{1}{(2\pi
    i)^{2}}(p)_{\ast}\left(\delta_W\wedge 
  \frac{d\overline{t}_2}{\overline{t}_2}\wedge g_Z\wedge
                                        \frac{dt_1}{t_1}\right)\right)\\
  &=\Ht(B).
\end{align*}
\end{proof}

\section{Examples of higher height pairing}
\label{sec:example}

\subsection{The case of dimension zero}
\label{sec:case-dimension-zero}

As a starter we discuss the case when $X=\Spec(\C)$, so $d=0$, and
$n=m=p=q=1$. Let $a,b\in \C\setminus \{0,1\}$ then $a$ and $b$ define
cycles in $Z^{1}(X,1)_{00}$ that we denote $Z$ and $W$. Moreover
these cycles always have proper 
intersection and satisfy Assumption \ref{def:7}. A choice of
differential forms satisfying the conditions of Proposition
\ref{prop:4} for the cycle $Z$ are
\begin{align*}
  \eta _{Z}&= \frac{d t}{t-1} - \frac{dt}{t-a}\in F^{0}\Sigma
             _{B}E^{1}_{\P^{1}}(\log A\cup |Z|;1)\\
  g_{Z}&=\log |t-1| - \log|t-a| +\log|a| \frac{1}{1+t\overline t}
         \in \Sigma
             _{B}E^{0}_{\P^{1}}(\log A\cup |Z|;1)
  \\
  \theta _{Z}& = -d\left( \log|a| \frac{1}{1+t\overline t}\right)
               =\log|a|\frac{\overline t dt + td\overline
               t}{(1+t\overline t)^{2}}\in F^{-1}\Sigma
             _{B}E^{1}_{\P^{1}}(\log A;1). 
\end{align*}
Note that the third term in the definition of $g_{Z}$ is added to
satisfy the condition  $g_{Z}(0)=0$ and is the responsible for the
presence of $\theta _{Z}$. Recall also Notation \ref{def:1}. With this
notation the complex conjugate of $\eta _{Z}$ is
\begin{displaymath}
  \overline \eta_{Z}=-\frac{d \overline t}{\overline t-1} -
  \frac{d\overline t}{\overline t-\overline a}.
\end{displaymath}
We denote by $\eta _{W}$, $g_{W}$ and $\theta _{W}$ the corresponding
differential forms for $W$ obtained by replacing $b$ for $a$.

Since $X=\Spec(\C)$, the relative products over $X$ are just absolute
products. Therefore there should not be any non trivial interaction
between $Z$ and $W$. As we will see, this is indeed the case.

We can choose $\caX_{Z,W}=\P^{1}\times \P^{1}$. The intersection
$\overline W \cap \overline Z$ is reduced to the point $(a,b)$. Since
$H^{0}(X;1)=\Q(1)$, the biextension $B_{Z,W}$ has the middle graded piece
\begin{displaymath}
  \Gr_{-2}^{W}B_{Z,W}=\Q(1)\oplus \Q(1).
\end{displaymath}
The first factor comes from the cohomology of $X$ and the second from
the intersection point.

The different invariants are easy to compute. We start with $\delta
_{E_{Z}}(e)$. This has to be a real element of $H^{0}(X;1)$. For
clarity, as in Definition \ref{def:1}, we will use explicitly the
generator $\bfone(1)_{\C}$ and write $\theta _{Z}=\theta'
_{Z}\otimes \bfone(1)_{\C}$ with
\begin{displaymath}
  \theta _{Z}'\in F^{0}\Sigma_{B}E^{1}_{\P^{1}}(\log A)
\end{displaymath}
given by the same formula as $\theta _{Z}$. Then, by equation \eqref{eq:90},
\begin{align*}
  \delta_{E_{Z}}(e)&=-i \Psi (\theta _{Z})=
  i\frac{1}{2\pi i}\int d\left( -\log|a| \frac{1}{1+t\overline
      t}\right)\wedge \frac{dt}{t}     \otimes   \bfone(1)_{\C}\\
  &=i\log|a|\otimes \bfone(1)_{\C}=\frac{1}{2\pi }\log|a|\otimes \bfone(1)_{\Q}. 
\end{align*}
This element is real as expected.

The invariant $\delta _{C}(e)$ is given by equation \eqref{eq:91}:
\begin{displaymath}
  \delta_{C}(e)=
  \frac{1}{2\pi}\log|b|\otimes \bfone(1)_{\Q}\in \Q(1)_{\C}.
\end{displaymath}

Finally we compute the height $\Ht(B)$. According to
\eqref{eq:87}, it is given by
\begin{multline*}
  \Ht(B)=\\\frac{1}{2}\frac{1}{(2\pi i)^4}\im
  \int_{(\P^{1})^{2}}
  \left(\frac{d t_{2}}{t_{2}-1} - \frac{dt_{2}}{t_{2}-b}\right)
  \wedge \frac{dt_{1}}{t_{1}}\wedge\left(
  \frac{d \overline t_{1}}{\overline t_{1}-1} - \frac{d\overline t_{1}}{\overline
    t_{1}-\overline a}\right)\wedge \frac{d\overline t_{2}}{\overline
  t_{2}}.
\end{multline*}
This integral can be computed separately in each variable. Since
\begin{displaymath}
  \frac{1}{2\pi i}\int_{\P^{1}}\left(\frac{d t}{t-1} -
    \frac{dt}{t-b}\right)\wedge \frac{d\overline t}{\overline t} =
  -\log|b|
\end{displaymath}
and 
\begin{displaymath}
  \frac{1}{2\pi i}\int_{\P^{1}}\frac{d t}{t} \wedge
  \left(\frac{d \overline t}{\overline t-1} - \frac{d\overline
      t}{\overline t-\overline a}\right) = 
  -\log|a|
\end{displaymath}
we obtain
\begin{displaymath}
\Ht(B)=\frac{1}{2(2\pi i)^{2}}\im(\log|a|\log|b|)=0  
\end{displaymath}
as we were expecting.

\subsection{An example in dimension 2}
\label{sec:an-example-dimension-2}

We next compute an example in $\P^2$. In this example $d=2$, $p=q=2$
and $n=m=1$. So condition \eqref{eq:57} is satisfied.

In this subsection we will present
the setting, in the next one we will develop the tools needed to
perform the computation using currents and in the last one we will
compute the main invariant associated with the biextension.

Let $X=\P^2$ and let $[x_0 :  x_1 :x_2]$ be homogeneous coordinates of
$\P^{2}$ and let 
\begin{align*}
s_0&=a_0x_0+a_1x_1+a_2x_2,\\
s_1&=b_0x_0+b_1x_1+b_2x_2,\\
s_2&=c_0x_0+c_1x_1+c_2x_2.\\
\end{align*}
be three linear global sections of $\mathcal{O}_{\P^2}(1)$ in general
position.
Let 
$\ell_i=\Div(s_i)$, $i=0,1,2$ be
the corresponding reduced divisors that we identify with their
support.
By general
position we mean that the lines $\ell_{1}$, $\ell_{2}$ and $\ell_{3}$
form a non-degenerate triangle.

For $i=0,1, 2 (\text{mod}\hspace{0.1cm}3)$ we write
\begin{displaymath}
  f_i=\frac{s_{i+1}}{s_{i+2}}
\end{displaymath}
for the rational function and 
$p_i=\ell_{i+1}\cap \ell_{i+2}$ for the intersection point. Note the equation $f_0\cdot f_1\cdot f_2=1$, which will be used later.

\begin{df} \label{def:12}
  Given a line $\ell$ and a rational function $f$ whose divisor does
  not contain $\ell$, we denote by $(\ell,f)\in Z^{2}(X,1)$ the
  pre-cycle 
  given as the graph of $f|_{\ell}$. Let $s_{0}$, $s_{1}$ and $s_{2}$
  be sections as before. We denote by
  \begin{displaymath}
    Z(s_{0},s_{1},s_{2})= \sum^2_{i=0} (\ell_i, f_i) - \sum ^2_{i=0} \pi
    _{X}^\ast (p_i).
  \end{displaymath}
  Moreover, if $\alpha \in \C^{\times}$, we write
  \begin{displaymath}
    Z(s_{0},s_{1},s_{2};\alpha )= (\ell_0, \alpha f_0)+(\ell_{1},f_{1})+(\ell_{2},f_{2}) -
    \sum ^2_{i=0} \pi _{X}^\ast (p_i). 
  \end{displaymath}
  In particular $Z(s_{0},s_{1},s_{2})=Z(s_{0},s_{1},s_{2};1)$.
\end{df}
The following lemma is an easy verification. 
\begin{lem}
  For $s_{0}$, $s_{1}$ and $s_{2}$ in general position and $\alpha \in
  \C^{\times}$, the pre-cycle $Z(s_{0},s_{1},s_{2};\alpha )$ is a cycle and
  belongs to $Z^{2}(X,1)_{00}$. 
\end{lem}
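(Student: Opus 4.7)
The argument is a direct divisor computation, relying on the general position of the three lines. The plan has two stages: first, verify that each of the five constituent pre-cycles lies in $Z^{2}(X,1)$ (proper intersection with faces), and then check the identities $(\delta^{1}_{1})^{\ast}Z=0$ and $(\delta^{1}_{0})^{\ast}Z=0$, which together say $Z\in Z^{2}(X,1)_{00}$ and $\delta Z=0$.

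The starting point is the description of $f_{i}|_{\ell_{i}}$ as a degree one map $\ell_{i}\cong\P^{1}\to\P^{1}$. By general position the lines meet pairwise in single points; writing out $p_{j}=\ell_{j+1}\cap\ell_{j+2}$ and identifying $\ell_{i}\cap\ell_{i+1}=p_{i+2}$, $\ell_{i}\cap\ell_{i+2}=p_{i+1}$ (mod $3$), one sees that $f_{i}|_{\ell_{i}}$ has a simple zero at $p_{i+2}$ and a simple pole at $p_{i+1}$, and nothing else. Multiplying $f_{0}$ by $\alpha\in\C^{\times}$ does not change either of these divisors. Consequently, each graph $(\ell_{i},f_{i})$ (resp.\ $(\ell_{0},\alpha f_{0})$) meets $X\times\{0\}$ in the single point $(p_{i+2},0)$ and $X\times\{\infty\}$ in the single point $(p_{i+1},\infty)$, which has the correct codimension $2$ in the two\nobreakdash-dimensional face $X$. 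Similarly $\pi_{X}^{\ast}(p_{i})=\{p_{i}\}\times\square$ is a $1$-cycle meeting each face transversely in one point. Hence every summand of $Z(s_{0},s_{1},s_{2};\alpha)$ belongs to $Z^{2}(X,1)$, and so does their sum.

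The final step is the computation of the face pullbacks. By the divisor description above,
\begin{align*}
(\delta^{1}_{1})^{\ast}(\ell_{i},f_{i}) &= p_{i+1}, &
(\delta^{1}_{0})^{\ast}(\ell_{i},f_{i}) &= p_{i+2},\\
(\delta^{1}_{1})^{\ast}(\ell_{0},\alpha f_{0}) &= p_{1}, &
(\delta^{1}_{0})^{\ast}(\ell_{0},\alpha f_{0}) &= p_{2},\\
(\delta^{1}_{1})^{\ast}\pi_{X}^{\ast}(p_{i}) &= p_{i}, &
(\delta^{1}_{0})^{\ast}\pi_{X}^{\ast}(p_{i}) &= p_{i}.
\end{align*}
Summing and reindexing modulo $3$, both telescoping sums vanish:
\[
(\delta^{1}_{1})^{\ast}Z=\sum_{i=0}^{2}p_{i+1}-\sum_{i=0}^{2}p_{i}=0,\qquad
(\delta^{1}_{0})^{\ast}Z=\sum_{i=0}^{2}p_{i+2}-\sum_{i=0}^{2}p_{i}=0.
\]
The first equation places $Z(s_{0},s_{1},s_{2};\alpha)$ in the refined normalized complex $Z^{2}(X,1)_{00}$; the second, combined with the definition $\delta=-(\delta^{1}_{0})^{\ast}$, shows $\delta Z=0$, so $Z$ is a cycle.

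The main obstacle, such as it is, is purely bookkeeping: keeping the cyclic indexing of the three lines and their pairwise intersections straight. There is no analytic content—just the observation that $\alpha\in\C^{\times}$ is invisible to the divisor of $f_{0}$, and that the correction terms $\pi_{X}^{\ast}(p_{i})$ have been inserted precisely to cancel the contributions of the graphs at the two faces.
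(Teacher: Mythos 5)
Your proof is correct and follows essentially the same route as the paper's own terse argument: both hinge on the observation that $\Div(f_i|_{\ell_i}) = p_{i+2} - p_{i+1}$ telescopes (so that $\sum_i \Div(f_i) = 0$, a consequence of $f_0 f_1 f_2 = 1$), that multiplication by $\alpha$ leaves these divisors untouched, and that the vertical components $\pi_X^\ast(p_i)$ are inserted exactly to cancel the resulting face restrictions at $t=0$ and $t=\infty$. You merely spell out the face computations and the admissibility check that the paper leaves implicit.
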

\begin{proof}
  The fact that $Z(s_{0},s_{1},s_{2};\alpha )$ is a cycle follows directly
  from the condition $\sum_{i}\Div(f_{i})=0$. The degenerate
  components $\sum ^2_{i=0} \pi _{X}^\ast (p_i)$ are subtracted
  precisely in order to fulfill the condition that
  $Z(s_{1},s_{2},s_{3};\alpha )$ belongs to the refined normalized complex.
\end{proof}

We define
\begin{align*}
  W_{\beta }\coloneqq Z(x_{0},x_{1},x_{2};\beta )
\end{align*}
and choose section $s_{0}$, $s_{1}$ and $s_{2}$ that are in general
position with respect to $\{x_{0},x_{1},x_{2}\}$ so that, for any
complex number 
$\alpha \in \C^{\times}$, if we write
$Z_{\alpha }=Z(s_{0},s_{1},s_{2};\alpha )$ then $Z_{\alpha }$ and $W_{\beta }$ satisfy
Assumption \ref{def:7}. We will also write $Z=Z_{1}$ and $W=W_{1}$.

The real regulator of $Z_{\alpha }$ is easy to compute.

\begin{prop}\label{reg-zero-triangle}
The real regulator class of $Z_{\alpha }$ in
\begin{displaymath}
H^{3}_{\DB}(\P^{2},\R(2))=H^{2}(\P^{2};2)_{\C}/H^{2}(\P^{2};2)_{\R}
\end{displaymath}
is represented by
the closed current $-\log|\alpha |\delta _{\ell}$, for any line $\ell$ in
$\P^{2}$. In particular, if $|\alpha |=1$ then the regulator class is zero.    
\end{prop}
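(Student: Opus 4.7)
I would begin by computing $\caP((\ell, f))$ for a basic building block $(\ell, f)\in Z^{2}(\P^{2},1)$ (the graph of a rational function $f$ on a line $\ell$) via Goncharov's formula \eqref{eq:50} with $W_{1}=\lambda$ as in \eqref{eq:48}. Under the projection $\fDTW\to \fD$ of equation \eqref{eq:34}, only the $d\varepsilon$-component of $\lambda$ survives, namely $-\tfrac{1}{2}\log(t\bar t)\otimes \bfone(1)_{\C}$. Using $t=f(x)$ on the graph, together with the fact that $\pi_{X}$ restricts to an isomorphism from the graph onto $\ell$, one obtains
\begin{displaymath}
  \caP((\ell, f)) \equiv [-\log|f|\,\delta_{\ell}] \quad \text{in } \fD^{3}(\P^{2},2).
\end{displaymath}
I would also check that the degenerate summands $\pi_{X}^{\ast}(p_{i})=\{p_{i}\}\times \square$ contribute zero: pushing forward $\delta_{\{p_{i}\}\times \square}\cdot \lambda$ along $\pi_{X}$ and projecting to $\fD$ would require a top-degree form on the $\square$-fibre, but $\lambda$ has no such component.

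By additivity of the regulator at the level of complexes, this yields
\begin{displaymath}
  \caP(Z_{\alpha}) \equiv -\log|\alpha|\,[\delta_{\ell_{0}}] \;-\; \sum_{i=0}^{2}[\log|f_{i}|\,\delta_{\ell_{i}}]
  \quad \text{in } \fD^{3}(\P^{2},2),
\end{displaymath}
after splitting $\log|\alpha f_{0}|=\log|\alpha|+\log|f_{0}|$. Since any two lines in $\P^{2}$ are homologous, $\delta_{\ell_{0}}-\delta_{\ell}$ is a coboundary of currents for any line $\ell$, and we may replace $\delta_{\ell_{0}}$ by $\delta_{\ell}$ in the first summand without changing the Deligne class.

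The heart of the proof will be to show that the Steinberg-type term $T\coloneqq \sum_{i=0}^{2}\log|f_{i}|\,\delta_{\ell_{i}}$ represents the zero class in $H^{3}_{\DB}(\P^{2},\R(2))=H^{2}(\P^{2};2)_{\C}/H^{2}(\P^{2};2)_{\R}$. The main input is the relation $f_{0}f_{1}f_{2}=1$, which forces $\sum_{i}\log|f_{i}|=0$ as a locally integrable function on $\P^{2}$, combined with Poincar\'e--Lelong. Fixing a smooth hermitian metric $\|\cdot\|$ on $\caO(1)$ and setting $g_{i}=\log\|s_{i}\|^{2}$, one has $dd^{c}[g_{i}]=\delta_{\ell_{i}}-\omega$ for $\omega=c_{1}(\caO(1),\|\cdot\|)$ a smooth $(1,1)$-form. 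Substituting, the $\omega$-contributions cancel by $\sum_{i}\log|f_{i}|=0$, leaving $T=\sum_{i}\log|f_{i}|\,dd^{c}[g_{i}]$. Using the identity $\log|f_{i}|=\tfrac{1}{2}(g_{i+1}-g_{i+2})$ and the Poincar\'e--Lelong formula $2dd^{c}[\log|f_{i}|]=\delta_{\ell_{i+1}}-\delta_{\ell_{i+2}}$, a careful integration by parts exhibits $T$ in the form $dR+S$ where $R$ is an explicit current and $S$ is a real smooth $(1,1)$-form, so that the class of $T$ lies in the real part of $H^{2}(\P^{2};2)$. This step, where all the bookkeeping of signs and logarithmic singularities lives, is the main obstacle of the proof.

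Assembling these steps, $[\caP(Z_{\alpha})] = -\log|\alpha|\,[\delta_{\ell}]$ in $H^{3}_{\DB}(\P^{2},\R(2))$; the vanishing when $|\alpha|=1$ is then immediate from $\log|\alpha|=0$.
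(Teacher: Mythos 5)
Your proposal follows the paper's proof essentially step by step: compute $\caP(Z_\alpha)$ termwise via Goncharov's formula and the Wang form $\lambda$, arrive at $-\log|\alpha|\,\delta_{\ell_0}-\sum_i\log|f_i|\,\delta_{\ell_i}$, replace $\delta_{\ell_0}$ by $\delta_\ell$, and use $f_0f_1f_2=1$ to annihilate the Steinberg-type term. The only difference is one of detail: where the paper disposes of that last vanishing in a single sentence, you flesh it out with Poincar\'e--Lelong and an integration-by-parts sketch, which is indeed the intended completion of the paper's terse ``we deduce the result.''
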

\begin{proof}
  In the Thom-Whitney complex, the regulator of the cycle 
      $Z_{\alpha }$ is represented by $(\pi
      _{X})_{\ast}(\delta _{Z}\cdot W_{1})$. After taking the direct
      image and integrate with respect to $\epsilon $   we obtain  
  \begin{displaymath}
\caP(Z_{\alpha })=-\frac{1}{2}\left(\left(\log|\alpha |^2+\log|f_0|^2\right)
  \delta_{\ell_0}+\log|f_1|^2\delta_{\ell_1}
  +\log|f_2|^2\delta_{\ell_2}\right). 
\end{displaymath}
Since each $\delta _{\ell_{i}}$ is cohomologous to $\delta _{\ell}$
and, by construction $f_{0}f_{1}f_{2}=1$ we deduce the result.
\end{proof}

Let $\ell_{i}$, $f_{i}$ and $p_{i}$, $i=0,1,2$ be the lines, rational
functions and
intersection points constructed as before for the sections
$s_{0},s_{1},s_{2} $ and let $\ell_{i}'$, $f_{i}'$ and $p'_{i}$,
$i=0,1,2$ be the ones corresponding to the sections $x_{0},x_{i}$ and
$x_{2}$. For instance $\ell'_{0}=\{x_{0}=0\}$, $p'_{0}=[1:0:0]$ and
$f'_{0}=x_{1}/x_{2}$.    

For $i=0,1,2$ and $j=0,1,2$ we write $p_{i,j}=\ell_{i}\cap \ell'_{j}$,
\begin{displaymath}
  \alpha _{i}=
  \begin{cases}
    \alpha, &\text{ if }i=0,\\
    1, &\text{ otherwise,}
  \end{cases}\qquad
  \beta _{j}=
  \begin{cases}
    \beta,  &\text{ if }j=0,\\
    1, &\text{ otherwise,}
  \end{cases}
\end{displaymath}
and 
\begin{displaymath}
q_{i,j}=(p_{i,j}.\alpha _{i}f_{i}(p_{i,j}),\beta _{j}f'_{j}(p_{i,j}))\in X\times
\P^{1}\times \P^{1}.
\end{displaymath}
By the generality assumption, the set $S$ consist
of the nine points $q_{i,j}$. Moreover,
$H^{2p-2}(X;p)=H^{2}(\P^{2};2)=\Q(1)$.  
Therefore, the biextension $B=B_{Z_{\alpha },W_{\beta }}$ has the shape
\begin{align*}
\Gr_0B&= \Q(0),\\
\Gr_{-2}B&=\Q(1)\oplus \Q(1)^{\oplus_9},\\
\Gr_{-4}B&= \Q(2).
\end{align*}
From the description of the real regulator of $Z_{\alpha }$ above, the invariant
$\delta_{E_{Z_{\alpha }}}$ is given by
\begin{displaymath}
\delta_{E_{Z_{\alpha }}}(e)=i\left((\log|\alpha
  |+\log|f_0|)\delta_{l_0}+\log|f_1|\delta_{l_1}+\log|f_2|\delta_{l_2}\right)^\sim,
\end{displaymath}
while from equation \eqref{eq:91} the invariant $\delta _{C}(e)$ is given by 
\begin{displaymath}
\delta_C(e)=\frac{1}{2\pi}\sum_{i,j}\log|\beta _{j}f'_j(p_{ij})|e_{i,j},
\end{displaymath}
where $e_{i,j}$ is the generator of the cohomology with support on the
point $q_{i,j}$. The remaining invariant $\Ht(B)$ will be computed in section
\ref{subsec-triangle-height} after we discuss how to use currents to
ease the computation.

\subsection{Computation using currents}
In classical Arakelov geometry, it is usually simpler to write down 
explicitly a Green current for a cycle than to write a Green form with
logarithmic singularities for the cycle. Although in general, inverse images and
products of currents are not defined, the theory of wave front sets
sketched in Subsection \ref{sec:wave-front-sets} allow us, in some
situations, to work with currents with the same ease as with
differential forms. We will use the notations and results of section
\ref{sec:wave-front-sets}.

For simplicity we make the following enhancement of Assumption
\ref{def:7}.

\begin{assumption}\label{def:10}
  To Assumption \ref{def:7} we add the condition that $|Z|$ and $|W|$
  are both union of smooth subvarieties that intersect $A_{X}$ and $B_{X}$
  transversely. 
\end{assumption}
Note that Assumption \ref{def:10} is satisfied in the example
presented in Subsection \ref{sec:an-example-dimension-2}.

Hence we assume \ref{def:10} and we consider first the situation of
$Z$ in $X\times \P^{1}$. We 
denote by $t$ the absolute coordinate of $\P^{1}$, and, for 
shorthand,
$A=A_{X}$ and $B=B_{X}$. Since $|Z|=\bigcup Z_{i}$ is a union of
smooth components, we write $N_{0}^{\vee}|Z|=\bigcup N_{0}^{\vee}Z_{i}$.
Let $\iota \colon A \hookrightarrow X\times
\P^{1}$ be the inclusion and $\caS =\iota_{\ast}\iota^{\ast} N_{0}^{\vee}|Z|$.
Then $\caS $ is saturated with respect to $\iota$ by construction. The
fact that the $Z_{i}$ intersect $B$ transversely readily implies that
$\caS $ and $B_{X}$ are in good position.  So the hypothesis of Theorem
\ref{thm:2} are satisfied.

Let $g_Z$, $\eta_Z$ and
$\theta_Z$ be the differential forms obtained in Proposition \ref{prop:4}, 
They define
currents
\begin{align*}
  [\eta_{Z}]&\in F^{0}\Sigma _{B}D^{2p-1}_{X\times \P^1/A;\caS }(p)\\
  [\theta_{Z}]&\in F^{-1}\Sigma
                _{B}D^{2p-1}_{X\times \P^1/A;\caS }(p)\\
  [g_{Z}]&\in F^{-1}\cap\overline F^{-1} \Sigma _{B}D^{2p-2}_{X\times \P^1/A;\caS }(p)
\end{align*}
satisfying the differential equations
\begin{align*}
  d[\eta_{Z}]&=-\delta _{Z}\\
d[g_Z]&=\frac{1}{2}([\eta_Z] - [\overline{\eta}_Z])-[\theta_Z].
\end{align*}
In fact, in our situation, as the following result implies, any choice
of currents satisfying the above properties
is enough to compute the regulator of $Z$ and the invariant
$\Ht(B)$.

\begin{lem}\label{lemm:6}
  Let $\caS \subset T_{0}^{\vee}X$ be a closed conical subset that is
  saturated with respect to $\iota$ and is in good position with
  respect to $B$. Let
\begin{align*}
  \eta_{Z}'&\in F^{0}\Sigma _{B}D^{2p-1}_{X\times \P^1/A;\caS }(p)\\
  \theta_{Z}'&\in F^{-1}\Sigma
                _{B}D^{2p-1}_{X\times \P^1/A;\caS }(p)\\
  g_{Z}'&\in F^{-1}\cap\overline F^{-1} \Sigma _{B}D^{2p-2}_{X\times \P^1/A;\caS }(p)
\end{align*}
be currents 
satisfying the differential equations
\begin{align}
  d\eta_{Z}'&=-\delta _{Z}\label{eq:111}\\
dg_{Z}'&=\frac{1}{2}(\eta_Z' - \overline{\eta'}_Z)-\theta'_Z.\label{eq:103}
\end{align}
  then $\theta '_{Z}$ is closed and there are currents
  \begin{align*}
    v_{1}&\in \Sigma _{B}D^{2p-2}_{X\times \P^1/A;\caS }(p)\\
    v_{2}&\in F^{0}\Sigma _{B}D^{2p-2}_{X\times \P^1/A;\caS }(p)
  \end{align*}
satisfying
\begin{displaymath}
  dv_{1} = [\theta _{Z}]- \theta' _{Z},\qquad
  dv_{2}= [\eta_{Z}] - \eta '_{Z}.
\end{displaymath}
In particular $\theta '_{Z}$ represents the class of the regulator of
$Z$. 
\end{lem}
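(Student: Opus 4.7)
The plan is to extract a Hodge-theoretic constraint from the relation coupling $\eta'_Z$, $\theta'_Z$, $g'_Z$ and then invoke the filtered quasi-isomorphism of Theorem~\ref{thm:2} to produce the primitives. First, applying $d$ to \eqref{eq:103} and using $d\eta'_Z = -\delta_Z$ together with $\overline{\delta_Z} = \delta_Z$ gives $d\theta'_Z = \tfrac{1}{2}(-\delta_Z + \overline{\delta_Z}) = 0$, so $\theta'_Z$ is closed.

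Set
\[
\alpha \coloneqq [\eta_Z] - \eta'_Z,\quad \beta \coloneqq [\theta_Z] - \theta'_Z,\quad \gamma \coloneqq [g_Z] - g'_Z.
\]
By construction $\alpha \in F^0$, $\beta \in F^{-1}$, $\gamma \in F^{-1}\cap \overline{F^{-1}}$, and the currents $\alpha$, $\beta$ are both closed (using \eqref{eq:18} and \eqref{eq:111} for $\alpha$, and the previous step for $\beta$). Subtracting \eqref{eq:103} from \eqref{eq:21} yields the key identity
\[
 d\gamma = \tfrac{1}{2}(\alpha - \bar\alpha) - \beta.
\]

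The main step is a Hodge type count. By Theorem~\ref{thm:2} the complex $\Sigma_B D^*_{X\times \P^1/A;\caS}(p)$ computes $H^*(X\times \P^1 \setminus A,B;p)_\C$ compatibly with $F$, and by \eqref{eq:4} its degree $2p-1$ cohomology is the pure weight $-2$ Hodge structure $H^{2p-2}(X;p)_\C$. Passing to cohomology in the identity above gives $[\beta] = \tfrac{1}{2}([\alpha] - [\bar\alpha])$ in $H^{2p-2}(X;p)_\C$. For a pure Hodge structure of weight $-2$, the subspace $F^{-1}$ (types $(r,s)$ with $r \geq -1$) and $\overline{F^0}$ (types with $s \geq 0$, equivalently $r \leq -2$) are disjoint. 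Since $[\alpha] \in F^0$, its conjugate lies in $\overline{F^0}$; the $\overline{F^0}$-component of the right hand side is $-\tfrac{1}{2}[\bar\alpha]$ while the left hand side has no $\overline{F^0}$-component. Hence $[\bar\alpha] = 0$, and therefore $[\alpha] = 0$ in $F^0 H^{2p-1}$. This Hodge-theoretic argument is the principal obstacle: without the $g'_Z$-equation, which couples $\alpha$, $\bar\alpha$ and $\beta$, the class of $\alpha$ would be unconstrained.

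Because the inclusion of Theorem~\ref{thm:2} is a filtered quasi-isomorphism, the induced map on $F^0$-cohomology is an isomorphism, so the vanishing $[\alpha] = 0$ produces $v_2 \in F^0 \Sigma_B D^{2p-2}_{X\times \P^1/A;\caS}(p)$ with $dv_2 = \alpha$. Setting $v_1 \coloneqq \tfrac{1}{2}(v_2 - \overline{v_2}) - \gamma$, the key identity yields $dv_1 = \tfrac{1}{2}(\alpha - \bar\alpha) - d\gamma = \beta$, so $v_1$ satisfies the required relation. The closing statement that $\theta'_Z$ represents the regulator class of $Z$ follows at once: since $dv_1 = [\theta_Z] - \theta'_Z$, the current $\theta'_Z$ is cohomologous to $[\theta_Z]$, and $[\theta_Z]$ represents the regulator of $Z$ by Proposition~\ref{propreg}.
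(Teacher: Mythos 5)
Your proof is correct and follows essentially the same route as the paper's: both hinge on the fact that $H^{2p-1}$ of the complex is pure of weight $-2$, so the Hodge types of $[\alpha]$, $[\bar\alpha]$ and $[\beta]$ force each class to vanish separately, after which strictness of $d$ with respect to $F$ (via theorems \ref{thm:1} and \ref{thm:2}) supplies the $F^0$-primitive $v_2$. The only cosmetic differences are that you separate the terms using the decomposition $F^{-1}\oplus\overline{F^{0}}$ together with conjugation symmetry instead of the three-fold splitting $F^{0}\oplus(F^{-1}\cap\overline{F^{-1}})\oplus\overline{F^{0}}$, and that you give an explicit formula $v_{1}=\tfrac12(v_{2}-\overline{v_{2}})-\gamma$ where the paper merely asserts existence.
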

\begin{proof}
  By the properties of the involved forms and currents is easy to see
  that $[\eta _{Z}]-\eta _{Z}$ and $\theta _{Z}'$ are both
  closed. Moreover the current
  \begin{displaymath}
    ([\eta _{Z}]-\eta' _{Z})/2 - ([\theta _{Z}]-\theta '_{Z})
    - ([\overline \eta _{Z}]-\overline {\eta' _{Z}})/2
  \end{displaymath}
  is exact. By Theorem \ref{thm:2}, the cohomology group
  \begin{displaymath}
    H^{2p-1}(\Sigma_{B}D^{*}_{X\times \P^1/A;\caS}(p))
  \end{displaymath}
  is the de Rham part of a pure Hodge
  structure $H$ of weight $-2$. Moreover,
  \begin{displaymath}
    ([\eta _{Z}]-\eta' _{Z})/2\in F^{0}, \quad ([\theta _{Z}]-\theta
    '_{Z})\in F^{-1}\cap \overline F^{-1},\quad\text{and}\quad
    ([\overline \eta _{Z}]-\overline {\eta' _{Z}})/2\in \overline F^{0}.
  \end{displaymath}
  Since $H$ is pure of weight $-2$, there is a direct sum decomposition
  \begin{displaymath}
    H = F^{0}H\oplus F^{-1}\cap \overline F^{-1} H \oplus \overline F^{0}H.
  \end{displaymath}
  Therefore the three terms $[\eta _{Z}]-\eta' _{Z}$, $[\theta
  _{Z}]-\theta '_{Z}$ and $[\overline \eta _{Z}]-\overline {\eta'
    _{Z}}$ are exact. In particular we obtain the
  current $v_{1}$ in the statement. By theorems \ref{thm:1} and
  \ref{thm:2}, the differential of the above complex is strict with
  respect to the Hodge filtration. Therefore we can find a primitive
  $v_{2}$ of $[\eta _{Z}]-\eta _{Z}$ belonging to $F^{0}$, completing
  the proof of the result.
\end{proof}

\begin{rmk}
  Since $\WF(\delta _{Z})=N^{\vee}_{0}|Z|$ and the differential does
  not increase the wave front set, equation \eqref{eq:111} implies
  that, for the currents in the lemma to
  exist, a necessary condition is that $ N^{\vee}_{0}|Z|\subset
  \caS $. Clearly $\iota _{\ast}\iota ^{\ast}N^{\vee}_{0}|Z|\subset \caS $ is a
  sufficient condition for the currents to exist. In the explicit
  computation of next section it will be handy to have the freedom to
  enlarge $\caS $.  
\end{rmk}

We now put together $Z$ and $W$ to obtain the next result. Recall that
we are implicitly taking the pullbacks to $X \times (\P^{1})^{2}$. Let
$\caS _{Z},\caS _{W}\subset T^{\vee}_{\ast}X\times \P^{1}$ be closed conical
subsets that are saturated with respect to $A$, in good position
with respect to $B$ and such that $\pi _{1}^{\ast}\caS _{Z}\cap \pi
_{2}^{\ast}\caS _{W}=\emptyset$.

\begin{cor} \label{cor:5} Assuming \ref{def:10}, let $\eta _{Z}'$,
  $\theta '_{Z}$ and $g_{Z}'$ (respectively $\eta _{W}'$,
  $\theta '_{W}$ and $g_{W}'$) be currents satisfying the hypothesis of
  Lemma \ref{lemm:6} for the cycle $Z$ and the set $\caS _{Z}$
  (respectively $W$ and the set $\caS _{W}$). Then
  \begin{multline*}
    \Ht(B) = \frac{1}{2}\im\frac{1}{(2\pi
      i)^{2}} p_{\ast}\left(\eta'_{W}\wedge
    \frac{dt_{1}}{t_{1}}\wedge \overline \eta'_{Z}\wedge
    \frac{d\overline t_{2}}{\overline t_{2}}\right)\\
  =-\im \frac{1}{(2\pi
      i)^{2}} p_{\ast}\left(\delta _{W}\wedge \frac{dt_{1}}{t_{1}}\wedge g'_{Z}\wedge
    \frac{d\overline t_{2}}{\overline t_{2}} 
    +
    \eta'_{W}\wedge \frac{dt_{1}}{t_{1}}\wedge \theta '_{Z}\wedge
    \frac{d\overline t_{2}}{\overline t_{2}}\right),
  \end{multline*}
  where $p\colon X\times (\P^{1})^{2}\to \Spec(\C)$ is the structural
  map. 
\end{cor}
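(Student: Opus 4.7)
The plan is to reduce the statement to equations \eqref{eq:87} and \eqref{formula-ht-B} already established in Section \ref{subsecgamma} for the specific smooth forms $\eta_Z, \theta_Z, g_Z$ (and similarly for $W$) of Proposition \ref{prop:4}, and then to show that the right-hand sides are unchanged if we replace these by arbitrary currents $\eta'_Z, \theta'_Z, g'_Z$ (and $\eta'_W, \theta'_W, g'_W$) satisfying the hypotheses. Unfolding the convention from Subsection \ref{subsecconv} that $p_\ast$ already incorporates a factor $(2\pi i)^{-\dim}$, the first formula of the corollary for the smooth choice coincides exactly with \eqref{eq:87}, and the second with \eqref{formula-ht-B}. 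So the substantive content of the corollary is the independence of the two expressions on the chosen representatives.

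For independence, Lemma \ref{lemm:6} provides on each factor primitives
\begin{displaymath}
  v^Z_1, v^Z_2 \in \Sigma_{B}D^{*}_{X\times \P^1/A;\caS_Z}(p),\qquad
  v^W_1, v^W_2 \in \Sigma_{B}D^{*}_{X\times \P^1/A;\caS_W}(q),
\end{displaymath}
with $v^Z_2 \in F^0$, such that $dv^Z_2 = [\eta_Z]-\eta'_Z$ and $dv^Z_1 = [\theta_Z]-\theta'_Z$, and analogously for $W$. The hypothesis $\pi_1^\ast\caS_Z \cap \pi_2^\ast\caS_W = \emptyset$, combined with each $\caS$ being saturated with respect to $A$ and in good position with $B$, ensures through Proposition \ref{prop:11} that all the wedge products occurring in the corollary's right-hand side, as well as the analogous wedge products of the primitives $v^Z_i, v^W_j$ with the remaining factors $dt_1/t_1$, $d\bar t_2/\bar t_2$, $\delta_W$, $\eta'_W$, define currents on $X\times(\P^1)^2$ whose pushforward to $\Spec(\C)$ is defined.

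With these products well-defined, one expands the difference
\begin{displaymath}
  \eta'_W\wedge \frac{dt_1}{t_1}\wedge\overline{\eta'_Z}\wedge \frac{d\bar t_2}{\bar t_2}
  -\eta_W\wedge \frac{dt_1}{t_1}\wedge\overline{\eta_Z}\wedge \frac{d\bar t_2}{\bar t_2}
\end{displaymath}
as an exact current by telescoping through $v^Z_2$ and $v^W_2$, using closedness of $dt_1/t_1$ and $d\bar t_2/\bar t_2$ together with the closedness of $\eta'_Z, \eta'_W$. Stokes' theorem on the compact ambient $X\times(\P^1)^2$ then gives the independence of the first formula. An analogous telescoping, now additionally invoking $v^Z_1$ (for the $\theta'$-term) together with the differential equation \eqref{eq:103} and the analogue for $W$, handles the second formula.

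The main obstacle is the wave-front-set bookkeeping: at each step in the Stokes telescoping one must check that the primitive produced by Lemma \ref{lemm:6} lies in a complex compatible with the remaining tensor factors, so that all products, pullbacks, and applications of $d$ or $p_\ast$ are legitimate as operations on currents. Once this bookkeeping is in place, the manipulations are precisely the smooth-form computations of Section \ref{subsecgamma} transferred, term for term, to the current-theoretic setting permitted by Assumption \ref{def:10}.
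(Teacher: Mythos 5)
There is a genuine gap: your claim that $\eta'_Z$ and $\eta'_W$ are closed is false. By the hypothesis of Lemma~\ref{lemm:6} (equation~\eqref{eq:111}) they satisfy $d\eta'_Z = -\delta_Z$ and $d\eta'_W = -\delta_W$, and likewise the smooth-form currents satisfy $d[\eta_Z]=-\delta_Z$ and $d[\eta_W]=-\delta_W$ (this is precisely equation~\eqref{eq:18}). It is only the \emph{differences} $[\eta_Z]-\eta'_Z$ and $[\eta_W]-\eta'_W$ that are closed, and that is what Lemma~\ref{lemm:6} provides primitives $v_Z,v_W\in F^0$ for. Consequently the telescoping you propose does not express the difference of the two integrands as an exact current. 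When you substitute $[\eta_W]=\eta'_W+dv_W$ (respectively $[\eta_Z]=\eta'_Z+dv_Z$) and push $d$ past the remaining factors via Stokes, you pick up the terms
\begin{displaymath}
  p_{\ast}\left(v_W\wedge\frac{dt_1}{t_1}\wedge\delta_Z\wedge\frac{d\overline{t}_2}{\overline{t}_2}\right)
  \qquad\text{and}\qquad
  p_{\ast}\left(\delta_W\wedge\frac{dt_1}{t_1}\wedge\overline{v}_Z\wedge\frac{d\overline{t}_2}{\overline{t}_2}\right),
\end{displaymath}
coming from $d\overline{\eta'_Z}=-\delta_Z$ and $d[\eta_W]=-\delta_W$, and these do not cancel formally.

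The crux of the proof --- and what is entirely absent from your proposal --- is the Hodge-type/dimension argument showing that these leftover terms vanish. Lemma~\ref{lemm:6} delivers $v_Z\in F^0$, so $v_Z$ (hence $\overline{v}_Z$ after conjugation) has at least $p$ (anti)holomorphic differentials. Since $W\times\P^1\subset X\times(\P^1)^2$ has complex dimension $p$, this forces $\delta_W\wedge\frac{dt_1}{t_1}\wedge\overline{v}_Z\wedge\frac{d\overline{t}_2}{\overline{t}_2}=0$, and by symmetry $v_W\wedge\frac{dt_1}{t_1}\wedge\delta_Z\wedge\frac{d\overline{t}_2}{\overline{t}_2}=0$; these are equations~\eqref{eq:100} and~\eqref{eq:101} of the paper. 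Without this step the independence of the formula on the choice of $(\eta'_Z,\theta'_Z,g'_Z)$ and $(\eta'_W,\theta'_W,g'_W)$ does not follow. The boundary terms at $t_1=0,\infty$ and $t_2=0,\infty$ (which is what the ``closedness of $dt_1/t_1$'' informally refers to) are indeed handled by the vanishing conditions on $v_Z,\eta_Z$ and $v_W,\eta_W$ as you suggest, and the wave-front-set compatibility is a one-line remark in the paper; the missing content is the $F^0$-versus-dimension argument.
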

\begin{proof}
  That the product current is well defined follows from the fact that the
  wave front sets of the involved currents are disjoint. 
  By \eqref{eq:87}, we have
  \begin{equation}\label{eq:98}
    \Ht(B) = \frac{1}{2}\im\frac{1}{(2\pi
      i)^{2}} p_{\ast}\left([\eta_{W}]\wedge
    \frac{dt_{1}}{t_{1}}\wedge \overline{[\eta_{Z}]}\wedge
    \frac{d\overline t_{2}}{\overline t_{2}}\right).
  \end{equation}
  By Lemma \ref{lemm:6} there are currents
  \begin{displaymath}
    v_{Z}\in F^{0}\Sigma
    _{B}D^{2p-2}_{X/A;\caS }(p),\qquad
    v_{W}\in F^{0}\Sigma _{B}D^{2q-2}_{X/A;\caS}(q),
  \end{displaymath}
  such that
  \begin{equation}
    \label{eq:99}
     dv_{Z}=[\eta_{Z}] - \eta '_{Z},\qquad
  dv_{W}=[\eta_{W}] - \eta '_{W}.
  \end{equation}
  Since $v_{Z}$ belongs to $F^{0}$
  it has at least $p$ holomorphic differentials. As $W\times
  \P^{1}\subset X\times (\P^{1})^{2}$ has dimension $p$, we obtain 
  \begin{equation}\label{eq:100}
    \delta _{W}\wedge
    \frac{dt_{1}}{t_{1}}\wedge \overline v_{Z}\wedge
    \frac{d\overline t_{2}}{\overline t_{2}}=0
  \end{equation}
  Similarly
  \begin{equation}\label{eq:101}
    v _{W}\wedge
    \frac{dt_{1}}{t_{1}}\wedge \delta_{Z}\wedge
    \frac{d\overline t_{2}}{\overline t_{2}}=0.
  \end{equation}
  Then the result follows from Stokes' theorem by using equations
  \eqref{eq:98}, \eqref{eq:99}, \eqref{eq:100} and \eqref{eq:101} and
  the fact that the forms $v_{Z}$ and $\eta_{Z}$ vanish for $t_{1}=0$
  and $t_{1}=\infty$, while the forms $v_{W}$ and $\eta_{W}$ vanish for $t_{2}=0$
  and $t_{2}=\infty$.  
\end{proof}

\subsection{The invariant \texorpdfstring{$\Ht(B)$}{htB} of the example in dimension two}
\label{subsec-triangle-height}

Now that we have set up the theory, we are ready to compute the
remaining invariant $\Ht(B)$ for the pair of higher cycles $Z_{\alpha }$ and
$W_{\beta }$ described in \ref{sec:an-example-dimension-2}.

The first task is to compute a set of currents satisfying the
conditions of Lemma \ref{lemm:6} for the cycle $Z_{\alpha }$. The
currents for the cycle $W_{\beta }$ will be constructed in a similar
way. Since, for the moment we work with a single cycle we denote by
$t$ the absolute coordinate of $\P^{1}$ and we omit any needed
pullback to $X\times \P^{1}$ from the formulas.

We start with a classical Green
current for the cycle  $Z_{\alpha }$ in $X\times \square$:
\begin{displaymath}
  g_{Z_{\alpha },0}\coloneqq
  -\sum^2_{i=0}\left(\log\frac{|t-\alpha
      _{i}f_i|}{|t-1|}\right)\delta_{\ell_i} \in F^{-1}D^{2}_{X\times \P^{1}}(2).
\end{displaymath}
Then one can check that
\begin{displaymath}
  2\partial\bar \partial g_{Z_{\alpha },0}=
  \sum_{i=0}^{2}\delta _{(\ell_{i},\alpha _{i}f_{i})}-\delta
  _{p_{i}}-\delta _{\ell_{i}\times \{1\}}.
\end{displaymath}
Hence
\begin{equation}\label{eq:104}
  2\partial\bar \partial g_{Z_{\alpha },0}|_{X\times \square}= \delta _{Z_{\alpha }}.
\end{equation}
Moreover $g_{Z_{\alpha },0}|_{t=\infty}=0$. But in general $g_{Z,0}|_{t=0}\not =
0$. In fact,
\begin{equation}\label{eq:105}
  g_{Z_{\alpha },0}|_{t=0} = -\sum \log|f_{i}| \delta _{\ell_{i}} - \log|\alpha
  | \delta _{\ell _{0}}. 
\end{equation}
The two terms appearing in this decomposition have a different
nature. The first one, the sum, is a boundary, hence we will be able
to get rid of it without altering equation \eqref{eq:104}, while the second one is
the responsible for the real regulator of $Z_{\alpha }$ therefore will
force us a non zero current $\theta _{Z_{\alpha }}$. 

To see that the first term is a boundary 
We introduce the current 
\begin{equation}\label{eq:109}
u_Z=[\log|f_0|\partial\log|f_1|-\log|f_1|\partial\log|f_0|]\in F^{-1}D^{1}_{X}(2).
\end{equation}
This current does not depend on the choice of $\alpha $.
Using the fact that $2\partial
\overline{\partial}[\log|f_i|]=\delta_{\ell_{i+2}}-\delta_{\ell_{i+1}}$,
we get 
\begin{displaymath}
\overline{\partial}u_Z-\partial
  \overline{u}_Z=-\log|f_0|\delta_{\ell_0}
-\log|f_1|\delta_{\ell_1}+(\log|f_0|+\log|f_1|)\delta_{\ell_2}. 
\end{displaymath}
Finally, using the relation $f_0\cdot f_1\cdot f_2=1$, we get
\begin{equation}\label{eq:106}
\overline{\partial}u_Z-\partial \overline{u}_Z=-\sum^2_{i=0}\log|f_i|\delta_{\ell_i}.
\end{equation}
Let $h$ be the function
\begin{displaymath}
  h(t)=\frac{1}{1+|t|^2}.
\end{displaymath}
It is smooth in the whole $\P^{1}$ and satisfies
\begin{equation}
h(0)=1,\qquad h(\infty )=1.\label{eq:107}
\end{equation}

We define the currents
\begin{align*}
  g_{Z_{\alpha },1}
  &= -\sum^2_{i=0}\left(\log\frac{|t-\alpha
    _{i}f_i|}{|t-1|}\right) \delta_{
    \ell_i}-\left(\overline{\partial}(h(t)
    u_Z)- \partial (h(t)\overline{u}_Z)\right),\\
  g_{Z_{\alpha },2} &= h(t)\log|\alpha |\delta _{\ell_{0}}\\
  g_{Z_{\alpha }} &= g_{Z_{\alpha },1}+ g_{Z_{\alpha },2}.                   
\end{align*}
By equations \eqref{eq:105}, \eqref{eq:106} and \eqref{eq:107},
\begin{displaymath}
g_{Z_{\alpha }}|_{t=0}=g_{Z_{\alpha }}|_{t=\infty}=0.
\end{displaymath}
We also write
\begin{align*}
  \eta _{Z_{\alpha }}&= 2\partial g_{Z_{\alpha },1},\\
  \theta _{Z_{\alpha }}&= -d g_{Z_{\alpha },2}.
\end{align*}
Let $\iota \colon A_{X}\to X\times \P^{1}$ denote the inclusion and
$\caS=\iota _{\ast}\iota ^{\ast}\WF(g_{Z_{\alpha }})$.

\begin{prop} The set $\caS$ and 
  the currents $g_{Z _{\alpha }}$, $\theta _{Z_{\alpha }}$ and $\eta
  _{Z_{\alpha }}$ satisfy the hypothesis of Lemma \ref{lemm:6}.
\end{prop}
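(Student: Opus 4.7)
The hypotheses of Lemma~\ref{lemm:6} break into four tasks that I plan to address in turn: (a) that $\caS$ is saturated with respect to $\iota$ and in good position with respect to $B$; (b) that the three currents have the correct Hodge bidegree, twist, and conjugation symmetry, and land in the complex $\Sigma_B D^{*}_{X\times\P^1/A;\caS}(2)$; (c) that $g_{Z_\alpha}$ vanishes on $B$; and (d) that the differential equations $d\eta_{Z_\alpha} = -\delta_{Z_\alpha}$ and $dg_{Z_\alpha} = \tfrac{1}{2}(\eta_{Z_\alpha} - \overline{\eta_{Z_\alpha}}) - \theta_{Z_\alpha}$ hold in $\Sigma_B D^{*}_{X\times\P^1/A;\caS}(2)$.

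For (b), each summand in the definition of $g_{Z_\alpha}$ is a real $(1,1)$-current; once I account for the $\Q(2)$-twist of Definition~\ref{def:1} and the fact (cf.\ Example~\ref{exm:5}) that the integration current $\delta_{\ell_i}$ of a codimension-one real cycle satisfies $\fancyconj{\delta_{\ell_i}}{dR} = -\delta_{\ell_i}$ in the paper's normalization, the twisted conjugate $\overline{g_{Z_\alpha}}$ equals $-g_{Z_\alpha}$, placing $g_{Z_\alpha}$ in $F^{-1}\cap \overline F^{-1}\Sigma_B D^{2}_{X\times\P^1/A;\caS}(2)$. The derivatives $\eta_{Z_\alpha} = 2\partial g_{Z_\alpha,1}$ and $\theta_{Z_\alpha} = -dg_{Z_\alpha,2}$ then automatically land in $F^0$ and $F^{-1}$, the latter being closed because it is exact. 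For (c), restricting each piece to $t=0$ via \eqref{eq:105}--\eqref{eq:107} and using $h(0)=h(\infty)=1$, I will check that the three contributions to $g_{Z_\alpha}|_{t=0}$ cancel pairwise; the $t=\infty$ case is identical.

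For (d), a short calculation exploiting $\partial^2 = \overline\partial^2 = 0$ reduces $\eta_{Z_\alpha}$ to $2\partial g_{Z_\alpha,0} - 2\partial\overline\partial(hu_Z)$, whence $d\eta_{Z_\alpha} = -2\partial\overline\partial g_{Z_\alpha,0}$, which by \eqref{eq:104} equals $-\delta_{Z_\alpha}$ modulo currents supported on $A$. Since $g_{Z_\alpha,1}$ is dR-imaginary one has $\overline{\eta_{Z_\alpha}} = -2\overline\partial g_{Z_\alpha,1}$ in the twisted convention, so $\tfrac{1}{2}(\eta_{Z_\alpha} - \overline{\eta_{Z_\alpha}}) = dg_{Z_\alpha,1}$, and adding $dg_{Z_\alpha,2} = -\theta_{Z_\alpha}$ yields the second identity.

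The crux will be (a). Saturation of $\caS$ with respect to $\iota$ is built into the definition $\caS = \iota_*\iota^*\WF(g_{Z_\alpha})$. The hard part is verifying good position with respect to $B$: I will analyze the singular support of $g_{Z_\alpha}$, which sits inside $\bigcup_i (\ell_i \times \P^1) \cup \bigcup_i \{t = \alpha_i f_i\} \cup \bigcup_i (\ell_i \times \{1\})$. Under the generality hypothesis on $(s_0,s_1,s_2)$ relative to $(x_0,x_1,x_2)$, the graphs $\{t = \alpha_i f_i\}$ stay away from $B$, while $\bigcup_i (\ell_i \times \{1\})$ lies in $A$. Thus at a point of $B = X \times \{0,\infty\}$ the only contributions to $\WF(g_{Z_\alpha})$ are conormals to $\ell_i \times \P^1$, which are of the form $(\xi, 0)$ with $\xi$ a conormal to $\ell_i$ in $X$; these are invariant under the trivial retractions $X \times U \to X \times \{0\}$ and $X \times U \to X \times \{\infty\}$ on a product neighborhood, giving the required good-position condition.
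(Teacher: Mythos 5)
Your outline for parts (b), (c) and (d) matches the paper's proof: the paper also deduces $dg_{Z_\alpha}=\tfrac12(\eta_{Z_\alpha}-\overline{\eta}_{Z_\alpha})-\theta_{Z_\alpha}$ from the anti-invariance $\overline{g_{Z_\alpha,j}}=-g_{Z_\alpha,j}$, and dismisses the remaining hypotheses as immediate from the construction. (One small caveat: at $t=\infty$ the situation is not ``identical'' to $t=0$ --- there $g_{Z_\alpha,0}$ already vanishes and one needs $h(\infty)=0$, so no cancellation occurs; if $h(\infty)$ were really $1$ as \eqref{eq:107} literally states, the sum would \emph{not} vanish at $t=\infty$.)

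The genuine gap is in part (a), which you yourself call the crux. The claim that ``the graphs $\{t=\alpha_i f_i\}$ stay away from $B$'' is false: $f_i|_{\ell_i}$ has a zero at $p_{i+2}=\ell_i\cap\ell_{i+1}$ and a pole at $p_{i+1}=\ell_i\cap\ell_{i+2}$, so the closure of each graph component meets $B_X$ at $(p_{i+2},0)$ and $(p_{i+1},\infty)$; moreover you have dropped the degenerate components $\{p_j\}\times\P^1$ of $|\overline{Z_\alpha}|$, which also meet $B_X$. The general-position hypothesis on $(s_0,s_1,s_2)$ versus $(x_0,x_1,x_2)$ controls the intersection $S$ of $Z_\alpha$ with $W_\beta$ (Assumption \ref{def:7}), not the position of $|Z_\alpha|$ relative to $B_X$. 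Consequently, at the six points $(p_j,0),(p_j,\infty)$ the set $\caS$ contains, besides $N^\vee_0(\ell_i\times\P^1)$, the (closures of the) conormals of the graph components and of the vertical lines $\{p_j\}\times\P^1$, and your assertion that only horizontal covectors $(\xi,0)$ occur over $B$ fails there. Good position does still hold --- e.g.\ $(\iota')^{\ast}N^\vee_0(\{p_j\}\times\P^1)$ at $(p_j,0)$ is $N^\vee_0\{p_j\}$ computed in $B_X$, and its pullback under the product retraction lies in $N^\vee_0(\{p_j\}\times\P^1)\subset\caS$, with a similar (transversality) argument for the graph closures --- but this verification requires exactly the wave-front components your argument discards, so as written the proof of the key hypothesis does not go through.
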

\begin{proof}
  By construction the set $\caS$ is saturated by respect to
  $A_{X}$. By examining the singularities of the different functions,
  the wave front set of $g_{Z_{\alpha }}$ is given by
  \begin{displaymath}
    \WF(g_{Z_{\alpha }}) = \bigcup_{i=0}^{2}\left(
    N^{\vee}_{0}(\ell_{i}\times \P^{1})\cup N^{\vee}_{0}(\ell_{i}\times
    \{1\}) \right) \cup N^{\vee}_{0}|Z_{\alpha }|
\end{displaymath}
Therefore, if $\iota'\colon B_{X}\to X\times \P^{1}$ is the inclusion,
then
\begin{displaymath}
  (\iota')^{\ast} \caS = \bigcup _{i=0}^{2}N^{\vee}_{0}(\ell_{i}\times
  \{0\})\cup N^{\vee}_{0}(\ell_{i}\times
  \{\infty\}). 
\end{displaymath}
Here the conormal bundle is computed in $B_{X}$. Let $r_{0}$ be the
retraction to $X\times \{0\}$ and $r_{\infty}$ the retraction to
$X\times\{\infty\}$. Since, for $i=0,1,2$ and $j=0,\infty$,
\begin{displaymath}
  s_{j}^{\ast}N^{\vee}_{0} (\ell_{i}\times \{j\}) N^{\vee}_{0}
  (\ell_{i}\times \P^{1})
\end{displaymath}
we deduce that $\caS$ and $B_{X}$ are in good position.

By construction, for $j=\emptyset, 0,1,2$,   $\overline g_{Z_{\alpha
  },j}=-g_{Z_{\alpha },j}$. Therefore
\begin{displaymath}
  2\bar \partial g_{Z_{\alpha },1}=-\overline{\partial g_{Z_{\alpha
      },1}} = -\bar \eta_{Z_{\alpha }}.  
\end{displaymath}
Therefore
\begin{displaymath}
  dg_{Z_{\alpha }}=\partial g_{Z_{\alpha },1}+\bar \partial
  g_{Z_{\alpha },1} + dg_{Z_{\alpha },2}=\frac{1}{2}\left(\eta
    _{Z_{\alpha }}-\bar \eta _{Z_{\alpha }}\right)-\theta _{Z_{\alpha
    }}.  
\end{displaymath}
The remaining hypothesis follow directly from the construction of the
different currents. 
\end{proof}

By Corollary \ref{cor:5}, the height of $B_{W_{\beta },Z_{\alpha }}$ is given by
\begin{displaymath}
  \Ht(B)=
-\frac{1}{(2\pi i)^2}
  \im  p_{\ast}\left(\delta _{W_{\beta }}\wedge \frac{dt_{1}}{t_{1}}\wedge g_{Z_{\alpha }}\wedge
    \frac{d\overline t_{2}}{\overline t_{2}} 
    +
    \eta_{W_{\beta }}\wedge \frac{dt_{1}}{t_{1}}\wedge \theta _{Z_{\alpha }}\wedge
    \frac{d\overline t_{2}}{\overline t_{2}}\right),
\end{displaymath}
The support of the current $g_{Z_{\alpha },0}$ is the union of the
threefolds $\ell_{i}\times (\P^{1})^{2}$.
Since we are assuming that the intersection of $Z_{\alpha }$ and $W_{\beta }$ is proper,
the intersection of $\overline W_{\beta }$ with this support is the union of
the lines $p_{ij}\times \P^1\times \{\beta _{j}f'_j(p_{ij})\}$ (see
Section \ref{sec:an-example-dimension-2} for the notation). 
Since the pullback of $\frac{d\overline{t}_2}{\overline{t}_2}$ to
these lines is zero, we obtain
\begin{displaymath}
  p_{\ast}\left(\delta _{W_{\beta }}\wedge \frac{dt_{1}}{t_{1}}\wedge g_{Z_{\alpha },0}\wedge
    \frac{d\overline t_{2}}{\overline t_{2}}\right)=0.
\end{displaymath}
We next compute
\begin{displaymath}
  I_{1}=p_{\ast}\left(\delta _{W_{\beta }}\wedge \frac{dt_{1}}{t_{1}}\wedge 
  \partial \bar u_{Z}\wedge
    \frac{d\overline t_{2}}{\overline t_{2}}\right).
\end{displaymath}
Using that $W\times \P^{1}$ has dimension 2, that $\delta
_{W_{\beta }}$ vanishes when restricted to $t_{2}=0$ and
$t_{2}=\infty$, and that $u_{Z}$ vanishes when restricted to
$t_{1}=\infty$, we obtain
\begin{displaymath}
  I_{1}=p_{\ast}\left(\delta _{W_{\beta }}\wedge \frac{dt_{1}}{t_{1}}\wedge 
  d \bar u_{Z}\wedge
  \frac{d\overline t_{2}}{\overline t_{2}}\right) = 
(p_{2})_{\ast} \left(\delta _{W_{\beta }}\wedge 
  \bar u_{Z}\wedge
  \frac{d\overline t_{2}}{\overline t_{2}}\right),
\end{displaymath}
where now $p_{2}$ is the structural morphism of the product $X\times \P^{1}$
where $W_{\beta }$ lives. Since the support of $W_{\beta }$ consist of
lines and $\bar u_{Z}$ contains one anti-holomorphic differential we
deduce $I_{1}=0$.

Next we consider
\begin{displaymath}
  I_{2}=p_{\ast}\left(\delta _{W_{\beta }}\wedge \frac{dt_{1}}{t_{1}}\wedge 
  \bar \partial u_{Z}\wedge
    \frac{d\overline t_{2}}{\overline t_{2}}\right).
\end{displaymath}
By the same argument as before
\begin{displaymath}
  I_{2}= (p_{2})_{\ast} \left(\delta _{W_{\beta }}\wedge 
  u_{Z}\wedge
  \frac{d\overline t_{2}}{\overline t_{2}}\right).
\end{displaymath}
This time the integral may be non-zero and we will compute it later. 
The last piece to consider is
\begin{displaymath}
  I_{3}=p_{\ast}\left(\delta _{W_{\beta }}\wedge \frac{dt_{1}}{t_{1}}\wedge 
  g_{Z_{\alpha },2}\wedge
  \frac{d\overline t_{2}}{\overline t_{2}}+
\eta _{W_{\beta }}\wedge \frac{dt_{1}}{t_{1}}\wedge 
  \theta _{Z_{\alpha }}\wedge
    \frac{d\overline t_{2}}{\overline t_{2}}
\right)
\end{displaymath}
Using that $\delta _{W_{\beta }}=-d\eta _{W_{\beta }}$, that $d
g_{Z_{\alpha },2}=-\theta _{Z_{\alpha }}$, and that
\begin{alignat*}{2}
  \delta _{W_{\beta }}|_{t_{2}=0}&= \delta _{W_{\beta
    }}|_{t_{2}=\infty}=0,&\qquad g_{Z_{\alpha
    }}|_{t_{1}=0}&=\log|\alpha |\delta _{\ell_{0}},\\
  \eta _{W_{\beta }}|_{t_{2}=0}&= \eta  _{W_{\beta
    }}|_{t_{2}=\infty}=0&\qquad
  g_{Z_{\alpha }}|_{t_{1}=\infty}&=0,\\
  \theta  _{Z_{\alpha }}|_{t_{1}=0}&= \theta  _{Z_{\alpha 
    }}|_{t_{1}=\infty}=0,
\end{alignat*}
we obtain
\begin{displaymath}
  I_{3}=(p_2)_{\ast}\left(\eta _{W_{\beta }}\wedge 
  \log|\alpha |\delta _{\ell_{0}}\wedge
  \frac{d\overline t_{2}}{\overline t_{2}}\right).
\end{displaymath}
Using $\eta _{W_{\beta }}=2\partial g_{W_{\beta },1}$, $\delta_{\ell_0}$
is closed, Stokes' theorem, and the fact that $\partial
g_{W_{\beta},1}\wedge \delta_{l_0}\wedge
\frac{d\overline{t}_2}{\overline{t}_2}$ is of type $(3,3)$, we deduce 
\begin{align*}
I_{3}&=-\log|\alpha|(p_2)_{\ast}\left(g_{W_{\beta},1}\wedge
       \delta_{\ell_0}\wedge
       d\left[\frac{d\overline{t}_2}{\overline{t}_2}\right]\right)\\
&=\log|\alpha|\left(-\sum^2_{j=0}\log|\beta_jf'_j(p_{0j})|-\sum^2_{j=0}\log|f'_{j}(p_{0j})|\right)\\
&=-\log|\alpha|\log|\beta|,
\end{align*}
using $f'_0f'_1f'_2=1$. So $\I(I_3)=0$, and we are reduced to the expression
\begin{displaymath}
\Ht(B)=\frac{1}{(2\pi i)^2} \I\left((p_{2})_{\ast} \left(\delta _{W_{\beta }}\wedge 
  u_{Z}\wedge
  \frac{d\overline t_{2}}{\overline t_{2}}\right)\right).
\end{displaymath}

Recall that the cycle $W$ has six components. The three degenerate
vertical components $V\coloneqq \sum ^2_{j=0} p^\ast (q'_j)$ and the
three lines $\sum^2_{j=0}(\ell'_j, \beta _{j}f'_j)$.
Since $u_{Z}|_{q'_{j}}=0$, we obtain $\delta_V\wedge u_Z\wedge
\frac{d\overline{t}}{\overline{t}}=0$.
Hence, we arrive at
\begin{displaymath}
\Ht(B)
=\frac{1}{(2\pi i)^2}\sum^2_{j=0}\I\left(p_{\ell'_j,\ast}\left[u_Z\wedge
    \frac{d\overline{t}_{2}}{\overline{t}_{2}}\right]\right),
\end{displaymath}
where $p_{\ell'_j}\colon \ell'_j\rightarrow \Spec(\C)$ is the structural morphism,
To compute the contribution of each line we use that 
$\ell'_j=V(x_j)$ and $f'_j=\frac{x_{j+1}}{x_{j+2}}$ to obtain the parametrizations 
\begin{alignat*}{2}
(\ell'_0, \beta _{0}f'_0) &= \{(0:1:t),(1:\beta_{0} t)\}\cong
\P^1;&\qquad t_{2}&=\beta_{0} t,\\
(\ell'_1,\beta _{1}f'_1) &= \{(t:0:1),(1:\beta_{1}t)\}\cong
\P^1;&\qquad t_{2}&=\beta_{1} t,\\ 
(\ell'_2, \beta _{2}f'_2) &= \{(1:t:0),(1:\beta_{2}t)\}\cong \P^1
&\qquad t_{2}&=\beta_{2} t.
\end{alignat*}
By symmetry we only need to compute the contribution of $(\ell'_0,
\beta _{0}f'_0)$ as the other two terms will be obtained by a cyclic
permutation of $\{0,1,2\}$.
Restricting to this line we obtain 
\begin{align*}
  f_0|_{(\ell'_0, \beta _{0}f'_0)}(t)
  &=\frac{b_1+b_2t}{c_1+c_2t}
  =\left(\frac{b_2}{c_2}\right)\frac{t-(-\frac{b_1}{b_2})}{t-(-\frac{c_1}{c_2})},\\
  f_1|_{(\ell'_0, \beta _{0}f'_0)}(t)&=\frac{c_1+c_2t}{a_1+a_2t} =
          \left(\frac{c_2}{a_2}\right)\frac{t-(-\frac{c_1}{c_2})}{t-(-\frac{a_1}{a_2})},\\
  \frac{d\overline t_{2}}{\overline t_{2}}|_{(\ell'_0, \beta _{0}f'_0)}&= \frac{d\overline t}{\overline t}.
\end{align*}
For shorthand we write
\begin{displaymath}
\gamma \coloneqq \frac{b_2}{c_2},\quad \rho
\coloneqq\frac{c_2}{a_2},\quad\theta_1\coloneqq-\frac{b_1}{b_2},\quad
\theta_2\coloneqq-\frac{c_1}{c_2},\quad \theta_3\coloneqq
-\frac{a_1}{a_2},
\end{displaymath}
and
\begin{displaymath}
\tilde{f}_0(t)=\frac{t-\theta_1}{t-\theta_2}, \qquad \tilde{f}_1(t)\coloneqq \frac{t-\theta_2}{t-\theta_3}.
\end{displaymath}
The differential form $u_Z|_{(\ell'_0, \beta _{0}f'_0)}$ splits up into
\begin{displaymath}
u_Z|_{(\ell'_0, \beta _{0}f'_0)}=u_{1,Z}+u_{2,Z},
\end{displaymath}
where
\begin{align*}
u_{1,Z}&=\log|\gamma |\partial\log|\tilde{f}_1(t)| -\log|\rho
         |\partial\log|\tilde{f}_0(t)|,\\
u_{2,Z}&=\log|\tilde{f}_0(t)|\partial(\log|\tilde{f}_1(t)|)
         -\log|\tilde{f}_1(t)|\partial(\log|\tilde{f}_0(t)|). 
\end{align*}
The current $p_{\ell'_0,\ast}\left[u_{1,Z}\wedge
  \frac{d\overline{t}}{\overline{t}}\right]$ is simple to compute:
\begin{align*}
  p_{\ell'_0,\ast}&\left[u_{1,Z}\wedge
    \frac{d\overline{t}}{\overline{t}}\right]\\
  &=\log|\gamma |p_{\ell'_0,\ast}\left(d\left[\log|\tilde{f}_1|
      \frac{d\overline{t}}{\overline{t}}\right]\right) -\log|\rho|
  p_{\ell'_0,\ast}\left(d\left[\log|\tilde{f}_0|
      \frac{d\overline{t}}{\overline{t}}\right]\right)\\ 
  &=\log|\gamma |\log\frac{|\theta_2|}{|\theta_3|}
  -\log|\rho|\log\frac{|\theta_1|}{|\theta_2|}. 
\end{align*}
Since this expression is purely real it does not contribute to the
height of $B_{W_{\beta },Z_{\alpha }}$. 
To compute $p_{\ell'_0,\ast}\left(u_{2,Z}(t)\wedge
  \frac{d\overline{t}}{\overline{t}}\right)$, we have to make a slight
digression to the theory of Bloch-Wigner dilogarithm function. For 
details the reader is referred to \cite{Zagier:dilog}. The
dilogarithm function is the holomorphic function defined, over the
disk $\D:=\{t\in \C\colon |t|<1\}$  as  
\begin{displaymath}
\Li_2(t)=\sum_{n\geq 1}\frac{t^n}{n^2}.
\end{displaymath}
This function can be extended as a holomorphic function to
$\C\setminus [1,\infty)$ with jumps $2\pi i\log|t|$. Thus the function
$\Li_{2,\text{arg}}(t)\coloneqq \Li_2(t)+i\text{arg}(1-t)\log|t|$
is continuous. The Bloch-Wigner dilogarithm is defined by taking the
imaginary part of $\Li_{2,\text{arg}}$:
\begin{multline*}
D_2(t)=\im(\text{Li}_2(t))+\arg(1-t)\log|t|\\
=\frac{1}{2i}(\Li_2(t)-\Li_2(\overline{t}))+
\frac{1}{4i}(\log(1-t)-\log(1-\overline{t}))
(\log(t)+\log(\overline{t})). 
\end{multline*}
We take the branch $-\pi\leq\arg(t)<\pi$. The Bloch-Wigner dilogarithm
satisfies the following partial differential equation.
\begin{displaymath}
\partial iD_2(t)=\log|t|\partial \log|1-t|-\log|1-t|\partial \log |t|.
\end{displaymath}
For any two linear rational functions $f,g$ in $\C(\P^1)$, we define
\begin{displaymath}
S(f,g)\coloneqq \log|f|\partial(\log|g|)-\log|g|\partial(\log|f|).
\end{displaymath}
We make the following observations: Let $f,g,h$ be three linear rational functions. Then
\begin{enumerate}
\item $S(f,g)=-S(g,f)$
\item $S(f,gh)=S(f,g)+S(f,h)$
\item $S(f, 1-f)=S(f,f-1)=\partial i D_2(f).$
\end{enumerate}
Using the above observations, we can find a boundary formula for
$S(\tilde{f}_0,\tilde{f}_1)$. First for rational
functions of the form $\frac{t-a}{b-a}$ and $\frac{t-b}{b-a}$ we get 
\begin{displaymath}
S\left(\frac{t-a}{b-a}, \frac{t-b}{b-a}\right)=S\left(\frac{t-a}{b-a},
  \frac{t-a}{b-a}-1\right)=\partial i
D_2\left(\frac{t-a}{b-a}\right).
\end{displaymath}
Hence
\begin{align*}
S(t-a,t-b)&=\partial i D_2\left(\frac{t-a}{b-a}\right)+S\left(b-a, \frac{t-b}{t-a}\right)\\
&=\partial\left(i D_2\left(\frac{t-a}{b-a}\right)+\log|b-a|\log\frac{|t-b|}{|t-a|}\right).
\end{align*}
Since $u_{2,Z}=S(\tilde{f}_0, \tilde{f}_1)$, we obtain
\begin{align*}
  u_{2,Z}
  &=S(t-\theta_1, t-\theta_2)-S(t-\theta_1,t-\theta_3)\\
   &\quad -S(t-\theta_2,t-\theta_2)+S(t-\theta_2,t-\theta_3)\\
  &=S(t-\theta_1,
    t-\theta_2)+S(t-\theta_2,t-\theta_3)+S(t-\theta_3,t-\theta_1)\\
  &=\partial(G(t)),
\end{align*}
where $G(t)$ is given by
\begin{multline*}
G(t)=i\left(D_2\left(\frac{t-\theta_1}{\theta_2-\theta_1}\right)+
  D_2\left(\frac{t-\theta_2}{\theta_3-\theta_2}\right)+
  D_2\left(\frac{t-\theta_3}{\theta_1-\theta_3}\right)\right)\\  
+\log|\theta_2-\theta_1|\log\frac{|t-\theta_2|}{|t -\theta_1|}
  +\log|\theta_3-\theta_2|\log\frac{|t-\theta_3|}{|t-\theta_2|}
  +\log|\theta_1-\theta_3|\log\frac{|t-\theta_1|}{|t-\theta_3|}.
\end{multline*}
Putting everything in place, we obtain
\begin{displaymath}
  p_{\ell'_{0,\ast}}\left[u_{2,Z}\wedge \frac{d\overline{t}}{\overline{t}}\right]
=p_{\ell'_{0,\ast}}\left(\left[dG(t)\frac{d\overline{t}}{\overline{t}}\right]\right)
=G(0)-G(\infty). 
\end{displaymath}
Noting that $G(\infty)=0$ since $D_2(\infty)=\log 1=0$, and using the
six-fold symmetry of Bloch-Wigner dilogarithm functions, we deduce
\begin{multline*}
  p_{\ell'_{0,\ast}}\left[u_{2,Z}(t)\wedge
    \frac{d\overline{t}}{\overline{t}}\right]
  =i\left(D_2\left(\frac{\theta_2}{\theta_1}\right) +
    D_2\left(\frac{\theta_3}{\theta_2}\right) +
    D_2\left(\frac{\theta_1}{\theta_3}\right) \right)\\
  +\left(\log|\theta_1|\log\frac{|\theta_1-\theta_3|}{|\theta_1-\theta_2|}
    +
    \log|\theta_2|\log\frac{|\theta_2-\theta_1|}{|\theta_2-\theta_3|}
    + \log|\theta_3|\log\frac{|\theta_3-\theta_2|}
    {|\theta_3-\theta_1|}\right).  
\end{multline*}
After plugging in the values of $\theta_1$, $\theta_2$ and $\theta_3$
and taking the imaginary part,

\begin{displaymath}
\im p_{\ell'_{0,\ast}}\left[u_{Z}(t)\wedge
  \frac{d\overline{t}}{\overline{t}}\right] =
D_2\left(\frac{b_2c_1}{b_1c_2}\right) +
  D_2\left(\frac{c_2a_1}{c_1a_2}\right) 
  + D_2\left(\frac{a_2b_1}{a_1b_2}\right).  
\end{displaymath}
Similarly, for $\ell'_1$ and $\ell'_2$, we have
\begin{align*}
\im p_{\ell'_{1,\ast}}\left[u_{Z}(t)\wedge
  \frac{d\overline{t}}{\overline{t}}\right]
  &=D_2\left(\frac{b_0c_2}{b_2c_0}\right)
  +D_2\left(\frac{c_0a_2}{c_2a_0}\right) +
  D_2\left(\frac{a_0b_2}{a_2b_0}\right),\\  
\im p_{l'_{2,\ast}}\left[u_{Z}(t)\wedge
  \frac{d\overline{t}}{\overline{t}}\right]
  &=D_2\left(\frac{b_1c_0}{b_0c_1}\right) +
    D_2\left(\frac{c_1a_0}{c_0a_1}\right) +
    D_2\left(\frac{a_1b_0}{a_0b_1}\right). 
\end{align*}
Summing up, the height of $B_{Z_{\alpha },W_{\beta }}$ is given by
\begin{displaymath}\label{htdilog}
\Ht(B)=\frac{1}{(2\pi i)^2}
\sum_{\substack{(0,1,2)\\(a,b,c)}}D_2\left(\frac{a_{2}b_{1}}{a_{1}b_{2}} \right),  
\end{displaymath}
where the sum is over all cyclic permutations of $(0,1,2)$ and
$(a,b,c)$ for a total of nine terms.

The expression above can be reduced to a six dilogarithms one by using
the five-term relation
for Bloch-Wigner dilogarithm. As a prototype, we show the
simplification for the first component of the above sum. Taking 
\begin{displaymath}
u\coloneqq \frac{b_2c_1}{b_1c_2},\qquad
v\coloneqq\frac{c_2a_1}{c_1a_2},\qquad w\coloneqq\frac{a_2b_1}{a_1b_2},
\end{displaymath}
we observe that $uvw=1$. Hence
$D_2(w)=D_2(1-\frac{1}{w})=D_2(1-uv)$. Now using the five-term
relation, we conclude 
\begin{displaymath}
D_2(u)+D_2(v)+D_2(w)=D_2\left(\frac{1-uv}{1-u}\right)+D_2\left(\frac{1-uv}{1-v}\right).
\end{displaymath}
Plugging back the values of $u,v$ and $w$, we get 
\begin{multline*}
D_2\left(\frac{b_2c_1}{b_1c_2}\right)+D_2\left(\frac{c_2a_1}{c_1a_2}\right)
+ D_2\left(\frac{a_2b_1}{a_1b_2}\right)=\\ 
D_2\left(\frac{a_2b_1-a_1b_2}{b_1c_2-b_2c_1} \left(\frac{c_2}{a_2}\right)\right)
+ D_2\left(\frac{a_2b_1-a_1b_2}{a_2c_1-a_1c_2} \left(\frac{c_1}{b_1}\right)\right).
\end{multline*}
Finally, putting everything together, we get a reduced expression

\begin{displaymath}
 \Ht(B) = \frac{1}{(2\pi i)^2} \sum_{(0,1,2)} D_2\left(\frac{a_2b_1-a_1b_2}{b_1c_2-b_2c_1} \left(\frac{c_2}{a_2}\right)\right)
+ D_2\left(\frac{a_2b_1-a_1b_2}{a_2c_1-a_1c_2} \left(\frac{c_1}{b_1}\right)\right),
\end{displaymath}
where the sum is over the cyclic permutations of $(0,1,2)$ only, giving us six terms.

\begin{rmk}
  From the formula for $\Ht(B)$ we can derive two conclusions.
  \begin{enumerate}
  \item Since $D_2(r)=0,\forall r\in \R$, we deduce that if the
    triangles are defined over $\R$ the height pairing is zero.
    In fact this is a general phenomenon as the Proposition
    \ref{prop:7} shows.

  \item Since the function $D_2$ can be extended to a continuous
    function on $\P^{1}(\C)$, the above height can be extended by
    continuity to any degenerate situation. We see in the next section
    that this is a very general phenomenon.
  \end{enumerate}
\end{rmk}

\begin{prop}\label{prop:7}
  Let $X$ be a smooth projective variety defined over $\R$ and
  $X_{\C}$ the corresponding complex variety. Let $Z\in
  Z^{p}(X_{\C},1)$ and $W\in Z^{q}(X_{\C},1)$ be two higher cycles
  defined also over $\R$ satisfying Assumption \ref{def:7}. Then
  \begin{displaymath}
    \Ht(B_{Z,W})=0.
  \end{displaymath}
\end{prop}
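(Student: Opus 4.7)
The plan is to show that the complex number $I/(2\pi i)^{d+4}$ inside the imaginary part of formula \eqref{eq:87} is in fact real, where
\[
I = \int_{\caX_{W,Z}} \eta_W \wedge \frac{dt_1}{t_1} \wedge \overline{\eta}_Z \wedge \frac{d\bar t_2}{\bar t_2}.
\]
The argument exploits the antiholomorphic involution induced by the real structure on $X$.

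\textbf{Step 1 (The involution).} Since $X$ is defined over $\R$, complex conjugation provides an antiholomorphic involution $\sigma\colon X_{\C}\to X_{\C}$, which extends to $X_{\C}\times\P^{1}\times\P^{1}$ by $(x,t_1,t_2)\mapsto (\sigma(x),\bar t_1,\bar t_2)$. The divisors $A_X$ (cut out by $t_i=1$) and $B_X$ (by $t_i\in\{0,\infty\}$), the supports $|Z|$ and $|W|$, together with their singular loci and the centers $C_Z, C_W$ of the blow-ups in diagram \eqref{eq:75}, are all defined over $\R$. Thus $\sigma$ lifts canonically to an antiholomorphic involution of $\caX_{Z,W}$, a complex manifold of dimension $d+2$, and in particular multiplies the orientation by $(-1)^{d+2}$.

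\textbf{Step 2 (Real choices of the auxiliary forms).} Consider the antilinear involution $F_{\infty}\coloneqq \overline{\sigma^{*}}$ on differential forms. It commutes with $d$ and with Dolbeault conjugation, preserves bidegree (hence the Hodge filtration), and preserves the subcomplexes $\Sigma_{B_X}E^{*}(\log(A_X\cup |Z|); p)$ used in Proposition~\ref{prop:4}, since every subset involved is $\sigma$-stable. Because $Z$ and $W$ are defined over $\R$, the classes $[Z]$ and $[W]$ and the regulator classes of $Z$ and $W$ are $F_{\infty}$-invariant. By $F_{\infty}$-averaging the forms produced by Proposition~\ref{prop:4}, I would obtain choices of $\eta_Z, g_Z, \theta_Z$ and their analogues for $W$ that satisfy, in addition to the properties of loc.\ cit.,
\[
\sigma^{*}\eta_Z = \overline{\eta_Z}, \qquad \sigma^{*}\eta_W = \overline{\eta_W}.
\]

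\textbf{Step 3 (The key symmetry).} Using $\sigma^{*}(dt_i/t_i)=d\bar t_i/\bar t_i$ together with the identities of Step 2, the pullback by $\sigma$ of the integrand of $I$ is
\[
\overline{\eta_W}\wedge \frac{d\bar t_1}{\bar t_1}\wedge \eta_Z \wedge \frac{dt_2}{t_2},
\]
which is exactly the Dolbeault conjugate of the original integrand. Integrating over $\caX_{W,Z}$ and using the orientation factor from Step 1 yields $\overline{I} = (-1)^{d+2} I$. Since $\overline{(2\pi i)^{d+4}} = (-1)^{d+4}(2\pi i)^{d+4}$ and $(-1)^{d+4} = (-1)^{d+2}$, we deduce $\overline{I/(2\pi i)^{d+4}} = I/(2\pi i)^{d+4}$. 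Thus the argument of $\im$ in \eqref{eq:87} is real, so $\Ht(B_{Z,W}) = 0$.

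\textbf{Main obstacle.} The subtle point is Step 2: verifying that the forms of Proposition~\ref{prop:4} admit $F_{\infty}$-invariant representatives. One must check that $F_{\infty}$-averaging preserves every condition (membership in $F^{0}$, vanishing on $B_X$, logarithmic pole type, closedness, and the differential equation $dg_Z = \tfrac{1}{2}(\eta_Z-\overline{\eta_Z})-\theta_Z$), and that the averaged $\eta_Z$ still represents the cycle class $[Z]$. All of these reduce to the fact that $F_{\infty}$ acts naturally on the simple complex computing $H^{2p}_{|Z|}(X_{\C}\times \P^{1}\setminus A_X,\,B_X; p)$, fixes the class of an $\R$-defined cycle, and respects the Hodge filtration; the argument for $W$ and for the blow-up side is identical.
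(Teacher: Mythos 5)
Your argument reaches the right conclusion but by a genuinely different route from the paper. The paper never touches the integral formula: it notes that by functoriality $B_{\sigma^{*}Z,\sigma^{*}W}=\overline{B_{Z,W}}$ (the mixed Hodge structure obtained by replacing $i$ with $-i$), establishes the general identity $\Ht(\overline{B})=(-1)^{r+1}\Ht(B)$ for a generalized biextension with $r=\ell(B)/2$ (conjugation sends the chosen generator of $\Q(a)$ to $(-1)^{a}$ times itself and replaces $\im$ by $-\im$), and then uses $r=(n+m)/2+1=2$ together with $\sigma^{*}Z=Z$, $\sigma^{*}W=W$ to conclude $\Ht(B_{Z,W})=-\Ht(B_{Z,W})$. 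Your proof instead works directly with \eqref{eq:87}. What the paper's route buys is that all sign bookkeeping is concentrated in the single exponent $r+1$, which makes transparent exactly when the vanishing holds (it fails for classical cycles, $r=1$). What your route buys is independence from the abstract machinery: it needs only the integral formula, an equivariant choice of blow-up, and a good choice of representatives, and it is closer to how one checks the vanishing in the explicit $\P^2$ example.

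The one point you must tighten is the conjugation convention in Steps 2--3, because here the sign is the entire content of the statement. The involution fixing the class of a cycle defined over $\R$ is $F_{\infty}(\omega)=\fancyconj{\sigma^{*}\omega}{\dR}$, i.e.\ $\sigma^{*}$ followed by the honest (de Rham) conjugation; in the conventions of Definition \ref{def:1} the achievable normalization is therefore $\sigma^{*}\eta_{Z}=\fancyconj{\eta_{Z}}{\dR}=(-1)^{p}\,\overline{\eta_{Z}}$, not $\sigma^{*}\eta_{Z}=\overline{\eta_{Z}}$ (compare the dimension-zero example, where $\sigma^{*}\eta_{Z}=-\overline{\eta_{Z}}$ for real $a$). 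With the correct normalization one gets $\sigma^{*}\Omega=\fancyconj{\Omega}{\dR}$ on the nose for the integrand $\Omega$ of \eqref{eq:87}, hence $\overline{I}=(-1)^{d+2}I$, and your final cancellation against $\overline{(2\pi i)^{d+4}}=(-1)^{d+4}(2\pi i)^{d+4}$ is valid. As literally written, your Step 3 carries two compensating factors of $(-1)^{p+q}$ (an unachievable normalization, and the identification of the displayed pullback with the Dolbeault conjugate, which is only true up to $(-1)^{p+q}$ when the bars are the twisted ones of the paper); they cancel here, but an argument in which such factors are not tracked would "prove" the false classical statement as well. Finally, you should record explicitly why the averaging in Step 2 preserves condition \eqref{eq:19}: $\sigma^{*}$ multiplies the Betti class of $Z$ by $(-1)^{p}$ (orientation of the normal directions) and the conjugation on $\Q(p)_{\C}$ contributes another $(-1)^{p}$, so $F_{\infty}[Z]=[Z]$.
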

\begin{proof}
  The short proof is that, under the hypothesis of the proposition
  \begin{displaymath}
    \Ht(B_{Z,W})\in \rho _{2}(H^{1}_{\DB}(\Spec(\R);\R(2)))
  \end{displaymath}
  and $H^{1}_{\DB}(\Spec(\R);\R(2))=0$.

  In more down to earth
  terms. Let $\sigma \colon X_{\C}\to X_{\C}$ be the antilinear
  involution defined by the real structure of $X$. Assume for the
  moment that $Z$ and $W$ are not necessarily defined over $\R$. By the
  functoriality of the construction of mixed Hodge structures, we
  deduce that $B_{\sigma ^{\ast}Z,\sigma ^{\ast}W}=\overline {B_{Z,W}}$,
  where, for a mixed Hodge structure  $H$, we denote by $\overline H$
  the mixed Hodge structure obtained by sending $i$ to $-i$.

  Let now $B$ be any generalized biextension. Let
  $r=\ell(B)/2$. Then the operation $B\mapsto \overline B$ sends a
  generator $e$ of $\Q(a)$ to $(-1)^{a}e$ (see Remark \ref{rem:4}) and
  the map $\im$ is sent to $-\im$. Therefore
  \begin{displaymath}
    \Ht(\overline{B})= (-1)^{r+1}\Ht(B).
  \end{displaymath}
  In our case
  \begin{displaymath}
    \ell(B_{Z,W})/2=\frac{n+m}{2}+1=2.
  \end{displaymath}
  Therefore,
  \begin{displaymath}
    \Ht(B_{\sigma ^{\ast}Z},\sigma ^{\ast}W)=-\Ht(B_{Z,W}).
  \end{displaymath}
  But if $Z$ and $W$ are defined over $\R$ we also have  
  \begin{displaymath}
    \Ht(B_{\sigma ^{\ast}Z,\sigma ^{\ast}W})=\Ht(B_{Z,W}),
  \end{displaymath}
from which the proposition follows.
\end{proof}

\section{Asymptotic Behavior}
\label{sec:assymptotic mhs}

In this section, we begin the study of the asymptotic behavior of the
height of families of higher cycles.  In subsection \ref{subsec:ht-lim} we
prove the height extends continuously whenever the associated variation of
mixed Hodge structure is of Hodge--Tate type.  In subsection
\ref{subsec:ht-nilp} we give a definition of limit height for arbitrary
admissible variations of mixed Hodge structures over the punctured disk
with unipotent monodromy.  In subsection \ref{subsec:examples} we give three
examples of heights coming from (i) the dilogarithm variation, (ii) a particular
family of triangles in $\P ^2$ and (iii) a nilpotent orbit.  The first
two examples in subsection \ref{subsec:examples} can be read independently
of the rest of this section.  By definition, an oriented variation of
mixed Hodge structure is a variation equipped with a choice of flat, global
sections which induce an orientation on each fiber.

\subsection{Hodge--Tate Limits}
\label{subsec:ht-lim}

\begin{thm}\label{asymptotic-thm}  Let $S$ be a Zariski open subset of
a complex manifold $\bar S$ such that $D=\bar S-S$ is a normal crossing
divisor.  Let $\mathcal V\to S$ be an oriented Hodge--Tate variation
(graded-polarized) such that the length $\ell(\mathcal V) \ge
4$.
Assume $\mathcal V$ is admissible with respect to $\bar S$ and has unipotent
local monodromy about $D$. Let $p\in D$.  Then, the limit mixed
Hodge structure $\mathcal V_p$ of $\mathcal V$ at $p$ is an oriented
Hodge--Tate structure with the same weight filtration as $\mathcal V$.
Moreover,
\begin{equation}\label{eq:ht_continuity}
  \lim_{s\to p} \Ht(\mathcal V_s)= \Ht(\mathcal V_p).
\end{equation}
\end{thm}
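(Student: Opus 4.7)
The plan is to reduce the statement to the reparametrization invariance of the height furnished by Corollary \ref{ht-reparam}. I would begin by invoking the nilpotent orbit theorem for admissible variations with unipotent monodromy (Kashiwara, Steenbrink--Zucker) in order to choose local coordinates $(z_1,\dots,z_n)$ on $\bar S$ centered at $p$ with $D=\{z_1\cdots z_k=0\}$ and to write the period map on the universal cover as
\[
F(\tilde s)=\exp\Big(\sum_{j=1}^{k}\frac{\log z_j}{2\pi i}N_j\Big)\tilde F(s),
\]
where $N_j$ are the commuting nilpotent logarithms of the unipotent monodromies and $\tilde F$ extends holomorphically across $p$ to the limit Hodge filtration $F_\infty=\tilde F(p)$. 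Because $\caV$ is Hodge--Tate, the weight filtration $W$ is flat and constant in this trivialization; the limit $\caV_p=(F_\infty,W)$ is again an oriented Hodge--Tate MHS with the same $W$, the orientation being transported through $p$ by the distinguished flat generators of the top and bottom graded pieces, which are constant rank-one variations.

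The central intermediate step is to verify that $(\tilde F(s),W)$ is itself a Hodge--Tate MHS for every $s$ in a neighborhood of $p$ and that each $N_j$ remains a $(-1,-1)$-morphism of it. Admissibility gives that $N_j$ shifts $W$ by $-2$ and is of type $(-1,-1)$ for the limit MHS. The Hodge--Tate condition is open among filtrations with given Hodge numbers compatible with $W$, so it persists on a neighborhood of $p$. Combined with the observation that on a Hodge--Tate MHS the Deligne bigrading is concentrated on the diagonal, so that any flat $W$-morphism shifting $W$ by $-2$ and $F$ by $-1$ is automatically $(-1,-1)$, one obtains the desired stability of the $N_j$ near $p$.

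Granted this, I would upgrade Corollary \ref{ht-reparam} to a multi-parameter statement for commuting $(-1,-1)$-morphisms. Iterating Lemma \ref{rescale-by-N} using the commutativity of the $N_j$ gives
\[
\delta_{(\exp(\sum_j t_j N_j)\cdot F,W)}=\delta_{(F,W)}+\sum_{j=1}^{k}\im(t_j)\,N_j,
\]
a correction of pure $(-1,-1)$ type. The hypothesis $\ell(\caV)\ge 4$ gives $r=-\ell(\caV)/2\le -2<-1$, so this correction does not contribute to $\delta^{r,r}$ and therefore leaves the signed height invariant. Applying this with $(F,W)=(\tilde F(s),W)$ and $t_j=\log z_j(s)/(2\pi i)$ yields
\[
\Ht(\caV_s)=\Ht\Big(\exp\big(\textstyle\sum_j\frac{\log z_j}{2\pi i}N_j\big)\cdot \tilde F(s),W\Big)=\Ht(\tilde F(s),W).
\]
The right-hand side is independent of the lift $\tilde s$: a loop around $z_j=0$ shifts $t_j$ by $1$ and replaces $F(\tilde s)$ by $e^{N_j}F(\tilde s)$, leaving $\tilde F(s)$ unchanged.

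Finally, since $\tilde F(s)$ extends holomorphically across $p$ and the Deligne bigrading \eqref{eq:72} depends real-analytically on the Hodge filtration when $W$ is held fixed, the function $\Ht(\tilde F(s),W)$ is continuous in $s$ near $p$; hence
\[
\lim_{s\to p}\Ht(\caV_s)=\Ht(\tilde F(p),W)=\Ht(\caV_p).
\]
The hardest part will be the intermediate stability step: propagating the $(-1,-1)$-type of each $N_j$ from the single fiber at $p$ to a whole neighborhood of it. In the Hodge--Tate setting this rests on the openness of the Hodge--Tate condition and the diagonal concentration of the Deligne bigrading, arguments which collapse precisely in the cases excluded by the hypotheses (non-Tate graded pieces or $\ell(\caV)\le 2$), matching the counterexamples the authors indicate in Example \ref{exam:6iii}.
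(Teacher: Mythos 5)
Your strategy is genuinely different from the paper's, and if the central gap were closed it would give a considerably cleaner proof. The paper computes the Deligne splitting $\delta(z,s)$ directly via the Campbell--Baker--Hausdorff formula together with a careful bookkeeping of ``tame monomials,'' and extracts the $(-\ell,-\ell)$-Hodge component of $\delta$ by hand. You instead reduce everything to the reparametrization invariance of the height (Corollary~\ref{ht-reparam}, iterated over the commuting $N_j$ via Lemma~\ref{rescale-by-N}), so that $\Ht(\mathcal V_s)=\Ht(\tilde F(s),W)$ and the limit is immediate from the holomorphic extension of $\tilde F$ and the real-analytic dependence of the Deligne bigrading on $F$. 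This is an attractive conceptual shortcut.

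However, the pivot of your argument --- that each $N_j$ is a $(-1,-1)$-morphism of $(\tilde F(s),W)$ for $s\neq p$ --- is not established by the reasons you give, and as stated there is a genuine gap. Openness of the Hodge--Tate property and the equality $I^{a,a}=F^a\cap W_{2a}$ reduce the claim to showing that $N_j$ shifts $\tilde F(s)$ by $-1$, i.e.\ $N_j\,\tilde F(s)^a\subset\tilde F(s)^{a-1}$. But this is a \emph{closed} condition on the flag variety of filtrations, not an open one, so it does not follow from $N_j$ being horizontal at $F_\infty$ together with $\tilde F(s)\to F_\infty$. To close the gap you must use the actual structure of the local normal form: the untwisted map is $\tilde F(s)=\psi(s)=e^{\Gamma(s)}\cdot F_\infty$ with $\Gamma(s)\in\mathfrak q_\infty$, and in the Hodge--Tate case $\mathfrak q_\infty=\Lambda^{-1,-1}_{(F_\infty,W)}=\bigoplus_{a\le -1}\mathfrak g_{\C}^{a,a}$, which is a Lie subalgebra containing each $N_j$. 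Hence $\mathrm{Ad}(e^{-\Gamma(s)})(N_j)\in\Lambda^{-1,-1}$ shifts $F_\infty$ by at most $-1$, so $N_j$ shifts $\tilde F(s)$ by at most $-1$, and now the Hodge--Tate identity $I^{a,a}=\tilde F(s)^a\cap W_{2a}$ gives the $(-1,-1)$-property. This is precisely the structural fact (the equality $\mathfrak q_\infty=\Lambda^{-1,-1}$ in the Hodge--Tate setting and Lemma~\ref{lambda-equiv}) with which the paper's proof \emph{begins}; so your proof is fillable, but not from ``openness'' alone, and the filled argument shares its entry point with the paper's before diverging from the CBH computation. You should also say a word about why $(\psi(s),W)$ is a MHS for all $s$ near $p$ on the punctured polydisk --- in the paper this is again an application of Lemma~\ref{lambda-equiv} using the same fact about $\mathfrak q_\infty$ --- and about the extended limits at strata of $D$ (Remark~\ref{extended-limit}), since the theorem's conclusion is that the limit equals $\Ht(\mathcal V_p)$ for $p$ anywhere on $D$.
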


To set up the machinery to prove Theorem~\ref{asymptotic-thm}, let
$p\in\bar S-S$.  Then we can find a polydisk $\Delta^r\subset\bar S$
containing $p$ and local holomorphic coordinates $(s_1,\dots,s_r)$ vanishing
at $p$ such that 
\begin{enumerate}
\item The image of $\Delta^r$ under $(s_1,\dots,s_r)$ is the unit
polydisk (coordinate norm $<1$) in $\C^r$;
\item $D\cap\Delta^r$ is given by the local equation $s_1\cdots s_k=0$.
\end{enumerate}
Therefore,
\begin{displaymath}
  \Delta^r - D\cap\Delta^r
  = \Delta^{*k}\times\Delta^{r-k}
  = \{s \mid s_1\cdots s_k\neq 0\}.
\end{displaymath}
      
As Theorem~\ref{asymptotic-thm} concerns the asymptotic behavior of the
variation, it is sufficient work on $\Delta^{*k}\times\Delta^{r-k}$.  We
therefore recall the theory of period maps of admissible variations of
graded-polarized mixed Hodge structures in this setting following the
conventions of~\cite{Pearlstein:vmhshf}.

Pick $b\in\Delta^{*k}\times\Delta^{r-k}$ and let $V=\mathcal V_b$ be the
fiber of $\mathcal V$ at $b$.  Let $T_j$ denote the local monodromy of
$\mathcal V$ about $s_j=0$.  We assume $T_j$ to be unipotent and write
$T_j = e^{N_j}$.  Note the $[N_a,N_b]=0$ since the fundamental group of
$\Delta^{*k}\times\Delta^{r-k}$ is abelian.

In analogy with the pure case, we can represent $\mathcal V$ by a period
map
\begin{displaymath}
  \varphi\colon \Delta^r-D\to\Gamma\backslash\mathcal M,
\end{displaymath}
where $\mathcal M$ the classifying space of mixed Hodge structure attached
to $\mathcal V$ with reference fiber $V$ and monodromy group $\Gamma$
generated by $T_1,\dots,T_k$.  As with variations of pure Hodge structures,
the classifying space $\mathcal M$ is a complex manifold and the period map
$\varphi$ is holomorphic, horizontal and locally liftable.  

Let $W$ denote the weight filtration of $\mathcal V$ and define
\begin{displaymath}
  \GL(V_{\C})^W
  =\{g\in \GL(V_{\C}) \mid g(W_k)\subseteq W_k,\quad\forall k\}.
\end{displaymath}
Let $\mathcal S_j$ denote the graded-polarization of $\Gr^W_j$ and define
\begin{displaymath}
  G = \{ g\in \GL(V_{\C}^W) \mid
  \Gr^{W}(g)\in\Aut_{\R}(\mathcal S_{\bullet})\,\}.
\end{displaymath}
Then (see \S 3, \cite{Pearlstein:vmhshf}) $G$ acts transitively on
$\mathcal M$ by biholomorphic transformations.

Let $G_{\R} = G\cap \GL(V_{\R})$ and $G_{\C}$ be the
complexification of $G_{\R}$.  The classifying space $\mathcal M$ is
a complex analytic open subset of a complex manifold $\check{\mathcal M}$
upon which $G_{\C}$ acts transitively by biholomorphisms.  Let
$\mathfrak g_{\C}$ be the Lie algebra of $G_{\C}$ and
$\mathfrak g_{\C}^F$ denote the isotopy subalgebra of elements
which preserve $F\in\check{\mathcal M}$.  Let $\mathfrak q$ be a vector space
complement to $\mathfrak g_{\C}^F$ in $\mathfrak g_{\C}$.  Then,
by the implicit function theorem, there exists a neighborhood
$\mathcal U$ of
$0\in \mathfrak q$ such that the map
\begin{displaymath}
     u\in\mathcal U\longmapsto e^u\cdot F\in\check{\mathcal M}
   \end{displaymath}
is a biholomorphism onto its image.     

Let $(z_1,\dots,z_k)$ denote the standard Euclidean coordinates on
$\C^k$ and $U^k\subset\C$ denote the product of upper
half-planes where $\I(z_1),\dots,\I(z_k)>0$.
Let $\Delta^{r-k}\subset\Delta^r$ be the locus where $s_1,\dots,s_k=0$
and
\begin{displaymath}
  (z,s) = (z_1,\dots,z_k,s_{k+1},\dots,s_r)
\end{displaymath}
be the corresponding coordinate system of $U^k\times\Delta^{r-k}$.
  
Let $U^k\times\Delta^{r-k}\to\Delta^{*k}\times\Delta^{r-k}$ be the
covering map
\begin{displaymath}
  (z_1,\dots,z_k,s_{k+1},\dots,s_r)\longrightarrow
  (e^{2\pi i z_1},\dots,e^{2\pi iz_k},s_{k+1},\dots,s_r),
\end{displaymath}
i.e. $s_j =e^{2\pi iz_j}$ for $j=1,\dots,k$.  Let $\eta_j$ be the covering
transformation $\eta_{j}(z,s) = (z+e_j,s)$ where $e_j$ is the $j$'th unit
coordinate vector in $\C^k$.  Set
\begin{displaymath}
  N(z) = z_1N_1 + \cdots +z_kN_k.
\end{displaymath}
By the local liftability of $\varphi$ there exists a holomorphic
map $F\colon U^k\times\Delta^{r-k}\to\mathcal M$ such that
$F(\eta_j(z,s)) = T_j\cdot F(z,s)$ which makes the following diagram commute
\begin{displaymath}
\begin{CD}
                 U @> F >> \mathcal M               \\
               @VVV         @VVV              \\
                 \Delta^*   @> \varphi >> \Gamma\backslash\mathcal M.
\end{CD}
\end{displaymath}
Accordingly, the formula $\tilde\psi(z,s) = e^{-N(z)}\cdot F(z,s)$ defines a map
$\tilde\psi\colon U^k\times\Delta^{r-k}\to\check{\mathcal M}$ such that
$\tilde\psi\circ\eta_j(z,s)=\tilde\psi(z,s)$. Therefore, $\tilde\psi$
descends to a holomorphic map
$\psi\colon \Delta^{*k}\times\Delta^{r-k}\to\check{\mathcal M}$.  By
admissibility~\cite{SZ:vmHsI,Kashiwara:vmHs}, $\psi$ extends to a
holomorphic map
$\Delta^r\to\check{\mathcal M}$ with limit Hodge filtration
\begin{equation}
  F_{\infty} = \lim_{s\to 0}\psi(s)\in\check{\mathcal M}.
  \label{limit-mhs}
\end{equation}

Let $N$ be an element of the monodromy cone
\begin{displaymath}
  \mathcal C=\Big\{\sum_j\, a_j N_j \,\big|\, a_1,\dots,a_k>0\Big\}.
\end{displaymath}
By admissibility, it follows that the relative weight filtration $M=M(N,W)$ 
of $N$ and $W$ exists, and together with  $F_{\infty}$ define a
graded-polarizable mixed Hodge structure $(F_{\infty},M)$.

The mixed Hodge structure $(F_{\infty},M)$ induces a mixed Hodge structure
on $\mathfrak g_{\C}$ with associated Deligne bigrading
\begin{displaymath}
  \mathfrak g_{\C} = \bigoplus_{a+b\leq 0}\mathfrak g_{\C}^{a,b}.
\end{displaymath}
In particular,
\begin{displaymath}
  \mathfrak g_{\C}^{F_{\infty}}
  = \bigoplus_{\substack{a\geq 0\\a+b\leq 0}}\mathfrak g_{\C}^{a,b}
\end{displaymath}
and hence
\begin{equation}
      \mathfrak q_{\infty}
      \coloneqq \bigoplus_{\substack{a<0\\a+b\leq 0}}\mathfrak g_{\C}^{a,b}
      \label{q-lim}
\end{equation}
is a vector space complement to $\mathfrak g_{\C}^{F_{\infty}}$ in
$\mathfrak g_{\C}$.  Therefore, it follows from equation
\eqref{limit-mhs} that for $s\sim 0$ we can write
\begin{displaymath}
  \psi(s) = e^{\Gamma(s)}\cdot F_{\infty},
\end{displaymath}
where $\Gamma(s)$ is a holomorphic function with values in
$\mathfrak q_{\infty}$ which vanishes at $s=0$.  Thus,
\begin{equation}
   F(z,s) = e^{N(z)}e^{\Gamma(s)}\cdot F_{\infty}.    \label{local-nf}
\end{equation}
See~\cite[\S 6]{Pearlstein:vmhshf} for a complete account of the
 constructions outlined in the previous paragraphs.

The final preliminary result we need is the following
\cite[Lemma 5.7]{HP:asymptotics} 
\begin{equation}
       [N_j,\left.\Gamma(s)\right|_{s_j=0}] = 0,  \label{hp-1}
\end{equation}
which follows from a straightforward consequence of horizontality and the
results established above.  Accordingly,
\begin{equation}
       [N_j,\Gamma(s)] = [N_j,\Gamma(s)-\left.\Gamma(s)\right|_{s_j=0}].
       \label{hp-2}
\end{equation}
Considering the power series expansion of $\Gamma(s)$ about $s=0$ we
see that $\Gamma(s)-\left.\Gamma(s)\right|_{s_j=0}$ is divisible by $s_j$.
Thus,
\begin{equation}
     s_j | [N_j,\Gamma(s)]    \label{hp-3}
\end{equation}  
in $\mathcal O(\Delta^r)$.

By induction one has the following result~\cite[8.11]{HP:asymptotics}):  Given a
multi-index $J=(a_1,\dots,a_k)$ with non-negative entries define 
\begin{displaymath}
  A_J = \prod_j\Ad(N_j)^{a_j}.
\end{displaymath}
and 
\begin{displaymath}
  s^{|J|} = \prod_{\{j\mid a_j\neq 0\}} s_j.
\end{displaymath}
Then
\begin{equation}
  s^{|J|}|A_J\Gamma.  \label{hp-4}
\end{equation}

Let $M(z,\bar z)$ be a monomial in $z_1,\dots,z_k$ and
$\bar z_1,\dots,\bar z_k$.  Let $\alpha(s,\bar s)$ be a real analytic
$\mathfrak{g}_{\C}$-valued  function
on $\Delta^r$ in the variables $s_1,\dots,s_r$ and $\bar s_1,\dots,\bar s_r$ which vanishes
at $s=0$.  Motivated by \eqref{hp-4} we say that the product
$M(z,\bar z)\alpha(s,\bar s)$ is a \emph{tame monomial} if, whenever $z_j$ or
$\bar z_j$ divide $M$, then either $s_j$ or $\bar s_j$ divides $\alpha$
(note: if $f$ is any $\mathfrak g_{\C}$ valued real analytic function,
then $z_j s_jf$, $z_j\bar s_jf$, $\bar z_j s_jf$, $\bar z_j\bar s_jf$ are all
tame monomials).  A \emph{tame polynomial} is a finite sum of tame monomials.  Let
$\mathcal T$ denote the set of all tame polynomials.

$\mathcal T$ is a complex vector space which is closed under complex
conjugation and taking Hodge components with respect to a fixed mixed Hodge
structure.  If $\beta\in\mathfrak g_{\C}$ and $\tau\in\mathcal T$ then
$[\beta,\tau]$ clearly belongs to $\mathcal T$.  By equation \eqref{hp-4},
the application of any polynomial in $\Ad(N(z))$ and
$\Ad(N(\bar z))$ to $\Gamma(s)$ is tame.

To see that $\mathcal T$ is closed under Lie bracket, note that if
$m_1\alpha_1$ and $m_2\alpha_2$ are tame monomials then
\begin{displaymath}
  [m_1\alpha_1,m_2\alpha_2] = m_1m_2[\alpha_1,\alpha_2].
\end{displaymath}
If $z_j$ or $\bar z_j$ divides $m_1m_2$ then $z_j$ or $\bar z_j$ must divide
either $m_1$ or $m_2$.  If $z_j$ or $\bar z_j$ divides $m_1$ then either
$s_j$ or $\bar s_j$ divides $\alpha_1$.  As such $s_j$ or $\bar s_j$
divides $[\alpha_1,\alpha_2]$.  The same argument applies to the case
where $z_j$ or $\bar z_j$ divides $m_2$.

Finally, if $\tau\in\mathcal T$ then
\begin{equation}
  \lim_{\substack{\I(z)\to\infty\\s\to 0}}\tau(z,s) = 0, \label{hp-5b}
\end{equation}  
where the limit is taken along sequences $(z(m),s(m))\in U^k\times\Delta^{k-r}$
such that $s(m)\to 0$,
$\I(z_1(m)),\dots,\I(z_k(m))\to\infty$ and
$\RE(z_1(m)),\dots,\RE(z_k(m))$ is constrained to a finite
interval.

We now specialize to the case where $\mathcal V$ is Hodge--Tate.
By the monodromy theorem~\cite[6.1]{Schmid:vhsspm} it follows that $N\in C$
acts trivially on each $\Gr^W_{2\ell}$ as $\Gr^W_{2\ell}$ is pure of type
$(\ell,\ell)$.  Therefore, by admissibility and Proposition $(2.14)$
of~\cite{SZ:vmHsI} it follows that the relative weight filtration $M=M(N,W)$
exists and equals $W$.  Accordingly, the limit Hodge filtration $F_{\infty}$
of $\mathcal V$ belongs to $\mathcal M$.  Therefore, the image of
$\psi $ is contained in $\mathcal M$.

\begin{rmk}\label{extended-limit} Since every element $N\in C$ acts trivially
on $\Gr^W$, the same holds for every 
\begin{displaymath}
  N\in\bar C = \{\,\sum_j\, a_j N_j \mid a_1,\dots,a_k\geq 0\}
\end{displaymath}
and hence $N\in\bar C$ implies that $M(N,W)=W$.  Therefore, $(\psi(s),W)$
is the limit Hodge structure at $s\in D\cap\Delta^r$.
\end{rmk}

Before continuing, we note that $F_{\infty}$ depends upon the choice of
local coordinates $(s_1,\dots,s_r)$.  The permissible changes of coordinates
which are compatible with the divisor structure result in the limit Hodge
filtration $F_{\infty}$ only being well defined up to transformation of the
form
\begin{equation}
    F_{\infty}\mapsto e^{N(\lambda)}\cdot F_{\infty},\qquad
    N(\lambda) = \sum_j\, \lambda_j N_j,    \label{limit-mhs-rescale}
\end{equation}
for some complex numbers $\lambda_1,\dots,\lambda_k$.
Since $\mathcal V$ is Hodge--Tate, $\Gr^W_k=0$
for odd $k$. Since
$\ell(\mathcal V)\ge 4$, by Corollary
\ref{ht-reparam} we have
\begin{displaymath}
  \Ht(e^{\lambda N}\cdot F_{\infty},W) = \Ht(F_{\infty},W).
\end{displaymath}

We conclude this section with the proof of Theorem~\ref{asymptotic-thm}.
\begin{proof}[Proof of Theorem~\ref{asymptotic-thm}]
  By Remark \ref{extended-limit} and the fact that $F_{\infty}\in
  \mathcal M$ we deduce that the limit mixed Hodge structure
  $(F_{\infty},M(N,W))=(F_{\infty},W)$ is Hodge--Tate and has the same
  weight filtration. So it only remains to be shown the continuity
  condition \eqref{eq:ht_continuity}.

  Returning to the subspace 
  \eqref{q-lim}, we see that since $\mathcal V$ is
Hodge--Tate and $F_{\infty}\in\mathcal M$, it follows that
\begin{displaymath}
  \mathfrak q_{\infty} = \bigoplus_{a<0}\,\mathfrak g_{\C}^{a,a}
  = \Lambda^{-1,-1}_{(F_{\infty},W)}
\end{displaymath}
in this case.  Accordingly, by \eqref{local-nf} and Lemma \ref{lambda-equiv}
we have
\begin{displaymath}
  Y_{(F(z,s),W)} = Y_{(e^{N(z)}e^{\Gamma(s)}\cdot F_{\infty},W)}
  = e^{N(z)}e^{\Gamma(s)}\cdot Y_{(F_{\infty},W)}
\end{displaymath}
and hence
\begin{displaymath}
  \overline{Y_{(F(z,s),W)}}
  = e^{N(\bar z)}e^{\bar\Gamma(s)}\cdot \overline{Y_{(F_{\infty},W)}}.
\end{displaymath}
Let $\delta = \delta_{(F_{\infty},W)}$ and $\delta(z,s) =
\delta_{(F(z,s),W)}$ as in \eqref{delta-def}. 
Then,
\begin{displaymath}
  \overline{Y_{(F(z,s),W)}}
  = e^{N(\bar z)}e^{\bar\Gamma(s)}e^{-2i\delta}\cdot Y_{(F_{\infty},W)}.
\end{displaymath}
On the other hand, by definition
\begin{displaymath}
  \overline{Y_{(F(z,s),W)}} = e^{-2i\delta(z,s)}\cdot Y_{(F(z,s),W)} 
  = e^{-2i\delta(z,s)}e^{N(z)}e^{\Gamma(s)}\cdot Y_{(F_{\infty},W)}.
\end{displaymath}
Comparing these two equations, it follows that
\begin{equation}                          
  e^{N(\bar z)}e^{\bar\Gamma(s)}e^{-2i\delta}\cdot Y_{(F_{\infty},W)}
  = e^{-2i\delta(z,s)}e^{N(z)}e^{\Gamma(s)}\cdot Y_{(F_{\infty},W)}.
  \label{two-sides-1} 
\end{equation}
By Proposition 2.2 of~\cite{CKS:dhs}, the group $\exp(W_{-1}\gl(V))$ acts simply
transitively on the set of gradings of $W$.  Therefore, equation 
\eqref{two-sides-1} implies that
\begin{equation}
     e^{N(\bar z)}e^{\bar\Gamma(s)}e^{-2i\delta}
    =  e^{-2i\delta(z,s)}e^{N(z)}e^{\Gamma(s)}.      \label{two-sides-2}
\end{equation}

The Hodge components of $\alpha\in\mathfrak g_{\C}$ relative to
$(F_{\infty},W)$ will be denoted $\alpha^{-b,-b}$.  For the remainder of this
proof, we constrain $\RE(z_1),\dots,\RE(z_k)$ to a finite interval.  

By the Campbell--Baker--Hausdorff formula (CBH),
\begin{equation}
  e^{N(\bar z)}e^{\bar\Gamma(s)} = e^{N(\bar z) + \bar\Gamma(s) + A(z,s)},
  \label{CBH-1}
\end{equation}
where $A(z,s)$ is a Lie polynomial with terms
$X=\Ad(X_1)\circ\Ad(X_{m-1})X_m$ where at least one
$X_j=\bar\Gamma(s)$ and the other $X_i$ are either $N(\bar z)$ or
$\bar\Gamma(s)$. Therefore, by the discussion following \eqref{hp-4}, $A(z,s)$
belongs to $\mathcal T$.  For future use, we observe that $A^{-1,-1}(z,s)=0$
since $A(z,s)$ is a sum of terms containing at least two elements from
$\mathfrak q_{\infty}=\oplus_{k>1}\,\mathfrak g_{\C}^{-k,-k}$.

Before continuing, observe that because each $N_j=N^{-1,-1}_j$ and
$\delta=\sum_{k>0}\,\delta^{-k,-k}$ the equation $[N_j,\delta]=0$ implies
$[N(\bar z),\delta^{-k,-k}]=0$ for all $k>0$. In particular,
\begin{equation}
  \Ad(L_1)\circ\dots\circ \Ad(L_{m-1})A(z,s)\in\mathcal T
  \label{hp-6}
\end{equation}
if each $L_j$ is either $-2i\delta$ or $N(\bar z)$ since $[N(\bar z),\delta]=0$
and $A(z,s)$ is itself constructed from Lie polynomials in
$\Ad(N(\bar z))$ and $\bar\Gamma(s)$.

More generally, any Lie polynomial
$U = \Ad(U_1)\circ\cdots\circ\Ad(U_{m-1})U_m$ where
each $U_j$ is either $N(\bar z)$, $\bar\Gamma(s)$, $A(z,s)$ and
$-2i\delta$ again belongs to $\mathcal T$.  Indeed, bracketing $\bar\Gamma(s)$
or $A(z,s)$ with $-2i\delta$ produces another element of $\mathcal T$.  By
the remarks of the previous paragraph, if $\bar\Gamma(s)$ does not appear
the result belongs to $\mathcal T$.  Finally, $\bar\Gamma(s)$ belongs to
$\mathcal T$, and $\mathcal T$ is closed under Lie brackets.  Application
of the Jacobi identity now shows that $U$ belongs to $\mathcal T$.

Continuing, by the CBH,
\begin{equation}      
     e^{N(\bar z) + \bar\Gamma(s) + A(z,s)}e^{-2i\delta}
     = e^{N(\bar z) + \bar\Gamma(s) + A(z,s)-2i\delta + B(z,s)},  \label{delta-B2}
\end{equation}
where $B(z,s)$ is a Lie polynomial with terms
$X=\Ad(X_1)\circ\Ad(X_{m-1})X_m$ where at least one
$X_j=-2i\delta$ and the other $X_i$ are either
$N(\bar z) + \bar\Gamma(s) + A(z,s)$ or $-2i\delta$.
Expanding out $X$ as a sum of terms 
$U = \Ad(U_1)\circ\cdots\circ\Ad(U_{r-1})U_r$ where
each $U_j$ is either $N(\bar z)$, $\bar\Gamma(s)$, $A(z,s)$ and
$-2i\delta$ it follows that $B(z,s)$ belongs to $\mathcal T$ by the
previous paragraph.  As was the case for $A$, $B^{-1,-1}(z,s)=0$ since
$B(z,s)$ is a sum of terms 
involving the Lie bracket of at least 2 elements of $\mathfrak q_{\infty}$.

Turning now to the right hand side of \eqref{two-sides-2}, by
\eqref{CBH-1}
\begin{displaymath}
  e^{N(z)}e^{\Gamma(s)} = e^{N(z)+\Gamma(s)+\bar A(z,s)}.
\end{displaymath}
Therefore,
\begin{equation}
  e^{-2i\delta(z,s)}e^{N(z)}e^{\Gamma(s)} =
  e^{-2i\delta(z,s) + N(z) + \Gamma(s)+ \bar A(z,s) + C(z,s)},
  \label{delta-C2}
\end{equation}
where $C(z,s)$ is a sum of terms
$X=\Ad(X_1)\circ\cdots\circ\Ad(X_{m-1})X_m$ with
some $X_j = -2i\delta(z,s)$ and the remaining terms $X_i$ either equal
to $-2i\delta(z,s)$ or to $ N(z) + \Gamma(s)+ \bar A(z,s)$.

Comparing \eqref{delta-B2} and \eqref{delta-C2} it follows that
\begin{equation}
  \begin{aligned}
    N(\bar z) + \bar\Gamma(s) & + A(z,s)-2i\delta + B(z,s) \\
    &= -2i\delta(z,s) + N(z) + \Gamma(s)+ \bar A(z,s) + C(z,s).
  \end{aligned}
  \label{delta-lim-1}
\end{equation}
Like with $A$ and $B$, we have $C^{-1,-1}(z,s)=0$.  Accordingly, taking the
$(-1,-1)$-component of equation \eqref{delta-lim-1} yields
\begin{displaymath}
  N(\bar z) + (\bar\Gamma)^{-1,-1}(s) -2i\delta^{-1,-1}
  = -2i\delta^{-1,-1}(z,s) + N(z) + \Gamma^{-1,-1}(s).
\end{displaymath}
Solving for $\delta^{-1,-1}(z,s)$ gives
\begin{equation}
  \delta^{-1,-1}(z,s) = N(\I(z)) + \I(\Gamma(s))^{-1,-1}
                     + \delta^{-1,-1}.        \label{delta-lim-2}
\end{equation}

Returning to equation \eqref{delta-C2} and noting that $A^{-1,-1}(z,s)=0$,
upon taking the $(-2,-2)$-component we obtain that
\begin{equation}
\begin{aligned}
  C^{-2,-2}(z,s)  &= \frac{1}{2}[-2i\delta^{-1,-1}(z,s),N(z)+\Gamma^{-1,-1}(s)] \\
  &= -i[N(\I(z)) + \I(\Gamma(s))^{-1,-1}
  + \delta^{-1,-1}, N(z)+\Gamma^{-1,-1}(s)] \\
  &= -i[N(\I(z)),\Gamma^{-1,-1}(s)]\\
  &\qquad -i[\I(\Gamma(s))^{-1,-1},
  N(z)+\Gamma^{-1,-1}(s)] \\
  &\qquad -i[\delta^{-1,-1},\Gamma^{-1,-1}(s)].
\end{aligned}
\label{C2-4}                     
\end{equation}
In particular, it follows from \eqref{C2-4} that
$C^{-2,-2}(z,s)$ belongs to class $\mathcal T$.

Taking $(-2,-2)$ components \eqref{delta-lim-1} implies 
\begin{multline*}
  (\bar\Gamma)^{-2,-2}(s) +A^{-2,-2}(z,s) + B^{-2,-2}(z,s) -2i\delta^{-2,-2} \\
  = -2i\delta^{-2,-2}(z,s)+\Gamma^{-2,-2}(s)+\bar
  A^{-2,-2}(z,s)+C^{-2,-2}(z,s)
\end{multline*}
and hence
\begin{displaymath}
  \delta^{-2,-2}(z,s) = \delta^{-2,-2} + D^{-2,-2}(z,s),
\end{displaymath}
where $D^{-2,-2}(z,s)$ belongs to the class $\mathcal T$. By
\eqref{hp-5b} we obtain that
\begin{displaymath}
  \lim_{\substack{\I(z)\to\infty\\s\to 0}} \delta^{-2,-2}(z,s) = \delta^{-2,-2} .
\end{displaymath}
Therefore,  we have completed the proof of Theorem
\ref{asymptotic-thm} in the case where $\ell(\mathcal V)=4$
(e.g. the dilogarithm variation in Example \ref{exam:6i}).

To verify the general statement,
we assume by induction that for $a=2,\cdots,k$ that
\begin{enumerate}
\item \label{item:9}  $C^{-a,-a}(z,s)$ belongs to class $\mathcal T$, and is given by
a Lie polynomial with terms
\begin{equation}
    \Ad(L_1)\circ\dots\circ \Ad(L_{r-1})L_r,  \label{hp-7}
\end{equation}
where each $L_j$ is either $\delta^{-b,-b}$, $N(z)$, $N(\bar z)$,
$\Gamma^{-b,-b}(s)$ or $\bar\Gamma^{-b,-b}(s)$.    
\item \label{item:10} $\delta^{-a,-a}(z,s) = \delta^{-a,-a} + D^{-a,-a}(z,s)$ where
$D^{-a,-a}(z,s)$ satisfies also condition \ref{item:9}. 
\end{enumerate}
The previous paragraphs establish the induction base $a=2$.

To establish the case $a=k+1$ we recall $C(z,s)$ is a sum of terms
$X=\Ad(X_1)\circ\cdots\circ\Ad(X_{m-1})X_m$ where
some $X_j = -2i\delta(z)$ and the remaining terms $X_i$ are either
$-2i\delta(z)$ or $N(z) + \Gamma(s)+ \bar A(z,s)$ (which occurs at least once).
In particular, upon expanding $\delta(z,s)$ into Hodge components, it follows
that $C^{-a-1,-a-1}(z,s)$ can be expanded into a sum of terms
\begin{displaymath}
  U = \Ad(U_1)\circ\dots \circ\Ad(U_{m-1})U_m
\end{displaymath}
of the required form \ref{item:9}.  It now follows from \eqref{delta-lim-1} and
the previous results about $A(z,s)$, $B(z,s)$ and $C^{-b,-b}(z,s)$ for
$b=1,\dots,k+1$ that \ref{item:10} holds.
\end{proof}

\subsection{Heights of Nilpotent Orbits}
\label{subsec:ht-nilp}

Let $\mathcal U\to\Delta^*$ be an admissible variation of mixed Hodge
structure over the punctured disk $\Delta^*$ with weight graded quotients
$\Gr^W_0=\Z(0)$, $\Gr^W_{-1}=\mathcal H$ and $\Gr^W_{-2}=\Z(1)$.
Assume that $\mathcal U$ has unipotent monodromy and select an embedding
of $\Delta^*$ into the coordinate disk
\begin{displaymath}
  \Delta = \{s\in\C \mid |s|<1\}
\end{displaymath}
as the complement of $s=0$.  In \S 3 of~\cite{BP:jumps},
the third author and P. Brosnan proved that there exists a rational number
$\mu$ such that
\begin{equation}
  h(s) = \Ht(\mathcal U_s) + \mu\log|s|    \label{limit-height-1}
\end{equation}
extends continuously to $\Delta$.  Moreover, $h(0)$ can be constructed
by pure linear algebra from the data of $(N,F_{\infty},W)$ of the nilpotent
orbit of $\mathcal U$.

Consider now an arbitrary oriented admissible variation $\mathcal V\to\Delta^*$
with unipotent monodromy.  As noted in \eqref{limit-mhs-rescale}, the data
$(N,F_{\infty},W)$ of the associated nilpotent orbit of $\mathcal V$
is only well defined up to replacing $F_{\infty}$ by $e^{\lambda N}\cdot F_{\infty}$.
In this section, we define a height $\Ht(N,F_{\infty},W)$ of an oriented
admissible nilpotent orbit $(e^{zN}\cdot F_{\infty},W)$ which generalizes the
construction of \cite{BP:jumps} and prove:

\begin{prop}\label{limit-ht-inv} If $\ell(\mathcal V)>2$ then, for any $\lambda \in \C$, 
\begin{displaymath}
  \Ht(N,e^{\lambda N}\cdot F_{\infty},W) = \Ht(N,F_{\infty},W).
\end{displaymath}
Thus $\Ht(N,F_{\infty},W)$ only depends on the variation $\mathcal{V}$
and not on a particular choice of limit Hodge filtration
$F_{\infty}$. 
If moreover $N$ acts trivially on $\Gr^W$ then $M(N,W) = W$ and
\begin{displaymath}
\Ht(N,F_{\infty},W) = \Ht(F_{\infty},M).
\end{displaymath}
On the right hand side $\Ht(F_{\infty},M)$ denotes the usual height of
the oriented extension $(F_{\infty},M)$.
\end{prop}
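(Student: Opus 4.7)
The plan is to reduce the proposition to Corollary \ref{ht-reparam}. Based on the announced generalization of \cite{BP:jumps}, I read $\Ht(N,F_{\infty},W)$ as the height of the limit mixed Hodge structure $(F_{\infty},M)$, where $M=M(N,W)$ is the relative weight filtration, equipped with the orientation induced from that of $\mathcal V$. The defining property of $M$ ensures that $N$ is a $(-1,-1)$-morphism of $(F_{\infty},M)$, so Lemma \ref{lambda-equiv} guarantees that $(e^{\lambda N}\cdot F_{\infty},M)$ is again a mixed Hodge structure on the same underlying vector space, and Lemma \ref{rescale-by-N} gives
\begin{equation*}
\delta_{(e^{\lambda N}\cdot F_{\infty},M)}=\delta_{(F_{\infty},M)}+\im(\lambda)N.
\end{equation*}

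For part 1, I would apply Corollary \ref{ht-reparam} directly to $(F_{\infty},M)$: the hypothesis $r<-1$ with $r=(\min(F_{\infty},M)-\max(F_{\infty},M))/2$ is equivalent to $\ell(M)>2$. The key structural observation needed here is that $\ell(M)\ge \ell(W)=\ell(\mathcal V)$; this follows because the orientation forces $\Gr^{W}_{\max(W)}$ and $\Gr^{W}_{\min(W)}$ to be rank one, hence $N$ acts trivially on them (any nilpotent on a one-dimensional space is zero), and by the defining property of the relative weight filtration these one-dimensional pure summands land in $\Gr^{M}_{\max(W)}$ and $\Gr^{M}_{\min(W)}$ respectively. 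Given $\ell(\mathcal V)>2$, we conclude $r<-1$, so by the corollary $\Ht(e^{\lambda N}\cdot F_{\infty},M)=\Ht(F_{\infty},M)$. Since $M=M(N,W)$ depends only on $N$ and $W$, this is exactly the identity claimed in part 1.

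Part 2 follows from part 1 combined with the observation in equation \eqref{limit-mhs-rescale}: the only indeterminacy in $F_{\infty}$ once the underlying nilpotent orbit $\mathcal V$ is fixed is precisely the family of transformations $F_{\infty}\mapsto e^{\lambda N}\cdot F_{\infty}$. For part 3, if $N$ acts trivially on $\Gr^{W}$ then Proposition $2.14$ of \cite{SZ:vmHsI} (invoked already in the proof of Theorem \ref{asymptotic-thm}) yields $M(N,W)=W$, so by construction $\Ht(N,F_{\infty},W)=\Ht(F_{\infty},M)=\Ht(F_{\infty},W)$. The only genuinely nontrivial step in the plan is verifying $\ell(M)\ge\ell(W)$; once that is in place, everything reduces to a one-line application of Corollary \ref{ht-reparam}, so I expect the bookkeeping around the orientation and the extremal graded pieces of $M$ to be the main point requiring care.
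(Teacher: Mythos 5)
There is a genuine gap: your argument rests on a misreading of the definition of $\Ht(N,F_{\infty},W)$. Equation \eqref{limit-ht-def} does \emph{not} define it as the height of the oriented mixed Hodge structure $(F_{\infty},M)$. It is defined as the coefficient in $\delta_{-\ell}\,e = \Ht(N,F,W)\,e^{\vee}$, where $\delta=\delta_{(F,M)}$ but the eigencomponent $\delta_{-\ell}$ is taken with respect to the Deligne-system grading $Y'=Y'(N,Y_{(F,M)})$ of $W$ (Theorem \ref{deligne-system}), $\ell=\ell(W)$, and $e,e^{\vee}$ come from the $W$-orientation. The paper stresses exactly this point: ``there is no reason for $(F,M)$ to be oriented.'' Your claim that the rank-one pieces $\Gr^W_{\max}$ and $\Gr^W_{\min}$ land in $\Gr^M_{\max(W)}$ and $\Gr^M_{\min(W)}$ is correct, but it gives only $\ell(M)\ge\ell(W)$; the extremal $M$-graded pieces need not have rank one, an ``induced orientation'' on $(F_{\infty},M)$ need not exist, and even when it does the Hodge component $\delta^{r,r}_{(F_{\infty},M)}$ with $r=-\ell(M)/2$ (the object Corollary \ref{ht-reparam} controls) is a different map from $\delta_{-\ell(W)}$ whenever $Y\neq Y'$, i.e.\ whenever $N$ acts nontrivially on $\Gr^W$. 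Under your reading, part 3 of the proposition becomes a tautology, which is a sign that the content has been lost: part 3 is precisely the assertion that the two notions coincide when $N$ kills $\Gr^W$.

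Because of this, Corollary \ref{ht-reparam} cannot be applied directly and the actual proof has two steps that your proposal skips. First, one uses that $e^{\lambda N}$ is a morphism of Deligne systems, so $\tilde Y'=e^{\lambda N}\cdot Y'$, and combines this with Lemma \ref{rescale-by-N} to obtain the transformation law
\begin{displaymath}
  \tilde\delta_{-j}=e^{\lambda N}\cdot\bigl(\delta_{-j}+\im(\lambda)N_{-j}\bigr),
\end{displaymath}
which for $j=\ell$ reduces to $\tilde\delta_{-\ell}=\delta_{-\ell}+\im(\lambda)N_{-\ell}$ because $N$ annihilates the rank-one extremal pieces. Second, one must show $N_{-\ell}(e)=0$: choosing $e\in F^{a}$ with $2a=\max(W)$, using that $Y'$ preserves $F$ so each $N_{-j}$ is horizontal, one gets $N_{-\ell}(e)\in F^{a-1}\cap W_{2b}$ with $2b=\min(W)$, and $\ell>2$ forces $a-1>b$, hence $N_{-\ell}(e)\in F^{b+1}\cap W_{2b}=0$. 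This is the genuine analogue of Corollary \ref{ht-reparam} in the $Y'$-graded setting and is where the hypothesis $\ell(\mathcal V)>2$ enters. Your treatment of part 2 (the indeterminacy \eqref{limit-mhs-rescale}) is fine, and for part 3 the implication $M(N,W)=W$ is correct, but the paper additionally needs $N_0=0\Rightarrow Y=Y'$ so that the $Y'$-decomposition of $\delta$ coincides with the Deligne decomposition of $(F_{\infty},M)$ and the definition collapses to the ordinary height.
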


Accordingly, we can define the limit height of $\mathcal V$ to be
$\Ht(N,F_{\infty},W)$ of the associated nilpotent orbit. 

\begin{rmk} Unfortunately, we do not yet have the analog of
\eqref{limit-height-1} in general. In the next subsection we given an
example of an admissible nilpotent variation with weight graded quotients
$\Gr^W_0=\Z $, $\Gr^W_{-3}$ of rank 2 and $\Gr^W_{-6}\cong\Z (3)$
for which $\Ht(\mathcal V)$ grows like a multiple of $(\log|s|)^3$ as
$s\to 0$.  
\end{rmk}

To define the height of a nilpotent orbit, we will freely borrow from
section 6 of~\cite{BPR:nilpotent}.  The key concept is the notion of a Deligne
system, which originates from a letter of P. Deligne to E. Cattani
and A. Kaplan:

\begin{df} (6.6,~\cite{BPR:nilpotent})  Let $K$ be a field of characteristic
zero. A 1-variable Deligne system over K consists of the following data:
\begin{enumerate}
\item an increasing filtration $W$ of a finite dimensional $K$-vector
space $V$;
\item a nilpotent endomorphism $N$ of $V$ which preserves $W$ such that
the relative weight filtration $M = M(N,W)$ exists;  
\item a grading $Y$ of $M$ which preserves $W$ and satisfies $[Y,N]=-2N$.
\end{enumerate}
A morphism of Deligne systems $(W,N,Y)\to (\tilde W,\tilde N, \tilde Y)$ is
an endomorphism $T$ of the underlying $K$-vector spaces such that
\begin{displaymath}
T(W_i) \subseteq \tilde W_i,\qquad \tilde Y\circ T-T\circ Y =
0\qquad\text{and}\qquad \tilde N\circ T - T\circ N=0.   \label{deligne-funct}
\end{displaymath}
\end{df}
Given a Deligne system $(W,N,Y)$, each choice of grading $Y'$ of $W$
which commutes with $Y$ determines an $\lsl_{2}$-triple $(N_0,H,N_0^+)$ where
\begin{equation}
  N = \sum_{j\geq 0}\, N_{-j},\qquad [Y',N_{-j}] = -jN_{-j},
  \label{N-decomp}
\end{equation}
(so $N_0$ is the 0-eigencomponent of N relative to $\Ad Y'$) and
$H=Y-Y'$ (cf. equations 6.8 and 6.9 of \cite{BPR:nilpotent}).  The basic
construction of Deligne's letter is the following (see~\cite{BPR:nilpotent}
for additional history and references):

\begin{thm}[{\cite[6.10]{BPR:nilpotent}}]\label{deligne-system} Let $(W,N,Y)$
be a Deligne system. Then, there exists an unique functorial grading
$Y'= Y'(N,Y)$ of $W$ which commutes with $Y$ such that
\begin{equation}
    [N-N_0,N_0^+ ] = 0,     \label{sl2-triple}
\end{equation}
where $(N_0,H,N_0^+)$ is the associated $\lsl_2$-triple attached to
$Y'$ and $(W,N,Y)$.    
\end{thm}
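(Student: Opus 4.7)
The plan is to prove existence and uniqueness by exploiting the fact that the gradings of $W$ commuting with $Y$ form a torsor under a unipotent group, and then pinning down the correct element by an iterative construction governed by $\lsl_{2}$-representation theory, mirroring Deligne's original approach for the canonical splitting of a mixed Hodge structure.

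First I would observe that the set $\mathcal{G}$ of gradings $Y'$ of $W$ that commute with $Y$ is a nonempty torsor under $\exp(\mathfrak{n})$, where $\mathfrak{n}$ is the nilpotent Lie subalgebra of $\gl(V)$ consisting of endomorphisms commuting with $Y$ and strictly lowering $W$. Nonemptiness follows inductively: since $Y$ preserves $W$, it induces a grading of each $\Gr^{W}_{k}$, which can be lifted step-by-step to a $Y$-compatible grading of $W$. Given any $Y'\in\mathcal{G}$, set $H=Y-Y'$ and decompose $N=\sum_{j\geq 0}N_{-j}$ into $\ad(Y')$-eigencomponents. The commuting relations $[Y,Y']=0$ and $[Y,N]=-2N$ force $[H,N_{-j}]=(j-2)N_{-j}$, so in particular $[H,N_{0}]=-2N_{0}$, which is the first $\lsl_{2}$-commutation relation. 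By Jacobson--Morozov applied to the nilpotent $N_{0}$ with the prescribed semisimple $H$, one extends to a triple $(N_{0},H,N_{0}^{+})$, whose positive part $N_{0}^{+}$ is determined up to $\ker(\ad N_{0})$.

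For existence, the goal is to find $Y'\in\mathcal{G}$ such that, for the associated $\lsl_{2}$-triple, one has $[N-N_{0},N_{0}^{+}]=0$. I would construct $Y'$ iteratively: starting from an arbitrary $Y'_{0}\in\mathcal{G}$, one corrects by an element $\alpha\in\mathfrak{n}$ so that $Y'=e^{\ad(\alpha)}(Y'_{0})$ satisfies the desired relation. Filtering $\mathfrak{n}$ by $W$ and solving modulo $W_{-k-1}$ by induction on $k$, at each stage the obstruction lives in a graded piece on which one can apply the representation theory of the evolving $\lsl_{2}$-triple: the standard decomposition into isotypic $\lsl_{2}$-modules shows that the obstruction can always be killed by a suitable choice of $\alpha$ modulo deeper steps. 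Uniqueness of $\alpha$ at each step follows from the fact that the only element commuting with $N_{0}^{+}$ and lying in the appropriate highest-weight piece is zero.

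For uniqueness of $Y'$, suppose $Y'_{1}$ and $Y'_{2}$ both solve the problem; then $e^{\ad(\alpha)}(Y'_{1})=Y'_{2}$ for a unique $\alpha\in\mathfrak{n}$. The condition $[N-N_{0},N_{0}^{+}]=0$ for both gradings, combined with the fact that $\alpha$ commutes with $Y$ and the interplay of $\ad(Y')$-weights with the $\lsl_{2}$-action of $(N_{0},H,N_{0}^{+})$, forces $\alpha$ to lie in the kernel of multiplication by a nontrivial element of the universal enveloping algebra; by the primitive-element decomposition one concludes $\alpha=0$. Functoriality is automatic from the uniqueness: for a morphism $T\colon(W,N,Y)\to(\widetilde{W},\widetilde{N},\widetilde{Y})$, the image of $T$-intertwining data under $Y'(N,Y)$ and $Y'(\widetilde{N},\widetilde{Y})$ satisfies the same characterizing property, hence must coincide.

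The main obstacle I anticipate is the bookkeeping in the iterative step: as $Y'$ is modified, the decomposition $N=\sum N_{-j}$, the semisimple element $H$, and hence the resulting $\lsl_{2}$-triple all change, so one must verify that the inductive correction process actually converges (in the sense that only finitely many steps are needed, by nilpotence of $\mathfrak{n}$) and that the obstruction at each level genuinely lies in the image of the correction map. This amounts to a careful filtered computation using the Jacobson--Morozov theorem and the structure of finite-dimensional $\lsl_{2}$-representations, but once the filtered bookkeeping is set up, the argument is essentially formal.
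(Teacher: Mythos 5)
First, note that the paper does not prove this statement: it is quoted verbatim from \cite[6.10]{BPR:nilpotent} (going back to Deligne's letter to Cattani and Kaplan), so there is no internal proof to compare your argument against. Judged on its own terms, your sketch identifies the right strategy --- the gradings of $W$ commuting with $Y$ form a torsor under a unipotent group, and one corrects an arbitrary initial grading step by step down the filtration --- but the two places where the actual content of the theorem lives are asserted rather than proved.

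Concretely: (a) to even form the triple $(N_0,H,N_0^+)$ you need more than $[H,N_0]=-2N_0$; Jacobson--Morozov with a prescribed semisimple part requires $H$ to lie in the image of $\ad N_0$, and verifying this is exactly where the hypothesis that $M=M(N,W)$ is the \emph{relative} weight filtration enters (it guarantees that $H=Y-Y'$ induces on each $\Gr^W_k$ the grading of the monodromy weight filtration of $N_0|_{\Gr^W_k}$). Your sketch never invokes this hypothesis, so as written the triple need not exist. Once it does exist, $N_0^+$ is in fact unique, not merely ``determined up to $\ker(\ad N_0)$'': the difference of two candidates is a highest-weight vector of $\ad H$-weight $-2$, hence zero. (b) The inductive step --- that after modifying $Y'$ by $e^{\ad\alpha}$ with $\alpha$ in the next graded piece of $\mathfrak{n}$, the obstruction to $[N-N_0,N_0^+]=0$ can be killed, and killed uniquely --- is the whole theorem. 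One must compute how $N_0$, $H$ and $N_0^+$ vary to first order in $\alpha$, identify the obstruction space as an $\lsl_2$-submodule of $\gl(V)$, and show that the linearized correction map is bijective onto it; your appeal to ``the standard decomposition into isotypic $\lsl_2$-modules'' names the tool but does not carry out the computation, and your own closing paragraph concedes the point. So the proposal is a plausible outline of Deligne's construction, not a proof.
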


In particular, given any admissible variation $\mathcal V$ of mixed
Hodge structure over the punctured disk $\Delta^*$ with unipotent monodromy,
we obtain a Deligne system $(W,N,Y)$ where $W$ is the weight filtration of
$\mathcal V$, $N$ is the local monodromy and $Y=Y_{(F_{\infty},M)}$
where $(F_{\infty},M) $
is the limit mixed Hodge structure of $\mathcal V$. If $\lambda
\in\C$, then 
$e^{\lambda N}$ is a morphism from $(W,N,Y)$ to $(W,N,Y+2\lambda N) =
(W,N,Y_{(e^{\lambda N}\cdot F_{\infty},M)})$. 
Therefore,
\begin{equation}
  Y'(N,Y_{(e^{\lambda N}\cdot F_{\infty},M)}) = e^{\lambda N}\cdot Y'(N,Y_{(F_{\infty},M)}).    \label{ds-1}
\end{equation}

We next proceed to the definition of the height of a nilpotent
orbit. So let $\mathcal M$ and $\check{\mathcal M}$ be the classifying
spaces of mixed Hodge structures of a filtered vector space $(V,W)$ and
its compact dual. Let $F\in \check{\mathcal M}$ and $N$ a nilpotent
endomorphism of $V$ such that $(e^{zN}\cdot F, W)$ is an admissible
nilpotent orbit. This means the following conditions
\begin{enumerate}
\item $N(F^{r})\subset F^{r-1}$ (horizontality,)
\item $e^{zN}\cdot F\in \mathcal M$ for $\im (z) \gg 0$,
\item the filtration $M=M(N,W)$ exists.
\end{enumerate}
Let $\max = \max(W)$, $\min=\min(W)$.  Assume $(e^{zN}\cdot F,W)$ is
oriented and $\ell = \ell(W)>2$.  We have a limit mixed Hodge
structure $(F,M)$. Let
$Y' = Y'(N,Y_{(F,M)})$ and $\delta = \delta_{(F,M)}$.  Write
\begin{equation}
  \delta = \sum_{j\geq 0}\delta_{-j},\qquad [Y',\delta_{-j}] = -j\delta_{-j}.
  \label{eigen-decomp}
\end{equation}
Note that this decomposition is with respect to a grading of $W$ and not
with respect to a grading of $M$.
We define the height of the admissible nilpotent orbit as 
\begin{equation}
    \Ht(N,F,W)e^{\vee} = \delta_{-\ell}\,e,   \label{limit-ht-def}
\end{equation}
where $e$ is a lift of the generator of $\Gr^W_{\max}$ and $e^{\vee}$ projects
to the generator of $\Gr^W_{\min}$. We stress here the fact that the
generators $e$ and $e^{\vee}$ as well as the grading $Y'$ correspond
to the filtration $W$, while the operator $\delta $ is defined by the
mixed Hodge structure $(F,M)$. We proceed in this way because there is
no reason for $(F,M)$ to be oriented.

\begin{proof}[Proof of Proposition \ref{limit-ht-inv}] Let
  $e^{zN}\cdot F$ be an admissible nilpotent orbit as before and
  $\lambda \in \C$.
  Let
  $\delta=\delta_{(F,M)}$ and
$\tilde\delta = \delta_{(e^{\lambda N}\cdot F,M)}$.  By Lemma \ref{rescale-by-N}
\begin{displaymath}
    \tilde\delta = \delta + \im(\lambda )N.
  \end{displaymath}
Moreover, since $N$ is a $(-1,-1)$-morphism of both $(F,M)$ and
$(e^{\lambda N}\cdot F,M)$ it follows that both $\delta$ and $\tilde\delta$
are fixed by the adjoint action of $e^{\lambda N}$.

Let $\delta=\sum_j\,\delta_j$ and $N = \sum_j\,N_j$ denote the
decomposition of $\delta$ and $N$ into eigencomponents with respect to
the adjoint action of $Y'=Y'(N,Y_{(F,M)})$ as in \eqref{eigen-decomp}.  Then,
\begin{equation}
     \tilde\delta
      = e^{\lambda N}\cdot \tilde\delta
      = e^{\lambda N}\cdot \sum_{j\geq 0}\, \delta_{-j} + \im(\lambda )N_{-j} 
      = \sum_{j\geq 0}\, e^{\lambda N}\cdot (\delta_{-j} + \im(\lambda )N_{-j}).
\label{ds-2}    
\end{equation}
Let $\tilde Y' = Y'(N,Y_{(e^{\lambda N}\cdot F,M)})$ and
\begin{displaymath}
  \tilde\delta = \sum_{j\geq 0}\,\tilde\delta_{-j},\qquad
  [\tilde Y',\tilde\delta_{-j}] = -j\tilde\delta_{-j},
\end{displaymath}
be the decomposition of $\tilde\delta$ into eigencomponents for
$\Ad\tilde Y'$.  By equation \eqref{ds-1}, $\tilde Y' = e^{\lambda N}\cdot Y'$.   
Moreover,
\begin{align*}
    {[e^{\lambda N}\cdot Y',e^{\lambda N}\cdot (\delta_{-j} + \im(\lambda )N_{-j})]}
    &=e^{\lambda N}\cdot [Y',\delta_{-j} + \im(\lambda )N_{-j}] \\  
    &=-j e^{\lambda N}\cdot (\delta_{-j} + \im(\lambda )N_{-j}).
\end{align*}
Comparing the previous equation with \eqref{ds-2} it follows that
\begin{equation}
  \tilde\delta_{-j} = e^{\lambda N}\cdot (\delta_{-j} + \im(\lambda )N_{-j}).  \label{ds-3}
\end{equation}

In the notation of \eqref{limit-ht-def}, we are interested in comparing
$\tilde\delta_{-\ell}$ and $\delta_{-\ell}$.  As a first step, we note that
$N$ acts trivially on $\Gr^W_{\max}$ and $\Gr^W_{\min}$ as each factor has
dimension 1 and $N$ is nilpotent.  As $N$ preserves $W$, it then follows
that $e^{\lambda N}$ fixes $\delta_{-\ell}$ and $N_{-\ell}$ under the adjoint action.
Thus,
\begin{displaymath}
  \tilde\delta_{-\ell} = \delta_{-\ell} + \I(\lambda)N_{-\ell}.
\end{displaymath}
       
The limit mixed Hodge structure $(F,M)$ induces on $\Gr^W$ the limit
mixed Hodge structures of the variations of pure Hodge structure on $\Gr^W$.
Let $2a=\max$ and $2b=\min$.  Then, $\Gr^W_{2a}$ is the constant variation
of type $(a,a)$ whereas $\Gr^W_{2b}$ is the constant variation of type
$(b,b)$.  Consequently, $F^a$ surjects on $\Gr^W_{2a}$ whereas $F^{a+1}$
maps to zero in $\Gr^W_{2a}$.  Moreover, $\Gr^W_{2b}=W_{2b}$ and $W_{2b}\subset F^{b}$
whereas $F^{b+1}\cap W_{2b}=0$.

By the previous paragraph, it follows that in equation
\eqref{limit-ht-def} we can arrange that $e\in F^a$.  By equation $(3.20)$
in~\cite{pearlstein:sl2}, $Y'$ preserves $F$.  Accordingly, since $N$ is
horizontal with respect to $F$, so is each eigencomponent $N_{-j}$.

Therefore,
$N_{-\ell}(e)\in F^{a-1}$.  But, $2a-2b>2$ implies $a-1>b$ and hence 
$N_{-\ell}(e)\in F^{b+1}\cap W_{2b}$.  Thus,
$N_{-\ell}(e)=0$. This proves the first
statement of Proposition \ref{limit-ht-inv}. 

Finally, if $N$ acts trivially on $\Gr^W$ then $N_0 = 0$ and hence
$H= Y-Y'=0$.  Therefore, $Y=Y'$ which implies $M=W$ and the decomposition
of $\delta$ with respect to $Y'$ is just the decomposition of $\delta$
with respect to $Y=Y_{(F_{\infty},M)}$.
\end{proof}

\subsection{Three Examples}
\label{subsec:examples}

In this subsection we show that the Bloch--Wigner dilogarithm $D_2$ is
the height of the dilogarithm variation over $\P ^1-\{0,1,\infty\}$.
We then show that up to a multiple of $4\zeta(2)$, we can express $D_2$
as the height of an elementary family of triangles of the type
considered in \ref{sec:an-example-dimension-2}. Finally, we show that the
height can become unbounded in the case where the underlying variation of
mixed Hodge structure is not unipotent in the sense of Hain and Zucker
\cite{HZ:uv}.

\begin{ex}\label{exam:6i}
Let $\mathcal V$ be the dilog variation over
$\P ^1-\{0,1,\infty\}$ (4.13, \cite{HZ:uv}).
Then, $\Ht(\mathcal V) = -D_2(s)$.

By (4.13, \cite{SZ:vmHsI}) we may select a basis $\{e_0,e_1,e_2\}$ of
$V_{\C} = \mathcal V_s$ such that $\mathcal V$ has bigrading
$I^{a,a} = \C e_{-a}$ and integral structure $V_{\Z }$ generated by 
\begin{align*}
        v_0(s) &= e_0-\log(1-s)e_1 + L_2(s)e_2,\\
        v_1(s) &= (2\pi i)(e_1 + \log (s)e_2), \\
        v_2(z) &= (2\pi i)^2 e_2,
\end{align*}
where $L_2(s) = \sum_{j=1}^{\infty}\,\frac{s^j}{j^2}$ is the dilogarithm.
By Lemma \ref{main-ht-formula} we need to compute
$\frac{1}{2}\text{Im}((e_0-\overline{e_0})_{-4})$.

Abbreviating $v_j(s)$ to $v_j$, it follows from the previous equations
that 
\begin{align*}
      e_2 &= (2\pi i)^{-2}v_2,  \\
      e_1 &= (2\pi i)^{-1} v_1-(2\pi i)^{-2}\log(s)v_2, \\
      e_0 &= v_0 +(2\pi i)^{-1}\log(1-s)v_1
             -(2\pi i)^{-2}[\log(1-s)\log(s)v_2 + L_2(s)]v_2.
\end{align*}
Therefore,
\begin{align*}
    e_0 &- \overline{e_0} \\
    &=  2(2\pi i)^{-1}\RE(\log(1-s))v_1
        -2i(2\pi i)^{-2}\I(\log(1-s)\log(s) + L_2(s))v_2 \\
    &=  2\RE(\log(1-s))(e_1+\log(s)e_2)
        -2i\I(\log(1-s)\log(s) + L_2(s))e_2.
\end{align*}
Accordingly,
\begin{align*}
    \frac{1}{2}\I((e_0 & -\overline{e_0})_{-4}) \\
       &= \RE(\log(1-s))\I(\log(s))
    -\I(\log(1-s)\log(s)) - \I(L_2(s)).
\end{align*}
To simplify the previous equation, let $\log(1-s) = A + iB$ and
$\log(s) = C + iD$.  Then,        
\begin{align*}
      \RE(\log(1-s))&\I(\log(s))
        -\I(\log(1-s)\log(s)) \\
       &= AD-(AD+BC) = -BC = -\arg(1-s)\log|s|.
\end{align*}
Thus,
\begin{displaymath}
     \Ht(\mathcal{V}_s)
     = -\I(L_2(s))-\arg(1-s)\log|s|
     =-D_2(s).
   \end{displaymath}
\end{ex}

\begin{ex}\label{exam:6ii} Returning to the setting of \ref{sec:an-example-dimension-2}, let
$W_{\beta}$ denote the standard triangle and consider the sections
\begin{align*}
s_{t,0}&=x_0+tx_1+x_2,\\
s_{t,1}&=x_0+x_1+tx_2,\\
s_{t,2}&=tx_0+x_1+x_2,
\end{align*}
of $\mathcal{O}_{\P^2}(1)$, where $t\in S =\P^1- \{-2,-1,0,1,\infty\}$.
Let $\ell_{t,i}=\Div(s_{t,i})$ for $i=0,1,2$ and consider the family of higher
cycles $\{Z_{\alpha}(t)\}_{t\in S}$, with individual $Z_{\alpha}(t)$ as defined in
subsection \ref{sec:an-example-dimension-2}. By the choice of $t$, all the
cycles $Z_{\alpha}(t)$ are non-degenerate and intersect $W_{\beta}$
properly and transversely. Moreover, the pair of cycles
$Z_{\alpha}(t)$, $W_{\beta}$ satisfies Assumption \ref{def:7}. 
Then,
\begin{displaymath}
    \Ht(B_{Z_{\alpha}(t),W_{\beta}})
     = \frac{3}{(2\pi i)^2}(D_2(t)+D_2(t)+D_2(t^{-2})).
   \end{displaymath}
To continue, recall that $D_2(z) = D_2(1-1/z)$ and hence
$D_2(t^{-2}) = D(1-t^2)$.  By the 5 term relation
\begin{displaymath}
  D_2(x)+D_2(y) + D_2\left(\frac{1-x}{1-xy}\right)
  +D_2(1-xy) + D_2\left(\frac{1-y}{1-xy}\right) =0.
\end{displaymath}
Setting $x=y=t$ it follows that
\begin{align*}
  D_2(t) + D_2(t) + D_2(1/t^2)
  &= D_2(t) + D_2(t) + D_2(1-t^2) \\
  &= -D_2\left(\frac{1-t}{1-t^2}\right)-D_2\left(\frac{1-t}{1-t^2}\right) \\
  &= -2D_2((1+t)^{-1}).
\end{align*}
Finally, $D_2(z) = -D_2(1/z)$ and $D_2(z) = -D_2(1-z)$.  Therefore,
\begin{displaymath}
  \Ht(B_{Z_{\alpha}(t),W_{\beta}})
  = \frac{6}{(2\pi i)^2}D_2(1+t)
  = \frac{-D_2(1+t)}{4\zeta(2)}
  = \frac{D_2(-t)}{4\zeta(2)}.
\end{displaymath}
In particular, upon setting $\theta=\pi/2$ in the formula
$
   D_2(e^{i\theta}) = \sum_{n=1}^{\infty}\, \frac{\sin(n\theta)}{n^2}
$
it follows that $D_2(\sqrt{-1})$ is equal to the Catalan constant $C$.
Thus,
\begin{displaymath}
  \Ht(B_{Z_{\alpha}(-\sqrt{-1}),W_{\beta}}) = \frac{C}{4\zeta(2)}.
\end{displaymath}
Also note that
\begin{displaymath}
\lim_{t\to p}\, \Ht(B_{Z_{\alpha}(t),W_{\beta}})=0
\end{displaymath}
for $p\in \{-2,-1,0,1,\infty\}$.
\end{ex}

To close this subsection, we give an example of an admissible nilpotent
orbit $(e^{zN}\cdot F,W)$ with weight graded quotients $\Gr^W_0\cong\Z(0)$,
$\Gr^W_{-3}$ of rank 2 and $\Gr^W_{-6}\cong \Z(3)$ such that the height grows
like $(\log|s|)^3$ for $s=e^{2\pi i z}$.

\begin{ex}\label{exam:6iii} Let $V_{\Z }$ be the lattice generated by $e_0$,
$e$, $f$ and $e_{-6}$.  Let
\begin{displaymath}
      W_{-6} = \Z  e_{-6},\qquad
      W_{-3} = W_{-6}\oplus\Z  f\oplus\Z  e,\qquad
      W_0 = V_{\Z },
    \end{displaymath}
with graded-polarizations
\begin{displaymath}
  \mathcal S_0([e_0],[e_0]) = \mathcal S_{-3}([e],[f])
  =\mathcal S_{-6}([e_{-6}],[e_{-6}]) = 1.
\end{displaymath}
Let $N$ be the nilpotent endomorphism obtained by setting
\begin{displaymath}
  N(e_0) = e,\qquad N(e)=f,\qquad N(f)=e_{-6},\qquad N(e_{-6})=0.
\end{displaymath}
Let $(F,M)$ be the mixed Hodge structure defined by setting
\begin{displaymath}
   I^{0,0} = \C e_0,\qquad
   I^{-1,-1} = \C e,\qquad
   I^{-2,-2} = \C f,\qquad
   I^{-3,-3} = \C e_{-6}.
 \end{displaymath}
Then, $N(I^{a,a})\subset I^{a-1,a-1}$ and hence $N$ is horizontal with respect
to $F$.  We also have $N(M_a)\subseteq M_{a-2}$.  To verify that $M$ is the
relative weight filtration $N$ and $W$ it remains to check $M$ induces the
monodromy weight filtration of $\Gr(N)$ shifted by $-k$ on $\Gr^W_k$.  This
is clear for $\Gr^W_0$ and $\Gr^W_{-6}$.  Let $\tilde N$ be the map induced
by $N$ on $\Gr^W_{-3}$ then
\begin{displaymath}
  W(\tilde N)_{-1} = \Z [f],\qquad
  W(\tilde N)_{1}  = \Gr^W_{-3},
\end{displaymath}
and hence $W(\tilde N)[3]_{-4} = W(\tilde N)_{-1}=\Z [f]$ while
$W(\tilde N)[3]_{-2} = W(\tilde N)_1 = \Gr^W_{-3}$.  Since $I^{-1,-1}=\C e$
and $I^{-2,-2}=\C f$ it follows that $M$ induces the correct filtration
on $\Gr^W_{-3}$.

Define
\begin{align*}
     \nu_0(z) &= e^{zN}(e_0)
               = e_0 + ze + \frac{1}{2}z^2 f + \frac{1}{6}z^3 e_{-6}, \\
   \nu_{-1}(z) &= e^{zN}(e)
               = e + zf + \frac{1}{2}z^2 e_{-6}, \\
   \nu_{-2}(z) &= e^{zN}(f)
               = f + ze_{-6}, \\
   \nu_{-3}(z) & = e^{zN}(e_{-6}) = e_{-6}.
\end{align*}
Then, $e^{zN}\cdot F^a = \oplus_{b\geq a}\,\C\nu_b(z)$.  Accordingly,
$e^{zN}\cdot F$ induces a pure Hodge structure of weight $k$ on $\Gr^W_k$:
For $\Gr^W_0$ and $\Gr^W_{-6}$ we just take the constant variations
of type $(0,0)$ and $(-3,-3)$.  The image $e^{zN}\cdot F^{-1}$ in $\Gr^W_{-3}$
is $\C[e+zf]$  which gives a variation of pure Hodge structure
of weight $-3$.

Recall (2.12, \cite{CKS:dhs}) that
\begin{displaymath}
  I^{p,q} = F^p\cap W_{p+q}
  \cap(\bar F^q\cap W_{p+q} + \overline{U^{q-1}_{p+q-1}}),
\end{displaymath}
where $U^a_b = \sum_{j\geq 0}\, F^{a-j}\cap W_{b-j}$.  In particular,
\begin{displaymath}
  I^{0,0}_{(e^{zN}\cdot F,W)} = \C\nu_0(z),\qquad
  I^{-1,-2}_{(e^{zN}\cdot F,W)} = \C\nu_{-1}(z),
\end{displaymath}
as both $e^{zN}\cdot F^0$ and $(e^{zN}\cdot F^{-1})\cap W_{-3}$ have rank 1.        

To determine $I^{-2,-1}$ note that
\begin{displaymath}
U^{-2}_{-5} = (\C\nu_{-2}(z))\cap W_{-5}\oplus
(\C\nu_{-3}(z))\cap W_{-6} = \C e_{-6}.
\end{displaymath}
Therefore,
\begin{displaymath}
  \overline{(e^{zN}\cdot F^{-1})}\cap W_{-3} + \overline{U^{-2}_{-5}}
  =\C\bar\nu_{-1}(z)\oplus\C e_{-6}
  =\C(e+\bar zf)\oplus\C e_{-6},
\end{displaymath}
and hence
\begin{align*}
      (e^{zN}\cdot F^{-2})\cap &W_{-3}
      \cap(\overline{(e^{zN}\cdot F^{-1})}\cap W_{-3} + \overline{U^{-2}_{-5}}) \\
      &= (\C\nu_{-1}(z)\oplus\C\nu_{-2}(z))
         \cap(\C(e+\bar zf)\oplus\C e_{-6}) \\
      &=\C(e+\bar zf + z(\bar z-\frac{1}{2}z)e_{-6}),
\end{align*}
because $e+\bar zf + z(\bar z-\frac{1}{2}z)e_{-6}
= \nu_{-1}(z) + (\bar z-z)\nu_{-2}(z)$.  As such,
$I^{-1,-2}_{(e^{zN}\cdot F,W)}\oplus I^{-2,-1}_{(e^{zN}\cdot F,W)}$ is spanned
by $\nu_{-1}(z)$ and $\nu_{-2}(z)$. 
Moreover $I^{-3,-3}_{(e^{zN}\cdot F,W)}= I^{-3,-3}$ is generated by
$e_{-6}$. 

To finish, observe that
\begin{displaymath}
  \nu_0(z) -\bar\nu_0(z)
  = (z-\bar z)(e+\frac{1}{2}(z+\bar z)f
  +\frac{1}{6}(z^2 + z\bar z + \bar z^2)e_{-6}).
\end{displaymath}
Next,
\begin{align*}
    e+\frac{1}{2}(z+\bar z)f
    &+\frac{1}{6}(z^2 + z\bar z + \bar z^2)e_{-6} \\
    &= \nu_{-1}(z)+\frac{1}{2}\nu_{-2}(z)+\frac{1}{6}(z-\bar z)^2 e_{-6}.
\end{align*}
Thus,
\begin{displaymath}
     (\nu_0(z)-\bar\nu_0(z))_{-6}
     = \frac{1}{6}(z-\bar z)^3 e_{-6}.
   \end{displaymath}
where $(\cdots)_{-6}$ is projection onto $I^{-3,-3}_{(e^{zN}\cdot F,W)}$ with
respect to the Deligne bigrading of $(e^{zN}\cdot F,W)$.
Write now $s=e^{2\pi iz}$, then the nilpotent orbit  $(e^{zN}\cdot F,W)$
defines a variation of mixed Hodge structures $\mathcal{V}$ over the
punctured unit disk with coordinate $s$. Then, by
\eqref{main-ht-formula},
\begin{displaymath}
  \Ht(\mathcal{V}_{s})= \frac{1}{12\pi ^{3}}(\log|s|)^{3}.
\end{displaymath}
 We note also that, since the
mixed Hodge structure $(F,M)$ is split over $\R$, then $\delta
_{(F,M)}=0$. Therefore, in this case
\begin{displaymath}
  \Ht(N,F,W)=0.
\end{displaymath}
\end{ex}      

\newcommand{\noopsort}[1]{} \newcommand{\printfirst}[2]{#1}
  \newcommand{\singleletter}[1]{#1} \newcommand{\switchargs}[2]{#2#1}
  \def\cprime{$'$}
\providecommand{\bysame}{\leavevmode\hbox to3em{\hrulefill}\thinspace}
\providecommand{\MR}{\relax\ifhmode\unskip\space\fi MR }

\providecommand{\MRhref}[2]{%
  \href{http://www.ams.org/mathscinet-getitem?mr=#1}{#2}
}
\providecommand{\href}[2]{#2}

\end{document}